\def\mywidth{30em}
\newtheorem{theorem}{Theorem}[section]
\newtheorem{lemma}[theorem]{Lemma}
\newtheorem{proposition}[theorem]{Proposition}
\newtheorem{corollary}[theorem]{Corollary}
\newtheorem{definition}[theorem]{Definition}
\theoremstyle{definition}
\newtheorem{remark}[theorem]{Remark}
\newtheorem{example}[theorem]{Example}
\numberwithin{equation}{section}
\newcommand{\Z}{\mathbb{Z}}
\newcommand{\N}{\mathbb{N}}
\newcommand{\R}{\mathbb{R}}
\newcommand{\T}{\mathbb{T}}
\newcommand{\C}{\mathbb{C}}
\newcommand{\K}{\mathbb{K}}
\newcommand{\CP}{\mathbb{CP}}
\newcommand{\PP}{\mathbb{P}}
\newcommand{\bfL}{\mathbf{L}}
\renewcommand{\Re}{\mathrm{Re}\,}
\newcommand{\Hom}{\mathrm{Hom}}
\renewcommand{\O}{\mathcal{O}}
\newcommand{\Wrap}{\mathcal{W}}
\newcommand{\F}{\mathcal{F}}
\newcommand{\cL}{\mathcal{L}}
\renewcommand{\v}{\mathbf{v}}
\newcommand{\bb}{\mathfrak{b}}
\newcommand{\cupprod}{\mathbin{\smile}}
\renewcommand{\mod}{\mathrm{mod}}
\newcommand{\cY}{\mathcal{Y}}
\newcommand{\id}{\mathrm{id}}
\DeclareMathOperator*{\colim}{colim}
\DeclareMathOperator*{\hocolim}{hocolim}
\DeclareMathOperator*{\holim}{holim}
\def\co{\colon\thinspace}
\title[Homological mirror symmetry for hypersurfaces in $(\C^*)^n$]{Homological mirror symmetry for \\ hypersurfaces in $(\C^*)^n$}
\author{Mohammed Abouzaid}
\address{Department of Mathematics, Columbia University, 2990 Broadway, New York, NY 10027, USA}
\email{abouzaid@math.columbia.edu}
\author{Denis Auroux}
\address{Department of Mathematics, Harvard University, 1 Oxford St, Cambridge, MA 02138, USA}
\email{auroux@math.harvard.edu}
\thanks{The first author was partially supported by NSF grants DMS-1308179,
DMS-1609148, DMS-1564172, and DMS-2103805, a Simons Fellowship, and a Poincar\'e visiting professorship at Stanford University. 
The second author was partially supported by NSF grants DMS-1406274,
DMS-1702049, DMS-1937869, and DMS-2202984. Both authors were supported by the Simons Foundation
collaboration grant ``Homological Mirror Symmetry and Applications'' (award
numbers 385571 and 385573).}
\begin{document}
\begin{abstract} 
We prove a homological mirror symmetry result for maximally degenerating families 
of hypersurfaces in $(\C^*)^n$ 
(B-model) and their mirror toric Landau-Ginzburg A-models. 
The main technical ingredient of our construction is a ``fiberwise wrapped'' version 
of the Fukaya category of a toric Landau-Ginzburg model. With the definition
in hand, we construct a fibered admissible Lagrangian submanifold whose fiberwise wrapped
Floer cohomology is isomorphic to the ring of regular functions of the hypersurface.
It follows that the derived category of coherent sheaves of the hypersurface
quasi-embeds into the fiberwise wrapped Fukaya category of the mirror.
We also discuss an extension to complete intersections. 
\end{abstract}

\setcounter{tocdepth}{1}
\maketitle
\tableofcontents
\setcounter{tocdepth}{2}

\section{Introduction}

The range of settings in which mirror symmetry is expected to hold has
steadily expanded since the mirror conjectures were first formulated for
projective Calabi-Yau varieties, and there are now candidate mirror
constructions in a wide range of settings. Outside of the Calabi-Yau
setting, the mirrors are in general Landau-Ginzburg models, i.e.\ pairs 
$(Y,W)$ where $Y$ is a quasi-projective variety and $W\in \O(Y)$ is a
regular function (the {\em superpotential}). 

We focus on the case of hypersurfaces in $(\C^*)^n$ (or rather hypersurfaces defined over the non-archimedean Novikov field 
$\K=\Lambda$, which arise from maximally
degenerating families of hypersurfaces near the tropical limit).
These have mirror Landau-Ginzburg models which consist of a noncompact
toric Calabi-Yau variety $Y$ of dimension $n+1$, equipped with a superpotential $W$ 
which is a toric monomial vanishing to order 1 on each irreducible
toric divisor of $Y$. The construction is summarized in
Section \ref{s:LGmodel}, following the description given in
\cite{AAK} which arrives at these mirrors from the perspective of SYZ
mirror symmetry; see also \cite{HV,Clarke,ChanLauLeung,GKR} for other viewpoints.

To be specific, consider a degenerating family of complex hypersurfaces defined by a Laurent polynomial of the form 
\begin{equation}\label{eq:intropoly}
f=\sum_{\alpha\in P_\Z} a_\alpha t^{2\pi\nu(\alpha)} x^\alpha,
\end{equation}
where $P_\Z$ is a finite subset of $\Z^n$, the exponents
$\nu(\alpha)\in \R$ are assumed to satisfy a convexity condition which ensures that Equation \eqref{eq:intropoly} is a sufficiently generic degeneration, and the coefficients $a_\alpha$ are complex numbers in the simplest situations, but will in general be given by elements of $\Lambda$ of vanishing valuation (see Section
\ref{s:LGmodel}). 
The space $Y$ which we associate to these data  is the K\"ahler toric variety determined by the polytope 
\begin{equation} \label{eq:mirror_polytope_Fan}
    \Delta_Y=\{(\xi,\eta)|\,\eta\geq \varphi(\xi)\} \subset \R^n\oplus \R,
\end{equation}
where the piecewise linear function $\varphi:\R^n\to \R$ is the
tropicalization of $f$, $$\varphi(\xi)=\max\limits_{\alpha\in P_\Z} \,\{\langle
\alpha,\xi\rangle-\nu(\alpha)\},$$
and the superpotential $W=-z^{(0,\dots,0,1)}$ is (up to sign) the toric monomial associated to the last coordinate of the ambient space $\R^n\oplus \R $ in Equation \eqref{eq:mirror_polytope_Fan}. 
The regular fibers of $W:Y\to\C$ are isomorphic to $(\C^*)^n$, 
while the unique singular fiber $Z=W^{-1}(0)=\bigcup_\alpha Z_\alpha$ is a union of toric varieties (the 
irreducible toric divisors of $Y$, which are in one-to-one correspondence
with the monomials appearing in $f$).

In the simplest example, the hypersurface $H$ is
the higher-dimensional pair of pants
$$
    \{(x_1,\dots,x_n)\,|\,1+x_1+\dots+x_n=0\}\subset (\C^*)^n
$$
with mirror the
Landau-Ginzburg model $(Y=\C^{n+1},W=-z_1\dots z_{n+1})$, whose singular
fiber is the union of the coordinate hyperplanes in $\C^{n+1}$;
however in most cases $Y$ is not affine and depends
on the choice of degeneration. 
\medskip

In one direction, homological mirror symmetry predicts that the wrapped Fukaya
category of a hypersurface $H\subset (\C^*)^n$ is equivalent to the
derived category of singularities of the mirror Landau-Ginzburg model,
$D^b_{sg}(Y,W)=D^b\mathrm{Coh}(Z)/\mathrm{Perf}(Z)$. This was first
verified for the wrapped Fukaya categories of open Riemann surfaces in
$(\C^*)^2$ and the derived categories of singularities of their mirror
Landau-Ginzburg models \cite{AAEKO, Lee}; see also \cite{LP1}, where the algebraic 
side is rather the derived category of coherent sheaves of a stacky nodal 
curve (equivalent to the Landau-Ginzburg model $(Y,W)$ via Orlov's
derived Kn\"orrer periodicity).  In higher dimensions, the result was
first verified for higher-dimensional pairs of pants in \cite{GammageNadler}
and \cite{LP2}; in the first of these, the wrapped Fukaya category is replaced
by the category of wrapped microlocal sheaves, but the two were subsequently
shown to be equivalent by Ganatra-Pardon-Shende \cite{GPS3}). Finally, the
case of general hypersurfaces in $(\C^*)^n$ was established by Gammage and Shende
\cite{GammageShende}, also using wrapped microlocal sheaves.

Here we consider the other direction of mirror symmetry, comparing 
coherent sheaves on the family of hypersurfaces $H_t$ defined by $f$ to a suitable
version of the Fukaya category of the Landau-Ginzburg model $(Y,W)$,
where $Y$ is equipped with a suitable toric K\"ahler form in the class
$[\omega_Y]\in H^2(Y,\R)$ determined by the polytope $\Delta_Y$, and
also a bulk deformation class (or B-field) $\bb\in H^2(Y,\Lambda_{\geq 0})$ (the subscript $\geq 0$ indicates that we only consider elements of non-negative valuation). 
This direction has been much less studied; in fact, at the start of our
project there wasn't even yet a candidate definition for the appropriate Fukaya 
category, because the initial formulation required that $Y$ be affine and that $W$ have isolated non-degenerate singularities~\cite{SeBook}.

\subsection{Fiberwise wrapped Fukaya categories}

The first step in our approach is to
define the {\em fiberwise wrapped Fukaya category} $\Wrap(Y,W)$
of a toric Landau-Ginzburg model. The objects of $\Wrap(Y,W)$ are
properly embedded Lagrangian submanifolds $L\subset Y$ which satisfy two
different types of geometric requirements: (1) in the base direction, 
we require that $L$ is fibered at infinity, i.e.\ that outside of a compact
subset of $\C$ the image of $L$ under $W:Y\to\C$ is a union of embedded arcs,
which are further required to be disjoint from the negative real axis and along which the distance from the origin increases strictly; (2) we
require $L$ to be fiberwise ``flat'' at infinity with respect to a
weakly plurisubharmonic fiberwise ``height'' function $h:Y\to [0,\infty)$,
i.e.\ the restriction of $d^c h$ to $L$ is required to vanish outside of
a bounded subset of each fiber of $W$. We call such Lagrangians {\em admissible}; 
see Definition \ref{def:admissibleLagrangian}. The Lagrangians we consider
are also required to be tautologically unobstructed (in the sense of not bounding any holomorphic disc with respect to a prescribed almost complex structure), and are equipped with the grading data and
local systems needed to construct Floer complexes.

Morphism spaces in $\Wrap(Y,W)$ are defined as direct limits of Floer
complexes for the images of admissible Lagrangians under a suitable
geometric flow,
which combines (1)~in the base direction, 
admissible isotopies acting on the complex plane by positive rotations
without crossing the 
negative real axis (as in the more familiar setting of Fukaya categories
of Lefschetz fibrations), and (2) in the fiber direction,
the flow of a Hamiltonian $H:Y\to\R$ which 
preserves the fibers of $W$ and whose restriction to each fiber is a
linear-growth wrapping Hamiltonian (hence the name ``fiberwise wrapped'').
The details of the construction are given in Section \ref{s:Fukayacat}.

In the toric case, the fiberwise 
behavior of our admissible Lagrangians is enforced by fixing a
collection of monomials $z^\v\in \O(Y)$ and open subsets $C_\v$ of $Y$,
and requiring $\arg(z^\v)$ to be locally constant over $L\cap C_\v$.
This amounts to a fiberwise version of the notion of {\em monomial admissibility}
considered in Andrew Hanlon's thesis \cite{Hanlon}; in fact, even though
we treat the monomial $W$ separately, the condition we impose in the base
direction could also be reformulated in the language of monomial admissibility.

Since our Lagrangians are required to be both fibered with respect to
$W:Y\to\C$ and fiberwise monomially admissible within the fibers, our
setup requires symplectic parallel transport between smooth 
fibers of $W$ to be compatible with monomial admissibility. 
This compatibility is easy to achieve for parallel transport along
radial lines in the complex plane by using elementary toric geometry (or
by directly imposing monomial admissibility in the total space $Y$).
However, the explicit calculation of Floer complexes and differentials
at the heart of our verification of homological mirror
symmetry requires us to consider Lagrangians that are everywhere fibered over
U-shaped arcs in the complex plane. Achieving fiberwise monomial
admissibility for such Lagrangians requires
some extra care in the choice of the toric K\"ahler form $\omega_Y$ on $Y$
within the given cohomology class; see Section \ref{s:toric} for details.

\begin{remark}
The several years elapsed since our results were first announced have
brought forth key advances and new viewpoints on Fukaya categories of
Landau-Ginzburg models which suggest other possible approaches. 

For example, partially wrapped Floer theory for
Liouville domains with stops \cite{Sylvan} and sectors
\cite{GPS1} has led to considerable progress in the exact setting.
However it is not clear that viewing $(Y,W)$ as a non-exact sector
would yield any simplification to our setup and main calculation, as the alternative 
description in terms of wrapped microlocal sheaves used by Nadler in the
case of higher-dimensional pairs of pants \cite{Nadler} would not be 
applicable outside of the exact setting, and direct calculation by
counting holomorphic discs would likely be no easier than the approach taken here. 

Monomial admissibility, as used by Hanlon to revisit mirror symmetry for
toric varieties \cite{Hanlon}, is much more directly suited to our goals,
and in fact we use this viewpoint to constrain the fiberwise behavior
of our Lagrangians and to arrive at a maximum principle.
 Defining $\Wrap(Y,W)$ directly in 
the language of monomial admissibility (adding $W$ itself to the list of 
monomials $z^\v$ whose arguments we constrain at infinity) would be fairly
straightforward, but the explicit calculation of Floer cohomology would 
likely still require the Lagrangian to be everywhere fibered with respect to the 
projection $W:Y\to \C$ (not just near infinity), making the setup essentially identical to that
considered here.

One can alternatively attempt to replace monomial admissibility with a variant of Groman's formulation for Floer theory on open manifolds \cite{Gr}, adapted to the setting of Landau-Ginzburg models. Early drafts of this text pursued a related approach based on geometric estimates on parallel
transport and monotonicity type arguments, but the relevant estimates turned out to be quite
challenging.
\end{remark}

\subsection{A Floer cohomology calculation}

The main protagonist of our argument is a specific admissible Lagrangian $L_0$
in the toric Landau-Ginzburg model $(Y,W)$, which is expected to generate
the fiberwise wrapped Fukaya category. 

Consider a Laurent polynomial $f\in
\K[x_1^{\pm 1},\dots,x_n^{\pm 1}]$ defining a maximally degenerating family
of hypersurfaces $H_t$ as above, and let $(Y,W)$ be the toric
Landau-Ginzburg model constructed in Section \ref{s:LGmodel}, equipped with
the toric K\"ahler form $\omega_Y$ constructed in Section \ref{s:toric} and
a bulk deformation\footnote{In the literature, one usually considers bulk classes of strictly positive valuation; the $0$-valuation part of $\bb$ corresponds to (a logarithm) of what is sometimes called a background class, which in our case is valued in $\C^*$, but is usually considered with $\Z_2$ coefficients, and modifies Floer theory by changing the sign contributions of discs.}  $\bb\in H^2(Y,\Lambda_{\geq 0})$.  Since the fiber $W^{-1}(-1)\subset Y$ is isomorphic to $(\C^*)^n$, it contains
a distinguished Lagrangian $\ell_0=(\R_+)^n$ along which the toric monomials
$z^\v$ are all real positive. The parallel transport of $\ell_0$ over a
U-shaped arc in the complex plane connecting $-1$ to $+\infty$ around the origin yields 
an admissible Lagrangian submanifold $L_0$ in $(Y,W)$. Our main result is:

\begin{theorem}\label{thm:main}
For a suitable choice of bulk deformation class $\bb\in H^2(Y,\Lambda_{\geq
0})$, the fiberwise wrapped Floer cohomology ring $H\Wrap^*(L_0,L_0)$ is
isomorphic to the quotient $\K[x_1^{\pm 1},\dots,x_n^{\pm 1}]/(f)$ of the
ring of Laurent polynomials by the ideal generated by $f$, the defining
equation of the family of hypersurfaces $H_t$.
\end{theorem}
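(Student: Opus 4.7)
The plan is to compute $H\Wrap^*(L_0,L_0)$ by constructing an explicit chain-level model whose $\K$-module generators are in bijection with Laurent monomials in $x_1,\ldots,x_n$, checking that the Floer differential vanishes, and then extracting the relation $f=0$ from an explicit count of holomorphic triangles contributing to the Floer product.

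First I would identify the generators. Since $L_0$ is a fibered Lagrangian over the U-shaped arc $\gamma$ from $-1$ to $+\infty$ with fiber $\ell_0=(\R_+)^n\subset(\C^*)^n$, the wrapping operation decomposes as (i) a positive rotation of $\gamma$ to a nearby arc $\gamma'$ that, after a perturbation, meets $\gamma$ transversely in a single point near $+\infty$, and (ii) a fiberwise flow of a linear-growth wrapping Hamiltonian on each smooth fiber $W^{-1}(c)\cong(\C^*)^n$. The fiber-direction wrapping of $\ell_0\subset(\C^*)^n$ is the standard setup whose wrapped Floer cohomology is $\K[x_1^{\pm 1},\ldots,x_n^{\pm 1}]$, with $x^\alpha$ corresponding to the chord that wraps $\alpha\in\Z^n$ times. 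Thus $CW^*(L_0,L_0)$ is identified as a graded $\K$-module with $\K[x_1^{\pm 1},\ldots,x_n^{\pm 1}]$ concentrated in degree zero, and the Floer differential vanishes for degree reasons.

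Next I would compute the Floer product. The dominant contributions come from holomorphic triangles whose base projection is confined near $+\infty$ and thus live inside a single fiber $(\C^*)^n$; these reproduce the multiplicative structure $x^\alpha\cdot x^\beta=x^{\alpha+\beta}$ from the standard wrapped theory of $\ell_0\subset(\C^*)^n$. The new phenomenon is the class of \emph{wrapping} triangles, whose base projections enclose the origin of $\C$ and therefore sweep across components of the singular fiber $Z=\bigcup_\alpha Z_\alpha$. Each intersection with a component $Z_\alpha$ contributes a factor of $a_\alpha t^{2\pi\nu(\alpha)}x^\alpha$: the Novikov weight $t^{2\pi\nu(\alpha)}$ equals the symplectic area added by crossing $Z_\alpha$ (a direct consequence of the choice of toric K\"ahler form in Section \ref{s:toric}, whose piecewise linear potential is $\varphi$), the monomial $x^\alpha$ records the jump in winding in $\Z^n$ along the primitive integer normal to $Z_\alpha$ in $Y$, and the coefficient $a_\alpha$ is installed through the bulk class $\bb$, chosen so that its pairing with $[Z_\alpha]$ equals $a_\alpha$. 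Summing these contributions produces exactly $f=\sum_\alpha a_\alpha t^{2\pi\nu(\alpha)}x^\alpha$ as a correction in the Floer product, so that $f$ acts as zero in cohomology and the ring structure is identified with $\K[x_1^{\pm 1},\ldots,x_n^{\pm 1}]/(f)$.

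The principal obstacle will be controlling the moduli spaces of wrapping triangles precisely enough to identify the correction with $f$ and no more. This requires a maximum principle built from the base projection $W$ and the fiberwise height function $h$ to rule out escape to infinity, together with explicit transversality arguments to exclude multiply-covered or higher-winding contributions. One must also verify that the intersection numbers with $Z_\alpha$ and the associated symplectic areas match the tropical data $\nu(\alpha)$ dictated by $\varphi$, which is where the careful choice of $\omega_Y$ in Section \ref{s:toric} becomes essential, and calibrate the bulk deformation $\bb$ so that the Floer product has the exact coefficient pattern of $f$ rather than an auxiliary normalization.
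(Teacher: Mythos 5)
Your identification of the generators and the vanishing of the Floer differential is where the argument breaks down. When $t'-t>t_0$, the arc $\gamma_{t'}$ obtained by wrapping crosses $\gamma_t$ in \emph{two} points, not one: once at $-1$ (where the fiberwise generators live in degree $0$) and once at a second point $c_{t',t}$ on the far side of the origin (where the fiberwise generators live in degree $-1$). So $CF^*(L_0(t'),L_0(t))$ is not $\K[x_1^{\pm 1},\ldots,x_n^{\pm 1}]$ concentrated in degree $0$; it is a two-step mapping cone
\begin{equation*}
\Bigl\{\,CF^0(\ell_-(t'),\ell_+(t))[1]\;\stackrel{s}{\longrightarrow}\;CF^0(\ell_0(t'),\ell_0(t))\,\Bigr\},
\end{equation*}
with a nontrivial differential $s$ given by a weighted count of $J$-holomorphic sections of $W$ over the region of $\C$ enclosed by $\gamma_t$ and $\gamma_{t'}$ (cf.\ Proposition \ref{prop:CF_is_cone} and Proposition \ref{prop:CFL0explicit}). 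This $s$ is exactly the place where the monomials $a_\alpha t^{2\pi\nu(\alpha)}x^\alpha$ you describe appear. The relation $f=0$ in $H\Wrap^*(L_0,L_0)$ is therefore produced by the Floer \emph{differential}, not by a correction to the Floer product: the paper's Proposition \ref{prop:CFL0_product} shows that the product of degree-$0$ generators is the uncorrected multiplication $\vartheta_p\cdot\vartheta_{p'}=\vartheta_{p+p'}$, so if the differential vanished the ring would simply be the free Laurent polynomial ring. There is no way a ``correction to the product'' could make the non--zero-divisor $f$ act as zero on the underlying module; the quotient by $(f)$ must come from killing classes, i.e.\ from the differential.

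Two further points are imprecise even where the outline is in the right spirit. First, the ``base projection enclosing the origin'' picture applies to the differential $s$, not to product triangles: in the paper every holomorphic triangle contributing to the product projects under $W$ either to a point or to a triangular region \emph{not} containing the origin, which is why the product is undeformed. Second, the choice of bulk class is not simply ``pairing with $[Z_\alpha]$ equals $a_\alpha$'': the weight of the section class $[D_\alpha]$ contains an extra instanton series $\bigl(1+\sum_{\beta}n_{\alpha,\beta}\,t^{[\omega]\cdot\beta}e^{[\bb]\cdot\beta}\bigr)$ coming from rational curves in $W^{-1}(0)$ (Proposition \ref{prop:correctedcount}), so the coefficients $e^{\bb_\alpha}$ must be solved for order by order as in Proposition \ref{prop:matchbulk}. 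Your proposal as written would only match $f$ when there are no such sphere classes.
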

\begin{remark}
We refer the reader to Remark \ref{rem:mirror-map} for a discussion of the relationship between the bulk class appearing in the statement of the above theorem and the expression of the mirror map in terms of Gromov-Witten theory.
\end{remark}
In other terms, $H\Wrap^*(L_0,L_0)$ is isomorphic to the ring of functions
of the non-archimedean hypersurface $\mathcal{H}$ defined by $f$ over $\K$:
\begin{equation}\label{eq:ringiso}
H\Wrap^*(L_0,L_0)\simeq \K[x_1^{\pm 1},\dots,x_n^{\pm 1}]/(f)\simeq
H^0(\mathcal{H},\O_{\mathcal{H}})=\Hom(\O_{\mathcal{H}},\O_{\mathcal{H}}).
\end{equation}
Since this ring is supported in degree $0$, it is intrinsically formal, so we conclude that the Floer algebra $ \Wrap^*(L_0,L_0)$ is $A_\infty$ equivalent to the ring of functions on $\mathcal{H}$. On the other hand,  
since $\mathcal{H}$ is affine, its derived category is generated by the
structure sheaf $\O_{\mathcal{H}}$, and by mapping a twisted complex built
from copies of $\O_{\mathcal{H}}$ to the corresponding twisted complex
built from $L_0$ inside $\Wrap(Y,W)$, we arrive at:
\begin{corollary}\label{cor:main}
The derived category of coherent sheaves of $\mathcal{H}$ admits a
fully faithful quasi-embedding into $\Wrap(Y,W)$.
\end{corollary}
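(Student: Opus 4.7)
The plan is to deduce the corollary from Theorem~\ref{thm:main} by an intrinsic formality argument combined with the standard fact that the structure sheaf of an affine scheme generates its derived category of coherent sheaves. No further Floer-theoretic input is required once Theorem~\ref{thm:main} is in hand.

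First I would observe that by Theorem~\ref{thm:main} the cohomology $H\Wrap^*(L_0,L_0)$ is concentrated in degree zero and isomorphic as a $\K$-algebra to $R:=\K[x_1^{\pm 1},\dots,x_n^{\pm 1}]/(f)=\O(\mathcal{H})$. Any $A_\infty$-algebra whose cohomology lives in a single degree is \emph{intrinsically formal}: the higher operations $m_k$ for $k\geq 3$ would have to land in a nonzero cohomological degree, and hence vanish after a Kadeishvili-style homotopy transfer of the $A_\infty$-structure to cohomology. Consequently $\Wrap^*(L_0,L_0)$ is $A_\infty$-quasi-isomorphic to $R$, regarded as a dg-algebra concentrated in degree zero.

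Next, this quasi-isomorphism determines a cohomologically fully faithful $A_\infty$-functor from the one-object dg-category with endomorphism algebra $R$---equivalently, from the full subcategory of $D^b\mathrm{Coh}(\mathcal{H})$ on $\O_{\mathcal{H}}$---into the full subcategory of $\Wrap(Y,W)$ on $L_0$, by the assignment $\O_{\mathcal{H}}\mapsto L_0$. The universal property of the pre-triangulated (twisted complex) envelope then extends this to a cohomologically fully faithful $A_\infty$-functor
\[
\mathrm{Tw}\,\langle\O_{\mathcal{H}}\rangle \longrightarrow \mathrm{Tw}\,\Wrap(Y,W),
\]
and taking split-closures extends it further to split-closed envelopes. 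Since $\mathcal{H}$ is affine, the structure sheaf split-generates $\mathrm{Perf}(\mathcal{H})$; under the (generic) assumption that $\mathcal{H}$ is smooth one moreover has $\mathrm{Perf}(\mathcal{H})=D^b\mathrm{Coh}(\mathcal{H})$, so the composite is the desired fully faithful quasi-embedding $D^b\mathrm{Coh}(\mathcal{H})\hookrightarrow\Wrap(Y,W)$.

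The genuine obstacle lies entirely in Theorem~\ref{thm:main}, which carries all the Floer-theoretic content; the derivation of the corollary is essentially formal homological algebra. Within this derivation the only point requiring care is the intrinsic formality step, which is nonetheless immediate from the concentration of cohomology in a single degree. If one wished to dispense with the smoothness hypothesis, one would need to work with a suitable derived enlargement of $\Wrap(Y,W)$ (for example its $A_\infty$-module category) in order to accommodate the possibly infinite projective resolutions needed to generate $D^b\mathrm{Coh}(\mathcal{H})$ from $\O_{\mathcal{H}}$.
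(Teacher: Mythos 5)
Your proposal is correct and follows essentially the same route the paper takes: intrinsic formality of the degree-zero Floer algebra from Theorem~\ref{thm:main}, followed by generation of $D^b\mathrm{Coh}(\mathcal{H})$ by $\O_{\mathcal{H}}$ (as $\mathcal{H}$ is affine) and extension of the resulting quasi-isomorphism to twisted complexes. Your explicit acknowledgement of the smoothness hypothesis needed for $\mathrm{Perf}(\mathcal{H})=D^b\mathrm{Coh}(\mathcal{H})$ is a mild refinement that the paper leaves implicit, but it does not constitute a different argument.
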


One can then return from the non-archimedean setting to the complex 
hypersurfaces $H_t$ by observing that, when $f$ is of the form
\eqref{eq:intropoly} with $a_\alpha\in \C^*$, the outcome of our
calculation is manifestly convergent over complex numbers and we can
treat $t$ as an actual parameter rather than a formal variable.

The calculation of $H\Wrap^*(L_0,L_0)$ involves counts of holomorphic
sections of the fibration $W:Y\to\C$ over domains in the complex plane,
with boundary in fibered Lagrangians, and the argument is essentially
within the realm of the ``Seidel TQFT'' \cite{SeBook} even though $W$ is
not a Lefschetz fibration; see Section \ref{s:calculation}. 
Our approach is concrete and explicit, but
a more conceptual interpretation can be given in terms of the Orlov cup
functor; see below.

\begin{remark}
The object $L_0$ is expected to generate the category $\Wrap(Y,W)$,
which would imply that the embedding of Corollary \ref{cor:main} is an
equivalence. Stop removal (wrapping past the negative real axis in the
base direction) yields an {\em acceleration} functor from $\Wrap(Y,W)$
to a suitable version of the wrapped Fukaya category of $Y$, under
which $L_0$ maps to the zero object (cf.\ \cite{A-S}). The stop removal results 
of \cite{Sylvan,GPS1} (to the extent that they hold in our setup) should 
imply that $\Wrap(Y)$ is precisely the quotient of $\Wrap(Y,W)$ by the 
full subcategory generated by $L_0$. The generation statement is then
equivalent to the vanishing of $\Wrap(Y)$. This argument can be made 
precise in the case of the pair of pants, where $Y=\C^{n+1}$ is a
subcritical Liouville manifold and its wrapped Fukaya category vanishes.
However, given that a complete argument in the general case where $Y$ 
is not exact would involve several pieces of machinery that have not yet
been developed in that setting, we do not investigate this question further
in this paper.
\end{remark}
\subsection{A functorial perspective}\label{ss:functorial}
The fiberwise wrapped Fukaya category is the target of a functor
$$\cup:\Wrap((\C^*)^n)\to \Wrap(Y,W)$$
(the Orlov cup functor), which is given on objects by parallel transport of
admissible Lagrangian submanifolds of $W^{-1}(-1)\simeq (\C^*)^n$ 
along a U-shaped arc in the complex plane, and on morphisms by observing that
the portions of the fiberwise wrapped
Floer complexes which live in the fiber over 
$-1$ are closed under all $A_\infty$-operations. In this language, the computation at the heart of
the proof of Theorem \ref{thm:main} gives a commutative diagram
of functors

\begin{equation}\label{eq:cupdiagram}
\begin{psmatrix}[colsep=1.5cm,rowsep=1.5cm]
\mathrm{Perf}((\K^*)^n) & \mathrm{Perf}(\mathcal{H})\\
\Wrap((\C^*)^n) & \Wrap(Y,W)
\end{psmatrix}
\psset{nodesep=5pt,arrows=->,hooklength=1mm,hookwidth=1mm}
\ncline{1,1}{2,1} \tlput{\simeq}
\ncline[arrows=H->]{1,2}{2,2} 
\ncline{1,1}{1,2} \taput{i^*}
\ncline{2,1}{2,2} \tbput{\cup}
\end{equation}
\vskip3mm

\noindent
where the restriction functor $i^*$ and the cup functor $\cup$ intertwine mirror
symmetry for the ambient torus $(\K^*)^n$ and the hypersurface $\mathcal{H}$.
The core of our argument amounts to a verification of this statement for
the structure sheaves on the algebraic side, and the admissible Lagrangians
$\ell_0=(\R_+)^n$ and $L_0={\cup}\ell_0$ on the symplectic side.

To continue further in this direction, the functor $\cup$ has an
adjoint functor $\cap:\Wrap(Y,W)\to\mathrm{Perf}\,\Wrap((\C^*)^n)$
(``restriction to the fiber at $+\infty$''), under which a fibered 
Lagrangian $L=\cup\ell$ maps to a twisted complex built from the 
fiberwise Lagrangians at the two ends of the U-shaped arc, with a
connecting differential $s^0_\ell$ which counts holomorphic sections of $W:Y\to\C$
bounded by $L$ over the region enclosed by the U-shaped arc. After
choosing a suitable identification of the fiber near $+\infty$ with $(\C^*)^n$,
we find that the image of $\ell$ under the composite functor $\cap \cup$ is isomorphic to a cone
$$\cap\cup\ell\simeq
\Bigl\{\,\mu^{-1}(\ell)\stackrel{s^0_\ell}{\longrightarrow} \ell\,\Bigr\},$$
where $\mu^{-1}$ is the clockwise monodromy of the fibration $W$ around the
origin, acting on the wrapped Fukaya category of the fiber (in our case
$\mu^{-1}\simeq \mathrm{id}$), and $s^0_\ell$ is a count of sections. This is part of
an exact triangle of functors 

\begin{equation*}
\begin{psmatrix}[colsep=1.2cm,rowsep=1cm]
\mu^{-1} & & \mathrm{id}\\
&\cap\cup\vphantom{x_j}
\end{psmatrix}
\psset{nodesep=5pt,arrows=->}
\ncline{2,2}{1,1} \tlput{[1]}
\ncline{1,1}{1,3} \taput{s}
\ncline{1,3}{2,2}
\end{equation*}
acting on
$\Wrap((\C^*)^n)$, originating in Seidel's work \cite{SeDehn} on Dehn twists, and which has been the subject of some recent work
(cf.\ \cite{A-G}, \cite[Appendix A]{A-Sm}, and
\cite[Theorem 1.3]{Sylvan2}).

Our calculation of the fiberwise wrapped Floer complex of
$L_0=\cup\ell_0$ can then be rewritten as
$$\Wrap_{(Y,W)}(\cup \ell_0,\cup \ell_0)\simeq 
\Wrap_{(\C^*)^n}(\ell_0,\cap\cup \ell_0)\simeq 
\mathrm{Cone}\Bigl\{HW^*(\ell_0,\mu^{-1}(\ell_0))
\stackrel{s^0_{\ell_0}}{\longrightarrow} HW^*(\ell_0,\ell_0)\Bigr\}$$
which, after verifying that the section-counting natural transformation $s^0_{\ell_0}$
amounts to multiplication by the Laurent polynomial $f$, corresponds on the algebraic side to
$$\Hom_{\mathcal{H}}(\O_\mathcal{H},\O_\mathcal{H})\simeq
\Hom_{(\K^*)^n}(\O,i_*i^*\O)\simeq \mathrm{Cone}
\Bigl\{\Hom(\O,\O)
\stackrel{f}{\longrightarrow} \Hom(\O,\O)\Bigr\}.$$

\subsection{Complete intersections and compactifications}

Our results admit extensions in at least two directions. The first one,
which we briefly discuss in Section \ref{s:ci}, concerns complete
intersections. The mirror of a codimension $k$ complete intersection
in $(\C^*)^n$ (or rather, of a family of complete intersections degenerating
to a tropical limit) is a Calabi-Yau toric K\"ahler manifold $Y$ of complex
dimension $n+k$, equipped with a superpotential which is a sum of $k$ toric
monomials $W_1,\dots,W_k \in \O(Y)$; taken together these determine a holomorphic map
$\mathbf{W}:Y\to \C^k$, whose fibers over $(\C^*)^k$ are again isomorphic
to $(\C^*)^n$ \cite[Section 11]{AAK}. We then define a version of the
fiberwise wrapped Fukaya category $\Wrap(Y,\mathbf{W})$ whose objects are
Lagrangian submanifolds which are
simultaneously admissible for each of the projections $W_1,\dots,W_k$;
the morphisms are direct limits of Floer complexes under a combination of 
admissible isotopies acting on each factor of $\C^k$ by positive
rotations without crossing the negative real axis and wrapping in the
fibers of $\mathbf{W}$. The key object $L_0$ under consideration is now obtained
by parallel transport of $(\R_+)^n\subset (\C^*)^n$ over a product
of U-shaped arcs inside $\C^k$. By an argument similar to our main
calculation, its fiberwise wrapped Floer complex is isomorphic to the 
Koszul resolution of the ring of functions of the complete intersection;
the upshot is that the obvious analogues
of Theorem \ref{thm:main} and Corollary \ref{cor:main} continue to hold in
this setting. See Section \ref{s:ci} and Theorem \ref{thm:ci_main}.

Another extension is to hypersurfaces (and complete intersections) in
toric varieties.  Namely, a Laurent polynomial of the form
\eqref{eq:intropoly} defines not only hypersurfaces in $(\C^*)^n$ or 
$(\K^*)^n$ but also (partial) compactifications in suitable toric varieties
or stacks~-- for example, the projective toric variety or stack $\overline{V}$ whose moment 
polytope is the convex hull of $P_\Z$. Following \cite{AAK}, the mirror to
$\overline{H}\subset \overline{V}$ is the same Calabi-Yau toric variety $Y$ as in our
main construction, now equipped with a superpotential $\overline{W}$ which is the sum of 
the previously encountered monomial $W_0=-z^{(0,\dots,0,1)}$ and extra 
terms consisting of one monomial for each of the irreducible toric divisors 
of $\overline{V}$. The latter turn out to be exactly the collection of 
monomials $z^\v$ we consider in Definition \ref{def:extremalv}.
Consequently, we can define the Fukaya category $\F(Y,\overline{W})$ by
considering exactly the same admissible Lagrangian submanifolds
of $Y$ as in the construction of $\Wrap(Y,W_0)$: namely, Lagrangians 
which are fibered at infinity with respect to $W_0:Y\to\C$, and within 
the fibers of $W_0$, monomially admissible for the collection
of monomials $z^\v$. However, we now consider colimits under
perturbations which only increase the argument of each monomial $z^\v$
within a small bounded interval, rather than by an unbounded amount of 
fiberwise wrapping. Starting from monomially admissible Lagrangian sections
$\ell,\ell'\subset (\C^*)^n$ such as those considered in \cite{Hanlon},
which are mirrors to line bundles $\mathcal{L},\mathcal{L}'$ on the toric variety $\overline{V}$, we
now find an isomorphism 
$$\Hom_{\F(Y,\overline{W})}(\cup\ell,\cup\ell')\simeq \mathrm{Cone}\Bigl\{
\Hom_{\F((\C^*)^n,\{z^\v\})}(\ell,\mu^{-1}(\ell'))
\stackrel{s}{\longrightarrow} \Hom_{\F((\C^*)^n,\{z^\v\})}(\ell,\ell')
\Bigr\}.$$
After checking that the action of the monodromy $\mu^{-1}$ on monomially admissible
Lagrangian sections is mirror to the functor $-\otimes \O(-\overline{H})$
and that the natural transformation $s:\mu^{-1}\to \mathrm{id}$ still
corresponds to multiplication by the defining section $f$ of $\overline{H}$,
this corresponds on the algebraic side to the isomorphism
$$\Hom_{\overline{H}}(\mathcal{L}_{|\overline{H}},
\mathcal{L}'_{|\overline{H}})\simeq \mathrm{Cone}\Bigl\{
\Hom_{\overline{V}}(\mathcal{L},\mathcal{L'}\otimes \O(-\overline{H}))
\stackrel{f}{\longrightarrow} \Hom_{\overline{V}}(\mathcal{L},\mathcal{L'})
\Bigr\}.$$
This in turn implies cohomology-level mirror symmetry statements for
restrictions of ample line bundles (compare with \cite{Cannizzo} which
establishes analogous results in a different setting).
A more detailed account of this will appear elsewhere \cite{AA2}.

\subsection{Related works}
In the time elapsed since our results were first announced, at least two
papers have appeared establishing conceptually similar homological
mirror symmetry results relating coherent sheaves on hypersurfaces to
the symplectic geometry of mirror Landau-Ginzburg models.

On one hand, Nadler introduced the category of wrapped microlocal
sheaves and gave an explicit computation  for the Landau-Ginzburg model
$(\C^{n},z_1\dots z_{n})$, which is mirror to the $(n-2)$-dimensional
pair of pants \cite{Nadler}. (Wrapped microlocal sheaves were subsequently
shown by \cite{GPS3} to be equivalent to the Fukaya category of the
corresponding Liouville sector.) Nadler's paper showcases the remarkable
computational power of microlocal sheaves in the exact setting, and
also identifies structural properties which are closely related to those
described in \S\ref{ss:functorial} above.

On the other hand, Cannizzo's thesis work \cite{Cannizzo} considers the
case of a genus 2 curve embedded in a principally polarized abelian surface
(its Jacobian) and the mirror Landau-Ginzburg model. The approach is fairly
similar to ours, but avoids the need to discuss fiberwise admissibility
because the mirror is proper (the generic fibers are $T^4$). However,
the monodromy is topologically non-trivial, and involves a twist 
mirror to the defining section of the genus 2 curve, so that the objects
of interest are a sequence of admissible Lagrangians mirror to powers of
the canonical bundle of the genus 2 curve (somewhat similarly
to the toric variety case outlined above). Another notable difference with
our setting is that, despite the non-exact nature of the mirror and the
presence of rational curves in the zero fiber, 
no bulk deformation is required as the instanton corrections only result
in a rescaling of the section-counting natural transformation \cite{Cannizzo}.

\subsection*{Plan of the paper}
The first part of this paper is concerned with the definition of the
fiberwise wrapped Fukaya category $\Wrap(Y,W)$. After reviewing
the construction of the Landau-Ginzburg model $(Y,W)$ in Section
\ref{s:LGmodel}, we develop the foundations of fiberwise wrapped Fukaya
categories in Section \ref{s:Fukayacat}, while  Section \ref{s:toric} is
devoted to the construction of the appropriate
toric K\"ahler form and verification of the required geometric properties.
The heart of the paper is then Section \ref{s:calculation}, which is devoted to the calculation of the
fiberwise wrapped Floer cohomology of $L_0$ and the proof of Theorem
\ref{thm:main}. Finally, in Section \ref{s:ci} we briefly discuss the
extension to complete intersections and prove Theorem \ref{thm:ci_main}.

\subsection*{Acknowledgements}

The first author was partially supported by NSF grants DMS-1308179,
DMS-1609148, DMS-1564172, and DMS-2103805, and by Stanford University through a Poincar\'e visiting professorship.  
The second author was partially supported by NSF grants DMS-1406274,
DMS-1937869, and DMS-2202984. Both authors were supported by the Simons Foundation
collaboration grant ``Homological Mirror Symmetry and Applications'' (award
numbers 385571 and 385573).

We wish to acknowledge our former students 
Zack Sylvan, Andrew Hanlon and Catherine Cannizzo,
whose PhD thesis projects evolved alongside this work and have provided
inspiration, technical foundations, and validation for various aspects of 
the strategy employed here. We would also like to thank the homological mirror symmetry community for
its collective patience with the long delay between the announcement of our
results and the completion of this text. Finally, we thank the anonymous
referee for their careful comments on the previous version of this paper.

\section{The mirror Landau-Ginzburg model} \label{s:LGmodel}

\subsection{The main construction}

Consider a  Laurent  polynomial
\begin{equation}
  f = \sum_{\alpha \in \Z^{n}} a_{\alpha} x^{\alpha},
\end{equation}
with complex coefficients, and denote by 
\begin{equation}
  H = f^{-1}(0)  \subset (\C^*)^n
\end{equation}
the corresponding hypersurface.

The construction of a mirror for $H$ depends on a choice of degeneration;
we specifically consider a maximal degeneration to a tropical limit,
and assume that the corresponding tropical variety is smooth in the sense
we explain now.

Let $P$ denote the Newton polytope of $f$, and $P_{\Z}$ its integral points. 
For simplicity, we assume that the interior of $P$ is non-empty (i.e., $P$
has positive volume); otherwise
we can always reduce to this case by splitting off some $\C^*$ factors.

A {\em tropically smooth}\/ maximal degeneration of $H$ is induced by the
choice of a strictly convex piecewise linear function
\begin{equation}\label{eq:define_nu}
\nu \co P \to \R
\end{equation}
whose domains of linearity determine a subdivision $\mathcal{P}$ of $P$ into 
standard integral simplices, i.e.\ simplices that are equivalent by an
integral affine transformation to the simplex spanned by the origin and the unit coordinate vectors in $\mathbb{Z}^n$;
this condition ensures that the mirror toric variety we construct below is smooth. 
The corresponding degeneration is then
\begin{equation}
f_{\nu} = \sum_{\alpha \in P_{\Z}} a_{\alpha} t^{2\pi\nu(\alpha)} x^{\alpha}.  
\end{equation}
We can associate to $f_{\nu}$ either a family of hypersurfaces parametrised 
by $t \in \C$, or a variety $\mathcal{H}$ over the Novikov
field $\K=\Lambda$ of power series in the formal variable $t$ with real exponents. The second point of view is more natural for the purpose of proving the well-definedness and invariance of the Fukaya category, and providing clear formulations of homological mirror symmetry.

Denote by $\varphi\co \R^n\to \R$ the tropicalisation of $f_{\nu}$, i.e.\ the 
piecewise linear function
\begin{equation}\label{eq:tropf}
  \varphi(\xi)=\max\{\langle \alpha,\xi\rangle-\nu(\alpha)\,|\,\alpha\in P_\Z\}.
\end{equation}
Let $Y$ be the (noncompact) K\"ahler toric manifold defined by the moment polytope
\begin{equation}\label{eq:Delta_Y}
  \Delta_Y=\{(\xi,\eta)\in \R^n\oplus \R\,|\,\eta\ge \varphi(\xi)\}.
\end{equation}

The polytope $\Delta_Y$ determines a K\"ahler class 
$[\omega_Y]\in H^2(Y,\R)$. In \S \ref{s:toric}, we shall specify an explicit K\"ahler form $\omega_Y$, 
obtained by Hamiltonian reduction from a vector space, which will be particularly well-adapted to our Floer-theoretic constructions.

Dually, $Y$ can also be described by the fan 
\begin{equation}
  \Sigma_Y=\R_{\ge 0}\cdot(\mathcal{P}\times \{1\})\subseteq 
  \R^{n+1}=\R^n\oplus\R,
\end{equation}
whose rays are generated by the integer vectors $(-\alpha,1)$, $\alpha\in P_\Z$,
and which is obtained as the union of the cones on polyhedra appearing in the subdivision $\mathcal{P}$.
Since we have assumed that this subdivision is maximal, all such cones are simplicial, and
since the simplices are further assumed to be congruent to the standard one,
it follows that $Y$ is a smooth
toric manifold. It is in fact a smooth toric Calabi-Yau, since the defining
equation of its toric anticanonical divisor is a regular
function (see below);
in particular its canonical bundle is trivial,
i.e.\ $c_1(Y)=0$, which will allow us to introduce $\Z$-gradings in Floer theory
(and also simplify our discussion of sphere bubbling). 

Denote by $z^{(0,\dots,0,1)}\in \O(Y)$ the toric monomial with weight
$(0,\dots,0,1)$, and equip $Y$ with the superpotential
\begin{equation}
  W=-z^{(0,\dots,0,1)} \co Y\to\C.
\end{equation}
The toric Landau-Ginzburg model $(Y,W)$ has been constructed as a
candidate mirror to $H$ from various perspectives; see in particular
\cite[Theorem 1.4]{AAK}. 

The level set $W^{-1}(0)$ is the union of the toric divisors in $Y$ (each with
multiplicity one), while the other level sets of $W$ are smooth and
isomorphic to $(\C^*)^n$. 
(The fact that the toric anticanonical divisor is 
defined by a regular function, namely $W$, verifies the above claim that $Y$
is Calabi-Yau).

\begin{example}\label{ex:example_21}
As a running example to illustrate the construction, we consider the Laurent polynomial
$f(x_1,x_2)=1+x_1+x_2+t^{2\pi}x_1x_2+t^{4\pi}x_1^2$ (which defines a degenerating
family of genus 0 curves with 5 punctures in $(\C^*)^2$). The tropicalization
of $f$ is given by $\varphi(\xi_1,\xi_2)=\max\{0,\xi_1,\xi_2,\xi_1+\xi_2-1,2\xi_1-2\}$.
The domains of linearity of $\varphi$, which also correspond to the facets
of the polytope $\Delta_Y$ ``seen from above'', are depicted on Figure
\ref{fig:tropicalexample}, along with the fan $\Sigma_Y$, whose generators
$(-\alpha,1)$, $\alpha\in P_\Z$, give the primitive (inward) normal vectors to
the facets of $\Delta_Y$.
\end{example}

\begin{figure}[t]
\setlength{\unitlength}{1cm}
\begin{picture}(5,3.5)(-1.5,-1.2)
\psset{unit=\unitlength}
\psline(0,-1.2)(0,0)
\psline(-1.2,0)(0,0)(1,1)(1,2.3)
\psline(1,1)(2.1,1)(2.1,-0.9)
\psline(2.1,1)(3.4,2.3)
\put(-1.4,-1.2){\tiny $\alpha_1$=(0,0)}
\put(-1.3,-0.7){\tiny $\varphi=0$}
\put(-0.7,1){\tiny $\alpha_3$=(0,1)}
\put(-0.6,1.5){\tiny $\varphi=\xi_2$}
\put(0.5,-0.8){\tiny $\alpha_2$=(1,0)}
\put(0.6,-0.3){\tiny $\varphi=\xi_1$}
\put(2.5,0){\tiny $\alpha_5$=(2,0)}
\put(2.4,0.5){\tiny $\varphi=2\xi_1\!-\!2$}
\put(1.3,1.6){\tiny $\alpha_4$=(1,1)}
\put(1.2,2.1){\tiny $\varphi=\xi_1\!+\!\xi_2\!-\!1$}
\end{picture}
\qquad \qquad
\setlength{\unitlength}{17mm}
\begin{picture}(2.3,1.5)(-2,-0.5)
\psset{unit=\unitlength,dash=4pt 2pt,dotsep=2pt}
\psline{->}(0,0)(0,1)  
\psline{->}(0,0)(-0.3,0.5)  
\psline{->}(0,0)(-1,1)  
\psline{->}(0,0)(-1.3,0.5)  
\psline{->}(0,0)(-2,1)  
\psline[linestyle=dotted](0,0.75)(-0.225,0.375)
\psline[linestyle=dotted](-0.225,0.375)(-0.75,0.75)
\psline[linestyle=dotted](0,0.75)(-1.5,0.75)
\psline[linestyle=dotted](-0.225,0.375)(-0.975,0.375)
\psline[linestyle=dotted](-0.75,0.75)(-0.975,0.375)
\psline[linestyle=dotted](-1.5,0.75)(-0.975,0.375)
\put(0,1.02){\makebox(0,0)[cb]{\tiny (0,0,1)}}
\put(-1,1.02){\makebox(0,0)[cb]{\tiny (-1,0,1)}}
\put(-2,1.02){\makebox(0,0)[cb]{\tiny (-2,0,1)}}
\put(-0.16,0.45){\makebox(0,0)[lc]{\tiny (0,-1,1)}}
\put(-1.35,0.45){\makebox(0,0)[rc]{\tiny (-1,-1,1)}}
\end{picture}
\caption{Constructing the mirror: $f(x_1,x_2)=1+x_1+x_2+t^{2\pi}x_1x_2+t^{4\pi}x_1^2$}
\label{fig:tropicalexample}
\end{figure}
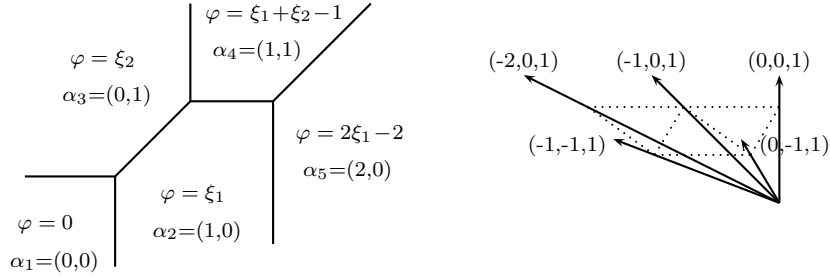

\subsection{Construction as a Hamiltonian reduction}\label{ss:ham_red}
We have a surjective map
\begin{equation}\label{eq:surjlattices}
  \Z^{P_{\Z}} \to \Z^{n} \oplus \Z
\end{equation}
which assigns to a lattice point $\alpha \in P$ the pair $(-\alpha, 1)$; the kernel is a lattice which we denote $M$.

We shall consider the reduction of $\C^{P_{\Z}}$ (equipped with a suitable toric
K\"ahler form, described in \S \ref{s:toricCN}) by the Hamiltonian action of the torus
\begin{equation}
\T_M=M\otimes (\R/\Z) \subset  \T^{P_{\Z}}.
\end{equation}
Fixing a regular value $\lambda$ for the moment map
\begin{equation}
\mu \co  \C^{P_{\Z}} \to \Hom(M, \R)= M^*_\R,
\end{equation}
the quotient $\mu^{-1}(\lambda)/\T_M$ inherits a canonical symplectic form
$\omega_\lambda$. 
By the Kempf-Ness theorem, this quotient can be naturally identified with
the quotient of an open subset of $\C^{P_{\Z}}$ by a complex torus, and the
symplectic form $\omega_\lambda$ is K\"ahler with respect to the induced
complex structure. Thus, $\mu^{-1}(\lambda)/\T_M$ is naturally equipped with
a toric K\"ahler form (induced by that constructed in \S \ref{s:toricCN} on
$\C^{P_\Z}$); see also \cite{Guillemin}.

We now explain how the choice of
level set $\lambda$ corresponds to the integral affine function in Equation \eqref{eq:define_nu}.
Dualizing \eqref{eq:surjlattices} we obtain a short exact sequence 
\begin{equation}
  0\to \R^{n+1}\stackrel{i}{\longrightarrow} \R^{P_\Z}\stackrel{\pi}{\longrightarrow} M^*_\R\to 0,
\end{equation}
where the first map is given explicitly by
\begin{equation}\label{eq:incldual}
  i(\xi_1,\dots,\xi_n,\eta)=\bigl(-\langle \alpha,\xi\rangle+\eta\bigr)_{\alpha\in P_\Z.}
\end{equation}
Viewing the piecewise linear function $\nu:P\to\R$
as an element of $\R^{P_\Z}$, we set $$\lambda=\pi(\nu).$$
Then the image of the moment map for the
action of $\T^{n+1}\simeq \T^{P_\Z}/\T_M$ on $\mu^{-1}(\lambda)/\T_M$ is the
intersection of $\pi^{-1}(\lambda)$ with the non-negative orthant in
$\R^{P_\Z}$, i.e.\ the set of all $(\xi,\eta)\in
\R^{n+1}=\R^n\oplus \R$ such that all the components of $i(\xi,\eta)+\nu$
are non-negative.
Comparing with \eqref{eq:Delta_Y}, this moment polytope 
is precisely $\Delta_Y$.

This yields a Hamiltonian quotient description of $Y$ equipped with the 
toric K\"ahler form $\omega_Y$. Moreover, the function 
\begin{equation}
  W_0=-{\prod}_{\alpha \in P_{\Z}}  z_\alpha \co \C^{P_{\Z}} \to \C
\end{equation}
descends to the toric potential $W : Y \to \C$.  (Note that both are toric
monomials vanishing to order 1 on each toric divisor).
Setting $N=|P_\Z|$, we can thus view
the Landau-Ginzburg model $(Y,W)$ as a Hamiltonian reduction (by $\T_M$) 
of the ``standard'' Landau-Ginzburg model $(\C^{N}, W_0=-\prod_{i=1}^N z_{i})$.

\begin{example}\label{ex:example_22}
In Example \ref{ex:example_21}, the kernel of the map
\eqref{eq:surjlattices}, i.e.\ the space of linear
relations among the $(-\alpha_i,1)$ (the generators of the fan
$\Sigma_Y$, shown on Figure \ref{fig:tropicalexample} right) is
a rank 2 lattice $M$, spanned by elements corresponding to the
linear relations $\alpha_1-\alpha_2-\alpha_3+\alpha_4=0$ and $\alpha_1-2\alpha_2+\alpha_5=0$
among the elements of $P_\Z$.
Thus, we can realize the toric 3-fold $Y$
as the quotient of $\C^5$ by a 2-dimensional
torus $\T_M$ whose generators act with weights $(1,-1,-1,1,0)$ and
$(1,-2,0,0,1)$. The moment map of the $\T_M$-action is obtained from that
of the standard action of $\T^5$ by the projection
$\pi(\mu_1,\dots,\mu_5)=(\mu_1-\mu_2-\mu_3+\mu_4,\mu_1-2\mu_2+\mu_5)$,
where $(\mu_1,\dots,\mu_5)$ take values in the standard moment polytope of
$\C^5$, i.e.\ the non-negative orthant (however, since the toric K\"ahler form on $\C^5$ we will construct in 
Section \ref{s:toricCN} differs from the standard one, it will not be the
case that $\mu_i=\frac12|z_i|^2$).

Setting $\lambda=\pi(\nu)=(1,2)$, we find that $\mu^{-1}(\lambda)\subset
\C^5$ is the set of points whose moment map coordinates for the $\T^5$ action
satisfy
\begin{equation}\label{eq:ex_momentconstraints}
\mu_1-\mu_2-\mu_3+\mu_4=1 \text{ and } \mu_1-2\mu_2+\mu_5=2.
\end{equation}
The moment polytope of the toric variety
$\mu^{-1}(\lambda)/\T_M$ is then the intersection of the non-negative
orthant with the affine subspace determined by
\eqref{eq:ex_momentconstraints}, which is identified with 
$\Delta_Y=\{(\xi_1,\xi_2,\eta)\in
\R^3\,|\,\eta\geq \varphi(\xi_1,\xi_2)\}$
via the affine embedding
$$i(\xi_1,\xi_2,\eta)+\nu=(\eta,\eta-\xi_1,\eta-\xi_2,\eta-\xi_1-\xi_2+1,\eta-2\xi_1+2).$$
\end{example}

\begin{remark}\label{rmk:reduction}
There is a uniform way of producing all the examples that we consider from a universal construction:
$(\C^N,W_0)$ is mirror to an $(N-2)$-dimensional
pair of pants, i.e.\ the intersection of the hyperplane
$X_0+\dots+X_{N-1}=0$ with the open stratum
$(\K^*)^{N-1}$ in $\PP^{N-1}$. 
The embedding of $(\K^*)^n$ into the open stratum of $\PP(\K^{P_\Z})$
defined by $$(x_1,\dots, x_n)\mapsto \bigl(a_\alpha
t^{2\pi \nu(\alpha)}x^\alpha\bigr)_{\alpha \in P_\Z}$$ defines an algebraic
subtorus, whose intersection with the pair of pants is the
hypersurface $\mathcal{H}$. 
Thus, the mirror pairs we consider can be viewed as ``reductions'' of the
mirror pair consisting of the
$(N-2$)-dimensional pair of pants and the Landau-Ginzburg model 
$(\C^N,W_0)$: namely, $\mathcal{H}$ is
the intersection of the pair of pants with an algebraic subtorus, while its mirror 
$(Y,W)$ is the quotient of $(\C^N,W_0)$ by the complementary subtorus.

However, the graph of the projection $\mu^{-1}(\lambda)\to Y$, viewed as a
Lagrangian correspondence in $\C^N\times Y$, bounds non-trivial families
of holomorphic discs; this causes a discrepancy between moduli spaces
of discs in $Y$ with boundary on given Lagrangian submanifolds
of $Y$, and moduli spaces of discs in $\C^N$ with boundary on
the lifts of these Lagrangians to $\mu^{-1}(\lambda)$. The instanton
corrections that arise out of this are responsible for the presence of the
bulk deformation class $\bb\in H^2(Y,\Lambda_{\geq 0})$ in the statement of
Theorem \ref{thm:main}, as we shall see in Section \ref{s:calculation}. 
\end{remark}

\section{The Fukaya category of a Landau-Ginzburg model}\label{s:Fukayacat}

\subsection{Landau-Ginzburg models}\label{sec:LGmodel-setup}
Let $ (Y, \omega) $ be a symplectic manifold, and
\begin{equation}
    W \co Y \to \C
\end{equation}
a map which is a symplectic fibration outside a compact subset of $\C$. 
We shall define a Fukaya category associated to the pair $(Y,W)$, subject 
to additional auxiliary choices: (i) a compatible almost complex structure $J$
making $W$ holomorphic outside a compact subset of $\C$, 
(ii) a continuous function $h \co Y \to [0,\infty)$ which is weakly
$J$-plurisubharmonic,  (iii) a non-negative \emph{wrapping Hamiltonian} 
\begin{equation}
    H \co Y \to \R,
\end{equation}
and (iv) a closed subset $Y^{in}\subset Y$, whose intersection with every fiber of $W$
is a (compact) sublevel set of $h$; more precisely, we take $Y^{in}$ to be
the set of points where $h\leq r(|W|)$, where $r(|W|)$ is a
non-decreasing function of $|W|$, constant over $[0,R_0]$ for some $R_0$.

We require these data to be compatible as follows:
\begin{enumerate}
    \item The restrictions of $h$ and $H$ to every fiber of $W$ are proper.
\smallskip
    \item The Hamiltonian flow of $H$ preserves the fibers of $W$\!, and 
outside of $Y^{in}$ it preserves the
level sets of $h$:
          \begin{equation}
            dW(X_H) = 0, \quad \text{and}\ dh(X_H)=0\text{ outside }Y^{in}.
          \end{equation}
Also, horizontal parallel transport preserves $H$ everywhere, as well as $h$ outside of
$Y^{in}$. By this we mean that, if $\xi^\#$ is the horizontal lift of a vector on $\C$, then
          \begin{equation}
            dH(\xi^\#)=0, \quad \  \text{and}\ dh(\xi^\#)=0\text{ outside }Y^{in}.
          \end{equation}
This in turn guarantees that
horizontal parallel transport is well-defined (except at critical points) despite the non-compactness of the fibers, since horizontal
lifts are contained in the level sets of $H$ which is fiberwise proper.%
\smallskip
\item Outside of $Y^{in}$, 
the $1$-form $d^c h = -dh \circ J$ vanishes on the symplectic orthogonal to the fibers of $W$,  i.e. if
$\xi^\#$ is the horizontal lift of a vector on $\C$, we have
  \begin{equation}\label{eq:dch_horiz}
    d^ch (\xi^\#) = 0.    
  \end{equation}
Moreover, $d^c h$ is  preserved by (i) parallel transport and (ii) the Hamiltonian flow $X_H$, i.e.\ the Lie derivative with respect to $X_H$ and to the horizontal pullback of a vector field $\xi$ on $\C$ both vanish:
   \begin{equation}\label{eq:dch_horizLie}
          \cL_{X_H}   d^ch  = \cL_{\xi^\#} d^ch = 0.            
          \end{equation}
          \item The function $h$ grows along $-JX_H$ outside of $Y^{in}$, i.e.
            \begin{equation} \label{eq:dch_XH}
             0 \leq  d^c h(X_H).               
            \end{equation}
\end{enumerate}

\begin{remark}
Condition (2) essentially states that $W$, $H$ and $h$ Poisson commute
outside of $Y^{in}$. Moreover, the fact that $W$ is holomorphic outside of a
compact subset implies that the horizontal subspace is $J$-invariant, and
hence the vanishings of $dh$ and $d^ch$ on the horizontal distribution are
equivalent to each other. On the other hand, 
the condition $  \cL_{X_H}   d^ch = 0$ is particularly strong, 
and is analogous to considering only \emph{linear} Hamiltonians in the 
situation of a manifold with contact boundary. 
\end{remark}

\begin{remark} \label{rmk:ddch_horiz}
By the Cartan formula, given \eqref{eq:dch_horiz} the condition
$\cL_{\xi^\#} d^ch=0$ is equivalent to requiring that $\iota_{\xi^\#}
dd^ch=0$ for every horizontal vector $\xi^\#$.
\end{remark}

\begin{remark} \label{rmk:horiz_exclude_crit}
In our main examples, the requirements concerning the behavior of $h$ 
along the horizontal
distribution ($dh(\xi^\#)=d^c h(\xi^\#)=0$, $\cL_{\xi^\#} d^ch=0$) only
hold outside of $Y^{in}\cup W^{-1}(\Delta')$, where 
$\Delta'$ is a small neighborhood of $\mathrm{crit}(W)=\{0\}$ in the
complex plane. We will see that this weakening of the assumptions is
not problematic as long as the Lagrangians we consider 
remain outside of $W^{-1}(\Delta')$ and the isotopies of the complex plane
whose lifts we consider are supported outside of $\Delta'$. 
\end{remark}

\begin{definition}\label{def:admissiblearc}
  An \emph{admissible arc} is a properly embedded arc $\gamma:[0,\infty)\to
\C$ that is disjoint from the critical values of $W$ and from the negative
real axis, and along which distance from the origin is strictly increasing
outside of the disc of radius $R_0$.
\end{definition}

\begin{definition}\label{def:admissibleLagrangian}
  An \emph{admissible Lagrangian} with respect to the above data is a
properly embedded Lagrangian $L \subset Y$ such that (i) the image $W(L)
\subset \C$ agrees outside of a compact subset $\Delta$ 
with a finite union of admissible arcs which do not reenter $\Delta$;
and (ii) the restriction of
$d^ch$ to $L$ vanishes outside of $Y^{in}$.
\end{definition}

The main examples we consider below are in fact fibered over properly
embedded arcs in $\C$ which avoid the critical values of $W$ and
are asymptotic to radial straight lines at infinity. In this
case we can take $\Delta$ to be a single base point on the arc.

Given an admissible Lagrangian $L\subset Y$ and an isotopy $\rho^t$ of the 
complex plane, pointwise preserving $\Delta\cup \mathrm{crit}(W)$ (or the slightly larger set $\Delta\cup \Delta'$) and setwise preserving
the negative real axis, there exists a unique Lagrangian
isotopy, which we denote by $\rho^t(L)$, with the following properties:
(i) $\rho^t(L)=L$ in $W^{-1}(\Delta)$, and (ii) outside of $W^{-1}(\Delta)$, 
$\rho^t(L)$ fibers over the collection of arcs which is the image of $W(L)$
under $\rho^t$.  We say that the lifted
isotopy $\rho^t(L)$ is admissible if the images of the arcs under $\rho^t$
are admissible. The Lagrangian $\rho^t(L)$ can be constructed by
intersecting $L$ with $W^{-1}(\Delta)$ and parallel transporting its
boundary along the images of the arcs under $\rho^t$.

\begin{remark}
If the symplectic connection on $W:Y\to\C$ has vanishing curvature outside
of $\Delta$ then $\rho^t$ can be directly constructed as the horizontal lift 
of the isotopy of the base. However, the geometric
models required for our applications do not naturally satisfy this
condition.
\end{remark}

\begin{lemma}\label{l:admissible_invariant}
The set of admissible Lagrangians is invariant under compositions of (i)
Hamiltonian isotopies supported in $Y^{in}$ that preserve the fibers of $W$
outside of a compact subset, (ii) the
Hamiltonian flow of $H$, and (iii) admissible lifted isotopies $\rho^t(L)$
as defined
above.
\end{lemma}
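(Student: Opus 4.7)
The plan is to verify the two conditions of Definition \ref{def:admissibleLagrangian}~--- the behaviour of $W(L)$ outside a compact subset of $\C$, and the vanishing of $d^c h|_L$ outside $Y^{in}$~--- for each of the three types of operations separately. In all three cases the strategy is to decompose tangent vectors along appropriate geometric directions (horizontal vs.\ fiberwise) and invoke the corresponding assumptions in the setup of \S\ref{sec:LGmodel-setup}.

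For a Hamiltonian isotopy $\phi^t$ supported in $Y^{in}$ which preserves the fibers of $W$ outside a compact set $K\subset Y$: outside $Y^{in}$ the Lagrangian is unchanged, so the vanishing of $d^c h|_L$ there is automatic. For the image under $W$, note that $W(\phi^t(L))$ agrees with $W(L)$ outside $W(K)$, so after enlarging $\Delta$ to contain $\Delta\cup W(K)$ (which does not affect admissibility, since the arcs of $W(L)$ outside the original $\Delta$ do not reenter it) the arc condition is inherited from $L$.

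For the flow $\phi^t_H$ of the wrapping Hamiltonian: condition~(2) of \S\ref{sec:LGmodel-setup} gives $dW(X_H)=0$ everywhere, so $W(\phi^t_H(L))=W(L)$ and the arc condition is preserved. The same condition gives $dh(X_H)=0$ outside $Y^{in}$, so the flow preserves the level sets of $h$ there, and in particular preserves $Y^{in}$ (since $Y^{in}$ is a sublevel set of $h$ over each fiber, and $W$ is preserved). Combined with $\cL_{X_H} d^c h=0$ from \eqref{eq:dch_horizLie}, this gives $(\phi^t_H)^* d^c h=d^c h$ on $Y\setminus Y^{in}$, and hence $d^c h$ still vanishes on $\phi^t_H(L)$ outside $Y^{in}$.

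For an admissible lifted isotopy $\rho^t(L)$: the arc condition is automatic from the construction, since by definition $W(\rho^t(L))$ is obtained outside $\Delta$ by applying $\rho^t$ to the admissible arcs covered by $W(L)$, and admissibility of the resulting arcs is part of the hypothesis. The nontrivial point is the vanishing of $d^c h$ outside $Y^{in}$. In $W^{-1}(\Delta)$ we have $\rho^t(L)=L$, so the condition is inherited. Outside $W^{-1}(\Delta)$, $\rho^t(L)$ is swept out by horizontal parallel transport of $\ell := L\cap W^{-1}(\partial \Delta)$ along the arcs $\rho^t(W(L))$; at any point $p\in\rho^t(L)\setminus Y^{in}$, a tangent vector decomposes into a horizontal component $\xi^\#$ and a fiberwise component $v$. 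By \eqref{eq:dch_horiz} we have $d^c h(\xi^\#)=0$, while $v$ is the image under the parallel transport $\Phi$ of a tangent vector $v_0$ to $\ell\subset L$ at a point $p_0$. Since $h$ is preserved by parallel transport outside $Y^{in}$ and $r(|W|)$ is non-decreasing in $|W|$, the preimage $p_0$ also lies outside $Y^{in}$, so $d^c h(v_0)=0$ by admissibility of $L$; the Lie-derivative vanishing $\cL_{\xi^\#}d^c h=0$ in \eqref{eq:dch_horizLie} gives $\Phi^* d^c h=d^c h$ outside $Y^{in}$, hence $d^c h(v)=d^c h(v_0)=0$.

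The main bookkeeping obstacle is in the third case: one must check that the subregion of $\rho^t(L)$ outside $Y^{in}$ is indeed in the domain where the relations \eqref{eq:dch_horiz}--\eqref{eq:dch_horizLie} hold, and that parallel transport along the arcs $\rho^t(\gamma)$ is well-defined on the level set containing $\ell$. The former follows because the admissibility of arcs (staying away from $\mathrm{crit}(W)\cup \Delta'$ as in Remark \ref{rmk:horiz_exclude_crit}) combines with $h$ being preserved by parallel transport to keep us in the prescribed region, and the latter follows from the fiberwise properness of $H$ together with $dH(\xi^\#)=0$, which confines horizontal lifts to level sets of $H$.
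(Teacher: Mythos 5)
Your proof is correct and takes essentially the same approach as the paper: verify the two conditions of Definition~\ref{def:admissibleLagrangian} separately for each of the three operations, using the structural hypotheses of \S\ref{sec:LGmodel-setup} (in particular $dW(X_H)=0$, $dh(X_H)=0$, \eqref{eq:dch_horiz}, \eqref{eq:dch_horizLie}) to show that the relevant data are preserved. The paper's own proof is only three sentences; you have spelled out the details it treats as immediate, notably the bookkeeping showing that the complement of $Y^{in}$ is preserved by the flows (using the monotonicity of $r(|W|)$ and the conservation of $h$), and the decomposition of tangent vectors into horizontal and fiberwise parts in case~(iii).
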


\begin{proof}
The first statement is obvious from the definition.
The Hamiltonian flow of $H$ preserves admissibility because
we have required that $dW(X_H) = 0$, so that the projection to the base is
preserved, and $\cL_{X_H} d^c h = 0$, so that $d^c h$ vanishes on a
Lagrangian if and only if it vanishes on its image under the flow.  
The third statement follows from the observation that parallel transport
along an admissible arc preserves $Y^{in}$ and preserves the vanishing of
$d^c h$ outside $Y^{in}$ by \eqref{eq:dch_horiz} and \eqref{eq:dch_horizLie}.
\end{proof}

We also note that admissible lifted isotopies commute with the Hamiltonian
flow of $H$, so the two operations can be performed in either order.

It will be useful for us to have a more explicit description of lifted
admissible isotopies as Hamiltonian flows. 

\begin{lemma}\label{l:ham_lifted_isotopy}
Given an admissible arc $\gamma:[0,\infty)\to \C$ and a vector field $v$
on the complex plane which vanishes at $\gamma(0)$
and generates an admissible isotopy of arcs $\gamma_t=\rho^t(\gamma)$, 
we define a Hamiltonian $K_{\gamma,t,v}\in C^\infty(W^{-1}(\gamma_t),\R)$ by:
\begin{itemize} 
\item $K_{\gamma,t,v}=0$ everywhere in the fiber $W^{-1}(\gamma_t(0))$,
\item the derivative of $K_{\gamma,t,v}$ along the horizontal lift
of $\gamma_t$ is 
\begin{equation}\label{eq:dK_along_gamma}
dK_{\gamma,t,v}(\dot\gamma_t^\#)=\omega(\dot\gamma_t^\#,v^\#),
\end{equation}
where $v^\#$ is the horizontal lift of $v$.
\end{itemize}
Denote by $\psi^t$ the Hamiltonian flow generated by (an arbitrary extension
of) $K_{\gamma,t,v}$.

Then, for any point $p\in W^{-1}(\gamma(0))$, $\psi^t$ maps the horizontal
lift of $\gamma$ through $p$ to the horizontal lift of $\gamma_t$ through
$p$. In particular, if $L$ is an admissible Lagrangian which fibers over
$\gamma$, then $\psi^t(L)=\rho^t(L)$. 

Moreover, at every point of $W^{-1}(\gamma_t)$ which lies outside of
$Y^{in}$, the Hamiltonian vector field $X_{\gamma,t,v}$ generated by
$K_{\gamma,t,v}$ satisfies 
\begin{equation}\label{eq:dch_lifted}
dh(X_{{\gamma,t,v}})=d^ch(X_{{\gamma,t,v}})=0 \quad \text{and} \quad
\iota_{X_{{\gamma,t,v}}}dd^ch=0.
\end{equation}
\end{lemma}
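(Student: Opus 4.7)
The strategy is to observe that $K_{\gamma,t,v}$ is, up to the choice of extension, a pullback from the base, and then to exploit the resulting horizontality of its Hamiltonian vector field for both the geometric statement and the $h$-vanishings.

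First I note that the right-hand side of \eqref{eq:dK_along_gamma} is itself pulled back from $\C$: since $\omega$ restricted to horizontal lifts equals the pullback of the symplectic form on the base, $\omega(\dot\gamma_t^\#, v^\#)$ depends only on the base point. Integrating along horizontal lifts of $\gamma_t$ starting from $W^{-1}(\gamma_t(0))$, where $K_{\gamma,t,v}$ vanishes, therefore yields a function on $W^{-1}(\gamma_t)$ that is constant along the fibers of $W|_{W^{-1}(\gamma_t)}$, so $K_{\gamma,t,v} = k_t \circ W$ for a unique function $k_t \co \gamma_t \to \R$ determined by $k_t(\gamma_t(0)) = 0$ and $dk_t(\dot\gamma_t) = \omega(\dot\gamma_t^\#, v^\#)$.

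Next, I would choose an extension of $K_{\gamma,t,v}$ of the form $k_t \circ W$ for a smooth extension of $k_t$ to a neighborhood of $\gamma_t$ in $\C$. Then $dK$ is pulled back from the base and annihilates every vertical vector, so the Hamiltonian vector field $X_K$ lies in the symplectic orthogonal to the vertical distribution, i.e., is horizontal. Its base projection is the Hamiltonian vector field $X_{k_t}$ on $\C$, and the defining identity $dk_t(\dot\gamma_t) = \omega(\dot\gamma_t^\#, v^\#)$ forces $X_{k_t} - v$ to lie in $\R \dot\gamma_t$ along $\gamma_t$. Hence the base flow carries $\gamma$ to $\gamma_t$ as an unparametrized arc, and the horizontal flow $\psi^t$ carries the horizontal lift of $\gamma$ through $p$ to the horizontal lift of $\gamma_t$ through $p$ as sets. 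For an admissible Lagrangian $L$ fibered over $\gamma$, the Lagrangian condition combined with fiberedness shows $L$ is exactly the union of horizontal lifts of $\gamma$ through the Lagrangian $L \cap W^{-1}(\gamma(0))$ in the initial fiber, so $\psi^t(L) = \rho^t(L)$ for this choice of extension.

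For an arbitrary extension $\tilde K$ of $K_{\gamma,t,v}$, the difference $\tilde K - k_t \circ W$ vanishes on $W^{-1}(\gamma_t)$, so the corresponding Hamiltonian vector field at a point of $W^{-1}(\gamma_t)$ lies in the symplectic complement $(T W^{-1}(\gamma_t))^\omega$. A dimension count shows this complement is one-dimensional and spanned by $\dot\gamma_t^\#$, since $\dot\gamma_t^\#$ is horizontal (hence $\omega$-orthogonal to the vertical subspace of $T W^{-1}(\gamma_t)$) and self-orthogonal. Thus $X_{\tilde K} = X_K + c\,\dot\gamma_t^\#$ for some scalar $c$; this correction is itself horizontal and tangent to $L$, so $\psi^t(L) = \rho^t(L)$ still holds and $X_{\tilde K}$ remains horizontal at points of $W^{-1}(\gamma_t)$. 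The vanishings in \eqref{eq:dch_lifted} then follow directly from horizontality and the setup: outside $Y^{in}$, condition (2) gives $dh(X_{\tilde K}) = 0$, \eqref{eq:dch_horiz} gives $d^c h(X_{\tilde K}) = 0$, and Remark \ref{rmk:ddch_horiz} combined with $\cL_{\xi^\#} d^c h = 0$ from condition (3) gives $\iota_{X_{\tilde K}} dd^c h = 0$. The main obstacle is the extension dependence, which the argument resolves by confining the indeterminacy in $X_K$ to the horizontal line $\R \dot\gamma_t^\#$, affecting neither the set-level statement about $L$ nor the horizontality-based vanishings.
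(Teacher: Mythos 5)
The proof has a fatal gap in the very first step: the claim that ``$\omega$ restricted to horizontal lifts equals the pullback of the symplectic form on the base,'' so that $\omega(\dot\gamma_t^\#, v^\#)$ depends only on the base point, is false unless the symplectic connection on $W:Y\to\C$ is flat. In the setting of this paper it is not: as a concrete illustration, for $Y=\C^2$ with the standard K\"ahler form and $W=z_1z_2$, the horizontal lift of $\xi\in T_c\C$ at $(z_1,z_2)\in W^{-1}(c)$ satisfies $\omega(\xi^\#,(i\xi)^\#)=|\xi|^2/(|z_1|^2+|z_2|^2)$, which manifestly varies over the fiber. Consequently, integrating $\omega(\dot\gamma_t^\#,v^\#)$ along horizontal lifts from $W^{-1}(\gamma_t(0))$ does \emph{not} produce a function constant on the fibers of $W|_{W^{-1}(\gamma_t)}$, and $K_{\gamma,t,v}$ is \emph{not} of the form $k_t\circ W$. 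Everything downstream collapses: $X_{\gamma,t,v}$ is not horizontal. Indeed, this is precisely the content of Remark \ref{rmk:ham_lifted_isotopy} in the paper, which states that ``the curvature of the symplectic connection on $W:Y\to\C$ prevents the existence of a lift simultaneously for all $p$'' --- if your argument were correct, $\psi^t$ would satisfy $W\circ\psi^t=\rho^t$, which the remark says cannot happen.

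The actual proof has to work around the failure of horizontality. The relation $\omega(\dot\gamma_t^\#,X_{\gamma,t,v})=\omega(\dot\gamma_t^\#,v^\#)$ only gives that $X_{\gamma,t,v}-v^\#$ is tangent to $W^{-1}(\gamma_t)$ (this tangent space has vertical directions, so $X_{\gamma,t,v}$ can have a vertical component). The paper then proves the first claim not by tracking horizontality of $X$, but by noting that $X_{\gamma,t,v}$ vanishes identically on $W^{-1}(\gamma_t(0))$ and that a Hamiltonian flow preserves the clean intersection of the parallel transports of two Lagrangian discs through $p$, which is the horizontal lift through $p$. For \eqref{eq:dch_lifted} the paper introduces the map $u(p,s,t)$ parametrizing horizontal lifts of $\gamma_t$ and shows $X_{\gamma,t,v}\in\mathrm{span}(\partial_s u,\partial_t u)$; while $\partial_s u$ is horizontal, $\partial_t u$ generally is not, so one cannot simply invoke conditions (2)--(3) from Section \ref{sec:LGmodel-setup}. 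Instead the paper analyzes the pullbacks $u^*dd^ch$ and $u^*d^ch$ directly, using $\partial_t u|_{s=0}=0$ and closedness of $dd^ch$ to force the relevant coefficients to vanish. Your argument bypasses all of this by assuming away the curvature, so the vanishings in \eqref{eq:dch_lifted} are not actually established.
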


\begin{remark}\label{rmk:ham_lifted_isotopy}
The ambiguity in extending $K_{\gamma,t,v}$ to a neighborhood
of $W^{-1}(\gamma_t)$ affects $X_{\gamma,t,v}$ by a multiple of
$\dot\gamma_t^\#$, which does not affect the conclusions of the lemma,
but implies that the isotopy  $\psi^t$ that we construct does not in
general lift the isotopy $\rho_t$ in the sense that the  $W \circ
\psi^t = \rho_t$. By  appropriately choosing the extension of the
Hamiltonian, we may arrange to have such a lift for a fixed point
$p\in W^{-1}(\gamma(0))$, but the curvature of the symplectic
connection on $W:Y\to\C$ prevents the existence of a lift
simultaneously for all $p$.


We note for future reference that $K_{\gamma,t,v}$ can be extended to a
smooth Hamiltonian whose support is contained in a small neighborhood of
$W^{-1}(\gamma_t)$ and such that the corresponding vector field
satisfies \eqref{eq:dch_lifted} everywhere outside of $Y^{in}$. The simplest
way to do this is to foliate a neighborhood of $\gamma$ in the complex plane
by a family of admissible arcs $\gamma^\tau$, $\tau\in (-\tau_0,\tau_0)$,
and consider a Hamiltonian which equals
$\chi(\tau) K_{\gamma^\tau,t,v}$ over 
the preimage of $\rho^t(\gamma^\tau)$, where $\chi(\tau)$ is a cut-off function with
compact support.
\end{remark}

\proof[Proof of Lemma \ref{l:ham_lifted_isotopy}]
Since \eqref{eq:dK_along_gamma} can be rewritten as $\omega(\dot
\gamma_t^\#,X_{\gamma,t,v})=\omega(\dot\gamma_t^\#,v^\#)$, the vector
field $X_{\gamma,t,v}-v^\#$ is symplectically orthogonal to
$\dot\gamma_t^\#$, hence tangent to $W^{-1}(\gamma_t)$. It follows that
the flow $\psi^t$ maps $W^{-1}(\gamma)$ to $W^{-1}(\gamma_t)$. 

Since $\psi^t$ is a Hamiltonian diffeomorphism, it maps Lagrangian
submanifolds of $Y$ which fiber over $\gamma$ to Lagrangian submanifolds
which fiber over $\gamma_t$. Moreover, since $v$ vanishes at $*=\gamma(0)$, the Hamiltonian $K_{\gamma,t,v}$ 
and its first derivative both vanish along $W^{-1}(*)$, hence $X_{\gamma,t,v}=0$ 
everywhere in $W^{-1}(*)$.
In particular, given a Lagrangian $\ell\subset
W^{-1}(*)$, $\psi^t$ maps the parallel transport of $\ell$ over $\gamma$ to
the parallel transport of $\ell$ over $\gamma_t$. Now consider 
two small Lagrangian discs $\ell_1,\ell_2\subset W^{-1}(*)$ which intersect
transversely at a given point $p$. The parallel transports of $\ell_1$ and 
$\ell_2$ over $\gamma$ intersect cleanly along the horizontal lift of $\gamma$ through
$p$, and are mapped by $\psi^t$ to the parallel transports of $\ell_1$ and
$\ell_2$ over $\gamma_t$, which intersect along the horizontal lift of
$\gamma_t$ through $p$. Thus, $\psi^t$ maps horizontal lifts of $\gamma$ to horizontal
lifts of $\gamma_t$.

In order to prove \eqref{eq:dch_lifted}, we consider the map
$u:W^{-1}(*)\times [0,\infty)\times [0,t_0]\to Y$ such that
$u(p,s,t)$ is the point of $W^{-1}(\gamma_t(s))$ obtained by
parallel transport of $p$ over $\gamma_t$. In other terms, $u(p,0,0)=p$, and
$\partial_s u=\dot\gamma_t^\#$. 

Since the flow $\psi^t$ maps $u(\{p\}\times
[0,\infty)\times \{0\})$ to $u(\{p\}\times [0,\infty)\times \{t\})$ for all
$p$, the vector field $X_{\gamma,t,v}$ lies in the span of $\partial_t u$ and 
$\partial_s u$. On the other hand, $\partial_s u=\dot\gamma_t^\#$ lies in
the kernel of $d^c h$ and $dd^c h$ by \eqref{eq:dch_horiz} and
\eqref{eq:dch_horizLie}.

The 2-form $u^* dd^c h$ vanishes on $\partial_s$, so it
can be written in the form $$u^* dd^c h = dt\wedge \alpha(s,t) + \beta(s,t)$$
where $\alpha(s,t)$ and $\beta(s,t)$ are forms on $W^{-1}(*)$. 
Since $\partial_t u=0$ whenever $s=0$, we find that $\alpha(0,t)\equiv 0$, and
$\beta(0,t)=dd^ch_{|W^{-1}(*)}$ is independent of $t$.  On the other hand,
$u^*dd^ch$ is closed, so necessarily $\alpha$ and $\beta$ are independent of
$s$, i.e.\ $\alpha(s,t)\equiv 0$ and $\beta(s,t)\equiv \beta_0=dd^ch_{|W^{-1}(*)}$.
We conclude that the span of $\partial_s$ and $\partial_t$ lies in the
kernel of $u^*dd^ch$, and hence $X_{\gamma,t,v}$ lies in the kernel of $dd^c h$.

Similarly, $u^* d^ch$ vanishes on $\partial_s$, so it can be written in the
form $$u^* d^ch = f(s,t)\,dt + \eta(s,t)$$ for $\eta(s,t)$ a 1-form on $W^{-1}(*)$.
Using again the fact that $\partial_t u=0$ for $s=0$, we find that
$f(0,t)\equiv 0$ and $\eta(0,t)$ is independent of $t$. Moveover, since
$u^*dd^ch$ vanishes on the span of $\partial_s$ and $\partial_t$, we have
$\partial_s f=0$, so that $f(s,t)\equiv 0$. This in turn implies that $u^*d^c h$
vanishes on the span of $\partial_s$ and $\partial_t$, and hence
$d^ch(X_{\gamma,t,v})=0$.

Finally, the vanishing of $dh(X_{\gamma,t,v})$ is a direct consequence of
the assumption that horizontal parallel transport preserves the levels of
$h$ outside of $Y^{in}$.
\endproof

\subsection{Maximum principle and energy estimates}
\label{sec:maximum-principle}

Our construction of the Fukaya category of a Landau-Ginzburg model involves
not only structure maps for Lagrangian Floer theory
with boundary on admissible Lagrangians, but also
natural morphisms and continuation maps associated to certain isotopies of
admissible Lagrangians. In this section we establish the results needed to
prove compactness of the corresponding moduli spaces.

Let $\Sigma$ be the complement of finitely many boundary marked points on a
compact Riemann surface with boundary, 
and $\Lambda$ a moving family of admissible Lagrangian boundary conditions on
$\Sigma$, i.e.\ a smoothly varying family of admissible Lagrangian submanifolds 
of $Y$, constant near the ends of each component of $\partial
\Sigma$.  The manner in which
$\Lambda$ varies along the boundary of $\Sigma$ can be described by a compactly 
supported 1-form on $\partial\Sigma$ with values in vector fields. 

We assume
that $\Lambda$ varies along each boundary component by a combination of
(i)~a multiple of the flow of the wrapping Hamiltonian $H$, namely
$X_H\otimes \eta$ for $\eta$ a 1-form on $\partial \Sigma$, and (ii)
the lift of an admissible isotopy of the complex plane supported away from
$\Delta\cup \Delta'$, where $\Delta'\supset \mathrm{crit}(W)$ (cf.\ Remark
\ref{rmk:horiz_exclude_crit}). We note that Lemma \ref{l:admissible_invariant} asserts the invariance of the class of admissible Lagrangians under this class of isotopies.
We shall impose the following (semi)-positivity assumption on the isotopy:
\begin{itemize}
\item In the fiber direction, we require:
\begin{equation}
\label{eq:maxprinciple_cond_fiberwise}
\parbox{\mywidth}{The total fiberwise wrapping is non-positive, i.e.\
$\displaystyle \int_{\partial\Sigma} \eta\leq 0$.}
\end{equation}
\item In the base direction, denote by $\Gamma$ the family of admissible arcs in
the complex plane to which $\Lambda$ projects outside of $\Delta$. We
assume:
\begin{equation}
\label{eq:maxprinciple_cond_base}
\parbox{\mywidth}{There exists an isotopy $\rho^t$ of the complex plane
rel.\ $\Delta\cup \Delta'$,
and a function $\tau\in C^\infty(\Sigma,\R)$ which is constant near the punctures,
such that along each component of $\partial\Sigma$ the arcs $\rho^\tau(\Gamma)$ 
vary by an admissible isotopy that moves in 
the clockwise direction outside of a
compact set.}
\end{equation}
\end{itemize}

For example, if $\Gamma$ only moves in the clockwise direction
outside of a compact set (or does not move at all) then we can take the 
isotopy $\rho^t$ to be trivial, and $\tau\equiv 0$.

Condition \eqref{eq:maxprinciple_cond_fiberwise} implies the existence
of a 1-form $\alpha$ on $\Sigma$ with the
following two properties:
\begin{equation}\label{eq:alpha_subclosed}
\text{$\alpha$ is sub-closed, i.e., $d\alpha\leq 0$;}
\end{equation}
\begin{equation}\label{eq:alpha_boundary_neg}
\alpha_{|\partial \Sigma}\geq \eta \quad \text{pointwise along
$\partial\Sigma$.}
\end{equation}
(As is customary for Floer theory, $\alpha_{|\partial\Sigma}$ and $d\alpha$
should also be compactly supported).
For instance, if $\eta$ is pointwise non-positive, then we can take
$\alpha\equiv 0$.

We consider maps $u \co \Sigma \to Y$ with boundary conditions given by $\Lambda$
(i.e., $u(z)\in \Lambda_z$ for all $z\in \partial\Sigma$) and subject to
a convergence condition (see \eqref{eq:converge_at_punctures} below),
satisfying the perturbed pseudo-holomorphic curve equation
\begin{equation} \label{eq:pseudo-holomorphic_equation}
(du - X_H \otimes \alpha + (\xi^\tau)^\# \otimes d\tau)^{0,1} = 0,  
\end{equation}
where $\xi^t$ is the vector field on the complex plane which generates
the isotopy $\rho^t$ in \eqref{eq:maxprinciple_cond_base}, and $(\xi^t)^\#$ is its horizontal lift to $Y$.
The expression \eqref{eq:pseudo-holomorphic_equation} takes values in the
space of $(0,1)$-forms on $\Sigma$ with values in $u^*TY$, i.e.\ complex 
antilinear maps from $T\Sigma$ to $u^*TY$. 
(We only need to consider smooth maps, but as usual in Lagrangian Floer theory,
the functional analysis setup involves an extension
to a space of maps of suitable Sobolev regularity; see e.g.\ \cite[Chapter 8]{SeBook}.)
We will also consider modifications of this equation by further adding a
compactly supported inhomogeneous perturbation term for transversality
purposes.

The inhomogeneous term $X_H\otimes \alpha$ in \eqref{eq:pseudo-holomorphic_equation} is 
the same type of Hamiltonian perturbation that commonly appears in the
construction of continuation maps (and other operations) in (ordinary or wrapped)
Lagrangian Floer theory,
and the term $(\xi^\tau)^\#\otimes d\tau$ plays a similar role in the horizontal direction.  
In the presence of moving boundary conditions, one frequently requires
that the restriction of the inhomogeneous perturbation term to $\partial\Sigma$ 
generates the isotopy by which $\Lambda$ varies, see e.g.\ \cite[Section 8k]{SeBook}.
However, when the variation of $\Lambda$ is pointwise non-positive
everywhere along $\partial\Sigma$ the maximum principle readily holds without
the need for inhomogeneous terms; our setup encompasses both
cases. 

The vanishing of $\xi^t$ inside $\Delta'$ ensures that, even if the
compatibility of $h$ with the horizontal distribution is relaxed over
$W^{-1}(\Delta')$ as in Remark \ref{rmk:horiz_exclude_crit}, the quantities
$dh((\xi^t)^\#)$, $d^c h((\xi^t)^\#)$ and $\iota_{(\xi^t)^\#}dd^ch$ still vanish
identically outside of $Y^{in}$.

We only ever consider finite energy solutions to
\eqref{eq:pseudo-holomorphic_equation}, in the sense that the  {\em geometric energy} 
\begin{equation}
 E_{geom}(u):=\int_\Sigma |du-X_H\otimes
\alpha+(\xi^\tau)^\#\otimes d\tau|^2\,dvol_\Sigma   
\end{equation}
 is finite. The norm in the above integral is taken with respect to the
metric induced by $\omega$ and $J$ on $Y$, and any $j$-compatible metric on $\Sigma$ (the integrand is conformally invariant). By the usual decay estimates for solutions of Floer's equation on
strips, this is equivalent to the condition that
\begin{equation}\label{eq:converge_at_punctures}
\parbox{\mywidth}{near each puncture of
$\Sigma$, $u$ converges to a generator of the
appropriate Floer complex (i.e., when the perturbation term is compactly supported
over $\Sigma$, an intersection point between the boundary
conditions $\Lambda$ on either side of the puncture).}
\end{equation}

\begin{proposition} \label{prop:maxprinciple_base}
Assuming \eqref{eq:maxprinciple_cond_base},
solutions to \eqref{eq:pseudo-holomorphic_equation} satisfy the maximum principle with respect to
the quantity $|\rho^\tau\circ W|$ (outside of a compact subset of $\C$).
\end{proposition}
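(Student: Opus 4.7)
The plan is to reduce the maximum principle for $|\rho^\tau \circ W \circ u|$ to a version of the classical maximum principle for a $J_\C$-holomorphic map to $\C$ with boundary on admissible arcs.  Outside a compact subset of $\C$ containing $\Delta$, $\Delta'$, and the supports of the perturbation data, $W$ is $(J,J_\C)$-holomorphic and satisfies $dW(X_H)=0$ and $dW((\xi^\tau)^\#) = \xi^\tau$, so projecting equation~\eqref{eq:pseudo-holomorphic_equation} via $dW$ yields
\[(dv + \xi^\tau \otimes d\tau)^{0,1} = 0\]
for $v := W\circ u \co \Sigma \to \C$. The inhomogeneous term $(\xi^\tau)^\#\otimes d\tau$ appearing in \eqref{eq:pseudo-holomorphic_equation} is thus tailored precisely so that this gauged Cauchy-Riemann equation on the base is compatible with the isotopy $\rho^\tau$ used to straighten out the arcs.

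Next I would set $\tilde v(z) := \rho^{\tau(z)}(v(z))$, so that the quantity to be controlled is $|\tilde v|$. Differentiating gives $d\tilde v = D\rho^\tau \cdot dv + \xi^\tau(\tilde v)\, d\tau$, and substituting into the gauged equation for $v$ shows that $\tilde v$ satisfies a perturbed Cauchy-Riemann equation whose leading part is holomorphic with respect to the push-forward complex structure $(\rho^\tau)_* J_\C$ on $\C$, up to a lower-order correction measuring the failure of $\xi^t$ to be $\rho^t$-invariant (which vanishes identically when $\xi$ is autonomous). The aim of this computation is to conclude that $|\tilde v|^2$ is (weakly) subharmonic on $\Sigma$ at every point where $v$ lies outside the chosen compact subset of $\C$.

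For the boundary, along $\partial\Sigma$ the map $\tilde v$ takes values on the arcs $\rho^\tau(\Gamma)$, which by hypothesis \eqref{eq:maxprinciple_cond_base} vary only in the clockwise direction outside a compact set. Combined with the admissibility condition that distance from the origin is strictly increasing along each such arc, the clockwise motion yields a Hopf-style boundary inequality ruling out strict interior maxima of $|\tilde v|$ on $\partial\Sigma$ outside the compact region. Together with interior subharmonicity, the strong maximum principle then rules out interior maxima of $|\tilde v|$ on $\Sigma$ outside the compact subset of $\C$.

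The main technical obstacle will be the explicit computation of the equation satisfied by $\tilde v$, and in particular controlling the non-autonomous correction so as to preserve subharmonicity of $|\tilde v|^2$ outside the compact region; verifying that the clockwise-motion hypothesis translates into the expected boundary inequality should then be routine given the admissibility of the arcs.
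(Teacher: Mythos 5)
Your proposal follows the paper's proof exactly: project the equation via $dW$ to obtain the gauged equation $(dv+\xi^\tau\otimes d\tau)^{0,1}=0$, then apply $\rho^\tau$ to obtain an \emph{unperturbed} Cauchy-Riemann equation for $\tilde v$ with respect to the domain-dependent complex structure $(\rho^\tau)_* j$, and combine the interior maximum principle with a Neumann boundary argument using the clockwise-variation hypothesis and admissibility of the arcs. Since the flow $\rho^t$ is autonomous (as fixed in Section 3.3), the ``lower-order correction'' you worry about vanishes identically; for the interior step you should invoke the openness of pseudoholomorphic maps (no interior maximum of $|\tilde v|$) rather than trying to show $|\tilde v|^2$ is subharmonic, which need not hold for a non-standard target complex structure.
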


\proof Outside of a compact subset of $\C$, $W$ is $J$-holomorphic, so
$w=W\circ u$ solves the perturbed Cauchy-Riemann equation
\begin{equation}\label{eq:pseudohol_projected}
(dw+\xi^\tau\otimes d\tau)^{0,1}=0.
\end{equation}
Hence, $\tilde{w}=\rho^\tau\circ w:\Sigma\to\C$ solves an unperturbed Cauchy-Riemann
equation with respect to the domain-dependent complex structure $(\rho^\tau)_*
j$ on the complex plane:
\begin{equation}\label{eq:pseudohol_projected_rewritten}
(d\tilde{w})^{0,1}_{(\rho^\tau)_* j}=0,
\end{equation}
and the maximum principle holds at interior points.
Along $\partial \Sigma$ we use a variant of the maximum principle with
Neumann boundary conditions. Namely,  pick local coordinates $z=s+it$ which
locally identify $\Sigma$ with the upper half-plane. If
$|\tilde{w}|$ has a local maximum, then necessarily 
$$\partial_s |\tilde{w}|=0 \ \ \mathrm{and} \ \ \partial_t |\tilde{w}|<0.$$ 
It follows that $\partial_s \arg(\tilde{w})> 0$,
since otherwise $\partial_t \tilde{w}$ would point clockwise from
$\partial_s \tilde{w}$, contradicting \eqref{eq:pseudohol_projected_rewritten}.

On the other hand, recall that the boundary conditions for $\tilde{w}$ are given by the family
of admissible arcs $\rho^\tau(\Gamma)$, along which the distance from the origin 
is strictly increasing. Thus, at a
boundary maximum, $\partial_s\tilde{w}$ points counterclockwise
from the tangent vector to $\rho^\tau(\Gamma)$. This contradicts the assumption 
\eqref{eq:maxprinciple_cond_base}, and we conclude that
$|\tilde{w}|$ has no local maxima.
\endproof

\begin{proposition}\label{prop:maxprinciple_fiber}
Solutions to \eqref{eq:pseudo-holomorphic_equation} satisfy the maximum principle with respect to $h$
(outside of $Y^{in}$).
\end{proposition}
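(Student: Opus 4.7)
The plan is to show that $h\circ u$ is subharmonic on the preimage of $Y\setminus Y^{in}$, that its outward normal derivative along $\partial\Sigma$ is non-positive outside the preimage of $Y^{in}$, and then invoke Hopf's boundary lemma; together with the asymptotic convergence condition \eqref{eq:converge_at_punctures}, this confines the supremum of $h\circ u$ either to the punctures or to the preimage of $Y^{in}$.

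Working in local holomorphic coordinates $z=s+it$, the perturbed equation \eqref{eq:pseudo-holomorphic_equation} reads $\partial_t u - J\partial_s u = \beta$, with
\[
\beta = X_H\alpha_t - (\xi^\tau)^\#\tau_t - JX_H\alpha_s + J(\xi^\tau)^\#\tau_s.
\]
Using the vanishings $dh(X_H) = dh((\xi^\tau)^\#) = d^ch((\xi^\tau)^\#) = 0$ guaranteed by the compatibility conditions of \S \ref{sec:LGmodel-setup} outside $Y^{in}$, a short calculation yields
\[
u^*d^ch = d^c_\Sigma(h\circ u) + \Phi\,\alpha,\qquad \Phi := d^ch(X_H)\circ u,
\]
where $d^c_\Sigma$ denotes the conjugate differential on $\Sigma$. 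Differentiating this identity and matching against $u^*(dd^ch) = dd^ch(\partial_s u,\partial_t u)\,ds\wedge dt$ gives
\[
\Delta(h\circ u)\,ds\wedge dt = dd^ch(\partial_s u,\partial_t u)\,ds\wedge dt - u^*(d\Phi)\wedge\alpha - \Phi\,d\alpha.
\]

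The heart of the argument is a completion-of-squares on the right-hand side. Setting $v_s := \partial_s u - X_H\alpha_s$ and $v_t := \partial_t u - X_H\alpha_t$, the identity $d\Phi = -\iota_{X_H}dd^ch$ (a consequence of $\mathcal{L}_{X_H}d^ch=0$) produces the clean cancellation
\[
dd^ch(\partial_s u,\partial_t u) - u^*(d\Phi)\wedge\alpha/(ds\wedge dt) = dd^ch(v_s,v_t).
\]
Moreover one checks $v_t - Jv_s = J(\xi^\tau)^\#\tau_s - (\xi^\tau)^\#\tau_t$, which is horizontal because the horizontal distribution is $J$-invariant where $W$ is $J$-holomorphic; the horizontal vanishings $\iota_{(\xi^\tau)^\#}dd^ch = \iota_{J(\xi^\tau)^\#}dd^ch = 0$ (Remark \ref{rmk:ddch_horiz}, applied to both a horizontal lift and its $J$-image) collapse $dd^ch(v_s,v_t)$ to $dd^ch(v_s,Jv_s)$. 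Thus, outside $Y^{in}$,
\[
\Delta(h\circ u)\,ds\wedge dt = dd^ch(v_s,Jv_s)\,ds\wedge dt - \Phi\,d\alpha \geq 0,
\]
by plurisubharmonicity of $h$, by $\Phi\geq 0$ from \eqref{eq:dch_XH}, and by $d\alpha\leq 0$ from \eqref{eq:alpha_subclosed}.

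For the boundary, admissibility of the moving Lagrangians gives $d^ch|_{T_{u(z)}L_z}=0$ outside $Y^{in}$, and Lemma \ref{l:ham_lifted_isotopy} ensures that the infinitesimal generator of the admissible lifted base isotopy also lies in the kernel of $d^ch$. The only remaining contribution is the fiberwise wrapping, so $u^*d^ch|_{\partial\Sigma} = \Phi\,\eta$, and comparing with the identity above yields
\[
d^c_\Sigma(h\circ u)\big|_{\partial\Sigma} = \Phi(\eta-\alpha)\big|_{\partial\Sigma}\leq 0
\]
by $\Phi\geq 0$ and \eqref{eq:alpha_boundary_neg}; in local coordinates this is precisely the statement that the outward normal derivative of $h\circ u$ along $\partial\Sigma$ is non-positive. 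Combined with interior subharmonicity, Hopf's boundary lemma rules out non-constant boundary maxima of $h\circ u$ outside the preimage of $Y^{in}$, and the maximum principle follows. The main obstacle I expect is the algebraic reorganization in the completion-of-squares step, which requires coordinating the identity $\mathcal{L}_{X_H}d^ch=0$, the horizontal annihilation of $dd^ch$ for both $(\xi^\tau)^\#$ and $J(\xi^\tau)^\#$, and the sub-closure of $\alpha$; once this identity is in hand the remainder is routine Hopf-type analysis.
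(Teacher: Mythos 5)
Your argument is correct and takes essentially the same approach as the paper: the identity $d^c(h\circ u)=u^*(d^ch)-\Phi\,\alpha$ obtained from the vanishings of $dh$ and $d^ch$ along the perturbation directions, the use of the Cartan formula together with $\cL_{X_H}d^ch=0$, the sign hypotheses on $\Phi=d^ch(X_H)\geq 0$ and $d\alpha\leq 0$, and the Neumann-type boundary estimate all match. The one step worth flagging is your reliance on $\iota_{J(\xi^\tau)^\#}\,dd^ch=0$ in the completion-of-squares: this is legitimate, since $J(\xi^\tau)^\#$ is the horizontal lift of $j\xi^\tau$ wherever $W$ is $J$-holomorphic and $\xi^\tau$ vanishes over $\Delta'$, but the paper's variant (expanding $dd^ch(w,Jw)$ directly with $w=v_s+\tau_s(\xi^\tau)^\#$ rather than peeling off the horizontal term first) sidesteps the need for this extra observation.
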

\begin{proof}
The argument is similar to other instances of the maximum principle in Floer theory:
since $h$ is weakly plurisubharmonic,
its values along a holomorphic curve satisfy the maximum principle at
interior points, and also at the boundary under the assumption that $d^c h$ 
vanishes there; the conditions \eqref{eq:dch_horiz}--\eqref{eq:dch_XH}, which govern
the behavior of $d^c h$ along the directions of the inhomogeneous
terms appearing in \eqref{eq:pseudo-holomorphic_equation}, ensure that the
maximum principle continues to hold for solutions of the perturbed Cauchy-Riemann equation,
as we now show by an explicit calculation.

We begin by showing that the maximum principle for $h \circ u$ holds at interior points. Let $z = x + i y$ be coordinates near a point in $\Sigma$.  Since $h$ is weakly plurisubharmonic, we have
\begin{align}
\nonumber
 0 & \leq d d^c h \bigl( \partial_x u - X_H\!\otimes\! \alpha(\partial_x)+\partial_x\tau\cdot(\xi^\tau)^\#, 
J \bigl(\partial_x u - X_H\!\otimes\!\alpha(\partial_x)+\partial_x\tau\cdot(\xi^\tau)^\#\bigr)\bigr) \\
& =  d d^c h \bigl( \partial_x u - X_H \!\otimes\! \alpha(\partial_x)+\partial_x\tau\cdot(\xi^\tau)^\#, 
\partial_y u - X_H \!\otimes\! \alpha(\partial_y)+\partial_y \tau\cdot(\xi^\tau)^\#) \bigr) \\
\nonumber & = \left(  u^{*}(d d^c h) - \alpha \wedge u^*\left( \iota_{X_H} d d^c h \right)
+ d\tau\wedge u^*(\iota_{(\xi^\tau)^\#} dd^ch\right) ( \partial_x, \partial_y ). 
\end{align}
By the Cartan formula, we have
\begin{equation}
    d( \iota_{X_H} d^c h ) =  -\iota_{X_H} d d^c h + \cL_{X_H} d^c h,
\end{equation}
where the second term vanishes by assumption \eqref{eq:dch_horizLie},
whereas $\iota_{(\xi^\tau)^\#} dd^ch=0$ by Remark \ref{rmk:ddch_horiz}, so we conclude that
\begin{equation}
  0 \leq   u^{*}(d d^c h) - d \left( u^* d^c h(X_H) \right) \wedge  \alpha,
\end{equation}
where the right hand side is considered as a $2$-form on $\Sigma$. The Leibniz rule implies that
\begin{equation}
  d (   u^* d^c h(X_H)  \cdot \alpha) =   d \left( u^* d^c h(X_H)  \right) \wedge \alpha + u^* d^c h(X_H) \cdot d \alpha,
\end{equation}
so we derive the inequality:
\begin{equation}
  0 \leq   u^{*}(d d^c h) -  d (   u^* d^c h(X_H)  \cdot \alpha) + u^* d^c h(X_H) \cdot d \alpha.
\end{equation}
The assumptions that $0 \leq d^c h(X_H)$ and that $\alpha$ is subclosed imply that
\begin{equation}\label{eq:ddc_h_composed_u_ineq}
  0 \leq    u^{*}(d d^c h) -  d (   u^* d^c h(X_H)  \cdot \alpha).
\end{equation}
We claim that the right hand side is the Laplacian of $h \circ u$. Indeed,
since $dh(X_H)=dh((\xi^\tau)^\#)=0$ and
$d^ch((\xi^\tau)^\#)=0$ by assumption, we compute that
\begin{align}
\nonumber  d^c (h \circ u) & = -dh \circ du \circ j \\
\nonumber & = -dh\circ \left( du\circ j - X_H\otimes \alpha \circ j+(\xi^\tau)^\#\otimes d\tau\circ j\right)\\
\label{eq:dc_h_composed_u} 
& = -dh \circ  \left( J \circ du - J X_H \otimes \alpha + J(\xi^\tau)^\# \otimes d\tau \right) \\
\nonumber & = u^*(d^c h) - u^* d^c h (X_H) \cdot \alpha + u^* d^c h((\xi^\tau)^\#)\cdot d\tau \\
\nonumber & = u^*(d^c h) - u^* d^c h (X_H) \cdot \alpha.
\end{align}
Hence,
\begin{equation}
  d d^c (h \circ u) = u^*(d d^c h) - d( u^* d^c h (X_H) \cdot \alpha),
\end{equation}
and comparing with \eqref{eq:ddc_h_composed_u_ineq}, we conclude that
\begin{equation}
dd^c(h\circ u)\geq 0.
\end{equation}
Thus, the maximum principle holds at interior points.

Along $\partial \Sigma$ we use the maximum principle with Neumann boundary
conditions. For this, we need to check that, in local coordinates $z=s+it$
which locally identify $\Sigma$ with the upper half-plane, 
the inequality $d(h\circ u)(\partial_t)\geq 0$ holds, or equivalently,
$d^c(h\circ u)(\partial_s)\leq 0$. 
We have computed above that
\begin{equation}
d^c (h \circ u) =   u^*(d^c h) - u^* d^c h (X_H) \cdot \alpha,
\end{equation}
and we now need to check that the restriction of this 1-form to
$\partial\Sigma$ is everywhere non-positive. 

The vanishing of $d^c h$ on each admissible Lagrangian $\Lambda_s$, by
Definition \ref{def:admissibleLagrangian}, and on the vector fields which
generate lifted admissible isotopies, by Lemma \ref{l:ham_lifted_isotopy},
imply that the only contribution to $u_{|\partial\Sigma}^*(d^ch)$ comes
from the fiberwise wrapping term $X_H\otimes \eta$ in the moving boundary
condition, so
$$d^c(h\circ u)_{|\partial \Sigma}=u^*d^ch(X_H)\cdot \eta - u^* d^ch(X_H)\cdot
\alpha_{|\partial\Sigma}.$$
The non-positivity of this quantity is now immediate, since
$d^c h(X_H)\geq 0$ and $\alpha_{|\partial\Sigma}\geq \eta$ pointwise by
assumption.
\end{proof}

\begin{remark} \label{rmk:maxprinciple_maxhv} In our setting, rather than being smooth, $h$ will be given
by the maximum of a finite collection of smooth plurisubharmonic functions
$h_{\mathbf{v}}$, where for each $\mathbf{v}$ the 1-form $d^c h_{\mathbf{v}}$
satisfies all the required
properties wherever $h_{\mathbf{v}}$ achieves the maximum (i.e., $h_{\mathbf{v}}=h$)
outside of $Y^{in}$. The above argument gives the maximum principle for all
$h_{\mathbf{v}}$ which achieve the maximum, and hence a fortiori for
$h=\max\{h_{\mathbf{v}}\}$.
\end{remark}

The next result asserts the existence of a bound of the geometric energy of solutions to \eqref{eq:pseudo-holomorphic_equation}: such a bound is necessary to appeal to any version of Gromov's compactness theorem, and requires fixing  a homotopy class   $\beta$ of maps from $(\Sigma,\partial\Sigma)$ to $(Y,\Lambda)$
with fixed asymptotic conditions,  given by generators of the Floer complexes, at the punctures of $\Sigma$.  The key point is that  Propositions \ref{prop:maxprinciple_base} and \ref{prop:maxprinciple_fiber}
provide maximum principles for the solutions of
\eqref{eq:pseudo-holomorphic_equation} in both base and fiber directions,
so that solutions which converge to given generators at the punctures of
$\Sigma$ remain within a fixed compact subset of $Y$. It thus suffices to bound the difference between the topological and geometric energy for solutions to a perturbed Cauchy-Riemann equation with image lying in a bounded region; this goes back all the way to Gromov's original paper \cite{Gromov} which established compactness for perturbed equations, and is standard for Hamiltonian perturbations. We nonetheless provide a detailed proof because of the (non-standard) appearance of the horizontal lift in our equation.
\begin{proposition}\label{prop:energy}
There is a constant $E_{max}(\beta)$ so that all 
solutions $u$ to \eqref{eq:pseudo-holomorphic_equation} in the homotopy class $\beta$ 
 satisfy the a priori bound 
\begin{equation}\label{eq:geom_energy}
E_{geom}(u) \leq E_{max}(\beta) 
\end{equation}
\end{proposition}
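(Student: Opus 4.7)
The strategy is classical: reduce the geometric energy to the topological energy plus correction terms of controlled size, exploiting the fact that the maximum principles already established confine the image of $u$ to a compact set. Concretely, Propositions \ref{prop:maxprinciple_base} and \ref{prop:maxprinciple_fiber} guarantee that for any solution $u$ in the homotopy class $\beta$, the image $u(\Sigma)$ is contained in a fixed compact subset $K \subset Y$ whose size depends only on $\beta$, via the asymptotic data at the punctures and the base-direction wrapping data $\rho^t$. On $K$ all the geometric quantities entering the perturbed equation (norms of $X_H$, $(\xi^\tau)^\#$, and the values of $H$) are uniformly bounded.

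Next I would derive the standard energy identity. Writing the inhomogeneous term as $Y = X_H \otimes \alpha - (\xi^\tau)^\# \otimes d\tau$, equation \eqref{eq:pseudo-holomorphic_equation} states that $du - Y$ is $(J,j)$-complex linear, so that $\tfrac12 |du - Y|^2\,dvol_\Sigma = (du - Y)^*\omega$ pointwise. Expanding and integrating over $\Sigma$ yields
\begin{equation*}
E_{geom}(u) = \int_\Sigma u^*\omega - \int_\Sigma d(H\circ u)\wedge \alpha - \int_\Sigma u^*(\iota_{(\xi^\tau)^\#}\omega)\wedge d\tau + \int_\Sigma \tfrac12 \omega(Y,Y).
\end{equation*}
The topological term $\int_\Sigma u^*\omega$ depends only on $\beta$ together with the fixed Lagrangian boundary conditions and asymptotic intersection points. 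For the Hamiltonian correction, Stokes' theorem converts it into a boundary integral of $(H\circ u)\,\alpha$ along $\partial\Sigma$, contributions at the punctures that are absorbed into the action values of the asymptotic generators, and a bulk integral $\int_\Sigma (H\circ u)\,d\alpha$; since $d\alpha \leq 0$ by \eqref{eq:alpha_subclosed} and $H \geq 0$, this last term is non-positive, and in any case bounded in absolute value by $\sup_K H \cdot \|d\alpha\|_{L^1}$.

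The remaining two summands involve the horizontal-lift piece, which is the non-standard feature here. The integrands $u^*(\iota_{(\xi^\tau)^\#}\omega)\wedge d\tau$ and $\omega(Y,Y)$ are pointwise bounded on $K$ by constants independent of $u$, and both are supported on a fixed compact subset of $\Sigma$, since $d\tau$ is compactly supported (as $\tau$ is constant near the punctures by hypothesis) and $(\xi^\tau)^\#$ vanishes where $d\tau$ vanishes. Integrating against the chosen area form on $\Sigma$ gives a uniform bound for both. Combining all pieces produces the desired $E_{max}(\beta)$.

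The main subtlety I anticipate lies in the bookkeeping at the punctures: because $\alpha_{|\partial\Sigma}$ and $\eta$ are prescribed by the moving boundary conditions, the boundary integral $\int_{\partial\Sigma} (H\circ u)\,\alpha$ is not negligible and must be correctly reinterpreted as part of the relative topological energy, namely as (differences of) action values of the generators at the punctures. This reconciliation between the action functional and the Floer-theoretic generators is standard for Hamiltonians of linear growth, but deserves care here because $H$ is unbounded and because the presence of the horizontal-lift term means the natural ``action functional'' on path space involves both a Hamiltonian and a base-direction contribution; one should check that these combine consistently with the generators defined via the combined isotopy flow of Lemma \ref{l:admissible_invariant}.
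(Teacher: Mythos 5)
Your overall strategy is the same as the paper's: express the geometric energy as a topological quantity plus correction terms, apply Stokes' theorem to the Hamiltonian piece, use $H\geq 0$ together with \eqref{eq:alpha_subclosed}--\eqref{eq:alpha_boundary_neg} to discard the non-positive parts, and invoke Propositions \ref{prop:maxprinciple_base} and \ref{prop:maxprinciple_fiber} to confine the image of $u$ to a fixed compact subset. Your closing observation that the boundary integral of $H$ must be absorbed into the relative topological energy is also correct; the paper does precisely this in the definition of $E_{top}([u])$, with an additional term for the Hamiltonian $K$ generating the lifted admissible isotopy.

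There is, however, a genuine gap in the way you handle the horizontal-lift term $\int_\Sigma d\tau\wedge u^*(\iota_{(\xi^\tau)^\#}\omega)$. You assert that this integrand is ``pointwise bounded on $K$ by constants independent of $u$,'' but that is false: the pullback $u^*(\iota_{(\xi^\tau)^\#}\omega)$ depends \emph{linearly on $du$}, and a uniform bound on the \emph{image} of $u$ does not give a pointwise bound on its \emph{derivative}. This is exactly where the paper has to work harder. Using $\omega(X_H,(\xi^\tau)^\#)=0$ and $\omega((\xi^\tau)^\#,(\xi^\tau)^\#)=0$, the paper rewrites
\[
\int_\Sigma d\tau\wedge\bigl(\iota_{(\xi^\tau)^\#}\omega\circ du\bigr) = \int_\Sigma d\tau\wedge\Bigl(\iota_{(\xi^\tau)^\#}\omega\circ\bigl(du - X_H\otimes\alpha + (\xi^\tau)^\#\otimes d\tau\bigr)\Bigr),
\]
and applies the Cauchy--Schwarz inequality to obtain a bound of the form $\bigl(\sup_\Omega|(\xi^\tau)^\#|\bigr)\,\|d\tau\|_{L^2}\,E_{geom}(u)^{1/2}$. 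The end result is therefore not a direct constant bound but the inequality $E_{geom}(u)\leq E_{top}([u]) + C_1 + C_2\,E_{geom}(u)^{1/2}$, which one must solve as a quadratic in $E_{geom}(u)^{1/2}$ to conclude. Without the Cauchy--Schwarz step and this final manipulation, your argument does not close.

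A minor point: the term $\int_\Sigma \tfrac12\omega(Y,Y)$ in your energy identity is identically zero, since $\omega(X_H,X_H)=\omega((\xi^\tau)^\#,(\xi^\tau)^\#)=0$ and $\omega(X_H,(\xi^\tau)^\#)=0$ ($X_H$ is vertical, $(\xi^\tau)^\#$ is horizontal). Including it is harmless, but noticing that it vanishes is what reduces the energy identity to the clean form the paper uses as its starting point.
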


\proof
Let $z=x+iy$ be coordinates near a point of $\Sigma$. Since
$du-X_H\otimes \alpha+(\xi^\tau)^\#\,d\tau$ is complex linear with respect
to $j$ and $J$, the integrand in \eqref{eq:geom_energy} is equal to
$$\omega\left(\partial_x u-X_H\!\otimes\!\alpha(\partial_x)+\partial_x\tau\,(\xi^\tau)^\#,
\partial_y u-X_H\!\otimes\!\alpha(\partial_y)+\partial_y\tau\,(\xi^\tau)^\#\right).$$
Since $X_H$ is tangent to the fibers of $W$ and $(\xi^\tau)^\#$ is
horizontal, $\omega(X_H,(\xi^\tau)^\#)=0$,
and so 
\begin{equation}\label{eq:geom_energy2}
E_{geom}(u)=\int_\Sigma u^*\omega -\alpha\wedge u^*(\iota_{X_H}\omega)+
d\tau \wedge u^*(\iota_{(\xi^\tau)^\#}\omega).
\end{equation}
This quantity is not invariant under deformations of the map $u$ relative to
the boundary condition $\Lambda$. On the other hand, the
variation of $\Lambda$ along the boundary of $\Sigma$ is described by a
vector field valued 1-form on $\partial\Sigma$ of the form $$X_H\otimes
\eta+X_K\otimes \vartheta,$$
where $K$ is a family of Hamiltonians (dependent on the point of
$\partial\Sigma$) generating the lifted isotopy, as in Lemma
\ref{l:ham_lifted_isotopy}. Then the variation of $\int_\Sigma u^*\omega$
along a vector field $v$ (tangent to $\Lambda$ at the boundary) is equal to
$$\int_{\partial\Sigma} \omega(v,\partial_s u)\,ds = \int_{\partial\Sigma}
\omega(v, X_H)\,\eta+\omega(v,X_K)\,\vartheta=\int_{\partial\Sigma}
dH(v)\,\eta+dK(v)\,\vartheta,$$ so the
{\em topological energy}
\begin{equation}\label{eq:top_energy}
E_{top}([u])=\int_\Sigma u^*\omega-\int_{\partial\Sigma} u^*H\cdot \eta
-\int_{\partial\Sigma} u^*K\cdot \vartheta\end{equation}
depends only on the relative homotopy class $[u]$ of the map $u$.

Returning to Equation \eqref{eq:geom_energy2}, Stokes' theorem expresses the second term as 
$$\int_\Sigma -\alpha\wedge u^*(\iota_{X_H}\omega)=-\int_\Sigma 
u^* dH\wedge \alpha =-\int_{\partial \Sigma} u^* H\cdot \alpha+\int_\Sigma
u^*H \cdot d\alpha.$$
Putting this together with Equation \eqref{eq:top_energy}, we conclude
that
\begin{multline}\label{eq:Egeom_vs_Etop}
E_{geom}(u)=E_{top}([u])+\int_{\partial\Sigma} u^*H\cdot
(\eta-\alpha_{|\partial\Sigma})+\int_\Sigma u^*H \cdot d\alpha \\
+ \int_{\partial\Sigma} u^*K\cdot \vartheta
+ \int_\Sigma d\tau \wedge u^*(\iota_{(\xi^\tau)^\#}\,\omega).
\end{multline}

The first two integrals in the right-hand side of 
\eqref{eq:Egeom_vs_Etop} are non-positive, since $H\geq 0$ by assumption
and $\alpha$ is required to satisfy \eqref{eq:alpha_subclosed} and \eqref{eq:alpha_boundary_neg}.

The existence of a compact subset $\Omega\subset Y$ which a priori contains the image of $u$  (as a consequence of Propositions \ref{prop:maxprinciple_base} and \ref{prop:maxprinciple_fiber}) provides a bound for the last two terms as follows: The third integral can be bounded by
$(\sup_\Omega |K|) \|\vartheta\|_{L^1(\partial\Sigma)}$, which depends only
on the size of $\Omega$ and the geometric bounds on the lifted isotopy of
the boundary condition $\Lambda$ within the compact subset $\Omega$.
Finally, the last integral can be rewritten as
\begin{equation} \label{eq:energy_horizterm}
\int_\Sigma d\tau\wedge \left(\iota_{(\xi^\tau)^\#}\omega \circ
du\right)=\int_\Sigma
d\tau \wedge \left(\iota_{(\xi^\tau)^\#}\omega \circ
(du-X_H\otimes\alpha+(\xi^\tau)^\#\otimes d\tau)\right).\end{equation}
Since the vector field $\xi^\tau$ vanishes at the critical values of $W$,
the norm of its horizontal lift $(\xi^\tau)^\#$ is bounded everywhere in
$\Omega$, and we can bound \eqref{eq:energy_horizterm} by
$$\bigl(\sup_\Omega |(\xi^\tau)^\#|\bigr)\,\|d\tau\|_{L^2(\Sigma)}\,
\bigl\|du-X_H\otimes \alpha+(\xi^\tau)^\#\otimes d\tau\bigr\|_{L^2(\Sigma)}.$$
Combining these bounds, we find that
\begin{equation}\label{eq:Egeom_bound}
E_{geom}(u)\leq E_{top}([u])+\bigl(\sup_\Omega
|K|\bigr)\,\|\vartheta\|_{L^1}+ \bigl(\sup_\Omega
|(\xi^\tau)^\#|\bigr)\,\|d\tau\|_{L^2} E_{geom}(u)^{1/2}.
\end{equation}
This implies a bound on $E_{geom}(u)$ in terms of the other quantities
appearing in \eqref{eq:Egeom_bound}.
\endproof

\begin{remark}
Proposition \ref{prop:energy} continues to hold if
\eqref{eq:pseudo-holomorphic_equation} is further modified by a compactly
supported (hence uniformly bounded) inhomogeneous perturbation term.
\end{remark}

\begin{remark}\label{rmk:invariant_up_to_homotopy}
In the next sections we will define Floer-theoretic operations in terms
of certain moduli spaces of solutions to (compactly supported perturbations of)
\eqref{eq:pseudo-holomorphic_equation}.
In each case we will make specific choices for the parameters $\alpha$ and $\tau$,
but we note that, since the set of allowable choices is contractible hence
connected, the operations we define are independent of these up to homotopy. Likewise for other auxiliary
data such as compactly 
supported inhomogeneous perturbation terms or deformations of the almost complex structure.
\end{remark}

\subsection{Definition of the directed category}
\label{sec:defin-direct-categ}

We fix a collection $\mathbf{L}$ of admissible Lagrangians in $Y$, for which 
the subset $\Delta$ appearing in Definition \ref{def:admissibleLagrangian}
is always the same, and whose images in $\C$ agree  near infinity with a fixed finite
collection of radial straight lines. (In our case $\Delta$ will be the
single point $\{-1\}$). Also fix a subset $\Delta'\supset \mathrm{crit}(W)$ (in our case $\Delta'$ will be a small disc centered at
the origin).

Let $\rho$ be an autonomous flow on $\C$ which fixes $\Delta\cup \Delta'$
and the negative real axis, maps radial lines to radial lines away from a compact set, and
moves all radial lines other than the negative real axis in the counterclockwise direction.
This isotopy preserves the admissibility of the arcs over which the objects of $\bfL$
fiber outside of $\Delta$. 
We define
\begin{equation}
  L(t) := \phi^{t} \rho^{t}(L),  
\end{equation}
where $\phi^t$ is the flow of the wrapping Hamiltonian $H$, and $\rho^t$ is
the lifted admissible isotopy generated by $\rho$. Since $\phi^t$ and
$\rho^t$ commute, we can think of this as an autonomous
flow on $Y$, in particular $(L(t))(t')=L(t+t')$.

By construction, the admissible arcs over which $L(t+\lambda)$ and $L'(t)$ fiber
outside of $\Delta$ are asymptotic to different straight lines for all but finitely many values
of $\lambda$. We will essentially require that, in the fiberwise direction, these
Lagrangians also go to infinity in different directions for generic $\lambda$, so
that their intersections are contained in a compact subset. More precisely,
we assume: 
\begin{equation} 
\label{eq:t-distance_bounded_from_0}
\parbox{\mywidth}{there exists an open (or Baire) dense set $U\subset\R$
such that, for all $L$ and $L'$ in $\bfL$ and $\lambda\in U$, $L(\lambda)\cap L'$ is contained in a
compact subset of $Y$ (the same then holds for $L(t+\lambda)\cap L'(t)$ for all $t\in \R$).}
\end{equation}
(In our case it will be possible to choose the compact subset in \eqref{eq:t-distance_bounded_from_0} to be
independent of $L$, $L'$ and $\lambda$, but there is no reason to require this in general.)


In addition, we impose the following conditions on elements $L \in \bfL$:
\begin{align} \label{eq:no_hol_discs}
  & \parbox{\mywidth}{for all $t\in \R$, $L(t)$ does not bound any (unperturbed) holomorphic discs;} \\ \label{eq:spingraded}
& \parbox{\mywidth}{$L$ is equipped with a spin structure and with a grading (i.e., after choosing a
holomorphic volume form $\Omega$ on $Y$, a lift of the phase map $\arg(\Omega_{|L})$ to $\R$).}
\end{align}

\noindent 
Condition \eqref{eq:no_hol_discs}, which may be replaced by unobstructedness,
ensures that Floer homology is well-defined; while \eqref{eq:spingraded}
ensures that it is $\Z$-graded and can be constructed over a field of characteristic zero.

We will also on occasion equip Lagrangians in $\mathbf{L}$
with local systems; since this will only come up in specific places, we omit
local systems from the notation for now.

\begin{lemma}\label{l:good_arithmetic_progression}
There are arbitrarily small values of $\epsilon>0$ such that, for each pair of Lagrangians $L_0, L_1 \in \bfL$, and
for all integers $ k_0 \neq k_1 $,
\begin{equation} \label{eq:good_arithmetic_progression}
 \parbox{\mywidth}{the images of $L_0(\epsilon k_0)$ and $L_1(\epsilon k_1)$
under $W$ are asymptotic to different radial straight lines in $\C$, and
$L_0(\epsilon k_0)\cap L_1(\epsilon k_1)$ is compact.}
\end{equation}
\end{lemma}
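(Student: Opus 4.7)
The plan is to reduce both requirements in \eqref{eq:good_arithmetic_progression} to Baire-type statements on $\epsilon \in (0,\infty)$. Outside a large compact subset of $\C$, the flow $\rho$ preserves the foliation by radial rays and induces a flow $\psi^t$ on the space of angles, which is strictly increasing on $(-\pi,\pi)$ (only $\pi$ is fixed, by hypothesis on $\rho$). Let $\Theta \subset (-\pi,\pi)$ be the (finite) set of angles of the fixed radial lines that govern the behavior of $\bfL$ at infinity; for each $L \in \bfL$ with asymptotic angle set $\Theta_L \subset \Theta$, the projection $W(L(t))$ is asymptotic at infinity to the rays at angles $\psi^t(\Theta_L)$.

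First I would analyze the condition on asymptotic radial lines. Its failure means $\psi^{\epsilon(k_0-k_1)}(\theta) = \theta'$ for some $\theta, \theta' \in \Theta$. If $\theta = \theta'$, strict monotonicity of $\psi^s$ (with no fixed points in $(-\pi,\pi)$) forces $\epsilon(k_0 - k_1) = 0$, impossible for $k_0 \neq k_1$. If $\theta \neq \theta'$, there is a unique $\tau(\theta, \theta') \in \R \setminus \{0\}$ with $\psi^{\tau(\theta, \theta')}(\theta) = \theta'$, forcing $\epsilon = \tau(\theta,\theta')/(k_0 - k_1)$. Since $\Theta$ is finite and $k_0 - k_1$ ranges over $\Z \setminus \{0\}$, the bad set
\begin{equation*}
S_a := \left\{ \tau(\theta, \theta')/m : \theta, \theta' \in \Theta,\ \theta \neq \theta',\ m \in \Z \setminus \{0\} \right\} \cap (0, \infty)
\end{equation*}
is countable.

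Next I would handle the compactness condition. Since $\phi^{\epsilon k_1} \rho^{\epsilon k_1}$ is a diffeomorphism of $Y$,
\begin{equation*}
L_0(\epsilon k_0) \cap L_1(\epsilon k_1) = \phi^{\epsilon k_1} \rho^{\epsilon k_1}\bigl(L_0(\epsilon(k_0 - k_1)) \cap L_1\bigr),
\end{equation*}
so compactness on the left reduces to compactness of $L_0(\epsilon m) \cap L_1$ for every $m \in \Z \setminus \{0\}$. Assumption \eqref{eq:t-distance_bounded_from_0} supplies an open (or Baire) dense $U_{L_0,L_1} \subset \R$ for which $L_0(\lambda)\cap L_1$ is compact when $\lambda \in U_{L_0,L_1}$; hence it suffices that $\epsilon \in \bigcap_{m \neq 0} m^{-1} U_{L_0,L_1}$ for every pair. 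Each $m^{-1} U_{L_0,L_1}$ is a dense $G_\delta$ in $\R$, and assuming (as is standard when constructing the Fukaya category one finite object set at a time) that one works with a countable subcollection of $\bfL$, the intersection over all nonzero $m$ and all pairs is a countable intersection of dense $G_\delta$ sets, hence itself a dense $G_\delta$ by the Baire category theorem.

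Combining, the set of good $\epsilon$ is the intersection of a dense $G_\delta$ with the complement of the countable set $S_a$, still a dense $G_\delta$ in $(0,\infty)$ and hence accumulating at $0$. The main technical point is verifying that the asymptotic angular dynamics of $\rho$ are strictly monotone with only $\pi$ as fixed point so that Step 1 truly yields a countable bad set; this is immediate from the hypothesis that $\rho$ moves every non-negative-real-axis radial ray counterclockwise.
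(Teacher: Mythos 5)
Your argument is correct and follows essentially the same Baire-category route as the paper: reduce (using that $L_0(\epsilon k_0)\cap L_1(\epsilon k_1)$ is diffeomorphic to $L_0(\epsilon(k_0-k_1))\cap L_1$) to requiring $\epsilon \in \bigcap_{m\neq 0} m^{-1}U$ and then discard a countable bad set for the angular condition. One small misreading worth correcting: Condition \eqref{eq:t-distance_bounded_from_0} already provides a \emph{single} dense set $U$ that works uniformly over all pairs $L,L'\in\bfL$, so introducing per-pair sets $U_{L_0,L_1}$ and then invoking a countability hypothesis on $\bfL$ is unnecessary; working with the uniform $U$ removes the need for that extra assumption. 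The paper also handles the angular coincidence differently — by deleting a finite set of values directly from $U$ rather than subtracting a countable set from $(0,\infty)$ — but this is cosmetic.
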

\begin{proof}
After removing a finite set of
values $u$ from the set $U$ in Condition \eqref{eq:t-distance_bounded_from_0}
we can assume that for $\lambda\in U$ the images of $L(t+\lambda)$ and 
$L'(t)$ under $W$ are asymptotic to different radial straight lines 
in $\C$. Now the desired properties hold whenever
$\epsilon$ lies in the intersection of the sets $k^{-1}\cdot U\subset \R$,
for all positive integers $k$.
This is a countable intersection of Baire sets and hence dense as well.
\end{proof}



Choose $0< \epsilon$ such that 
Condition  \eqref{eq:good_arithmetic_progression} holds for all pairs of objects. 

We construct a directed category  $\O$ with objects $L^k:=L(-\epsilon k)$ for 
all $k  \in \Z$ and $L \in \bfL$, whose morphisms are 
\begin{equation}
\O(L_{0}^{k_0},L_{1}^{k_1}) \equiv \begin{cases} CF^{*}(L_{0}(-\epsilon k_0), L_{1}(-\epsilon k_1))  & \textrm{ if } k_0 < k_1 \\
\K\cdot\mathrm{id} &   \textrm{ if } k_0 = k_1 \textrm{ and } L_0 =  L_1 \\
0 & \textrm{ otherwise.}
\end{cases} 
\end{equation}

The $A_{\infty}$ structure is obtained by counting solutions to pseudo-holomorphic curve equations
(for suitable $J$, see Remark \ref{rmk:genericJtransverse})
with compactly supported inhomogeneous perturbation terms
(when the integers $k_0,k_1,\dots$ form a strictly increasing sequence; in
all other cases the structure maps are defined tautologically). The
compactly supported perturbations are used to achieve transversality, and
are chosen in a consistent manner (cf.\ e.g.\ \cite{SeBook}).  Since we 
work over the Novikov field, the count of solutions in each
homotopy class is weighted by (topological) energy (as well as the bulk
deformation class, and holonomies of local systems along the boundary of
the disc when applicable). 

The key compactness property required for this construction is
a direct consequence of the maximum principle:

\begin{lemma}\label{l:holom_discs_stay_bounded}
Given any sequence of Lagrangians $L_0,\dots,L_r\in \bfL$ and integers $k_0<k_1<\dots<k_r$, 
there exists a bounded subset of\/ $Y$ which contains the images of all 
$J$-holomorphic discs with boundary on
$L_0(-\epsilon k_0)\cup\dots\cup L_r(-\epsilon k_r)$. 
The same property also holds in the presence of a compactly supported inhomogeneous
perturbation. 
\end{lemma}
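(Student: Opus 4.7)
The plan is to reduce the statement to the two maximum principles already established, namely Proposition~\ref{prop:maxprinciple_base} (control of $|W\circ u|$ in the base) and Proposition~\ref{prop:maxprinciple_fiber} (control of $h\circ u$ in the fiber), and then invoke the properness assumptions on $h$ along the fibers of $W$ to turn a bound on these two quantities into a bound on the image of~$u$.

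First I would observe that the boundary conditions in this setting are \emph{static}: on each boundary component of the disc $u$ is constrained to lie on a single Lagrangian $L_i(-\epsilon k_i)\in\bfL$, and these are admissible in the sense of Definition~\ref{def:admissiblearc}--\ref{def:admissibleLagrangian}. Consequently, in the notation of Section~\ref{sec:maximum-principle}, we may take the $1$-forms $\alpha$, $\eta$, $\vartheta$ and the function $\tau$ all identically zero, and the family $\rho^t$ of base isotopies to be trivial; the moving-boundary conditions \eqref{eq:maxprinciple_cond_fiberwise} and \eqref{eq:maxprinciple_cond_base} are then vacuously satisfied. The only inhomogeneous term in the equation $u$ satisfies is the compactly supported perturbation used to achieve transversality; in particular, outside some compact subset $K_{\mathrm{pert}}\subset Y$ containing the support of the perturbation, $u$ is genuinely $J$-holomorphic.

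Next, I would apply Proposition~\ref{prop:maxprinciple_base} to conclude that $|W\circ u|$ cannot attain a local maximum on the complement of $K_{\mathrm{pert}}\cup W^{-1}(\mathrm{crit}(W))$, either at an interior point or along the boundary (using the Neumann argument and the fact that each $L_i(-\epsilon k_i)$ fibers at infinity over admissible arcs along which distance from the origin strictly increases). Similarly, Proposition~\ref{prop:maxprinciple_fiber} (or its variant in Remark~\ref{rmk:maxprinciple_maxhv}) implies that $h\circ u$ satisfies the maximum principle outside $K_{\mathrm{pert}}\cup Y^{in}$: interior because $h$ is weakly plurisubharmonic, and boundary because $d^c h$ vanishes on each $L_i(-\epsilon k_i)$ outside of $Y^{in}$ by admissibility. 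Consequently, both $|W\circ u|$ and $h\circ u$ attain their suprema either inside the compact set $K_{\mathrm{pert}}\cup Y^{in}$, or at the asymptotic limits of $u$ at the punctures of $\Sigma$.

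The asymptotic limits are intersection points of the Lagrangians $L_i(-\epsilon k_i)\cap L_j(-\epsilon k_j)$ for various $i<j$, together with the case $i=j$ (the identity morphism, which contributes nothing in the non-trivial operations). By our choice of $\epsilon$ in Lemma~\ref{l:good_arithmetic_progression}, these intersection loci lie in a common compact subset of~$Y$. Thus both $|W\circ u|$ and $h\circ u$ are bounded by a constant depending only on the Lagrangians $L_i$, the integers $k_i$, $\epsilon$, and the perturbation. Finally, since $h$ is proper on each fiber of $W$, the set $\{p\in Y\,:\,|W(p)|\leq C_1,\ h(p)\leq C_2\}$ is compact, proving the claim. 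The case of no inhomogeneous perturbation is identical with $K_{\mathrm{pert}}=\emptyset$. The only subtle point, which is essentially a matter of bookkeeping, is to ensure that $K_{\mathrm{pert}}$ is chosen large enough to contain the region $W^{-1}(\Delta')$ where the horizontal compatibility of $h$ may fail (cf.\ Remark~\ref{rmk:horiz_exclude_crit}); since $\Delta'$ is taken to be a small neighborhood of $\mathrm{crit}(W)=\{0\}$ and the Lagrangians $L_i(-\epsilon k_i)$ project to admissible arcs disjoint from $\Delta'$, this enlargement does not affect the argument.
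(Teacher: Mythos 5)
Your proposal is correct and follows exactly the route taken in the paper, which simply invokes Propositions~\ref{prop:maxprinciple_base} and \ref{prop:maxprinciple_fiber} in the special case of static boundary conditions with no moving perturbation; your write-up merely expands on that. One small imprecision worth flagging: you say the suprema of $|W\circ u|$ and $h\circ u$ are attained inside ``the compact set $K_{\mathrm{pert}}\cup Y^{in}$,'' but $Y^{in}$ is \emph{not} compact (it is defined by $h\leq r(|W|)$ with no bound on $|W|$). The argument should therefore be sequenced: first Proposition~\ref{prop:maxprinciple_base} gives $|W\circ u|\leq C_1$ using only the admissible-arc boundary behavior and the compactness of the asymptotic intersection loci; only then does $Y^{in}\cap\{|W|\leq C_1\}$ become compact, and Proposition~\ref{prop:maxprinciple_fiber} bounds $h\circ u$ by $\max\bigl(r(C_1),\,\sup h\ \text{over the asymptotics}\bigr)$. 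With that reordering the rest of your reasoning, including the final properness step, goes through.
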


\begin{proof}
This follows immediately from Propositions \ref{prop:maxprinciple_base} and 
\ref{prop:maxprinciple_fiber},
in the special case where the Lagrangian boundary condition remains 
constant along each component of $\partial \Sigma$ and there are no 
perturbation terms.
\end{proof}

\begin{remark}\label{rmk:genericJtransverse}
Disc bubbling is excluded by assumption \eqref{eq:no_hol_discs}, but sphere
bubbling can happen in our setting, so the regularity of the moduli spaces we
consider is not immediate. 

To deal with sphere bubbling, we assume that $J$ is chosen
generically within a suitable class of compatible almost-complex structures,
so that simple $J$-holomorphic spheres are regular, and evaluation maps at
interior points for somewhere injective $J$-holomorphic curves
are mutually transverse (see \cite[Theorem 3.4.1]{mcduff-salamon} for
the closed case; the argument works similarly for discs). For
our main example the standard complex structure is not regular, but all
holomorphic spheres lie inside $W^{-1}(0)$, so it is enough to perturb $J$
in a neighborhood of $W^{-1}(0)$ (or, in fact, its intersection with the
bounded subset provided by Lemma \ref{l:holom_discs_stay_bounded}, so that
the conditions we have set in Section \ref{sec:LGmodel-setup} on the
geometry at infinity are not affected).

With this understood, bubbling of simple $J$-holomorphic spheres is a
real codimension 2 phenomenon, and does not affect our ability to count
solutions to Floer's equations in zero-dimensional moduli spaces, or to
compare counts of solutions by considering one-dimensional moduli spaces.
Moreover, since $c_1(Y)=0$ we need not worry about multiply covered sphere 
bubbles either. Indeed, regularity for simple spheres implies that
for generic $J$ the union of the images 
of all pseudo-holomorphic spheres in $Y$ has real codimension 4.
By transversality of evaluation maps, it is therefore disjoint from the
images of holomorphic discs (or solutions to Floer's equation)
in (a fixed countable collection of)
zero- or one-dimensional moduli spaces.
\end{remark}

\subsection{Quasi-units and continuation maps}\label{ss:quasiunits}

The next ingredient in the construction of the fiberwise
wrapped category $\Wrap(Y, W)$ is a distinguished
collection of morphisms
\begin{equation}
e_{L^k} \in HF^{0}(L^{k}, L^{k+1})
\end{equation}
for all $L\in \bfL$ and $k\in \Z$, called {\em quasi-units}.

The quasi-unit $e_{L^k}$ is the image of the
identity in $H^0(L)$ under a PSS-type homomorphism
from $H^*(L)$ to $HF(L^k,L^{k+1})$ which can be constructed exactly as in 
\cite{Albers} (see below for the specific case at hand); note
however that the reverse map from $HF(L^k,L^{k+1})$ to $H^*(L)$ is not well-defined
in our setting, as it involves Floer data for which the analytic estimates of \S
\ref{sec:maximum-principle} do not hold. (Nonetheless, given that our
Lagrangians do not bound any holomorphic discs, the PSS map often turns out 
to be an isomorphism for small enough $\epsilon$, under additional geometric
assumptions which ensure that $L^{k+1}$ is contained within a 
Weinstein tubular neighborhood of $L^k$; this is e.g.\ the case in our main
example, by Proposition \ref{prop:CFL0explicit}.)

Chain-level quasi-units can be constructed by counting solutions
to a Cauchy-Riemann equation with moving boundary condition,
whose domain $\Sigma$ is a disc with a single boundary puncture which we consider
as an output, and where the boundary condition $\Lambda$ is given by the isotopy
$L^{t}=L(-\epsilon t)$, $t\in [k,k+1]$ (parametrized using some choice of
monotonically increasing smooth function from $\partial \Sigma$ to 
$[k,k+1]$ which is constant near the ends). Since the isotopy along $\partial\Sigma$
moves the complex plane in the clockwise direction and wraps fiberwise in
the negative direction only, we can apply the results of Section
\ref{sec:maximum-principle}, with $\alpha\equiv 0$ and $\tau\equiv 0$,
to control the behavior of solutions. We denote again by 
\begin{equation}
e_{L^k}\in CF^0(L(-\epsilon k), L(-\epsilon(k+1)))=\O(L^k,L^{k+1})
\end{equation}
the chain-level quasi-unit constructed in this manner. While
$e_{L^k}$ depends on auxiliary 
choices (e.g., of a function from $\partial \Sigma$ to 
$[k,k+1]$), the chain-level quasi-units constructed using different choices
only differ by an explicit homotopy, and can be used interchangeably.

Let $Z$ denote the collection of all such morphisms. The fiberwise
wrapped category $\Wrap(Y, W)$ is the localisation of $\O$ with respect 
to these morphisms (i.e., the quotient of $\O$ by the cones of the morphisms in $Z$,
in the sense of Lyubashenko-Ovsienko \cite{Lyubashenko}; see
also \cite[\S 3.1.3]{GPS1}, as well as Section \ref{ss:colimits} below):
\begin{equation}
  \Wrap(Y,W) := Z^{-1} \O.
\end{equation}

We shall use a concrete model of the morphisms in $\Wrap(Y,W)$, introduced in the next
section, in which they are expressed as homotopy
colimits (i.e., direct limits) of morphism spaces in $\O$. In order to compute these morphism spaces explicitly in terms of Floer theory, we shall introduce {\em continuation maps}
\begin{equation} \label{eq:continuation_map}
F_{L_0^k,L_1^j}: \O(L_0^k,L_1^j)\to \O(L_0^{k+1},L_1^{j+1}).
\end{equation}
These are defined by counting solutions to a perturbed
Cauchy-Riemann equation, with domain $\Sigma=\R\times [0,1]$, and
where the boundary conditions are given by $\Lambda_{s,0}=L_0^{
k+\chi(s)}=L_0(-\epsilon(k+\chi(s)))$ along $\R\times \{0\}$ and 
$\Lambda_{s,1}=L_1^{j+ \chi(s)}=L_1(-\epsilon(j+\chi(s)))$ along $\R\times \{1\}$. Here $\chi:\R\to [0,1]$ is a
monotonically decreasing smooth function, constant near the ends, so that
the boundary conditions are $(L_0^{k}, L_1^{j})$
at the input end $s\to +\infty$, and 
$(L_0^{k+1}, L_1^{j+1})$ at the output end
$s\to -\infty$.

We use the setup of Section \ref{sec:maximum-principle}, with a fiberwise
wrapping perturbation given by $\alpha=-\epsilon \chi'(s)\,ds$ (so that
$d\alpha=0$ and $\alpha_{|\partial\Sigma}=\eta$), and a horizontal
perturbation given by the autonomous flow $\rho$ and $\tau=\epsilon \chi(s)$
(so $\rho^\tau$ exactly cancels the horizontal isotopy of the boundary
condition).  Propositions \ref{prop:maxprinciple_base},
\ref{prop:maxprinciple_fiber}, and \ref{prop:energy} then imply that the
counts of index 0 solutions to \eqref{eq:pseudo-holomorphic_equation} (weighted by topological energy) can be used
to define $F_{L_0^k,L_1^j}$.

Despite the slight differences in 
technical setup, these continuation maps have all the usual properties of continuation maps
associated to symplectic isotopies in Lagrangian Floer theory: they are quasi-isomorphisms, and
extend to an $A_\infty$-functor $F:\O\to \O$ which acts on objects
by $L^k\mapsto L^{k+1}$. Since we shall not need these properties, we omit the proofs.

%
%

\begin{lemma}\label{l:quasiunit_natural}
The quasi-units are natural with respect to continuation maps, in the sense
that both triangles in the diagram

\begin{equation}\label{eq:quasiunit_natural}
\begin{psmatrix}[colsep=2cm,rowsep=2cm]
\O(L_0^{k+1},L_1^j) & \O(L_0^{k+1},L_1^{j+1}) \\
\O(L_0^k,L_1^j) &  \O(L_0^k,L_1^{j+1})
\end{psmatrix}
\psset{nodesep=5pt,arrows=->}
\ncline{2,1}{1,2} \tbput[tpos=0.7]{F_{L_0^k,L_1^j}}
\ncline{1,1}{2,1} \tlput{\mu^2(\cdot,e_{L_0^k})}
\ncline{1,1}{1,2} \taput{\mu^2(e_{L_1^j},\cdot)}
\ncline{2,1}{2,2} \tbput{\mu^2(e_{L_1^j},\cdot)}
\ncline{1,2}{2,2} \trput{\mu^2(\cdot,e_{L_0^k})}
\end{equation}\vskip3mm

\noindent are commutative up to homotopy.
\end{lemma}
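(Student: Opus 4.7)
The plan is to prove the upper triangle via a parametrized moduli space argument; the lower triangle is treated by an entirely analogous argument with the roles of the two boundary components reversed.

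First I would represent both sides of the desired identity $F\circ \mu^2(\cdot,e_{L_0^k})\simeq \mu^2(e_{L_1^j},\cdot)$ as counts of pseudoholomorphic maps on a single bigon $\Sigma$ (a disc with one input and one output boundary puncture), with differing choices of moving Lagrangian boundary condition and perturbation data inherited from the standard $A_\infty$-gluing. For $\mu^2(e_{L_1^j},\cdot)$, the $L_0$-side of $\Sigma$ is constantly $L_0^{k+1}$ while the $L_1$-side carries a localised cap transition $L_1^j\to L_1^{j+1}$ coming from the glued quasi-unit half-disc. For $F\circ \mu^2(\cdot,e_{L_0^k})$, the $L_1$-side carries a transition $L_1^j\to L_1^{j+1}$ spread over the continuation region, while the $L_0$-side carries two canceling transitions $L_0^{k+1}\to L_0^k\to L_0^{k+1}$ (one from the glued quasi-unit for $e_{L_0^k}$, one from the continuation strip). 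Crucially, the net fiberwise wrapping along the $L_0$-side of $\partial\Sigma$ vanishes in both configurations.

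I would next construct a smooth one-parameter family $(\Lambda^r,\alpha^r,\tau^r)$, $r\in [0,1]$, of moving boundary conditions and perturbation data on $\Sigma$ interpolating between the two configurations above. Geometrically, as $r$ varies the two canceling $L_0$-side caps are slid together and annihilated (possible precisely because their net wrapping is zero), while the $L_1$-side transition is smoothly condensed from a wide continuation region into a localised cap. At each $r$, I would take $\alpha^r$ to coincide with $\eta^r$ pointwise along $\partial\Sigma$ and extend it to a sub-closed 1-form in the interior (this extension exists since $\int_{\partial\Sigma}\eta^r=-\epsilon\leq 0$ throughout the family). The base-direction data $\rho^{\tau^r}$ is chosen so that the admissible arcs under $W$ rotate clockwise outside a compact set, so that condition \eqref{eq:maxprinciple_cond_base} holds at every $r$. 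Together with a generic choice of inhomogeneous perturbation (cf.\ Remark \ref{rmk:genericJtransverse}), Propositions \ref{prop:maxprinciple_base}, \ref{prop:maxprinciple_fiber}, and \ref{prop:energy} then yield a compact, transversely cut out parametrized moduli space.

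The count of rigid solutions defines a degree $-1$ map $h\co \O(L_0^{k+1},L_1^j)\to \O(L_0^{k+1},L_1^{j+1})$, and a standard analysis of the boundary of the $1$-dimensional component of the moduli space---the contributions from $r=0$, from $r=1$, and from strip-breaking at the two punctures---yields the chain homotopy identity $\mu^1\circ h+h\circ \mu^1 = F\circ \mu^2(\cdot,e_{L_0^k})-\mu^2(e_{L_1^j},\cdot)$. I expect the main obstacle to lie in constructing the interpolating family so that the admissibility conditions of Section \ref{sec:maximum-principle} are preserved uniformly in $r$, in particular arranging the cancellation of the two $L_0$-side transitions and the simultaneous condensation of the $L_1$-side transition without ever violating the clockwise rotation condition in the base or producing a boundary wrapping that cannot be dominated by a sub-closed 1-form. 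Once that is in place, the remainder follows from the Floer-theoretic machinery already developed in Sections \ref{sec:LGmodel-setup}--\ref{sec:defin-direct-categ}.
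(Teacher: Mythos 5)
Your proposal is correct and matches the paper's proof in all essentials: both construct a one-parameter family of perturbed Cauchy--Riemann problems on a strip with moving Lagrangian boundary conditions interpolating between the two composite operations, with the maximum principle and energy bounds of Propositions \ref{prop:maxprinciple_base}--\ref{prop:energy} supplying compactness, and the chain homotopy extracted from index $-1$ solutions in the family. The only cosmetic difference is in the implementation of the interpolation: the paper keeps the two transition regions $I_1,I_2$ at fixed locations and shrinks the two canceling $L_0$-side boundary jumps to zero magnitude while linearly transferring the $L_1$-side jump from $I_1$ to $I_2$, rather than sliding the caps together as you describe, but the effect and the admissibility bookkeeping are the same.
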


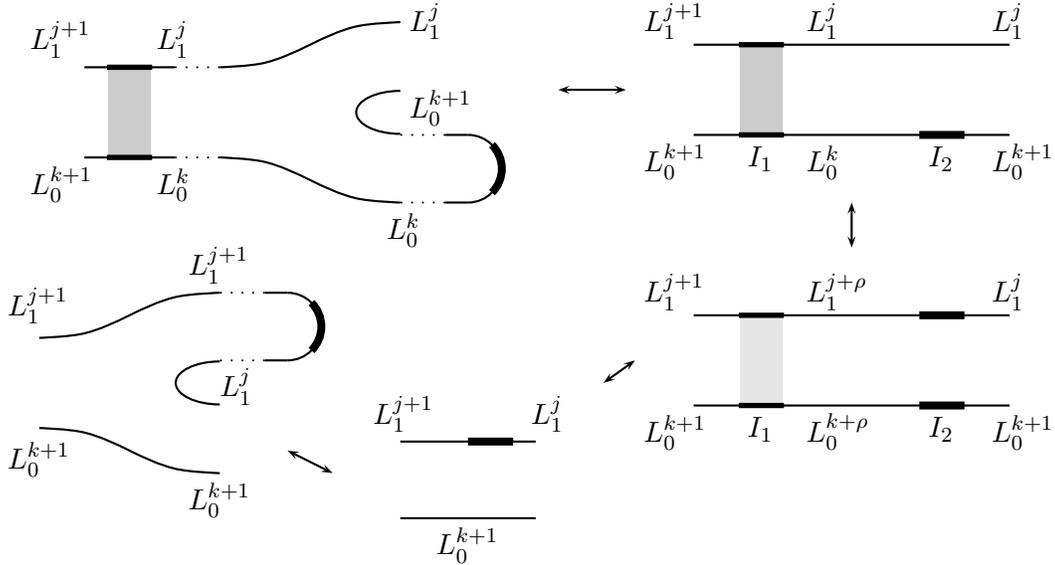
\begin{figure}[t]
\setlength{\unitlength}{6mm}
\begin{picture}(23,12)(-0.5,-1.5)
\newgray{litegray}{0.8}
\newgray{vlitegray}{0.9}
\psset{unit=\unitlength}
\psline(1,9)(3,9) 
\psline[linestyle=dotted](3,9)(4,9)
\psline(1,7)(3,7)
\psframe[fillstyle=solid,fillcolor=litegray,linestyle=none](1.5,7)(2.5,9)
\psline[linewidth=2pt](1.5,7)(2.5,7)
\psline[linewidth=2pt](1.5,9)(2.5,9)
\psline[linestyle=dotted](3,7)(4,7)
\pscurve(4,9)(5,9.1)(7,9.9)(8,10)
\pscurve(4,7)(5,6.9)(7,6.1)(8,6)
\psellipticarc(8,8)(1,0.5){90}{270}
\psline[linestyle=dotted](8,6)(9,6)
\psline[linestyle=dotted](8,7.5)(9,7.5)
\psline(9,6)(9.5,6)
\psline(9,7.5)(9.5,7.5)
\psarc(9.5,6.75){0.75}{-90}{90}
\psarc[linewidth=3pt](9.5,6.75){0.75}{-45}{45}
\put(-0.2,9.4){\small $L_1^{j+1}$}
\put(2.6,9.4){\small $L_1^j$}
\put(-0.2,6.2){\small $L_0^{k+1}$}
\put(2.6,6.2){\small $L_0^k$}
\put(8.2,9.8){\small $L_1^j$}
\put(8.2,7.9){\small $L_0^{k+1}$}
\put(7.7,5.2){\small $L_0^k$}
\psline{<->}(11.5,8.5)(13,8.5)
\psline(14.5,7.5)(21.5,7.5)
\psline(14.5,9.5)(21.5,9.5)
\psframe[fillstyle=solid,fillcolor=litegray,linestyle=none](15.5,7.5)(16.5,9.5)
\psline[linewidth=2pt](15.5,7.5)(16.5,7.5)
\psline[linewidth=2pt](15.5,9.5)(16.5,9.5)
\psline[linewidth=3pt](19.5,7.5)(20.5,7.5)
\put(13.4,6.7){\small $L_0^{k+1}$}
\put(13.4,9.8){\small $L_1^{j+1}$}
\put(17,6.7){\small $L_0^k$}
\put(17,9.8){\small $L_1^j$}
\put(21.1,6.7){\small $L_0^{k+1}$}
\put(21.1,9.8){\small $L_1^j$}
\put(15.7,6.8){\small $I_1$}
\put(19.7,6.8){\small $I_2$}
\psline{<->}(18,6)(18,5)
\psline(14.5,1.5)(21.5,1.5)
\psline(14.5,3.5)(21.5,3.5)
\psframe[fillstyle=solid,fillcolor=vlitegray,linestyle=none](15.5,1.5)(16.5,3.5)
\psline[linewidth=2pt](15.5,1.5)(16.5,1.5)
\psline[linewidth=2pt](15.5,3.5)(16.5,3.5)
\psline[linewidth=3pt](19.5,1.5)(20.5,1.5)
\psline[linewidth=3pt](19.5,3.5)(20.5,3.5)
\put(13.4,0.7){\small $L_0^{k+1}$}
\put(13.4,3.8){\small $L_1^{j+1}$}
\put(17,0.7){\small $L_0^{k+\rho}$}
\put(17,3.8){\small $L_1^{j+\rho}$}
\put(21.1,0.7){\small $L_0^{k+1}$}
\put(21.1,3.8){\small $L_1^j$}
\put(15.7,0.8){\small $I_1$}
\put(19.7,0.8){\small $I_2$}
\psline{<->}(13.2,2.5)(12.5,2)
\psline(8,0.7)(11,0.7)
\psline(8,-1)(11,-1)
\psline[linewidth=3pt](9.5,0.7)(10.5,0.7)
\put(8.8,-1.8){\small $L_0^{k+1}$}
\put(7.4,1.1){\small $L_1^{j+1}$}
\put(10.9,1.1){\small $L_1^j$}
\pscurve(0,3)(1,3.1)(3,3.9)(4,4)
\pscurve(0,1)(1,0.9)(3,0.1)(4,0)
\psellipticarc(4,2)(1,0.5){90}{270}
\psline[linestyle=dotted](4,4)(5,4)
\psline[linestyle=dotted](4,2.5)(5,2.5)
\psline(5,4)(5.5,4)
\psline(5,2.5)(5.5,2.5)
\psarc(5.5,3.25){0.75}{-90}{90}
\psarc[linewidth=3pt](5.5,3.25){0.75}{-45}{45}
\put(-0.7,3.5){\small $L_1^{j+1}$}
\put(3.3,4.4){\small $L_1^{j+1}$}
\put(-0.7,0.1){\small $L_0^{k+1}$}
\put(3.3,-0.8){\small $L_0^{k+1}$}
\put(4,1.7){\small $L_1^j$}
\psline{<->}(6.5,0)(5.5,0.5)
\end{picture}
\caption{A homotopy between $\mu^2(e_{L_1^j},\cdot)$ and 
$F(\mu^2(\cdot,e_{L_0^k}))$.}\label{fig:quasiunit_natural1}
\end{figure}

\proof
We start with the upper triangle, i.e.\ the homotopy between 
$F(\mu^2(\cdot,e_{L_0^k}))$ and $\mu^2(e_{L_1^j},\cdot)$. The
argument relies on comparing a series of moduli spaces of perturbed
holomorphic curves, presented pictorially on Figure 
\ref{fig:quasiunit_natural1}, where the thick edges correspond to 
regions where the Lagrangian boundary condition is moving and
the shaded areas correspond to the support of the inhomogeneous
perturbation terms in~\eqref{eq:pseudo-holomorphic_equation}.

The main protagonists in the homotopy are a family of perturbed
holomorphic strips with domain $\Sigma=\R\times [0,1]$, depicted
on the right-hand side of Figure \ref{fig:quasiunit_natural1}. 
Fix two
disjoint compact intervals $I_1,I_2\subset \R$, with $I_1$ to the
left of $I_2$, as well as two smooth monotonic functions
$\chi_1,\chi_2:\R\to [0,1]$, such that
$\chi_1$ equals 1 to the left of $I_1$ and 0 to its right, while
$\chi_2$ equals 0 to the left of $I_2$ and 1 to its right; we arrange
that the ``profiles'' of these functions are identical to those used
in the construction of the continuation maps and quasi-units.
Also fix a parameter $\rho\in [0,1]$, and define 
\begin{align*}
k_\rho(s)&=k+\rho+(1-\rho)\chi_1(s)+(1-\rho)\chi_2(s),\\
j_\rho(s)&=j+\rho+(1-\rho)\chi_1(s)-\rho\chi_2(s).
\end{align*}
Along $\R\times \{0\}$ we consider
the moving boundary condition $\Lambda_{0,s}=L_0^{k_\rho(s)}$, 
while along $\R\times \{1\}$ we use
$\Lambda_{1,s}=L_1^{j_\rho(s)}$.
While the boundary condition $\Lambda_{1,s}$ always moves in the negative
direction as $s$ decreases ($j_\rho$ is a monotonic function of $s$), 
the boundary condition $\Lambda_{0,s}$ moves in the positive direction
over $I_1$.  Accordingly, we set $\alpha=-\epsilon (1-\rho)
\chi'_1(s)\,ds$, and $\tau=\epsilon (1-\rho) \chi_1(s)$, for the perturbation
terms in \eqref{eq:pseudo-holomorphic_equation}. 

By Section \ref{sec:maximum-principle} the solutions to \eqref{eq:pseudo-holomorphic_equation} with these boundary
conditions and perturbations satisfy maximum principles and energy
estimates, so we can define operations 
$$\Phi_{I_1,I_2,\rho}:\O(L_0^{k+1},L_1^j)\to
\O(L_0^{k+1},L_1^{j+1})$$ by counting
rigid (index 0) solutions. 
These operations are chain maps, since
the ends of the moduli spaces of index 1 solutions for fixed $I_1,I_2,\rho$ 
are in bijection with the broken trajectories which contribute to
$\partial\circ \Phi_{I_1,I_2,\rho}$ and $\Phi_{I_1,I_2,\rho}\circ \partial$; and they
are all homotopic to each other, with explicit homotopies given by counts of index $-1$
solutions that may arise as the parameters $I_1,I_2,\rho$ vary, as can be
seen by considering the ends of moduli spaces of index 0 solutions for a
one-parameter family of choices of $I_1,I_2,\rho$. (These are standard
arguments in Lagrangian Floer theory, so we omit the details; see 
e.g.\ \cite[\S 2]{AuBeginner}, \cite[\S 17]{SeBook}, \cite{Albers}, etc.
for similar proofs.)

For $\rho=0$, the boundary conditions and perturbations near $I_1$ are
identical to those used to define the continuation map, while along
$I_2\times \{0\}$
the boundary condition $\Lambda_{0,s}$ varies from $L_0^k$ to $L_0^{k+1}$
(top-right diagram in Figure \ref{fig:quasiunit_natural1}).
Moving $I_1$ towards $-\infty$ and shrinking $I_2$ to a point then causes
the solutions to converge to limit configurations consisting of (typically)
three components (upper-left diagram in Figure
\ref{fig:quasiunit_natural1}). 
The ``main'' component is an unperturbed holomorphic disc 
with two inputs, corresponding to the Floer product $\mu^2$, while at
$s=-\infty$ we have a strip with moving boundary conditions and
inhomogeneous perturbations, corresponding to the continuation map $F$, 
and the rescaling limit near $I_2\times \{0\}$ gives a half-plane 
with a moving boundary condition which corresponds to the quasi-unit.
Thus, the operations $\Phi_{I_1,I_2,\rho}$ are homotopic to $F(\mu^2(\cdot,
e_{L_0^k}))$.

On the other hand, for $\rho=1$, there are no perturbations near $I_1$,
and along $I_2\times \{1\}$ the boundary condition $\Lambda_{1,s}$ varies 
from $L_1^j$ to $L_1^{j+1}$. Shrinking $I_2$ to a point then causes a holomorphic
half-plane with moving boundary condition to break off (lower-left diagram
in Figure \ref{fig:quasiunit_natural1}), showing that
$\Phi_{I_1,I_2,\rho}$ is also homotopic to $\mu^2(e_{L_1^j},\cdot)$.

\begin{figure}[t]
\setlength{\unitlength}{6mm}
\begin{picture}(24,6)(0.5,-1)
\newgray{litegray}{0.8}
\newgray{vlitegray}{0.9}
\psset{unit=\unitlength}
\pscurve(19,3)(19.8,3.1)(21.2,3.9)(22,4)
\pscurve(19,1)(19.8,0.9)(21.2,0.1)(22,0)
\psellipticarc(22,2)(1,0.5){90}{270}
\psline[linestyle=dotted](22,4)(23,4)
\psline[linestyle=dotted](22,2.5)(23,2.5)
\psline[linestyle=dotted](22,0)(23,0)
\psline[linestyle=dotted](22,1.5)(23,1.5)
\psline(23,0)(23.5,0)
\psline(23,1.5)(23.5,1.5)
\psline(23,4)(25,4)
\psline(23,2.5)(25,2.5)
\psframe[fillstyle=solid,fillcolor=litegray,linestyle=none](23.5,2.5)(24.5,4)
\psline[linewidth=2pt](23.5,2.5)(24.5,2.5)
\psline[linewidth=2pt](23.5,4)(24.5,4)
\psarc(23.5,0.75){0.75}{-90}{90}
\psarc[linewidth=3pt](23.5,0.75){0.75}{-45}{45}
\put(18.1,3.45){\small $L_1^{j+1}$}
\put(18.1,0.2){\small $L_0^k$}
\put(21.8,4.35){\small $L_1^{j+1}$}
\put(24.5,4.35){\small $L_1^j$}
\put(24.5,1.8){\small $L_0^k$}
\put(21.8,1.8){\small $L_0^{k+1}$}
\put(22,-0.7){\small $L_0^k$}
\psline{<->}(16.5,2)(17.5,2)
\psline(8.5,1)(15.5,1)
\psline(8.5,3)(15.5,3)
\psframe[fillstyle=solid,fillcolor=vlitegray,linestyle=none](13.5,1)(14.5,3)
\psline[linewidth=3pt](9.5,1)(10.5,1)
\psline[linewidth=3pt](9.5,3)(10.5,3)
\psline[linewidth=2pt](13.5,1)(14.5,1)
\psline[linewidth=2pt](13.5,3)(14.5,3)
\put(7.6,0.2){\small $L_0^k$}
\put(7.6,3.35){\small $L_1^{j+1}$}
\put(11,0.2){\small $L_0^{k+\rho}$}
\put(11,3.35){\small $L_1^{j+\rho}$}
\put(15.1,0.2){\small $L_0^k$}
\put(15.1,3.35){\small $L_1^j$}
\pscurve(0,3)(0.8,3.1)(2.2,3.9)(3,4)
\pscurve(0,1)(0.8,0.9)(2.2,0.1)(3,0)
\psellipticarc(3,2)(1,0.5){90}{270}
\psline[linestyle=dotted](3,4)(4,4)
\psline[linestyle=dotted](3,2.5)(4,2.5)
\psline(4,4)(4.5,4)
\psline(4,2.5)(4.5,2.5)
\psarc(4.5,3.25){0.75}{-90}{90}
\psarc[linewidth=3pt](4.5,3.25){0.75}{-45}{45}
\put(-0.7,3.5){\small $L_1^{j+1}$}
\put(2.5,4.4){\small $L_1^{j+1}$}
\put(-0.7,0.1){\small $L_0^k$}
\put(2.5,-0.8){\small $L_0^k$}
\put(3,1.7){\small $L_1^j$}
\psline{<->}(6.2,2)(7.2,2)
\end{picture}
\caption{A homotopy between $\mu^2(e_{L_1^j},\cdot)$ and
$\mu^2(F(\cdot),e_{L_0^k})$.}\label{fig:quasiunit_natural2}
\end{figure}
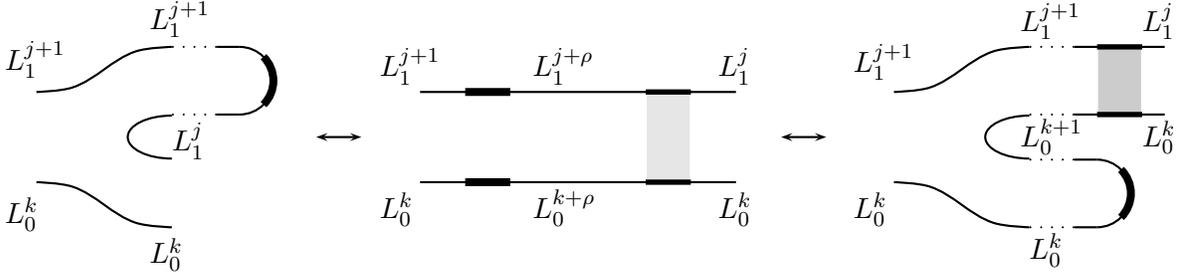

The commutativity up to homotopy of the lower triangle in 
\eqref{eq:quasiunit_natural} is proved in exactly the same manner, by
considering a family of perturbed holomorphic strips depicted 
in Figure \ref{fig:quasiunit_natural2}.  The construction is identical,
except that the roles of the two regions $I_1$ and $I_2$ are now reversed.
By considering the limit configurations as $\rho$ becomes 0 or 1 while
the left-most interval is degenerated to a point and the right-most interval
escapes towards $s=+\infty$, this yields a homotopy between
$\mu^2(e_{L_1^j},\cdot)$ (Figure \ref{fig:quasiunit_natural2} left)
and $\mu^2(F(\cdot),e_{L_0^k})$ (Figure \ref{fig:quasiunit_natural2} right).
\endproof

\begin{remark}
Lemma \ref{l:quasiunit_natural} can be strengthened to show that the
quasi-units form the leading order term of a natural transformation $e$ from the
identity to the $A_\infty$-functor $F$. The next (order 1) term in the
natural transformation is precisely the homotopy between $\mu^2(F(\cdot),
e_{L_0^k})$ and $\mu^2(e_{L_1^j},\cdot)$ that arises
in the proof of Lemma \ref{l:quasiunit_natural}, i.e.\ it can be defined
by counting index $-1$ solutions that come up in the family of perturbed
Cauchy-Riemann equations depicted in Figure \ref{fig:quasiunit_natural2}.
The construction of the higher order terms of the natural transformation
is technically more involved, and we do not discuss it here since we will
not be needing it.

Since the localization at all quasi-units amounts to making the natural
transformation $e$ invertible up to homotopy, the localized category
$\Wrap(Y,W)$ is also sometimes denoted $\O[e^{-1}]$; this notation
is also suggestive of the fact that the localization effectively enlarges
morphism spaces by inverting all quasi-units (up to homotopy).
\end{remark}

\begin{remark}
It is natural to ask to what extent the category $\Wrap(Y,W)$
depends on the choice of the collection of Lagrangians $\mathbf{L}$ and on
the parameter $\epsilon$ (the time step with respect to which we consider quasi-units). 
Here we do not address the first question,
which relates to the existence of generation criteria for $\Wrap(Y,W)$;
we simply assume that we have a collection $\mathbf{L}$ satisfying 
the required hypotheses, and if this collection is too small the category we
construct might only be a subcategory of the one we would obtain
from a larger collection of objects. 

On the other hand, the algebraic
properties of quasi-units imply that
the choice of the parameter $\epsilon$ does not affect the outcome
of our construction. The key observation is that we can define quasi-units
$e_{L(t')\to L(t)}\in HF^0(L(t'),L(t))$ for all $t'>t$ such that $L(t')\cap
L(t)$ is contained in a compact subset of $Y$, and an argument similar to
the proof of Lemma \ref{l:quasiunit_natural} shows that, for $t''>t'>t$, these
satisfy $$e_{L(t')\to L(t)}\cdot e_{L(t'')\to L(t')}=
e_{L(t'')\to L(t)}$$ (in cohomology, or up to homotopy).
Assume that $L(t'),L(t)$ are both objects of $\O$ for some $t'>t$, and let 
$n$ be such that $n\epsilon>t'-t$. Since $e_{L(t+n\epsilon)\to L(t)}$ is the
product of the quasi-units $e_{L(t+k\epsilon)\to L(t+(k-1)\epsilon)}$ for
$1\le k\le n$, it is a quasi-isomorphism in the localized category, hence
admits a quasi-inverse $f_{L(t)\to L(t+n\epsilon)}$; similarly for
$e_{L(t')\to L(t'-n\epsilon)}$, whose quasi-inverse we denote by
$f_{L(t'-n\epsilon)\to L(t')}$.
Then in $H^0\Wrap$ we have
\begin{eqnarray*}
e_{L(t')\to L(t)}\,\cdot\, (e_{L(t+n\epsilon)\to L(t')}\cdot f_{L(t)\to
L(t+n\epsilon)})&=&\id_{L(t)}\qquad
\text{and}\\
(f_{L(t'-n\epsilon)\to L(t')}\cdot e_{L(t)\to L(t'-n\epsilon)})\,\cdot\,
e_{L(t')\to L(t)}&=&\id_{L(t')},
\end{eqnarray*}
giving left and right inverses for $e_{L(t')\to L(t)}$ up to homotopy
and proving that it is a quasi-isomorphism.
Hence, localizing with respect to quasi-units for a fixed step size
$\epsilon$ actually inverts {\em all} quasi-units; and
$L(t)$ and $L(t')$
are quasi-isomorphic in the localized category whenever
they belong to the set of objects.
This implies that up to quasi-equivalence the category we construct does not depend on the
choice of $\epsilon$.
\end{remark}

\subsection{Fiberwise wrapped category via colimits}\label{ss:colimits}
Our goal in this section is to construct the fiberwise wrapped Fukaya category as a subcategory of the category of modules over $\O$. This approach is adapted from unpublished work \cite{A-S} of the first author with Seidel, where the starting point is the more abstract formalism of localisation of categories, and the point of view which we take here is used as a computational tool.

The basic idea is that we seek an $A_\infty$-category where morphism spaces between Lagrangians are taken after passing to a limit with respect to positive wrapping. We implement this by assigning to each Lagrangian $L$ an object of the category of modules over $\O$ given as a homotopy colimit
(or direct limit)
\begin{equation}
\cY_{L^\infty} \equiv  \hocolim_{k \to + \infty} \cY_{L^k}  
\end{equation}
where $\cY_{L^k}$ is the Yoneda module
\begin{equation}
 X \mapsto \O(X,L^k)
\end{equation}
and the connecting maps $\cY_{L^k}\to \cY_{L^{k+1}}$ are given by
composition with the quasi-units $e_{L^k}$.
We take as model for the homotopy colimit the mapping telescope
\begin{equation}
  \mathrm{Cone}\left( \bigoplus_{k =0}^{\infty} \cY_{L^k} \to  \bigoplus_{k =0}^{\infty} \cY_{L^k}\right)
\end{equation}
where the arrow is the direct sum of the differences $ \id - e_{L^k}$.

We write $\Wrap$ for the full subcategory of modules over $\O$ with these objects, i.e. objects are admissible 
Lagrangians in $\mathbf{L}$, and morphisms between $L_0$ and $L_1$ given by
\begin{equation}
  \Wrap(L_0,L_1) \equiv   \Hom_{\O}( \cY_{L^\infty_0} , \cY_{L_1^\infty} )
\end{equation}

The first computation we need is:
\begin{lemma}
  There is a natural quasi-isomorphism
  \begin{equation} \label{eq:morphisms-wrapped-inverse-limit-of-direct-limit}
    \holim_{k \to \infty} \hocolim_{j \to \infty} \O(L_0^k, L_1^j) \to   \Wrap(L_0,L_1).
  \end{equation}
\end{lemma}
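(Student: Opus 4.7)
The plan is to compute $\Wrap(L_0,L_1) = \Hom_\O(\cY_{L_0^\infty}, \cY_{L_1^\infty})$ using two standard facts about $A_\infty$-modules, applied to the explicit mapping telescope model for $\cY_{L^\infty}$. The two facts I will invoke are: (i) the $A_\infty$ Yoneda lemma, which for any right $\O$-module $M$ and object $X$ gives a natural quasi-isomorphism $\Hom_\O(\cY_X, M) \simeq M(X)$; and (ii) the fact that $\Hom_\O(-,M)$ converts homotopy colimits of modules into homotopy limits of chain complexes, i.e.\ $\Hom_\O(\hocolim_k C_k, M) \simeq \holim_k \Hom_\O(C_k, M)$. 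Both are purely algebraic statements that hold for modules over any $A_\infty$-category and require no new geometric input.

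First I would apply (ii) to the source: since $\cY_{L_0^\infty}$ is modeled as the cone on $\id - e$ acting on $\bigoplus_{k\ge 0} \cY_{L_0^k}$, taking $\Hom_\O(-,\cY_{L_1^\infty})$ produces the mapping cocone of the dual map on $\prod_{k\ge 0}\Hom_\O(\cY_{L_0^k},\cY_{L_1^\infty})$, which is precisely the standard chain-level model for $\holim_k \Hom_\O(\cY_{L_0^k},\cY_{L_1^\infty})$. This gives
\begin{equation*}
\Wrap(L_0,L_1) \simeq \holim_{k\to\infty} \Hom_\O(\cY_{L_0^k},\cY_{L_1^\infty}).
\end{equation*}

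Next I would apply the Yoneda lemma (i) with $X = L_0^k$ and $M = \cY_{L_1^\infty}$, obtaining
\begin{equation*}
\Hom_\O(\cY_{L_0^k},\cY_{L_1^\infty}) \simeq \cY_{L_1^\infty}(L_0^k).
\end{equation*}
Then I would use that evaluation of modules at a fixed object commutes with homotopy colimits (again immediate from the mapping telescope model, since taking $M \mapsto M(L_0^k)$ is an exact functor that preserves the mapping cone):
\begin{equation*}
\cY_{L_1^\infty}(L_0^k) = \bigl(\hocolim_{j\to\infty} \cY_{L_1^j}\bigr)(L_0^k) \simeq \hocolim_{j\to\infty} \cY_{L_1^j}(L_0^k) = \hocolim_{j\to\infty} \O(L_0^k,L_1^j).
\end{equation*}
Splicing this into the previous display yields the claimed quasi-isomorphism.

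Naturality of the map in \eqref{eq:morphisms-wrapped-inverse-limit-of-direct-limit} is built into the construction: the connecting maps in the inner colimit are induced by post-composition with the quasi-units $e_{L_1^j}$, and those in the outer limit are induced by pre-composition with $e_{L_0^k}$, exactly as in the definition of $\cY_{L^\infty}$. The only point that needs attention is that the Yoneda embedding $\cY$ is (cohomologically) fully faithful, which is the standard $A_\infty$-Yoneda lemma and holds for our directed category $\O$ with no additional hypotheses. I do not expect any serious obstacle: the entire argument is formal manipulation of telescopes and the Yoneda lemma, with no Floer-theoretic content beyond the already established structure of $\O$ and the quasi-units $e_{L^k}$ from Section \ref{ss:quasiunits}.
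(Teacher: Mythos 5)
Your proof is correct and takes essentially the same approach as the paper: both dualize the mapping telescope model of $\cY_{L_0^\infty}$ to produce the outer $\holim$ (cone of a map between direct sums becomes cocone of a map between products), and both use the $A_\infty$ Yoneda lemma to identify $\Hom_\O(\cY_{L_0^k},\cY_{L_1^\infty})$ with $\hocolim_{j}\O(L_0^k,L_1^j)$. The only cosmetic difference is that you pass through the evaluation $\cY_{L_1^\infty}(L_0^k)$ and observe that evaluation at a fixed object commutes with the telescope, whereas the paper applies the Yoneda map termwise to the inner colimit and then identifies it with $\Hom_\O(\cY_{L_0^k},\cY_{L_1^\infty})$; the two justifications are equivalent, and yours is if anything slightly more careful about why $\Hom$ out of a representable commutes with the telescope in the target.
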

\begin{proof}
The cone of the complex
  \begin{equation}
 \Hom_{\O}(\bigoplus_{k =0}^{\infty} \cY_{L_0^k}, \cY_{L_1^\infty} ) \to  \Hom_{\O}( \bigoplus_{k =0}^{\infty} \cY_{L_0^k},\cY_{L_1^\infty}),
\end{equation}
maps quasi-isomorphically to the space of morphisms from $\cY_{L_0^\infty}$ to $ \cY_{L_1^\infty} $, and is isomorphic to the cone of the map
\begin{equation}
  \prod_{k =0}^{\infty}  \Hom_{\O}( \cY_{L_0^k}, \cY_{L_1^\infty} ) \to    \prod_{k =0}^{\infty} \Hom_{\O}( \cY_{L_0^k},\cY_{L_1^\infty}), 
\end{equation}
which is a model for
\begin{equation}
  \holim_{k} \Hom_{\O}( \cY_{L_0^k}, \cY_{L_1^\infty} ).  
\end{equation}
On the other hand, the Yoneda map induces a quasi-isomorphism
  \begin{equation}
   \hocolim_{j \to \infty} \O(L_0^k, L_1^j) \to
   \hocolim_{j\to\infty} \Hom_{\O}(\cY_{L_0^k},\cY_{L_1^j})
    \cong \Hom_{\O}( \cY_{L_0^k}, \cY_{L_1^\infty} ).  
 \end{equation}
 The desired map follows by composition.
\end{proof}

The next result reduces the computation of morphisms in $\Wrap$ to a direct limit:
\begin{lemma} \label{lem:colimit-is-local-module}
  For all $L_0$, $L_1$ and $k$, the map
  \begin{equation}
     \Hom_{\O}( \cY_{L_0^{k+1}}, \cY_{L_1^\infty} ) \to   \Hom_{\O}( \cY_{L_0^k}, \cY_{L_1^\infty} )   
  \end{equation}
induced by multiplication by quasi-units is a quasi-isomorphism.
\end{lemma}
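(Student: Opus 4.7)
The plan is to translate the statement into a question about the homotopy colimits $\hocolim_j \O(L_0^{k},L_1^j)$ and then build an inverse up to homotopy out of the continuation maps $F_{L_0^k,L_1^j}$, using Lemma \ref{l:quasiunit_natural} as the key input. First, by the Yoneda lemma, for any Lagrangian $L$ and module $M$ we have a quasi-isomorphism $\Hom_\O(\cY_L,M)\simeq M(L)$. Applied to $M=\cY_{L_1^\infty}=\hocolim_j \cY_{L_1^j}$, and using that the telescope model of the homotopy colimit is built from representables (for which Yoneda commutes with the colimit term-by-term), this gives
\begin{equation*}
\Hom_\O(\cY_{L_0^k},\cY_{L_1^\infty})\simeq \hocolim_{j\to\infty}\O(L_0^k,L_1^j),
\end{equation*}
and similarly for $L_0^{k+1}$. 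Under these identifications, the map in the statement is induced by the level-wise maps $\mu^2(\cdot,e_{L_0^k})\co \O(L_0^{k+1},L_1^j)\to\O(L_0^k,L_1^j)$, which are compatible (up to the homotopies recorded in Lemma~\ref{l:quasiunit_natural}) with the transition maps $\mu^2(e_{L_1^j},\cdot)$ of the telescopes.

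Next, I propose to construct a candidate homotopy inverse using the continuation maps $F_{L_0^k,L_1^{j-1}}\co \O(L_0^k,L_1^{j-1})\to \O(L_0^{k+1},L_1^{j})$ defined in \eqref{eq:continuation_map}. Because $F$ shifts both indices up by one, when viewed as a map between the $j$-th terms of the two telescopes it provides (after the standard reindexing built into the mapping telescope model) a chain map
\begin{equation*}
\Psi\co \hocolim_j \O(L_0^k,L_1^j)\to\hocolim_j \O(L_0^{k+1},L_1^j).
\end{equation*}
Its compatibility with the telescope differentials amounts to checking that $F$ commutes up to homotopy with the $\mu^2(e_{L_1^j},\cdot)$; this in turn is a standard $A_\infty$-naturality statement for continuation maps which can be obtained by interpolating between two positions of the fiberwise wrapping and the clockwise isotopy on the base, exactly along the lines of the argument given in the proof of Lemma~\ref{l:quasiunit_natural}.

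The heart of the proof is then to verify that both composites of $\Psi$ with the map $\Phi\co \hocolim_j\O(L_0^{k+1},L_1^j)\to\hocolim_j \O(L_0^k,L_1^j)$ induced by $\mu^2(\cdot,e_{L_0^k})$ coincide, up to telescope homotopy, with the structural transition maps. At the level of the $j$-th stratum, the composite $\Phi\circ\Psi$ is $\mu^2(\cdot,e_{L_0^k})\circ F_{L_0^k,L_1^{j-1}}$, which is homotopic to $\mu^2(e_{L_1^{j-1}},\cdot)$ by the lower triangle of \eqref{eq:quasiunit_natural}; this is precisely the transition map in $\hocolim_j\O(L_0^k,L_1^j)$, so after passing to the colimit $\Phi\circ\Psi\simeq\id$. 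Symmetrically, $\Psi\circ\Phi$ at level $j$ is $F_{L_0^k,L_1^j}\circ\mu^2(\cdot,e_{L_0^k})$, which is homotopic to $\mu^2(e_{L_1^j},\cdot)$ by the upper triangle of \eqref{eq:quasiunit_natural} and hence induces the identity on $\hocolim_j\O(L_0^{k+1},L_1^j)$.

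The main technical point to get right is the bookkeeping for the telescope model: one must assemble the pointwise homotopies produced by Lemma~\ref{l:quasiunit_natural} into genuine chain homotopies on the telescope, which requires assembling a compatible system of higher homotopies expressing the $A_\infty$-naturality of $F$ with respect to the quasi-units $e_{L_1^j}$. These higher homotopies are constructed by counts of index $-1$ solutions in the families of perturbed Cauchy--Riemann equations depicted in Figures~\ref{fig:quasiunit_natural1}--\ref{fig:quasiunit_natural2}, now considered over a parameter space encoding a one-parameter family of interpolations, and organized so as to fit into the differential of the telescope complex. Once this bookkeeping is carried out, the maps $\Phi$ and $\Psi$ are mutual inverses in cohomology, which establishes the lemma.
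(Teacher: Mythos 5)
Your proposal correctly identifies the key ingredients — the Yoneda reduction to $\hocolim_j \O(L_0^{k},L_1^j)$, the continuation maps as candidate inverse, and the two commuting triangles of Lemma~\ref{l:quasiunit_natural} — but the route you take creates a genuine difficulty that you flag without resolving. To make $\Psi$ a chain map on the mapping telescope, and to upgrade the two triangle homotopies into chain homotopies for $\Phi\circ\Psi$ and $\Psi\circ\Phi$ on the telescope, you would indeed need a coherent system of higher homotopies expressing the naturality of $F$ with respect to the quasi-units. These higher-order terms are precisely what the remark following Lemma~\ref{l:quasiunit_natural} says are ``technically more involved'' and are \emph{not} constructed in the paper. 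Asserting that they ``are constructed by counts of index $-1$ solutions\ldots organized so as to fit into the differential of the telescope complex'' is not a proof; it is the hard part, and it is avoidable.

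The paper's argument sidesteps all of this by observing that a quasi-isomorphism need only be checked on cohomology, and that cohomology commutes with direct limits. This reduces the lemma to showing that the map
\begin{equation*}
\colim_{j\to\infty} HF^*(L_0^{k+1},L_1^j)\to\colim_{j\to\infty} HF^*(L_0^k,L_1^j)
\end{equation*}
is an isomorphism of graded groups. At this level the diagram obtained from \eqref{eq:quasiunit_natural} by taking cohomology commutes \emph{strictly}, with no homotopies in sight, and injectivity and surjectivity can be read off directly by chasing an element through the upper and lower triangles. If you want to salvage your chain-level approach you must either construct the full $A_\infty$-natural transformation or abandon the chain-level homotopy inverse; the cleanest fix is to adopt the paper's reduction to cohomology at the outset.
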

\begin{proof}
  The Yoneda Lemma reduces the problem to the statement that the map
  \begin{equation}
    \hocolim_{j \to \infty} \O(L_0^{k+1}, L_1^j) \to  \hocolim_{j \to \infty} \O(L_0^k, L_1^j)
  \end{equation}
  induces an isomorphism on cohomology. Since direct limits commute with passing to cohomology, it suffices to show that the map of cohomology groups
  \begin{equation} \label{eq:colimit_cohomology_k+1-to-k}
    \colim_{j \to \infty} HF^*(L_0^{k+1}, L_1^j) \to  \colim_{j \to \infty} HF^*(L_0^k, L_1^j)
  \end{equation}
  is an isomorphism, where we use the fact that the morphisms in $\O$ are given by Floer cochains whenever $j$ is sufficiently large. We claim that the continuation maps from Equation \eqref{eq:continuation_map} provide an inverse. Indeed, by taking the cohomology of Diagram \eqref{eq:quasiunit_natural} we obtain a commutative diagram
  \vskip3mm
  \begin{equation}
\begin{psmatrix}[colsep=2cm,rowsep=2cm]
HF^*(L_0^{k+1},L_1^j) & HF^*(L_0^{k+1},L_1^{j+1}) \\
HF^*(L_0^k,L_1^j) &  HF^*(L_0^k,L_1^{j+1}).
\end{psmatrix}
\psset{nodesep=5pt,arrows=->}
\ncline{2,1}{1,2} \tbput[tpos=0.7]{F_{L_0^k,L_1^j}}
\ncline{1,1}{2,1} \tlput{\mu^2(\cdot,e_{L_0^k})}
\ncline{1,1}{1,2} \taput{\mu^2(e_{L_1^j},\cdot)}
\ncline{2,1}{2,2} \tbput{\mu^2(e_{L_1^j},\cdot)}
\ncline{1,2}{2,2} \trput{\mu^2(\cdot,e_{L_0^k})}
\end{equation}
\vskip3mm
In this diagram the horizontal maps are those used to define the
direct limits, while the vertical maps assemble into the map \eqref{eq:colimit_cohomology_k+1-to-k}.

To show that \eqref{eq:colimit_cohomology_k+1-to-k} is injective, note that every element of the left hand side is represented by an element of $HF^*(L_0^{k+1},L_1^j)$ for some $j$. The above diagram implies that the image of this element in $ HF^*(L_0^{k+1},L_1^{j+1})$ agrees with the image under 
our proposed inverse (the continuation map $F_{L_0^k,L_1^j}$) 
of its image under the map of direct limits \eqref{eq:colimit_cohomology_k+1-to-k}.
By definition of the direct limit, this implies that the continuation map
is a left inverse to \eqref{eq:colimit_cohomology_k+1-to-k}, and 
injectivity follows. 

Considering the composition in the other order yields surjectivity: every
element of the right hand side of \eqref{eq:colimit_cohomology_k+1-to-k} is
represented by an element of $HF^*(L_0^k,L_1^j)$ for some~$j$, whose image
in $HF^*(L_0^k, L_1^{j+1})$ is also the image under \eqref{eq:colimit_cohomology_k+1-to-k} 
of its image under the continuation map, so the continuation map is a
right inverse.
\end{proof}

\begin{corollary}\label{cor:colimit}
  For each pair $L_0^k$ and $L_1$ of objects of $\O$, there is a natural isomorphism
  \begin{equation}
    \colim_{j\to \infty}    HF^*(L_0^k, L_1^j) \to H\Wrap(L_0,L_1).   
  \end{equation}
\end{corollary}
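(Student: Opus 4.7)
The plan is to assemble the two previous lemmas. By the first lemma, we have a natural quasi-isomorphism
\begin{equation*}
\holim_{k \to \infty} \hocolim_{j \to \infty} \O(L_0^k, L_1^j) \xrightarrow{\sim} \Wrap(L_0,L_1),
\end{equation*}
so it suffices to compute the cohomology of the left-hand side and to show it agrees with $\colim_{j} HF^*(L_0^k,L_1^j)$ for any fixed $k$ (via the evident structure maps).

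The key input is Lemma \ref{lem:colimit-is-local-module}, which asserts that every structure map in the tower
\begin{equation*}
\cdots \to \Hom_{\O}(\cY_{L_0^{k+2}}, \cY_{L_1^\infty}) \to \Hom_{\O}(\cY_{L_0^{k+1}}, \cY_{L_1^\infty}) \to \Hom_{\O}(\cY_{L_0^{k}}, \cY_{L_1^\infty})
\end{equation*}
is a quasi-isomorphism. A homotopy limit of a tower of quasi-isomorphisms is quasi-isomorphic to any of its terms, so the projection
\begin{equation*}
\holim_{k' \to \infty} \Hom_{\O}(\cY_{L_0^{k'}}, \cY_{L_1^\infty}) \xrightarrow{\sim} \Hom_{\O}(\cY_{L_0^{k}}, \cY_{L_1^\infty})
\end{equation*}
is a quasi-isomorphism for each fixed $k$. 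Under the Yoneda quasi-isomorphism used in the proof of the preceding lemma, the right-hand side is identified with $\hocolim_{j \to \infty} \O(L_0^k, L_1^j)$.

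Combining these identifications, we obtain a natural quasi-isomorphism
\begin{equation*}
\hocolim_{j \to \infty} \O(L_0^k, L_1^j) \xrightarrow{\sim} \Wrap(L_0, L_1).
\end{equation*}
Taking cohomology and using the fact that taking cohomology commutes with filtered (homotopy) colimits of chain complexes over a field, the left-hand side is identified with $\colim_{j \to \infty} HF^*(L_0^k, L_1^j)$, yielding the desired natural isomorphism. The only point requiring mild care is the verification that the isomorphism we produce is indeed induced by the evident structure maps sending a Floer cocycle in $CF^*(L_0^k, L_1^j)$ to its class in the colimit; this is immediate by unwinding the Yoneda quasi-isomorphism, which acts as the identity on representing cocycles up to homotopy. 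There is no substantive obstacle here: the argument is purely formal homological algebra built on top of the two preceding lemmas.
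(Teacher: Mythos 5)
Your proof is correct and takes essentially the same route as the paper: apply the first lemma to reduce to the $\holim\hocolim$ model, use Lemma \ref{lem:colimit-is-local-module} to see that the tower has quasi-isomorphic bonding maps, conclude the projection from the homotopy limit to a fixed term is a quasi-isomorphism, and then identify the cohomology of the homotopy colimit with the direct limit of Floer cohomologies. The only stylistic difference is that the paper explicitly invokes the Mittag-Leffler condition to justify that the homotopy limit of a tower of quasi-isomorphisms agrees with any of its terms, whereas you state this as a known fact; either formulation is fine.
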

\begin{proof}
  The above Lemma implies that bonding maps in the inverse system appearing in Equation \eqref{eq:morphisms-wrapped-inverse-limit-of-direct-limit} are quasi-isomorphisms. In particular, the Mittag-Leffler condition  is satisfied,%
\footnote{An inverse system $A_1\leftarrow
A_2\leftarrow A_3\leftarrow \dots$ is said to satisfy the Mittag-Leffler condition
if for each $k$, there exists $j>k$ such that, for all $i>j$,
the maps $A_i\to A_{k}$ and $A_j\to A_k$ have the same image;
this condition implies vanishing of the first derived functor of the
inverse limit, and that inverse limits are well-behaved with respect to
cohomology (see e.g.\ \cite[Definition 3.5.6]{Weibel}).}
 and for each integer $k$ the projection map 
  \begin{equation}
       \holim_{k \to \infty} \hocolim_{j \to \infty} \O(L_0^k, L_1^j) \to   \hocolim_{j \to \infty} \O(L_0^k, L_1^j)
  \end{equation}
  induces an isomorphism on cohomology. Inverting this map, and  composing with the one induced by Equation \eqref{eq:morphisms-wrapped-inverse-limit-of-direct-limit} on cohomology yields the desired isomorphism.
\end{proof}
\begin{remark}
The most straightforward way to compare our construction with the approach of \cite{A-S} is to consider the localisation functor from $\O$-modules  to $\O[e^{-1}]$- modules. By the universal property of localisation, the images of the Yoneda objects $L^k$ are equivalent, hence the image of the colimit $\cY_{L^\infty}$ under localisation is equivalent to these Yoneda modules. Lemma \ref{lem:colimit-is-local-module} can be restated as the fact that the modules $ \cY_{L^\infty} $ lie in the $e$-local subcategory of $\O$-modules, which is quasi-isomorphic to the category of $\O[e^{-1}]$-modules. We therefore conclude that the category generated by the modules $\cY_{L^\infty} $ is equivalent to the localisation of $\O$, which is the point of view taken by \cite{A-S}.
\end{remark}

\section{K\"ahler forms and admissibility} \label{s:toric}

In this section, we study the geometry of parallel transport in toric
Landau-Ginzburg models, and construct suitable K\"ahler forms for which 
{\em fiberwise monomial admissibility} is preserved by
parallel transport; we then show that the technical assumptions
we have made in the previous Section follow from this property.

\begin{definition}\label{def:admissibility}
A {\em fiberwise monomial subdivision} for the toric Landau-Ginzburg model\/
$W:Y\to\C$ consists of a finite collection of toric monomials
$z^{\mathbf{v}}$, $\mathbf{v}\in \mathcal{V}\subset \Z^{n+1}$,
weights $d(\mathbf{v})\in \Z_{>0}$, open subsets $C_{\mathbf{v}}\subset
Y$, and a closed subset $\Omega\subset Y$, such that:
\begin{enumerate}
\item $z^{\mathbf{v}}\in \O(Y)$ for all $\mathbf{v}\in \mathcal{V}$, and 
$z\mapsto (z^{\mathbf{v}})_{\mathbf{v}\in \mathcal{V}}$ defines a 
proper map $Y\to \C^{|\mathcal{V}|}$;
\item the restriction of $W$ to $\Omega$ is a proper map;
\item $\Omega \cup \bigcup_{\mathbf{v}\in \mathcal{V}} C_{\mathbf{v}}=Y$;
\item for $z\in Y\setminus \Omega$, if 
$|z^{\mathbf{v}_0}|^{1/d(\mathbf{v}_0)}=
\max\{|z^{\mathbf{v}}|^{1/d(\mathbf{v})},\ \mathbf{v}\in \mathcal{V}\}$
then $z\in C_{\mathbf{v}_0}$.
\end{enumerate}
\end{definition}

\begin{definition} \label{def:flat_at_infinity} 
Given a fiberwise monomial subdivision,
a Lagrangian submanifold $\ell \subset W^{-1}(c)\cong (\C^*)^n$ is {\em monomially
admissible} with phase angles $\{\varphi_{\mathbf{v}}, \mathbf{v}\in
\mathcal{V}\}$ if, outside of the compact subset $W^{-1}(c)\cap \Omega$,
$\arg(z^{\mathbf{v}})=\varphi_{\mathbf{v}}$ at
every point of $\ell \cap C_{\mathbf{v}}$.

A Lagrangian submanifold $L\subset Y$ is {\em fiberwise monomially admissible}
with phase angles $\{\varphi_{\v}\}$ if, outside of $\Omega$, 
$\arg(z^{\v})=\varphi_{\v}$ at every point of $L\cap C_\v$.
\end{definition}

\begin{example}
We can define a fiberwise monomial subdivision for the toric
Landau-Ginzburg model $(\C^N, W_0=-\prod z_j)$ as follows (the 
construction below will be a slight modification of this example). Take the collection of 
monomials to be the coordinate
functions $z_j$, $1\leq j\leq N$ (i.e., the exponent vectors $\mathbf{v}_j$ are
the standard basis of $\Z^N$); take $d(\v_j)=1$ for all $j$,
and let $C_{\v_j}$ be the set of points of $\C^N$ where $|z_j|>
K\,|W_0|^{1/N}$ for some constant $K>1$, and 
$\Omega=\C^N\setminus \bigcup C_{\v_j}=\{z\in \C^N\,|\,\max(|z_j|)\leq K\,|W_0|^{1/N}\}$.
Condition~(2) 
holds since the coordinates of points of $\Omega$ are bounded by $K\,|W_0|^{1/N}$,
and condition (4) holds since if $|z_j|=\max(|z_1|,\dots,|z_N|)>K\,|W_0|^{1/N}$
then $z\in C_{\v_j}$.
A Lagrangian submanifold $L\subset \C^N$ is then fiberwise monomially admissible
with phase angles $\varphi_1,\dots,\varphi_N$ if, at every point of $L$
where $|z_j|>K|W_0|^{1/N}$, $\arg(z_j)=\varphi_j$. For instance, the real
positive locus $(\R_+)^N$ satisfies this condition with all phase angles
equal to zero. We shall see below how to build more interesting examples 
under the assumption that the toric K\"ahler form on $\C^N$ 
is chosen suitably; see Section \ref{s:toricCN}.
\end{example}

The notions of monomial subdivision and monomial admissibility for
Lagrangians in $(\C^*)^n$ already appear in Andrew Hanlon's thesis
\cite{Hanlon}. One technical difference is that we consider a fiberwise
version of monomial admissibility and its compatibility with
parallel transport between the fibers of $W$. The more important difference
is philosophical: we use monomial admissibility merely as a technical tool
to ensure the flatness condition of Definition \ref{def:admissibleLagrangian}~(ii),
rather than as a geometric way of restricting the
fiberwise wrapping by introducing additional stops (though we will do so in the sequel
\cite{AA2} for mirrors of hypersurfaces in toric varieties).

\subsection{A toric K\"ahler form on $\C^N$}\label{s:toricCN}

We first consider the case of $\C^N$ equipped with a complete toric K\"ahler form
$\omega=dd^c\Phi$ (for a $\T^N$-invariant K\"ahler potential $\Phi$) and the superpotential
$W_0=-\prod z_j$. 
Writing $z_j=\exp(\rho_j+i\theta_j)$, we have
$$\omega=dd^c\Phi=\sum_{i,j} \frac{\partial^2\Phi}{\partial \rho_i
\partial \rho_j} d\rho_i\wedge d\theta_j.$$
In particular, $\omega$ is a K\"ahler form if and only if
the potential is a strictly convex function of the $\rho$ coordinates,
i.e.\  the Hessian matrix $\Psi=(\partial^2\Phi/\partial \rho_i\partial \rho_j)_{ij}$ is positive definite.
The moment map $\mu=(\mu_1,\dots,\mu_N):\C^N\to \R^N$
is given by the partial derivatives of~$\Phi$: $$\mu_j=\partial \Phi/\partial \rho_j.$$
The horizontal distribution, i.e.\ the symplectic orthogonal to the level
sets of $W_0$, is spanned (over $\C$) by the Hamiltonian vector
field generated by $\log |W_0|=\sum \rho_j$. 
We can express $d\log |W_0|$ as a linear combination of the
differentials of the moment maps, 
\begin{equation}\label{eq:dlogWdmuj}
d\log |W_0|=\sum_j d\rho_j = \sum_j
\lambda_j d\mu_j,\quad \text{where}\quad
(\lambda_1,\dots,\lambda_N)=\Psi^{-1}(1,\dots,1).
\end{equation} Angular parallel transport (i.e., along circles
centered at the origin in the base of the fibration given by $W_0$)
is then given by
rotating each coordinate at a rate proportional to $\lambda_i$, so that the
horizontal lifts of the angular and radial vector fields are given by
\begin{equation}\label{eq:horiz_span}(\partial_\theta)^\#=
\frac{\sum \lambda_j \partial_{\theta_j}}{\sum \lambda_j}\qquad
\mathrm{and}\qquad (r\partial_r)^\#=-i(\partial_\theta)^\#=\frac{\sum \lambda_j \partial_{\rho_j}}{
\sum \lambda_j}.
\end{equation}
One checks that the quantities $\mu_j-\mu_i$ are conserved by parallel
transport, as expected (since parallel transport is equivariant with respect
to the standard Hamiltonian $\T^{N-1}$-action on the fibers of $W_0$).

\begin{example}
For the standard K\"ahler form on $\C^N$, with potential $\Phi=\frac14\sum
|z_j|^2=\frac14 \sum e^{2\rho_j}$, the moment map is given by
$\mu_j=\frac12 |z_j|^2$, and $\Psi$ is diagonal with entries $|z_j|^2$, so
that $\lambda_j=|z_j|^{-2}$, and $(\partial_\theta)^\#=\frac{1}{\sum
|z_j|^{-2}}\sum |z_j|^{-2}\partial_{\theta_j}.$ Thus, when $|z_j|\to
\infty$ for $|W_0|$ fixed, the rate of change of $\arg(z_j)$ under angular
parallel transport tends to zero.  
This in turn implies that a weaker form of 
asymptotic admissibility (only requiring arguments of monomials to converge to
prescribed limit values at infinity) is preserved under parallel transport,
and it should be possible to carry out the whole construction using
the standard K\"ahler form. However, the stronger admissibility requirement
that we impose is necessary for the maximum principle of 
Proposition \ref{prop:maxprinciple_fiber}; thus we will need to ensure that $\arg(z_j)$
remains strictly constant (rather than approximately constant) under parallel
transport, and this in turn motivates the introduction of a different K\"ahler form.
\end{example}

\noindent Our choice of K\"ahler form involves smooth approximations of the maximum
function:

\begin{definition}\label{def:Max}
Given a constant $\delta>0$, denote by $M:\R^2\to \R$ a smooth convex function such that:
\begin{enumerate}
\item $M(u,v)=\max(u,v)$ whenever $|u-v|\ge \delta$;
\item $M(u+a,v+a)=M(u,v)+a$ for all $u,v,a\in \R$; and 
\item $M(u,v)=M(v,u)$.
\end{enumerate}
These conditions imply that $M$ is monotonically increasing with either
variable, and
$$\max(u,v)\le M(u,v)\le
\max(u,v)+\delta,\qquad 0\le \frac{\partial M}{\partial u}\le 1,\quad
\mbox{and}\quad 0\le \frac{\partial M}{\partial v}\le 1.$$
We then define $\hat{M}:\R_{\ge 0}^2\to \R_{\ge 0}$ by 
\begin{align*}
\hat{M}(U,V)&=\exp M(\log U, \log V)\quad \text{for $U,V>0$,} \\
\hat{M}(U,0)&=\hat{M}(0,U)=U,
\end{align*} and note that
$\hat{M}$ is continuous, smooth everywhere except at the origin, and
$\hat{M}(U,V)=\max(U,V)$ whenever\/ $U/V \not\in
(e^{-\delta},e^\delta)$.
\end{definition}
In fact, the second condition above implies that $M$ is determined by a smoothing, near the origin, of the absolute value function on $\R$.

\begin{definition}\label{def:K_potential} Choosing some small
$\varepsilon>0$, we equip $\C^N$ with 
$\omega=dd^c\Phi$, where
\begin{equation}\label{eq:K_potential}
\Phi=\sum_{i=1}^N \hat{M}\Biggl(\varepsilon,\prod_{\substack{j=1\\ j\neq i}}^N
\hat{M}(|z_i|^2,|z_j|^2)\Biggr)|z_i|^2.
\end{equation}
\end{definition}

\begin{remark} \label{rmk:simplify_K_potential}
The only purpose of taking $\hat{M}(\varepsilon,...)$ is that otherwise
$\omega$ would be degenerate (and non-smooth) along the coordinate 
axes. In fact, $$\prod_{j\neq i}
\hat{M}(|z_i|^2,|z_j|^2) \geq \prod_{j\neq i} \max(|z_i|^2,|z_j|^2)\geq
\frac{|W_0|^2}{\min \{|z_1|^2,\dots,|z_N|^2\}}\geq 
|W_0|^{2(N-1)/N},$$ so we have the simpler expression
\begin{equation}\label{eq:K_potential_simpler}
\Phi=\sum_{i=1}^N \Bigl(\prod_{j\neq i}
\hat{M}(|z_i|^2,|z_j|^2)\Bigr)\,|z_i|^2\qquad
\text{whenever}\ |W_0|^2\ge (\varepsilon e^{\delta})^{\frac{N}{N-1}}.
\end{equation}
Since we will only consider Lagrangian submanifolds which stay away from
the preimage of a small disc under $W_0$, choosing $\varepsilon$ and $\delta$ sufficiently small we can
always work with the simpler formula \eqref{eq:K_potential_simpler} to study
the geometry of admissible Lagrangians.
\end{remark}

\begin{lemma}\label{l:K_form_CN} $\omega$ is a toric K\"ahler form on $\C^N$.
\end{lemma}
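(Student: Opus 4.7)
The lemma has three components: $\omega$ is toric, smooth on all of $\C^N$, and non-degenerate (i.e.\ $\Phi$ is strictly plurisubharmonic). The plan is to handle these in increasing order of difficulty. Toric invariance is immediate because $\Phi$ is a sum of functions of the moduli $|z_i|^2$, hence invariant under the standard $\T^N$ action. For smoothness, I first note that $\hat M$ is smooth on $\R_{\ge 0}^2 \setminus \{(0,0)\}$ by construction: on the open quadrant $\{U,V > 0\}$ it equals $\exp \circ M \circ (\log,\log)$, and it coincides with $\max$ whenever either argument vanishes. The outer $\hat M(\varepsilon,\cdot)$ has its first argument bounded away from $0$, hence is globally smooth in its second argument. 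The inner factors $\hat M(|z_i|^2,|z_j|^2)$ can fail to be smooth only on $\{z_i = z_j = 0\}$; but on such a locus the product $\prod_{k \neq i}\hat M(|z_i|^2,|z_k|^2)$ vanishes (the $j$-th factor does), and by $1$-homogeneity of $\hat M$ the whole product stays below $\varepsilon e^{-\delta}$ in a neighborhood, where $\hat M(\varepsilon,\cdot) \equiv \varepsilon$ and the $i$-th summand of $\Phi$ reduces to the smooth expression $\varepsilon |z_i|^2$. In particular, near the origin of $\C^N$, $\Phi = \varepsilon|z|^2$ recovers the standard K\"ahler potential.

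For strict plurisubharmonicity I would work in logarithmic coordinates $\rho_i = \log|z_i|$ on the dense open $(\C^*)^N$. Writing $\Phi_i = e^{f_i(\rho)}$ with
\[
f_i(\rho) = M\Bigl(\log\varepsilon,\ \textstyle\sum_{k \neq i} M(2\rho_i, 2\rho_k)\Bigr) + 2\rho_i,
\]
the function $f_i$ is convex: $M$ is convex and monotonically non-decreasing in each argument, and is composed with a non-negative linear combination of the $\rho$'s; the extra term $2\rho_i$ is linear. Hence each $\Phi_i$ is log-convex and $\Phi = \sum_i \Phi_i$ is convex in $\rho$, so $\omega$ is at least semi-positive. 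For strictness, I compute
\[
\nabla_\rho^2 \Phi = \sum_i \Phi_i \bigl(\nabla f_i \otimes \nabla f_i + \nabla_\rho^2 f_i\bigr),
\]
a sum of positive semi-definite matrices. Vanishing of $v^T \nabla_\rho^2 \Phi\, v$ forces $v^T \nabla f_i = 0$ for every $i$, so strict convexity reduces to linear independence of the gradients $\nabla f_1,\ldots,\nabla f_N$. Using the translation-invariance identity $\partial_1 M + \partial_2 M = 1$ (from property (2) of Definition \ref{def:Max}), the $N\times N$ matrix $A$ whose $i$-th row is $\nabla f_i$ takes the form $A = 2I + 2DC$, where $D$ is diagonal with entries $\partial_2 M(\log\varepsilon,\cdot) \in [0,1]$ and $C$ is a non-negative matrix with row sums $N-1$ satisfying $C_{ij} + C_{ji} = 1$ for $i \neq j$. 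Once $A$ is shown to be invertible, strict convexity holds on $(\C^*)^N$ and extends to all of $\C^N$ by continuity combined with the explicit form $\Phi = \varepsilon|z|^2$ near the origin.

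The main obstacle is the invertibility of $A = 2I + 2DC$, equivalently ruling out $-1$ from the spectrum of the non-negative matrix $DC$. Perron-Frobenius bounds the spectral radius of $DC$ by $\max_i D_{ii}(N-1) \le N-1$ but does not by itself exclude the eigenvalue $-1$. The structural constraint $C_{ij} + C_{ji} = 1$ is the key additional input: it lets one rewrite $A$ as $2I$ perturbed by a graph-Laplacian-like object with row sums~$0$ and controlled eigenvalues. As a sanity check, a direct computation in the case $N = 2$ gives $\det A = 4 + 4a$ with $a \in [0,1]$, which is bounded below by $4$; I expect a combinatorial identity of the same flavor to yield an explicit positive lower bound on $\det A$ for general $N$, completing the proof.
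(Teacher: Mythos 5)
Your proposal follows the paper's framework closely through the first several steps: toric invariance, smoothness via the $\hat M(\varepsilon,\cdot)$ truncation, log-convexity of each summand $\Phi_i=e^{f_i}$, and reduction of strict convexity to linear independence of the gradients $\nabla f_i$. The decomposition $A = 2I + 2DC$ with the structural identity $C_{ij}+C_{ji}=1$ for $i\neq j$ (a consequence of property (2) of Definition \ref{def:Max}, i.e.\ $\partial_1 M + \partial_2 M = 1$) is also correct, and your $N=2$ determinant computation checks out.

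The gap is at the decisive step: you reduce to invertibility of $A$, conjecture a combinatorial lower bound on $\det A$, and stop. The paper does not take a determinant route, and the Perron--Frobenius obstruction you worry about does not in fact arise in their argument. Their trick is to observe that it suffices for the symmetrization $\hat A = A + A^T$ to be positive definite, since then $v^T A v = \tfrac12 v^T\hat A v > 0$ for $v\neq 0$, so $Av\neq 0$. Restricting, as the paper does, to the regime where the outer $M(\log\varepsilon,\cdot)$ is the identity (i.e.\ $D=I$ in your notation; the near-axis behavior is handled separately by the observation that $\hat M(\varepsilon,\cdots)\geq\varepsilon>0$), your identity $C_{ij}+C_{ji}=1$ gives $\hat a_{ij}=2$ for all $i\neq j$, while $\hat a_{ii}=2\,\partial\varphi_i/\partial\rho_i\geq 4$. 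Then
\begin{equation*}
v^T\hat A\,v \;=\; \sum_{i\neq j}2\,v_iv_j + \sum_i \hat a_{ii}v_i^2
\;=\; 2\Bigl(\sum_i v_i\Bigr)^{\!2} + \sum_i(\hat a_{ii}-2)\,v_i^2 \;>\; 0
\end{equation*}
for $v\neq 0$. This positive-definiteness estimate, rather than a determinant bound, is the missing idea; it avoids the spectral-radius issues you flag entirely. Note also that with general $D$ (entries possibly $<1$) the symmetrization $DC + C^T D$ no longer has constant off-diagonal entries, which is part of why the paper restricts to the $D=I$ region for this step.
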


\proof $\Phi$ is obviously $\T^N$-invariant, and we will momentarily check
that outside of the coordinate axes it is strictly convex as a function of
the variables $\rho_j=\log |z_j|$. Meanwhile, smoothness and non-degeneracy 
of the Hessian
near $z_i=0$ follow from the observation that the coefficient 
$\hat{M}(\varepsilon,...)$ in the $i$-th term of \eqref{eq:K_potential} is bounded
below by $\varepsilon>0$.

To prove the strict convexity of $\Phi$ outside of the coordinate axes, we
observe that each term in the sum \eqref{eq:K_potential} is {\em log-convex} 
as a function of $\rho_j=\log |z_j|$, i.e.\  its logarithm is convex. 
Indeed, using the convexity of $M$ and 
the fact that the composition of a convex monotonically 
increasing function with a convex function is itself convex, 
we find that $$\varphi_i(\rho_1,\dots,\rho_N):=M\Bigl(\log\varepsilon,\sum_{j\neq i} M(2\rho_i,2\rho_j)\Bigr)+2\rho_i$$
is a convex function.  Since the exponential function is
strictly increasing and strictly convex, we conclude that
$$\Phi_i(\rho_1,\dots,\rho_N)=e^{\varphi_i(\rho_1,\dots,\rho_N)}=
\hat{M}\Bigl(\varepsilon,\prod_{j\neq i} \hat{M}(|z_i|^2,|z_j|^2)\Bigr)
|z_i|^2$$ is a convex function, and that its Hessian
is non-degenerate on all tangent vectors which are transverse to the level
sets of $\varphi_i$, i.e.\ $d^2 \Phi_i(v,v)>0$ whenever $d\varphi_i(v)\neq
0.$

Thus, in order to conclude that $\Phi=\sum \Phi_i$ is strictly convex, it
suffices to show that $d\varphi_1,\dots,d\varphi_N$ are everywhere linearly
independent. Equivalently, we need to show that the matrix $A$ with
entries $a_{ij}=\partial \varphi_i/\partial \rho_j$ is invertible.
For simplicity we only do this in the region where $\varphi_i=\sum_{j\neq i}
M(2\rho_i,2\rho_j)+2\rho_i$; in light of Remark
\ref{rmk:simplify_K_potential} this is the only case of genuine interest to us.

Let $\hat{A}=A+A^T$, with entries $\hat{a}_{ij}=a_{ij}+a_{ji} = \partial
\varphi_i/\partial \rho_j+\partial \varphi_j/\partial \rho_i$.
For $i\neq j$, it follows from property (2) of Definition \ref{def:Max}
that $$\hat{a}_{ij}=
\tfrac{\partial}{\partial \rho_i} M(2\rho_i,2\rho_j)+
\tfrac{\partial}{\partial \rho_j} M(2\rho_i,2\rho_j)=2.$$
Meanwhile, $\hat{a}_{ii}=2\,\partial \varphi_i/\partial \rho_i\ge 4$.
Thus, given any non-zero vector $v\in \R^N$, 
$$\langle v,\hat{A}v\rangle = \sum_{i,j=1}^N \hat{a}_{ij} v_iv_j=
2\,\Bigl(\sum_i v_i\Bigr)^2 + \sum_i (\hat{a}_{ii}-2)\,v_i^2>0,$$
and it follows that $\langle v, Av\rangle=\frac12 \langle v,\hat{A}v\rangle
$ is positive as well, which implies that $A$ is invertible, and hence
$\Phi$ is strictly convex.
\endproof

The key feature of the K\"ahler form  $\omega$ which makes it possible for
fibered Lagrangians to be fiberwise monomially admissible
is that all ``large'' coordinates are
preserved under parallel transport. We first make the notion of ``large''
coordinate more precise:

\begin{definition}
A partition $\{1,\dots,N\}=K\sqcup J$ into two non-empty subsets is called a
$\delta$-gap at a point $(z_1,\dots,z_N)\in \C^N$
 if $\inf\,\{|z_i|^2,\ i\in J\} \geq e^\delta \sup\, \{|z_i|^2,\ i\in K\}$.
We say that $z_j$ lies above a $\delta$-gap if there exists a
$\delta$-gap $\{1,\dots,N\}=K\sqcup J$ with $j\in J$.
\end{definition}

\begin{lemma}\label{l:abovegap}
If $|z_\ell|\geq e^{\frac14 (N-1)\delta}\,|W_0|^{1/N}$, or if $|z_\ell| \geq e^{\frac12
(N-1)\delta}\,\min |z_i|$, then $z_\ell$ lies
above a $\delta$-gap.
\end{lemma}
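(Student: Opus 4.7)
The plan is to argue by contrapositive in both cases. Sort the squared moduli in increasing order, writing $s_1\le s_2\le\dots\le s_N$ for the values $\{|z_i|^2\}$ (with multiplicities), and let $m$ be a rank such that $|z_\ell|^2=s_m$. The key observation is that the partitions $\{1,\dots,N\}=K\sqcup J$ with $\ell\in J$ are (up to the order-preserving bijection between ranks and indices) in bijection with cut points $k\in\{1,\dots,m-1\}$, where $K$ collects the indices $i$ with $|z_i|^2\le s_k$ and $J$ collects those with $|z_i|^2\ge s_{k+1}$. Such a partition is a $\delta$-gap precisely when $s_{k+1}/s_k\ge e^\delta$. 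Therefore, assuming that $z_\ell$ does \emph{not} lie above any $\delta$-gap is equivalent to the telescoping hypothesis
\begin{equation*}
\log s_{k+1}-\log s_k<\delta\qquad\text{for every }k=1,\dots,m-1.
\end{equation*}

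Granting this hypothesis, the second estimate is immediate: summing yields $\log s_m-\log s_1<(m-1)\delta\le(N-1)\delta$, i.e.\ $|z_\ell|<e^{\frac12(N-1)\delta}\min_i|z_i|$. Taking the contrapositive gives the second statement.

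For the first estimate, the plan is to compare $\log s_m$ with the average $\frac1N\sum_i\log s_i=\log|W_0|^{2/N}$ by writing
\begin{equation*}
\log s_m-\tfrac1N{\textstyle\sum_i}\log s_i \;=\;\tfrac1N{\textstyle\sum_{k=1}^N}(\log s_m-\log s_k).
\end{equation*}
The terms with $k>m$ are non-positive and can be dropped, while for $k\le m$ the telescoping hypothesis gives $\log s_m-\log s_k<(m-k)\delta$. Summing then yields the bound $\frac{\delta}{N}\sum_{j=0}^{m-1}j=\frac{m(m-1)\delta}{2N}$, which is at most $\frac{(N-1)\delta}{2}$ since $m\le N$. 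Exponentiating and taking square roots gives $|z_\ell|<e^{\frac14(N-1)\delta}|W_0|^{1/N}$, whose contrapositive is the first statement.

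There is no serious obstacle: the only bookkeeping point to check carefully is the translation between partitions of the index set and cut points in the sorted sequence (in the presence of possible ties, one simply breaks ties in a way that places $\ell$ as high as possible within its equivalence class, which does not affect the above estimates). Everything else is the telescoping argument together with the elementary inequality $m(m-1)\le N(N-1)$.
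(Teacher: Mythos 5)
Your proof is correct and takes essentially the same route as the paper: argue by contrapositive, sort the squared moduli, observe that ``no $\delta$-gap below $z_\ell$'' means each consecutive ratio on the small side of $z_\ell$ is below $e^\delta$, and telescope. The only stylistic difference is that you phrase things in logs and with explicit cut points in the sorted sequence (and track the sum $m(m-1)/2$), while the paper sorts in decreasing order and bounds the product $\prod|z_i|^2$ directly, arriving at the same intermediate estimate since $(N-m')(N-m'+1)=m(m-1)$ under the change of order.
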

\proof Assume $z_\ell$ does not lie above any $\delta$-gap. Then listing all $|z_i|^2$ in
decreasing order, the entry just after $|z_\ell|^2$ (if there is one) is bounded below by
$e^{-\delta} |z_\ell|^2$, the next one is bounded below by $e^{-2\delta}
|z_\ell|^2$, and so on, whereas the entries preceding $|z_\ell|^2$ are bounded
below by $|z_\ell|^2$.  Thus, we conclude that $\min |z_i|^2 >
e^{-(N-1)\delta}\,|z_\ell|^2$, and $|W_0|^2=\prod_{i=1}^N |z_i|^2 > 
e^{-\frac12 N(N-1)\delta}|z_\ell|^{2N}$.  Taking the square root, resp.\ the $2N$-th root of both
sides of these inequalities, we obtain a contradiction.
\endproof

\begin{lemma}\label{l:invce_CN}
Assume that $|W_0|^2\ge (\varepsilon e^{\delta})^{\frac{N}{N-1}}$, and that
$z_\ell$ lies above a $\delta$-gap. Then 
the coordinate $z_\ell$ is (locally) invariant under parallel transport.
\end{lemma}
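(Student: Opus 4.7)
The plan is to study the linear system $\Psi \lambda = \mathbf{1}$ defining the coefficients in \eqref{eq:horiz_span} and to show directly that $\lambda_\ell = 0$ under the $\delta$-gap hypothesis with $\ell \in J$. Because both $(\partial_\theta)^\#$ and $(r\partial_r)^\#$ have their $\partial_{\theta_\ell}$ and $\partial_{\rho_\ell}$ components proportional to $\lambda_\ell$, this vanishing will imply that $\theta_\ell$ and $\rho_\ell$, and hence $z_\ell$, are preserved under horizontal parallel transport on the open region where the $\delta$-gap persists (this is the ``locally'' in the statement).

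The first step is simplification. The hypothesis $|W_0|^2 \geq (\varepsilon e^\delta)^{N/(N-1)}$ combined with Remark \ref{rmk:simplify_K_potential} removes the outer $\hat M(\varepsilon,\cdot)$, and the $\delta$-gap forces $\hat M(|z_i|^2,|z_j|^2) = |z_i|^2$ whenever $i \in J$ and $j \in K$, by property (1) of Definition \ref{def:Max}. Writing $s := \sum_{j \in J} \rho_j$, I would check that each $\varphi_i := \log \Phi_i$ with $i \in J$ depends only on $\rho_J$, while for $i \in K$ one has $\Phi_i = e^{2s}\, \Phi_i^{(K)}$, where $\Phi_i^{(K)} := |z_i|^2 \prod_{j \in K, j \neq i} \hat M(|z_i|^2, |z_j|^2)$ involves only $\rho_K$. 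Denoting by $\Phi^{(K)} := \sum_{i \in K} \Phi_i^{(K)}$ the analogous K\"ahler potential on $\C^K$, with Hessian $\Psi^{(K)}$ and moment map $\mu^{(K)}$, a straightforward computation will then produce the block decomposition
\[
\Psi \;=\; \begin{pmatrix} e^{2s}\,\Psi^{(K)} & 2 e^{2s}\,\mu^{(K)} \mathbf{1}_J^T \\[2pt] 2 e^{2s}\,\mathbf{1}_J (\mu^{(K)})^T & \Psi_{JJ} \end{pmatrix},
\]
whose essential features are that the $K$--$J$ cross block has rank one and that the exact form of $\Psi_{JJ}$ will play no role.

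I will then try the ansatz $\lambda = (\lambda_K, 0_J)$. The $K$-block of $\Psi \lambda = \mathbf{1}$ reduces immediately to $\lambda_K = e^{-2s}\, \lambda^{(K)}$ with $\lambda^{(K)} := (\Psi^{(K)})^{-1} \mathbf{1}_K$. Because the cross block has rank one, all $|J|$ equations from the $J$-block collapse to the single scalar condition $\mu^{(K)} \cdot \lambda^{(K)} = \tfrac12$. Once this is verified, positive definiteness of $\Psi$ (Lemma \ref{l:K_form_CN}) will guarantee uniqueness, yielding $\lambda_\ell = 0$ for every $\ell \in J$.

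The one non-tautological step is the scalar identity $\mu^{(K)} \cdot \lambda^{(K)} = \tfrac12$, which I expect to follow from Euler's homogeneity identity. Each $\Phi_i^{(K)}$ is a product of $|K|$ factors each scaling as $e^{2a}$ under the uniform shift $\rho_K \mapsto \rho_K + a\,\mathbf{1}_K$, so $\Phi^{(K)}$ is homogeneous of degree $2|K|$; differentiating $\mathbf{1}_K^T \mu^{(K)} = 2|K|\, \Phi^{(K)}$ once more gives $\Psi^{(K)} \mathbf{1}_K = 2|K|\, \mu^{(K)}$, whence
\[
\mu^{(K)} \cdot \lambda^{(K)} \;=\; \mathbf{1}_K^T (\Psi^{(K)})^{-1} \mu^{(K)} \;=\; \frac{\mathbf{1}_K^T \mathbf{1}_K}{2|K|} \;=\; \tfrac{1}{2},
\]
completing the argument. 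The main obstacle is really bookkeeping: tracking which sub-blocks of $\Psi$ vanish once the gap-induced simplification $\hat M(|z_i|^2,|z_j|^2) = |z_i|^2$ ($i\in J$, $j\in K$) is applied, so that the rank-one structure of the cross block becomes visible.
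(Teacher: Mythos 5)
Your proof is correct and, despite the cleaner packaging, is essentially the same argument as the paper's: both exploit the $\delta$-gap to reduce $\hat M(|z_i|^2,|z_j|^2)$ to $|z_i|^2$ when $i\in J$, $j\in K$, observe that the $J$-$K$ cross-block of $\Psi$ depends only on the $K$-index, and verify the ansatz $\lambda_J=0$ by a scaling identity. Your ``Euler homogeneity'' step $\Psi^{(K)}\mathbf{1}_K = 2|K|\,\mu^{(K)}$ is exactly what the paper derives from the translation-invariance property (2) of Definition \ref{def:Max} when it writes $\frac{1}{|K|}\sum_{m\in K}\Psi_{mk}=c_k$, so the two routes differ only in notation (your explicit factoring through $\Phi^{(K)}$ and the block form makes the rank-one structure of the cross block more visible, but the computation is the same).
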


Before giving the proof, we provide some intuition by briefly considering
the case $N=2$: when $|z_2|^2\geq e^\delta |z_1|^2$, our K\"ahler potential is
$\Phi=|z_1|^2|z_2|^2+|z_2|^4$, and 
$\omega$ is locally a product K\"ahler
form when expressed in the coordinates $(W_0,z_2)$, which readily implies
that parallel transport for $W_0$ preserves $z_2$. Alternatively,
the first component of the moment map is 
$\mu_1=\partial\Phi/\partial\rho_1=2|W_0|^2$,
as is also the case more generally whenever $z_1$ is the smallest coordinate and
separated from $z_2,\dots,z_N$ by a $\delta$-gap. Since $d\log
|W_0|$ is proportional to $d\mu_1$, comparing \eqref{eq:dlogWdmuj} and
\eqref{eq:horiz_span} we conclude that only $z_1$ varies
along the horizontal distribution, while $z_2,\dots,z_N$ are preserved.
(However, as parallel transport towards
$|W_0|\to\infty$ proceeds by varying $z_1$ while $z_2,\dots,z_N$ remain fixed, 
eventually $|z_1|$ becomes large enough to ``close'' the $\delta$-gap and 
the statement no longer holds). The argument in the general case is less explicit but
similarly involves the vanishing of certain coefficients in \eqref{eq:dlogWdmuj}.

\proof
Let $\{1,\dots,N\}=K\sqcup J$ be a $\delta$-gap with $\ell\in J$.
Recall that the K\"ahler potential is given by
\eqref{eq:K_potential_simpler}, i.e.\ $\Phi=\sum_{i=1}^N e^{\varphi_i}$,
where $\varphi_i=\sum_{j\neq i} M(2\rho_i,2\rho_j)+2\rho_i$. 
Property (1) of Definition \ref{def:Max} implies that, for $i\in J$ and
$k\in K$, $\partial \varphi_i/\partial \rho_k\equiv 0$, whereas for $i\in K$ and
$j\in J$, $\partial \varphi_i/\partial \rho_j\equiv 2$.
Thus, for $k\in K$ and $j\in J$,
\begin{equation}\label{eq:Psikj}
\Psi_{jk}=\frac{\partial^2\Phi}{\partial \rho_j\partial \rho_k}=
\sum_{i=1}^N \left( \frac{\partial^2 \varphi_i}{\partial \rho_j \partial
\rho_k}+\frac{\partial \varphi_i}{\partial \rho_j}\frac{\partial\varphi_i}{
\partial \rho_k}\right)\,e^{\varphi_i}=\sum_{i\in K} 2 \frac{\partial
\varphi_i}{\partial \rho_k}\,e^{\varphi_i},
\end{equation}
which is {\em independent of $j$}. We denote this quantity by
$c_k$.  

Next, property (2) of Definition
\ref{def:Max} implies that for all $i$ we have
$\sum_{m=1}^N \partial \varphi_i/\partial \rho_m=2N$, and for $i\in K$
we have $\sum_{m\in K} \partial \varphi_i/\partial \rho_m=
2N-2|J|=2|K|$. Thus,
$$\sum_{m\in K} \frac{\partial \Phi}{\partial \rho_m}=
\sum_{i,m\in K} \frac{\partial \varphi_i}{\partial \rho_m} e^{\varphi_i}
=2|K|\sum_{i\in K} e^{\varphi_i}.$$
Differentiating with respect to $\rho_k$ for $k\in K$, we find that
\begin{equation}\label{eq:Psikk}
\frac{1}{|K|}\sum_{m\in K} \Psi_{mk}=\sum_{i\in K} 2 \frac{\partial
\varphi_i}{\partial \rho_k}\,e^{\varphi_i}=c_k.
\end{equation}

The non-degeneracy of $\omega$ implies that the symmetric matrix $\Psi$ is
positive definite, and in particular its restriction $\Psi_{|K}$ to the
coordinates labelled by elements of $K$ is also non-degenerate. For $k\in
K$, denote by $\lambda_k$ the components of $(\Psi_{|K})^{-1}(1,\dots,1)$,
i.e.\ by definition $\sum_{k\in K} \Psi_{ik} \lambda_k=1$
for all $i\in K$. Averaging over $i\in K$ and using \eqref{eq:Psikk},
we also have $\sum_{k\in K} c_k \lambda_k=1$. Thus, using \eqref{eq:Psikj}
we find that $$\sum_{k\in K} \Psi_{ik}\lambda_k=1\qquad \text{for all }
i=1,\dots,N.$$
Setting $\lambda_j=0$ for $j\in J$, we conclude that
that $\Psi^{-1}(1,\dots,1)=(\lambda_1,\dots,\lambda_N)$, i.e.\  
$(\lambda_1,\dots,\lambda_N)$ are the coefficients which
appear in \eqref{eq:dlogWdmuj} and \eqref{eq:horiz_span}.

For $j\in J$, the vanishing of $\lambda_j$ implies that the $j$-th
components of $(\partial_\theta)^\#$ and $(r\partial_r)^\#$ are zero,
and thus, parallel transport preserves $z_j$. This is in particular true
for $j=\ell$.
\endproof

We conclude this section with some estimates for the moment map, which
will be used to establish the analogue of Lemma \ref{l:invce_CN} for 
K\"ahler forms obtained from $\omega$ by symplectic reduction.
Since the formula for the moment map is
obviously equivariant under permutation of the variables, it suffices to
consider the case where  $|z_1|\leq |z_2|\leq \dots \leq |z_N|$.

\begin{lemma}\label{l:momentmapest}
Assume that $|W_0|^2\geq (\varepsilon e^{\delta})^{\frac{N}{N-1}}$, and that
$|z_1|\leq |z_2|\leq \dots \leq |z_N|$. Then:
\begin{enumerate}
\item $0<\mu_1\leq \mu_2\leq \dots \leq \mu_N$.\medskip
\item $2 \leq \dfrac{\mu_j}{|z_j|^{2j}\prod_{i=j+1}^N |z_{i}|^2}\leq 4N
e^{2N\delta}$
for all $1\leq j\leq N$.\medskip
\item $\displaystyle (2N)^{-\frac{1}{2N}}e^{-\delta} \Bigl(\frac{\mu_j}{\mu_k}\Bigr)^{\!\frac{1}{2N}} \leq
\frac{|z_j|}{|z_k|}\leq (2N)^{1/2} e^{N\delta}\,\Bigl(\frac{\mu_j}{\mu_k}\Bigr)^{\!\frac12}$ for
all $1\leq k<j\leq N$.
\end{enumerate}
\end{lemma}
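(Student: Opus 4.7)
The plan is to expand
\begin{equation*}
\mu_j = \frac{\partial\Phi}{\partial\rho_j} = \sum_{i=1}^N e^{\varphi_i}\,\frac{\partial\varphi_i}{\partial\rho_j},\qquad \varphi_i = \sum_{k\neq i} M(2\rho_i,2\rho_k) + 2\rho_i,
\end{equation*}
and exploit the elementary properties of $M$: $\max(u,v)\leq M(u,v)\leq \max(u,v)+\delta$, and $\partial_u M + \partial_v M = 1$ with both partials in $[0,1]$, with $\partial_v M(u,v)=0$ as soon as $u-v\geq \delta$. Under the assumption $\rho_1\leq\dots\leq \rho_N$, estimating each $M(2\rho_i,2\rho_k)$ termwise yields the basic sandwich
\begin{equation*}
|z_i|^{2i}\prod_{k>i}|z_k|^2 \;\leq\; e^{\varphi_i} \;\leq\; e^{(N-1)\delta}|z_i|^{2i}\prod_{k>i}|z_k|^2,
\end{equation*}
which underlies the rest of the proof.

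I would prove part~(2) first. The lower bound $\mu_j\geq 2|z_j|^{2j}\prod_{k>j}|z_k|^2$ follows from retaining only the $i=j$ term in the sum and using $\partial\varphi_j/\partial\rho_j\geq 2$. For the upper bound, I split the sum into $i<j$, $i=j$ and $i>j$. In the first two cases the inequality $|z_\ell|\leq |z_j|$ for $\ell\leq j$ gives $e^{\varphi_i}\leq e^{(N-1)\delta}|z_j|^{2j}\prod_{k>j}|z_k|^2$, with $\partial\varphi_i/\partial\rho_j$ crudely bounded by $2$ and $2N$ respectively. In the third case $\partial\varphi_i/\partial\rho_j=2\partial_v M(2\rho_i,2\rho_j)$ vanishes whenever $|z_i|^2\geq e^\delta|z_j|^2$, so only terms with $|z_j|^2\leq |z_i|^2\leq e^\delta|z_j|^2$ contribute; for those, all $|z_\ell|$ with $j\leq\ell\leq i$ lie in $[|z_j|,e^{\delta/2}|z_j|]$, which bounds $e^{\varphi_i}$ by $e^{(2N-1)\delta}|z_j|^{2j}\prod_{k>j}|z_k|^2$. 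Adding the contributions with the crude count $2N+2(j-1)+2(N-j)=4N-2$ yields the constant $4Ne^{2N\delta}$.

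Part~(1) then combines the lower bound of part~(2) (which forces $\mu_j>0$ since all $|z_k|$ are positive) with a symmetry--convexity argument for the monotonicity. For $j<k$, I set $\rho(t)=(1-t)\rho + tS_{jk}\rho$, where $S_{jk}$ swaps the $j$-th and $k$-th coordinates, and define $h(t)=\mu_k(\rho(t))-\mu_j(\rho(t))$. The symmetry of $\Phi$ gives $\mu_k(S_{jk}\rho')=\mu_j(\rho')$, whence $h(1-t)=-h(t)$ and in particular $h(1/2)=0$, while direct differentiation using $\Psi_{\ell m}=\partial\mu_\ell/\partial\rho_m$ gives
\begin{equation*}
h'(t) = -(\rho_k-\rho_j)\,\langle e_j-e_k,\,\Psi(\rho(t))(e_j-e_k)\rangle \leq 0
\end{equation*}
by positive-definiteness of $\Psi$ (established in Lemma~\ref{l:K_form_CN}). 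Integrating from $0$ to $1/2$ yields $h(0)\geq 0$, i.e.\ $\mu_k\geq \mu_j$.

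For part~(3), combining the estimates of part~(2) with the pinching $|z_k|^{2(j-1-k)}\leq \prod_{\ell=k+1}^{j-1}|z_\ell|^2\leq |z_j|^{2(j-1-k)}$ produces the double inequality
\begin{equation*}
\frac{1}{2Ne^{2N\delta}}\Big(\frac{|z_j|}{|z_k|}\Big)^{\!2k} \leq \frac{\mu_j}{\mu_k} \leq 2Ne^{2N\delta}\Big(\frac{|z_j|}{|z_k|}\Big)^{\!2(j-1)}.
\end{equation*}
The claimed upper bound on $|z_j|/|z_k|$ follows by taking the $(2k)$-th root of the left inequality and using $(\mu_j/\mu_k)^{1/(2k)}\leq (\mu_j/\mu_k)^{1/2}$ (valid since $\mu_j/\mu_k\geq 1$ by part~(1) and $k\geq 1$). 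For the lower bound, the $2(j-1)$-th root of the right inequality gives $|z_j|/|z_k|\geq (\mu_j/(2Ne^{2N\delta}\mu_k))^{1/(2(j-1))}$; I then split into $\mu_j/\mu_k\geq 2Ne^{2N\delta}$ (where the base is $\geq 1$ and $1/(2(j-1))\geq 1/(2N)$ only weakens the bound, giving the desired form after noting $(2N)^{-1/(2N)}e^{-\delta}=(2Ne^{2N\delta})^{-1/(2N)}$) and $\mu_j/\mu_k\leq 2Ne^{2N\delta}$ (where the trivial estimate $|z_j|/|z_k|\geq 1$ already dominates the target). The main technical obstacle is the case analysis in part~(2), specifically the $i>j$ contributions: it is essential that $\partial_v M(2\rho_i,2\rho_j)$ vanishes as soon as $\rho_i-\rho_j\geq \delta/2$, since otherwise a large $|z_i|$ would blow up $e^{\varphi_i}$ relative to $e^{\varphi_j}$ and destroy the upper bound.
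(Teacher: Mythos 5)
Your proof is correct, and parts (2) and (3) run essentially parallel to the paper's own argument (establishing the sandwich $|z_j|^{2j}\prod_{i>j}|z_i|^2\leq e^{\varphi_j}\leq e^{N\delta}|z_j|^{2j}\prod_{i>j}|z_i|^2$, bounding $\mu_j$ via the expansion $\mu_j=\sum_i e^{\varphi_i}\partial\varphi_i/\partial\rho_j$ with the crucial observation that $\partial_v M(2\rho_i,2\rho_j)$ vanishes once $|z_i|^2\geq e^\delta|z_j|^2$, and then transferring the $\mu$-estimates to $|z_j|/|z_k|$ via the ratio of products). Your case split in part (3) is slightly more convoluted than the paper's, which simply uses $R\geq 1$ together with $2k\geq 2$ and $2(j-1)\leq 2N$ to pass from $R^{1/(2j-2)}\leq |z_j|/|z_k|\leq R^{1/(2k)}$ to $R^{1/(2N)}\leq |z_j|/|z_k|\leq R^{1/2}$, but it arrives at the same conclusion.

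Where you diverge is in part (1). The paper proves $\mu_j\leq\mu_k$ by a termwise comparison in the sum \eqref{eq:mu_j_CN}: for $i\notin\{j,k\}$ it uses $\varphi_j\leq\varphi_k$ together with the one-variable convexity inequality $\partial M(2\rho_i,2\rho_j)/\partial\rho_j\leq\partial M(2\rho_i,2\rho_k)/\partial\rho_k$, and it handles the remaining $\{j,k\}$-terms via $\partial M(2\rho_j,2\rho_k)/\partial\rho_j\leq 1\leq\partial M(2\rho_j,2\rho_k)/\partial\rho_k$. Your argument instead interpolates $\rho(t)=(1-t)\rho+tS_{jk}\rho$, uses the $\mathfrak{S}_N$-symmetry of $\Phi$ to force $h(1/2)=0$, and the positive-definiteness of $\Psi$ (from Lemma \ref{l:K_form_CN}) to force $h'(t)\leq 0$. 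This is conceptually cleaner and more general: it applies verbatim to any smooth, strictly convex, $\mathfrak{S}_N$-symmetric function of $(\rho_1,\dots,\rho_N)$, whereas the paper's argument leans on the specific piecewise structure of $M$. The one point worth flagging is that your argument needs the Hessian of the \emph{full} potential \eqref{eq:K_potential} to be positive definite along the whole segment $\rho(t)$, not just at its endpoints; this is fine since Lemma \ref{l:K_form_CN} establishes positive-definiteness everywhere, and moreover $|W_0|$ is constant along the segment so the simplified form \eqref{eq:K_potential_simpler} persists anyway. Both approaches buy the same lemma; yours trades a more careful term-by-term bookkeeping for a soft argument that better exposes why the result is true.
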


\proof
Recall that, by \eqref{eq:K_potential_simpler}, $\Phi=\sum e^{\varphi_i}$ with $\varphi_i=\sum_{j\neq i}
M(2\rho_i,2\rho_j)+2\rho_i$. Thus,
\begin{equation}
\label{eq:mu_j_CN}
\mu_j=\frac{\partial \Phi}{\partial \rho_j}=\sum_{i=1}^N  \frac{\partial
\varphi_i}{\partial \rho_j} e^{\varphi_i}=
\sum_{i\neq j} \frac{\partial M(2\rho_i,2\rho_j)}{\partial
\rho_j}\,(e^{\varphi_i}+e^{\varphi_j}) \,+\, 2\,e^{\varphi_j}.
\end{equation}
We first establish the inequality (1). For $j<k$, we have
$|z_j|\leq |z_k|$ by assumption, and using the monotonicity of $M$ we
immediately deduce that $\varphi_j\leq \varphi_k$. Moreover, for
$i\not\in \{j,k\}$, the convexity of $M$ implies that
$0\leq \partial M(2\rho_i,2\rho_j)/\partial \rho_j
\leq \partial M(2\rho_i,2\rho_k)/\partial \rho_k$, and hence 
$$\frac{\partial M(2\rho_i,2\rho_j)}{\partial
\rho_j}\,(e^{\varphi_i}+e^{\varphi_j})\leq 
\frac{\partial M(2\rho_i,2\rho_k)}{\partial
\rho_k}\,(e^{\varphi_i}+e^{\varphi_k}).$$
Meanwhile, properties (2)(3) of Definition \ref{def:Max} and the convexity
of $M$ imply that
${\partial M(2\rho_j,2\rho_k)}/{\partial \rho_j} \leq 1 \leq
{\partial M(2\rho_j,2\rho_k)}/{\partial \rho_k},$ so
$$\frac{\partial M(2\rho_j,2\rho_k)}{\partial \rho_j}(e^{\varphi_j}+
e^{\varphi_k}) \,+\,2\,e^{\varphi_j} \leq
\frac{\partial M(2\rho_j,2\rho_k)}{\partial \rho_k}(e^{\varphi_j}+
e^{\varphi_k}) \,+\,2\,e^{\varphi_k}.$$
Combining these inequalities we conclude that $\mu_j\leq \mu_k$, which
proves (1).

To establish (2), we first observe that, for $i_1<i_2$,
$|z_{i_2}|^2\leq \hat{M}(|z_{i_1}|^2,|z_{i_2}|^2)\leq e^{\delta} |z_{i_2}|^2$.
Therefore,
$$|z_j|^{2j}\prod_{i>j} |z_i|^2 \leq e^{\varphi_j}=\Bigl(\prod_{i\neq j}
\hat{M}(|z_i|^2,|z_j|^2)\Bigr)\,|z_j|^2 \leq e^{N\delta} |z_j|^{2j}\prod_{i>j}
|z_i|^2.$$
Since $\mu_j\geq 2\,e^{\varphi_j}$ by \eqref{eq:mu_j_CN}, the lower bound on $e^{\varphi_j}$
immediately yields the lower bound in (2). 
Meanwhile, to obtain an upper bound on $\mu_j$ we observe that in the sum
\eqref{eq:mu_j_CN} the terms corresponding to $i$ such that $|z_i|^2\geq
e^{\delta} |z_j|^2$ vanish identically, since for such $i$ we have
$M(2\rho_i,2\rho_j)\equiv 2\rho_i$.  Otherwise, the inequality $2\rho_i\leq
2\rho_j+\delta$ implies that $\varphi_i\leq \varphi_j+N\delta$.
Meanwhile, $\partial M(2\rho_i,2\rho_j)/\partial \rho_j\leq 2$.
Thus,
$$\mu_j\leq \!\!\!\sum_{\substack{i\neq j\\ 2\rho_i\leq 2\rho_j+\delta}}\!\!\!
2(e^{\varphi_i}+e^{\varphi_j})\,+\, 2e^{\varphi_j}\leq (2N+2(N-1)e^{N\delta})\,
e^{\varphi_j}\leq 4Ne^{2N\delta}\,|z_j|^{2j}\prod_{i>j} |z_i|^2.$$
This establishes the upper bound in (2). Finally, (3) is a direct
consequence of (2) using the observation that
$$\biggl(\frac{|z_j|}{|z_k|}\biggr)^{\!2k}\leq \frac{|z_j|^{2j} \prod_{i=j+1}^N |z_i|^2}{|z_k|^{2k} \prod_{i=k+1}^N
|z_i|^2}=\biggl(\frac{|z_j|}{|z_k|}\biggr)^{\!2k} \prod_{i=k+1}^{j-1}
\frac{|z_j|^2}{|z_i|^2} \leq \biggl(\frac{|z_j|}{|z_k|}\biggr)^{\!2j-2},$$
which in turn implies that
$$\biggl(\frac{|z_j|^{2j} \prod_{i=j+1}^N |z_i|^2}{|z_k|^{2k} \prod_{i=k+1}^N
|z_i|^2}\biggr)^{\!\frac{1}{2N}}\leq \frac{|z_j|}{|z_k|}\leq 
\biggl(\frac{|z_j|^{2j} \prod_{i=j+1}^N |z_i|^2}{|z_k|^{2k} \prod_{i=k+1}^N
|z_i|^2}\biggr)^{\!\frac{1}{2}}.$$
\endproof

\subsection{Symplectic reduction and monomial admissibility}\label{ss:invce_Y}

Recall that the toric variety $Y$ described in \S \ref{s:LGmodel} is the
symplectic reduction of $\C^{P_\Z}$ by a subtorus $\T_M\subset
\T^{P_\Z}$, i.e.\ $Y=\mu^{-1}(\lambda)/\T_M$, and the superpotential
$W_0\in \O(\C^{P_\Z})$ descends to $W\in \O(Y)$. We equip $\C^{P_\Z}$ with
the toric K\"ahler form constructed in the previous section, and the reduced
space $Y$ with the induced toric K\"ahler form.

Our goal in this section is to show that symplectic reduction preserves the
compatibility of parallel transport with fiberwise monomial admissibility,
i.e.\ to establish an analogue of Lemma
\ref{l:invce_CN} for symplectic parallel transport between the fibers of
$W:Y\to \C$. Our starting point is the observation that ``parallel transport
commutes with reduction'':

\begin{lemma}\label{l:partransred} 
The horizontal vector fields
$(\partial_\theta)^\#$ and $(r\partial_r)^\#$ described by \eqref{eq:horiz_span},
which span the symplectic orthogonal to the
fibers of $W_0:\C^{P_\Z}\to \C$, are $\T_M$-equivariant and
tangent to $\mu^{-1}(\lambda)$. Their images under
the projection from $\mu^{-1}(\lambda)$ to $\mu^{-1}(\lambda)/\T_M=Y$ 
span the symplectic orthogonal to the fibers of $W:Y\to\C$ with respect to the reduced K\"ahler
form, and in fact they are the horizontal lifts to $Y$ of $\partial_\theta$ and
$r\partial_r$.  \qed
\end{lemma}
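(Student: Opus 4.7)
The plan is to verify the four assertions in turn---$\T_M$-equivariance, tangency to $\mu^{-1}(\lambda)$, spanning the symplectic orthogonal to the fibers of $W$ in $Y$, and being the horizontal lifts of $\partial_\theta$ and $r\partial_r$---relying on the general principle that symplectic reduction commutes with taking symplectic orthogonals of $\T_M$-invariant subbundles.

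For the $\T_M$-equivariance, inspect \eqref{eq:horiz_span}: the coefficients $\lambda_j$ are determined via \eqref{eq:dlogWdmuj} by the Hessian of the $\T^{P_\Z}$-invariant K\"ahler potential $\Phi$, hence are $\T^{P_\Z}$-invariant functions, and the coordinate fields $\partial_{\theta_j}$ and $\partial_{\rho_j}$ are likewise $\T^{P_\Z}$-invariant. So \eqref{eq:horiz_span} defines $\T^{P_\Z}$-equivariant vector fields, a fortiori $\T_M$-equivariant. For tangency to $\mu^{-1}(\lambda)$, note that $W_0$ is $\T_M$-invariant, so the generating vector fields $X_\xi$ for $\xi\in\mathfrak{t}_M:=M\otimes\R$ lie in $\ker(dW_0)$. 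The horizontal distribution is by definition the symplectic orthogonal to $\ker(dW_0)$, hence $\omega(X_\xi,\xi^\#)=0$ for every horizontal lift $\xi^\#$. By the moment map relation $\omega(X_\xi,v)=d\mu_M(v)\cdot \xi$, this gives $d\mu_M(\xi^\#)=0$, so $\xi^\#$ is tangent to every level set of $\mu_M$, in particular to $\mu^{-1}(\lambda)$.

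For the last two claims, use the defining relation $\pi^*\omega_Y=\iota^*\omega$ of the reduced K\"ahler form, where $\iota\co \mu^{-1}(\lambda)\hookrightarrow \C^{P_\Z}$ and $\pi\co \mu^{-1}(\lambda)\to Y$. Given $w\in T_{[z]}Y$ tangent to a fiber of $W$, pick any lift $\tilde w\in T_z\mu^{-1}(\lambda)$; since $W_0=W\circ\pi$ on $\mu^{-1}(\lambda)$, one has $dW_0(\tilde w)=dW(w)=0$, i.e.\ $\tilde w\in \ker(dW_0)$, so $\omega_Y(\pi_*\xi^\#,w)=\omega(\xi^\#,\tilde w)=0$ for $\xi^\#\in\{(\partial_\theta)^\#,(r\partial_r)^\#\}$. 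The same identity $W_0=W\circ\pi$ gives $dW(\pi_*(\partial_\theta)^\#)=\partial_\theta$ and $dW(\pi_*(r\partial_r)^\#)=r\partial_r$, identifying the pushforwards as the (unique) horizontal lifts of $\partial_\theta$ and $r\partial_r$ on $Y$; in particular these pushforwards are $\R$-linearly independent away from $\mathrm{crit}(W)$ and span the full two-real-dimensional symplectic orthogonal to the fibers of $W$.

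The main obstacle is really one of bookkeeping: one must check that the pushforwards $\pi_*(\partial_\theta)^\#$ and $\pi_*(r\partial_r)^\#$ are well-defined on $Y$ (which follows from the $\T_M$-equivariance of the first step), and that the pairing $\omega_Y(\pi_*\xi^\#,w)$ is independent of the chosen lift $\tilde w$. The latter holds because two lifts differ by a vector tangent to a $\T_M$-orbit, against which $\xi^\#$ pairs trivially by the second claim combined with the moment-map identity. Everything else is a standard Marsden--Weinstein-type calculation.
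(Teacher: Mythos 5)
Your proof is correct and essentially the only reasonable one; the paper states this lemma with a \qed and no written argument, treating it as the standard Marsden--Weinstein fact that symplectic orthogonality commutes with reduction along an invariant submersion. Your filling-in is accurate: $\T^{P_\Z}$-invariance of the $\lambda_j$'s and of the coordinate frame gives equivariance; $\omega$-orthogonality of $\xi^\#$ to $\T_M$-orbit directions (which are $dW_0$-vertical since $W_0$ is $\T_M$-invariant) combined with the moment-map identity gives tangency to $\mu^{-1}(\lambda)$; and the relations $\pi^*\omega_Y=\iota^*\omega$ and $W_0=W\circ\pi$ on $\mu^{-1}(\lambda)$ give both the span-of-orthogonal and horizontal-lift identifications. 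The one redundancy is your final ``bookkeeping'' paragraph: the well-definedness of $\omega_Y(\pi_*\xi^\#,w)$ is automatic, since $\omega_Y$ and $w$ live downstairs; what needs checking is that the computed value $\omega(\xi^\#,\tilde w)$ is lift-independent, and that indeed follows (as you note) from $\xi^\#$ pairing trivially against $\T_M$-orbit directions---which you could have cited directly from horizontality, since $\T_M$-orbits are vertical for $W_0$.
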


To take advantage of this property,
we need a criterion to determine when a $\T_M$-invariant monomial
on $\C^{P_\Z}$ involves only coordinates to which Lemma
\ref{l:invce_CN} applies.

Recall that the moment polytope $\Delta_Y$ of $Y$, given by \eqref{eq:Delta_Y}, arises
as the intersection of an affine linear subspace of $\R^{P_\Z}$ (expressing
the condition $\mu=\lambda$) with the non-negative orthant (the moment polytope
for $\C^{P_\Z}$).  Embedding $\Delta_Y$ into $\R^{P_\Z}$
in this way, the coordinate hyperplanes correspond to the facets of $\Delta_Y$,
and the ambient coordinates (i.e., the components of the moment map for $\C^{P_\Z}$) 
are given by the affine distances to the facets of~$\Delta_Y$.
Thus, in terms of the description \eqref{eq:Delta_Y}, the point
$(\xi,\eta)\in \Delta_Y\subset \R^n\oplus \R$ corresponds to a
$\T^{n+1}$-orbit in $Y$ whose preimage in $\mu^{-1}(\lambda)\subset
\C^{P_\Z}$ is the $\T^{P_\Z}$-orbit whose moment map coordinates 
$(\mu_\alpha)_{\alpha \in P_\Z}$ are given by
\begin{equation}\label{eq:mualpha}
\mu_\alpha=\eta-\langle \alpha,\xi\rangle+\nu(\alpha)\qquad \text{for all }
\alpha\in P_\Z.
\end{equation}

Given a vector $\mathbf{v}=(\vec{v},v^0)\in \Z^n\oplus \Z$, the toric monomial
$z^{\mathbf{v}}$ defines a regular function on $Y$ if and only it pairs
non-negatively with the inward normal vector to each facet of $\Delta_Y$,
i.e.\ \begin{equation}\label{eq:zvorder}
v^\alpha:=(-\alpha,1)\cdot \mathbf{v}=v^0-\alpha\cdot \vec{v}\geq 0\quad
\text{for all }\alpha\in P_\Z.\end{equation}
The monomial $z^{\mathbf{v}}$ vanishes to order $v^\alpha$ along the toric divisor of $Y$
corresponding to $\alpha\in P_\Z$. Moreover, the monomial 
$\prod_{\alpha\in P_\Z} z_\alpha^{v^\alpha}\in \O(\C^{P_\Z})$ is invariant under the
$\T_M$-action and descends to $z^{\mathbf{v}}\in \O(Y)$ under reduction.
With a slight abuse of notation, we will therefore write
\begin{equation}\label{eq:monomialorders}
z^{\mathbf{v}}=\prod_{\alpha\in P_\Z} z_\alpha^{v^\alpha}.
\end{equation}
The vectors $\mathbf{v}$ satisfying \eqref{eq:zvorder} are the integer
points of a polyhedral convex cone, whose extremal rays are in one-to-one
correspondence with the facets of $P$. 

\begin{definition}\label{def:extremalv} Given a facet of the polytope $P$
with primitive outward normal vector $\vec{v}$, contained in the affine
hyperplane $\langle \vec{v},\cdot\rangle=v^0$, the corresponding extremal
vector is $\mathbf{v}=(\vec{v},v^0)$; we denote the set of 
these vectors by $\mathcal{V}$.  
\end{definition}

The elements of $\mathcal{V}$ can be
characterized equivalently as the primitive inward normal vectors to the
$n$-dimensional cones which lie on the boundary of the fan $\Sigma_Y$, or as
the primitive tangent vectors to the unbounded edges of the moment polytope
$\Delta_Y$.

For $\mathbf{v}\in \mathcal{V}$ we denote
by $A_{\mathbf{v}}$ the set of all $\alpha \in P_\Z$ which lie on the
corresponding facet of $P$, i.e.\ those $\alpha$ for which
$\alpha\cdot \vec{v}=v^0$, or equivalently, the quantity $v^\alpha$ defined
by \eqref{eq:zvorder} vanishes.  These correspond exactly to the facets of
$\Delta_Y$ to which $\mathbf{v}$ is parallel.

Given a small positive constant $\gamma>0$ and $\mathbf{v}=(\vec{v},v^0)\in
\mathcal{V}$, we define
\begin{equation}\label{eq:Svgamma}
S_{\mathbf{v},\gamma}=\{\xi\in \R^n\,|\,\langle
\alpha,\xi\rangle-\nu(\alpha)< \varphi(\xi)-\gamma\|\xi\|\ \ \forall
\alpha \in P_\Z\setminus A_{\mathbf{v}}\},
\end{equation}
where $\|\cdot\|$ is an arbitrary norm (e.g.\ the Euclidean norm) on $\R^n$.
In other terms, recalling that $\varphi(\xi)=\max\{\langle
\alpha,\xi\rangle-\nu(\alpha)\,|\,\alpha\in P_\Z\}$, $S_{\mathbf{v},\gamma}$ 
is the set of points where the maximum is achieved by some $\alpha \in
A_{\mathbf{v}}$, and no $\alpha\not\in A_{\mathbf{v}}$ comes close to the
maximum. We also define $C_{\mathbf{v},\gamma}\subset Y$ to be the inverse
image of $S_{\mathbf{v},\gamma}\times \R$ under the moment map.

Denote by $\Delta_\alpha$ the polyhedral subset of $\R^n$ where $\alpha$ achieves the
maximum in $\varphi$ (which is also the projection to $\R^n$ of the
corresponding facet of $\Delta_Y$). Then $S_{\mathbf{v},\gamma}$ is 
a retract of $\bigcup_{\alpha \in A_{\mathbf{v}}} \Delta_\alpha$, obtained by
removing those points which are too close 
(within distance of the order of $\gamma\|\xi\|$) to some other $\Delta_\alpha$. 
Thus, for sufficiently small $\gamma$ the subsets $S_{\mathbf{v},\gamma}$,
$\mathbf{v}\in \mathcal{V}$ cover the complement of a compact subset of $\R^n$. 

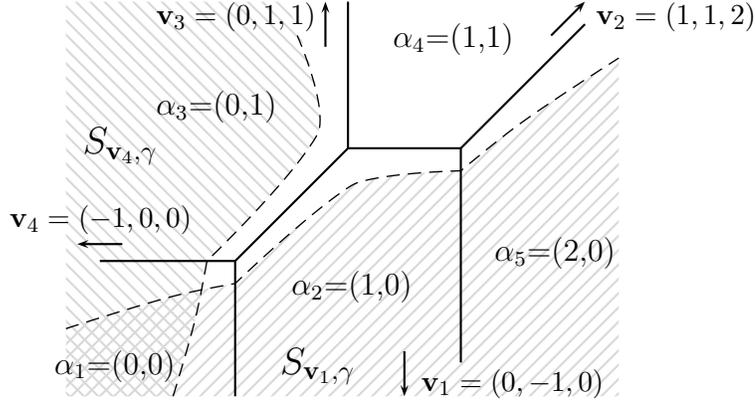
\begin{figure}[t]
\setlength{\unitlength}{1.5cm}
\begin{picture}(5,3.5)(-1.5,-1.2)
\psset{unit=\unitlength}
\newgray{gray15}{0.85}
\newgray{gray10}{0.9}
\newgray{gray5}{0.95}
\pscurve[fillstyle=hlines,hatchcolor=gray15,hatchsep=3pt,linestyle=none](-1.5,-1.2)%
(-1.5,-0.6)(-1.5,-0.6)(-0.5,-0.3)(0,-0.2)(0,-0.2)(1,0.65)(1.1,0.7)(2,0.8)(2,0.8)(2.5,1.2)(3.4,1.8)%
(3.4,1.8)(3.4,-1.2)(3.4,-1.2)(1,-1.2)(-1.5,-1.2)
\pscurve[fillstyle=vlines,hatchcolor=gray15,hatchsep=3pt,linestyle=none](-1.5,-1.2)(-0.55,-1.2)%
(-0.55,-1.2)(-0.35,-0.5)(-0.25,0)(-0.25,0)(0.65,1)(0.75,1.2)(0.45,2.3)(0.45,2.3)(-1.5,2.3)(-1.5,2.3)
\psline(0,-1.2)(0,0)
\psline(-1.2,0)(0,0)(1,1)(1,2.3)
\psline(1,1)(2,1)(2,-0.9)
\psline(2,1)(3.1,2.1)
\put(-1.6,-1){$\alpha_1$=(0,0)}
\put(-0.7,1.3){$\alpha_3$=(0,1)}
\put(0.5,-0.3){$\alpha_2$=(1,0)}
\put(2.3,0){$\alpha_5$=(2,0)}
\put(1.4,1.9){$\alpha_4$=(1,1)}
\psline{->}(1.5,-0.8)(1.5,-1.2)
\put(1.65,-1.15){\small $\mathbf{v}_1=(0,-1,0)$}
\psline{->}(2.8,2)(3.1,2.3)
\put(3.2,2.1){\small $\mathbf{v}_2=(1,1,2)$}
\psline{->}(0.8,1.9)(0.8,2.3)
\put(-0.7,2.1){\small $\mathbf{v}_3=(0,1,1)$}
\psline{->}(-1,0.15)(-1.4,0.15)
\put(-2,0.3){\small $\mathbf{v}_4=(-1,0,0)$}
\pscurve[linewidth=0.5pt,linestyle=dashed](-1.5,-0.6)(-0.5,-0.3)(0,-0.2)(0,-0.2)(1,0.65)(1.1,0.7)(2,0.8)(2,0.8)(2.5,1.2)(3.4,1.8)
\pscurve[linewidth=0.5pt,linestyle=dashed](-0.55,-1.2)(-0.35,-0.5)(-0.25,0)(-0.25,0)(0.65,1)(0.75,1.2)(0.55,2)
\put(-1.35,0.95){\large $S_{\mathbf{v}_4,\gamma}$}
\put(0.4,-1){\large $S_{\mathbf{v}_1,\gamma}$}
\end{picture}
\caption{The extremal vectors $\mathbf{v}\in\mathcal{V}$ and the regions $S_{\mathbf{v},\gamma}$,
for $f(x_1,x_2)=1+x_1+x_2+t^{2\pi}x_1x_2+t^{4\pi}x_1^2$ \ (cf.\ Example
\ref{ex:example_42}).}
\label{fig:Svgamma}
\end{figure}

\begin{example}\label{ex:example_42}
Consider $f(x_1,x_2)=1+x_1+x_2+t^{2\pi}x_1x_2+t^{4\pi}x_1^2$ (as in Example
\ref{ex:example_21}) and its
tropicalization
$\varphi(\xi_1,\xi_2)=\max\{0,\xi_1,\xi_2,\xi_1+\xi_2-1,2\xi_1-2\}$.
The convex hull $P$ of $P_\Z=\{(0,0),(1,0),(0,1),(1,1),(2,0)\}$ is a trapezoid 
with primitive outward normal vectors $\vec{v}_1=(0,-1)$, $\vec{v}_2=(1,1)$,
$\vec{v}_3=(0,1)$, and $\vec{v}_4=(-1,0)$, and we find that
$\mathcal{V}$ consists of the four elements $\mathbf{v}_1=(0,-1,0)$,
$\mathbf{v}_2=(1,1,2)$, $\mathbf{v}_3=(0,1,1)$, and $\mathbf{v}_4=(-1,0,0)$,
which are indeed the tangent vectors to the unbounded edges of the moment
polytope $\Delta_Y=\{(\xi_1,\xi_2,\eta)\,|\,\eta\geq \varphi(\xi_1,\xi_2)\}$,
shown ``from above'' on Figure \ref{fig:Svgamma}.

For $\v_1=(0,-1,0)$, the elements of $P_\Z$ which lie on the facet of $P$ with
outward normal vector $\vec{v}_1=(0,-1)$ are $\alpha_1=(0,0)$, $\alpha_2=(1,0)$,
and $\alpha_5=(2,0)$, whereas $P_\Z\setminus A_{\v_1}$ consists of
$\alpha_3=(0,1)$ and $\alpha_4=(1,1)$, so 
$$S_{\v_1,\gamma}=\{\xi=(\xi_1,\xi_2)\in
\R^2\,|\ \xi_2<\varphi(\xi)-\gamma\|\xi\|\text{ and }
\xi_1+\xi_2-1<\varphi(\xi)-\gamma\|\xi\|\}$$
is the set of points where the two terms $\xi_2$ and $\xi_1+\xi_2-1$ are
sufficiently far from achieving the maximum in $\varphi(\xi_1,\xi_2)$; see
Figure \ref{fig:Svgamma}. This is a retract of the region
$\Delta_{\alpha_1}\cup\Delta_{\alpha_2}\cup\Delta_{\alpha_5}$ where the
maximum is achieved by one of the three other terms. 
Similarly for the other regions $S_{\v_i,\gamma}$. \qed
\end{example}

For $c\in \C^*$, the fiber $W^{-1}(c)$ of $W:Y\to \C$ is $\T^n$-invariant, and its image under the
moment map is the graph $\{(\xi,\eta)\in \R^n\oplus
\R\,|\,\eta=f_c(\xi)\}$ of a piecewise smooth function $f_c:\R^n\to \R$
(with $f_c(\xi)>\varphi(\xi)$ everywhere).  We now show that, outside of a
bounded subset of $W^{-1}(c)$ (whose size depends on $c$), the monomial
$z^{\mathbf{v}}$ is locally preserved by parallel transport at all points of
$C_{\mathbf{v},\gamma}$.

\begin{proposition}\label{prop:invce_Y}
Let $z\in W^{-1}(c)\cap C_{\mathbf{v},\gamma}\subset Y$ for some
$\mathbf{v}\in \mathcal{V}$ and $\gamma>0$, with moment map coordinates
$(\xi,\eta)$, $\xi\in S_{\mathbf{v},\gamma}$. Assume that 
$|c|^2\geq (\varepsilon e^\delta)^{N/(N-1)}$ and $\|\xi\|\geq 
R=R(c,\gamma):=8N^2e^{N(N+3)\delta}\gamma^{-1}|c|^2$ (where $N=|P_\Z|$,
and $\varepsilon$ and $\delta$ are the same constants as in Section \ref{s:toricCN}).
Then the monomial $z^{\mathbf{v}}\in \O(Y)$ is locally invariant under
parallel transport.
\end{proposition}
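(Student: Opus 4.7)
The plan is to lift everything to $\C^{P_\Z}=\C^N$ using Lemma \ref{l:partransred} (parallel transport commutes with reduction), and then apply the invariance criterion of Lemma \ref{l:invce_CN}. Writing the monomial as $z^{\v}=\prod_{\alpha\in P_\Z} z_\alpha^{v^\alpha}$, with $v^\alpha>0$ precisely for $\alpha\notin A_{\v}$, it will be enough to show that each coordinate $z_\alpha$ with $\alpha\notin A_{\v}$ is locally invariant under parallel transport on $\C^N$. By Lemma \ref{l:invce_CN}, this reduces to checking that such a $z_\alpha$ lies above a $\delta$-gap, and by the second (weaker) criterion in Lemma \ref{l:abovegap} it will suffice to verify that
$$|z_\alpha| \,\geq\, e^{(N-1)\delta/2}\,\min_{\beta\in P_\Z}|z_\beta| \qquad \text{for all }\alpha\notin A_{\v}.$$
The hypothesis $|c|^2\geq (\varepsilon e^\delta)^{N/(N-1)}$ is precisely what is needed for both of these background lemmas to apply.

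First I would translate the hypothesis $\xi\in S_{\v,\gamma}$ into statements about moment-map coordinates. Using \eqref{eq:mualpha}, i.e.\ $\mu_\alpha=\eta-\langle\alpha,\xi\rangle+\nu(\alpha)$, the minimum of $\mu_\alpha$ over $\alpha\in P_\Z$ equals $\eta-\varphi(\xi)$ and is realized exactly at those $\alpha$'s where $\varphi(\xi)$ is achieved; the defining condition \eqref{eq:Svgamma} of $S_{\v,\gamma}$ forces all such $\alpha$'s to lie in $A_{\v}$ and gives the strict inequality $\mu_\alpha > \mu_{\min}+\gamma\|\xi\|$ for every $\alpha\notin A_{\v}$. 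Part (1) of Lemma \ref{l:momentmapest} moreover guarantees that the ordering of $|z_\alpha|$ agrees with that of $\mu_\alpha$, so the minimum of $|z_\beta|$ is realized at some $\alpha^*\in A_{\v}$ with $\mu_{\alpha^*}=\mu_{\min}$.

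The remaining step is a quantitative conversion from moment-map ratios to coordinate ratios. Part (2) of Lemma \ref{l:momentmapest}, applied to the smallest coordinate $|z_{\alpha^*}|$ (for which $|z_{\alpha^*}|^{2}\prod_{\beta\neq\alpha^*}|z_\beta|^{2}=|W_0|^2=|c|^2$), produces the bound $\mu_{\alpha^*}\leq 4N e^{2N\delta}\,|c|^2$, while part (3) gives
$$\frac{|z_\alpha|}{|z_{\alpha^*}|}\,\geq\, (2N)^{-1/(2N)}\,e^{-\delta}\,\bigl(\mu_\alpha/\mu_{\alpha^*}\bigr)^{1/(2N)}.$$
Requiring the right-hand side to be at least $e^{(N-1)\delta/2}$ is equivalent to $\mu_\alpha/\mu_{\alpha^*}\geq 2N\,e^{N(N+1)\delta}$; combined with the two previous bounds, this reduces to the inequality
$$\gamma\|\xi\| \,\geq\, 2N\,e^{N(N+1)\delta}\cdot 4N\,e^{2N\delta}\,|c|^2 \,=\, 8N^2\,e^{N(N+3)\delta}\,|c|^2,$$
which is precisely the hypothesis $\|\xi\|\geq R(c,\gamma)$. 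The only real obstacle here is the exponent bookkeeping needed to land on exactly the stated threshold $R$; the conceptual content is entirely packaged in Lemmas \ref{l:partransred}, \ref{l:invce_CN}, \ref{l:abovegap}, and \ref{l:momentmapest}.
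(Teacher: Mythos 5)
Your proposal is correct and essentially identical to the paper's proof: both lift to $\C^{P_\Z}$ via Lemma \ref{l:partransred}, invoke Lemma \ref{l:invce_CN} to reduce to a $\delta$-gap condition for each $z_\alpha$ with $\alpha\notin A_{\v}$, verify that condition via the second criterion of Lemma \ref{l:abovegap}, and use parts (2) and (3) of Lemma \ref{l:momentmapest} together with the estimate $\mu_\alpha-\mu_{\min}>\gamma\|\xi\|$ coming from $\xi\in S_{\v,\gamma}$ to close the chain of inequalities at exactly the threshold $R(c,\gamma)$. The exponent bookkeeping matches the paper's as well.
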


\begin{example} Continuing with Example \ref{ex:example_42}, consider
the case of $\v_1=(0,-1,0)$, for which the quantities $v^\alpha$ defined by
\eqref{eq:zvorder} are $0,0,1,1,0$ for $\alpha_1,\dots,\alpha_5$
respectively. Thus, $z^{\v_1}\in\O(Y)$ arises by toric reduction from the monomial
$z_3z_4\in\O(\C^{P_\Z})$, which is indeed invariant under the action of
the 2-dimensional torus $\T_M$ described in Example \ref{ex:example_22}.
By Lemma \ref{l:invce_CN}, the monomial $z_3z_4$ is invariant under parallel
transport for $W_0:\C^{P_\Z}\to\C$ wherever $z_3$ and $z_4$ lie above a
$\delta$-gap. The main ingredient of the proof is therefore to prove that
such a gap exists whenever $\xi\in S_{\v_1,\gamma}$ and $\|\xi\|$ is
sufficiently large; the key point being that, by
\eqref{eq:mualpha}, when $\xi\in S_{\v_1,\gamma}$ the moment map coordinates
$\mu_{\alpha_3}$ and $\mu_{\alpha_4}$ are bounded below by 
$\min(\mu_{\alpha_i})+\gamma\|\xi\|$.
\end{example}

\proof Denote by $(z_\alpha)_{\alpha\in P_\Z}$ the coordinates of a lift of
$z\in Y$ to $\mu^{-1}(\lambda)\subset \C^{P_\Z}$, and let $\alpha_0\in P_\Z$ be
such that $\xi\in \Delta_{\alpha_0}$. Then by
\eqref{eq:mualpha} the smallest moment map coordinate is
$\min(\mu_\alpha)=\mu_{\alpha_0}=\eta-\varphi(\xi)=f_c(\xi)-\varphi(\xi)$. On the other hand, Lemma
\ref{l:momentmapest}~(2) gives a bound on the ratio between $\mu_{\alpha_0}$
and $|W_0(z)|^2=|c|^2$. We conclude that 
\begin{equation}\label{eq:mumin_bound}
2|c|^2\leq \mu_{\alpha_0}=f_c(\xi)-\varphi(\xi)\leq
4Ne^{2N\delta} |c|^2.
\end{equation}
On the other hand, since $\xi\in S_{\mathbf{v},\gamma}$ and $\|\xi\|\geq R$,
for all $\alpha \not \in A_\mathbf{v}$ we
have $$\mu_\alpha=f_c(\xi)-\langle
\alpha,\xi\rangle+\nu(\alpha)\geq \mu_{\alpha_0}+\gamma \|\xi\|\geq \gamma
R=8N^2 e^{N(N+3)\delta} |c|^2\geq 2N e^{N(N+1)\delta}\,\mu_{\alpha_0}.$$
Hence, by Lemma \ref{l:momentmapest}~(3),
$$\frac{|z_\alpha|}{|z_{\alpha_0}|}\geq (2N)^{-\frac{1}{2N}} e^{-\delta}
\left(\frac{\mu_\alpha}{\mu_{\alpha_0}}\right)^{\!\frac{1}{2N}}\geq e^{(N-1)\delta/2}.$$
By Lemma \ref{l:abovegap}, we conclude that $z_\alpha$ lies above a
$\delta$-gap for all $\alpha \not \in A_{\mathbf{v}}$. Hence, by Lemma
\ref{l:invce_CN} the coordinates $z_\alpha$ ($\alpha \not\in
A_{\mathbf{v}}$) are locally invariant under parallel
transport in $\C^{P_\Z}$. Using the fact that the exponents $v^\alpha$ in
\eqref{eq:monomialorders} vanish
for all $\alpha \in A_{\mathbf{v}}$ (by definition of $A_{\mathbf{v}}$) and
the compatibility of parallel transport with reduction (Lemma
\ref{l:partransred}), we conclude that $z^{\mathbf{v}}$ is locally preserved
under parallel transport in $Y$.
\endproof

Finally, we show that, 
at every point where $\|\xi\|$ is sufficiently large, Proposition
\ref{prop:invce_Y} applies to the largest (in a suitably renormalized sense) among
the monomials $z^{\mathbf{v}}$, $\mathbf{v}\in \mathcal{V}$.
More precisely, for $\mathbf{v}\in \mathcal{V}$ and $v^\alpha$ as in
\eqref{eq:zvorder}, we set
\begin{equation}\label{eq:dofv}
d(\mathbf{v})=\sum_{\alpha \in P_\Z} v^\alpha.
\end{equation}

\begin{proposition} \label{prop:largest_invce_Y}
There exist positive constants $\gamma_0$ and $K_0$, depending only on the
polytope $\Delta_Y$ $($and on $N$, $\varepsilon$, $\delta)$ with the
following property.
Let $z\in W^{-1}(c)\subset Y$ be a point with moment map coordinates $(\xi,\eta)$,
where $|c|^2 \geq (\varepsilon e^{\delta})^{N/(N-1)}$ and $\|\xi\| \geq
K_0|c|^2$. 
Let $\mathbf{v}_0\in\mathcal{V}$ be such that
$$|z^{\mathbf{v}_0}|^{1/d(\mathbf{v}_0)}=\max\{|z^{\mathbf{v}}|^{1/d(\mathbf{v})}\,|\,\mathbf{v}\in
\mathcal{V}\}.$$
Then $\xi\in S_{\mathbf{v}_0,\gamma_0}$ and $z\in C_{\mathbf{v}_0,\gamma_0}$.
\end{proposition}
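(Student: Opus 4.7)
We argue by contradiction: suppose $\mathbf{v}_0$ maximizes $|z^{\mathbf{v}}|^{1/d(\mathbf{v})}$ but $\xi \notin S_{\mathbf{v}_0,\gamma_0}$, so that there exists $\alpha' \in P_\Z \setminus A_{\mathbf{v}_0}$ with $h_{\alpha'}(\xi) := \varphi(\xi) - \langle\alpha',\xi\rangle + \nu(\alpha') < \gamma_0\|\xi\|$. The strategy is to exhibit $\mathbf{v}' \in \mathcal{V}$ with $|z^{\mathbf{v}'}|^{1/d(\mathbf{v}')} > |z^{\mathbf{v}_0}|^{1/d(\mathbf{v}_0)}$, contradicting maximality. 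For any lattice point $\alpha$ in the interior of $P$, one has $\max_{\beta \in P_\Z}\langle\beta - \alpha,\bar\xi\rangle > 0$ for every unit vector $\bar\xi$; by compactness of $S^{n-1}$ and finiteness of the interior lattice points of $P$, this positive minimum is bounded below by a constant $c_P > 0$ depending only on $P$, so $h_\alpha(\xi) \geq c_P\|\xi\| - O(1)$ uniformly in $\alpha$. Taking $\gamma_0 < c_P/2$ and $K_0$ large enough to absorb the $O(1)$, the hypothesis $\|\xi\| \geq K_0|c|^2$ forces $\alpha'$ to lie on $\partial P$; thus $\alpha' \in A_{\mathbf{v}'}$ for some $\mathbf{v}' \in \mathcal{V}$.

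Set $F(\mathbf{v}) := \log|z^{\mathbf{v}}|^{1/d(\mathbf{v})}$ and order $P_\Z = \{\alpha_1,\dots,\alpha_N\}$ so $\mu_{\alpha_1} \leq \cdots \leq \mu_{\alpha_N}$. The estimate of Lemma \ref{l:momentmapest}(2) rewrites as the upper-triangular relation $\log\mu_{\alpha_k} = 2k\log|z_{\alpha_k}| + 2\sum_{j > k}\log|z_{\alpha_j}| + O(1)$ (with constants depending only on $N,\delta$); inverting from $k = N$ downward yields
\begin{equation*}
\log|z_{\alpha_k}| = \tfrac{1}{2k}\log\mu_{\alpha_k} - \sum_{j > k}\tfrac{1}{2j(j-1)}\log\mu_{\alpha_j} + O(1).
\end{equation*}
Substituting into $F(\mathbf{v}) = \tfrac{1}{d(\mathbf{v})}\sum_k v^{\alpha_k}\log|z_{\alpha_k}|$, regrouping by $\log\mu_{\alpha_j}$ using partial sums $V_{<j}^{(\mathbf{v})}=\sum_{k<j}v^{\alpha_k}$, and eliminating the $\log\mu_{\alpha_1}$ contributions via Abel summation, one obtains
\begin{equation*}
F(\mathbf{v}) = \log|z_{\alpha_1}| + \tfrac{1}{2 d(\mathbf{v})}\sum_{k \geq 2}\bigl(\log\mu_{\alpha_k} - \log\mu_{\alpha_1}\bigr)\,w_k^{(\mathbf{v})} + O(1),
\end{equation*}
where $w_k^{(\mathbf{v})} := \tfrac{v^{\alpha_k}}{k} + \tfrac{V_{\geq k}^{(\mathbf{v})}}{k(k-1)} \geq 0$ with $V_{\geq k}^{(\mathbf{v})} := \sum_{j \geq k}v^{\alpha_j}$, and a direct computation establishes the key identity $\sum_{k \geq 2}w_k^{(\mathbf{v})} = d(\mathbf{v}) - v^{\alpha_1}$.

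By \eqref{eq:mumin_bound}, $\log\mu_{\alpha_1} \asymp \log|c|^2$, while for every $\alpha_k$ with $h_{\alpha_k} \geq \gamma_0\|\xi\|$ one has $\log\mu_{\alpha_k} - \log\mu_{\alpha_1} \geq \log(\|\xi\|/|c|^2) - O(1)$. Partition $P_\Z = K \sqcup (P_\Z \setminus K)$ with $K := \{\alpha_k : h_{\alpha_k} < \gamma_0\|\xi\|\}$ (so that $\alpha_1,\alpha' \in K$); up to $O(1)$ error, the dominant term of $F(\mathbf{v})$ is then $\log(\|\xi\|/|c|^2) \cdot \tfrac{1}{2d(\mathbf{v})}\sum_{\alpha_k \notin K}w_k^{(\mathbf{v})}$. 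The assumption $\alpha' \in K\setminus A_{\mathbf{v}_0}$ means $v_0^{\alpha'} > 0$, so part of the weight budget $\sum_k w_k^{(\mathbf{v}_0)} = d(\mathbf{v}_0) - v_0^{\alpha_1}$ is spent on $\alpha_k \in K$ (low-$\mu$ coordinates); by contrast, choosing $\mathbf{v}' \in \mathcal{V}$ with $\alpha' \in A_{\mathbf{v}'}$ (and $A_{\mathbf{v}'}\cap K$ as large as possible) redirects more of the total weight onto $\alpha_k \notin K$. A quantitative combinatorial comparison then produces $\tfrac{1}{d(\mathbf{v}')}\sum_{\alpha_k \notin K}w_k^{(\mathbf{v}')} > \tfrac{1}{d(\mathbf{v}_0)}\sum_{\alpha_k \notin K}w_k^{(\mathbf{v}_0)}$ with a gap bounded below by a positive constant depending only on $P$; for $K_0$ large enough this leading-order gap dominates the $O(1)$ error, yielding $F(\mathbf{v}') > F(\mathbf{v}_0)$ and the desired contradiction. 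Finally $z \in C_{\mathbf{v}_0,\gamma_0}$ follows from $\xi \in S_{\mathbf{v}_0,\gamma_0}$ by definition. The main obstacle is this last quantitative combinatorial step: one needs a case analysis exploiting how the facets of $P$ interact with the data-dependent partition $P_\Z = K\sqcup(P_\Z\setminus K)$ to guarantee a uniformly strictly better choice of $\mathbf{v}'$.
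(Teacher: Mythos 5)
Your proposal takes a genuinely different, and much more elaborate, route than the paper, but it has two gaps that together leave the argument incomplete.

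First, the ``dominant term'' claim does not hold. You partition $P_\Z = K \sqcup (P_\Z\setminus K)$ with $K=\{\alpha_k : h_{\alpha_k}<\gamma_0\|\xi\|\}$ and assert that, up to $O(1)$ error, the remaining part of $F(\mathbf{v})$ is $\log(\|\xi\|/|c|^2)\cdot\tfrac{1}{2d(\mathbf{v})}\sum_{\alpha_k\notin K}w_k^{(\mathbf{v})}$. But membership in $K$ only bounds $\mu_{\alpha_k}-\mu_{\alpha_1}$ by $\gamma_0\|\xi\|$, whereas $\mu_{\alpha_1}\asymp |c|^2$; in the regime $\|\xi\|\geq K_0|c|^2$ this means $\log\mu_{\alpha_k}-\log\mu_{\alpha_1}$ is still $\asymp\log(\|\xi\|/|c|^2)$ for $\alpha_k\in K\setminus\{\alpha_1\}$, differing from the ``non-$K$'' contributions only by the bounded offset $\log\gamma_0+O(1)$. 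So the $K$-terms are \emph{not} $O(1)$: they have the same leading growth in $\log(\|\xi\|/|c|^2)$, and the partition does not isolate a dominant term. (Your algebra leading to the formula $F(\mathbf{v})=\log|z_{\alpha_1}|+\tfrac{1}{2d(\mathbf{v})}\sum_{k\geq 2}w_k^{(\mathbf{v})}(\log\mu_{\alpha_k}-\log\mu_{\alpha_1})+O(1)$ with $\sum_{k\geq 2}w_k^{(\mathbf{v})}=d(\mathbf{v})-v^{\alpha_1}$ checks out, but the filtering by $K$ does not do what you want.)

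Second, you explicitly flag the ``quantitative combinatorial comparison'' as the main obstacle and leave it unfinished. This is the heart of the proof, and it is not at all clear it can be made to work as stated: the set $K$ is defined by the $\mu$-values and is not guaranteed to be the intersection of $P_\Z$ with a single facet of $P$, so there need not exist $\mathbf{v}'\in\mathcal{V}$ with $K\subset A_{\mathbf{v}'}$, and the ordering-dependent weights $w_k^{(\mathbf{v})}$ make a uniform comparison between $\mathbf{v}_0$ and any candidate $\mathbf{v}'$ delicate.

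For comparison, the paper avoids inverting the triangular system entirely and never compares two candidate vectors directly. It establishes two inequalities, both normalized by the largest coordinate $|z_{\alpha_1}|$: if $\xi\in S_{\mathbf{v},\gamma}$ then $|z^{\mathbf{v}}|^{1/d(\mathbf{v})}\gtrsim\gamma^{1/2}|z_{\alpha_1}|$ (because every $\alpha\notin A_{\mathbf{v}}$ has $\mu_\alpha\geq\mu_{\alpha_0}+\gamma\|\xi\|$, so every factor in $z^{\mathbf{v}}$ is bounded below); whereas if $\xi\notin S_{\mathbf{v},\gamma}$ then $|z^{\mathbf{v}}|^{1/d(\mathbf{v})}\lesssim\gamma^{1/(2Nd(\mathbf{v}))}|z_{\alpha_1}|$ (because at least one factor, with exponent $\geq 1$, is small). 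It then fixes $\gamma_1$ so that the $S_{\mathbf{v},\gamma_1}$ cover, giving a lower bound $c(\gamma_1)|z_{\alpha_1}|$ for the maximum over $\mathbf{v}\in\mathcal{V}$, and shrinks $\gamma_0$ until the upper bound $C(\gamma_0,d)|z_{\alpha_1}|$ drops below $c(\gamma_1)|z_{\alpha_1}|$ for every $d\in\mathcal{D}$ (this is the choice~\eqref{eq:gamma0_choice}). The contradiction for $\xi\notin S_{\mathbf{v}_0,\gamma_0}$ is then immediate, with no combinatorial analysis of the weight budget.
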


\proof Let $(z_\alpha)_{\alpha \in P_\Z}$ be a lift of $z\in Y$ to
$\mu^{-1}(\lambda)\subset \C^{P_\Z}$. Recall from \eqref{eq:mumin_bound}
that the smallest moment map coordinate $\mu_{\alpha_0}$ corresponds to
$\alpha_0\in P_\Z$ such that $\xi\in \Delta_{\alpha_0}$, and
$\mu_{\alpha_0}$ is bounded by
$4N e^{2N\delta}|c|^2$. On the other hand, let $\alpha_1\in P_\Z$ be such
that $|z_{\alpha_1}|=\max \{|z_\alpha|,\ \alpha\in P_\Z\}$, or equivalently,
$\mu_{\alpha_1}=\max \{\mu_\alpha,\ \alpha\in P_\Z\}$. By
\eqref{eq:mualpha}, $\mu_{\alpha_1}-\mu_{\alpha_0}=\langle
\alpha_0-\alpha_1,\xi\rangle+\nu(\alpha_1)-\nu(\alpha_0)$, so there exist
positive constants $c_1,c_2$ depending only on $\Delta_Y$ such that
\begin{equation}\mu_{\alpha_1}\leq \mu_{\alpha_0}+c_1\|\xi\|+c_2.\end{equation}
On the other hand, since $P$ is assumed to have non-empty interior, 
the quantity 
$\max \{\langle \alpha-\alpha',\xi\rangle,\ \alpha,\alpha'\in P_\Z\}$ is
bounded below by a positive constant times $\|\xi\|$, and there are positive constants $c'_1,c'_2$
depending only on $\Delta_Y$ such that
\begin{equation}\label{eq:maxmu_lowerbound}
\mu_{\alpha_1}\geq \mu_{\alpha_0}+c'_1\|\xi\|-c'_2.
\end{equation}

Assume that $\xi \in S_{\mathbf{v},\gamma}$ for some $\mathbf{v}\in
\mathcal{V}$ and $\gamma>0$. Then for all $\alpha \not\in A_{\mathbf{v}}$ we
have \begin{equation}
\label{eq:mualpha_lowerbound}
\mu_\alpha=f_c(\xi)-\langle \alpha,\xi\rangle+\nu(\alpha)\geq
\mu_{\alpha_0}+\gamma\|\xi\|.
\end{equation}
Thus, assuming some lower bound on $\|\xi\|$ (e.g.\ $\|\xi\|\geq 1$), 
the upper bound on $\mu_{\alpha_1}$ implies the existence of
a constant $c_3>0$ (still depending only on $\Delta_Y$) such that, for all
$\alpha\not \in A_{\mathbf{v}}$, $\mu_\alpha \geq c_3 \gamma \mu_{\alpha_1}$.
Using Lemma \ref{l:momentmapest}~(3), this in turn yields the inequality
\begin{equation}\label{eq:zalpha_lowerbound}
|z_\alpha|\geq (2N)^{-1/2} e^{-N\delta} c_3^{1/2}\gamma^{1/2}\,|z_{\alpha_1}|\quad
\text{for all } \alpha\not\in A_{\mathbf{v}}.
\end{equation}
Taking a weighted geometric mean (and recalling that $v^{\alpha}=0$ for $\alpha\in A_{\mathbf{v}}$), we
get:
\begin{equation}\label{eq:zv_lowerbound}
|z^{\mathbf{v}}|^{1/d(\mathbf{v})}\geq (2N)^{-1/2} e^{-N\delta}
c_3^{1/2}\gamma^{1/2}\,|z_{\alpha_1}|.
\end{equation}

Conversely, if $\xi\not \in S_{\mathbf{v},\gamma}$, then 
 $\langle \alpha,\xi\rangle-
\nu(\alpha)\geq \varphi(\xi)-\gamma\|\xi\|$ for some $\alpha\not\in A_{\mathbf{v}}$, hence
$$\mu_\alpha=f_c(\xi)-\langle \alpha,\xi\rangle+\nu(\alpha)\leq
\mu_{\alpha_0}+\gamma\|\xi\|.$$
When $\|\xi\|$ is sufficiently large, namely
$\|\xi\|\geq \max(2c'_2/c'_1,4Ne^{2N\delta}\gamma^{-1}|c|^2)$, we have
$\mu_\alpha \leq 2\gamma\|\xi\|$ and $\mu_{\alpha_1}\geq \frac12 c'_1 \|\xi\|$.
Therefore, $\mu_\alpha \leq c'_3
\gamma \mu_{\alpha_1}$, where $c'_3=4(c'_1)^{-1}$.
Using Lemma \ref{l:momentmapest}~(3), this in turn yields the inequality
\begin{equation}\label{eq:zalpha_upperbound}
|z_\alpha|\leq (2N)^{1/2N} e^\delta (c'_3)^{1/2N} \gamma^{1/2N}
|z_{\alpha_1}|.
\end{equation}
Since $\alpha\not\in A_{\mathbf{v}}$, by definition the exponent $v^\alpha$
of $z_\alpha$ in the expression for $z^{\mathbf{v}}$ is at least 1.
Since the other coordinates which appear in the expression for $z^{\mathbf{v}}$
are all bounded by $|z_{\alpha_1}|$, we obtain:
\begin{equation}\label{eq:zv_upperbound}
|z^{\mathbf{v}}|^{1/d(\mathbf{v})}\leq
e^{\delta/d(\mathbf{v})}\,(2N\,c'_3\gamma)^{\frac{1}{2Nd(\mathbf{v})}}\,|z_{\alpha_1}|.
\end{equation}

With the necessary estimates in hand, we now proceed with the proof. First,
there exists $\gamma_1>0$ depending only on $\Delta_Y$ such that the subsets
$S_{\mathbf{v},\gamma_1}$ cover all but a bounded subset of $\R^n$, i.e.\
for some constant $K_1>0$ (depending only on $\Delta_Y$), 
every point with $\|\xi\|\geq K_1$ belongs to some
$S_{\mathbf{v},\gamma_1}$. Thus, whenever $\|\xi\|\geq K_1$, the estimate
\eqref{eq:zv_lowerbound} implies that
\begin{equation}\label{eq:maxzv_lowerbound}
\max\{|z^{\mathbf{v}}|^{1/d(\mathbf{v})}\,|\,\mathbf{v}\in
\mathcal{V}\}\geq (2N)^{-1/2} e^{-N\delta} c_3^{1/2} \gamma_1^{1/2}
\max\{|z_\alpha|,\ \alpha\in P_\Z\}.
\end{equation}
Let $\mathcal{D}=\{d(\mathbf{v}),\ \mathbf{v}\in \mathcal{V}\}$ (a finite set
of positive integers). We now choose $\gamma_0$ so that
\begin{equation}\label{eq:gamma0_choice}
e^{\delta/d} (2N c'_3\gamma_0)^{\frac{1}{2Nd}} < (2N)^{-1/2}
e^{-N\delta} c_3^{1/2} \gamma_1^{1/2} \qquad \text{for all }d\in \mathcal{D},
\end{equation}
and $K_0$ so that
$$K_0 \geq 4N e^{2N\delta} \gamma_0^{-1}\quad \text{and} \quad
K_0 (\varepsilon e^{\delta})^{N/(N-1)} \geq \max(K_1, 2c'_2/c'_1).$$
Assume $|c|^2\geq (\varepsilon e^{\delta})^{N/(N-1)}$ and $\|\xi\|\geq
K_0 |c|^2$, and let $\mathbf{v}_0$ be such that
$|z^{\mathbf{v}_0}|^{1/d(\mathbf{v}_0)}=\max\{|z^{\mathbf{v}}|^{1/d(\mathbf{v})}\,|\,\mathbf{v}\in
\mathcal{V}\}$. If $\xi \not\in S_{\mathbf{v}_0,\gamma_0}$, then
\eqref{eq:zv_upperbound} and \eqref{eq:gamma0_choice} give
\begin{eqnarray*}|z^{\mathbf{v}_0}|^{1/d(\mathbf{v}_0)}&\leq& 
e^{\delta/d(\mathbf{v}_0)}\,(2N\,c'_3\gamma_0)^{\frac{1}{2Nd(\mathbf{v}_0)}}\,
\max\{|z_\alpha|,\ \alpha\in P_\Z\} \\
&<& (2N)^{-1/2} e^{-N\delta} c_3^{1/2}\gamma_1^{1/2}\,\max\{|z_\alpha|,\ \alpha\in P_\Z\},
\end{eqnarray*}
which contradicts \eqref{eq:maxzv_lowerbound}.  Thus $\xi\in
S_{\mathbf{v}_0,\gamma_0}$, or equivalently, $z\in
C_{\mathbf{v}_0,\gamma_0}$.
\endproof

Propositions \ref{prop:invce_Y} and \ref{prop:largest_invce_Y} imply the
following:

\begin{corollary}\label{cor:invce_Y}
The extremal monomials $z^{\mathbf{v}}$, $\mathbf{v}\in \mathcal{V}$
introduced in Definition \ref{def:extremalv}, the weights $d(\mathbf{v})$ defined in
\eqref{eq:dofv}, the open subsets $C_{\mathbf{v}}=C_{\mathbf{v},\gamma_0}$,
and the closed subset $\Omega=\{z\in Y\,|\ \|\xi\|\leq K'_0
\max(1,|W|^2)\}$, where $K'_0=\max(8N^2 e^{N(N+3)\delta} \gamma_0^{-1}, K_0)$, define a
fiberwise monomial subdivision on the toric Landau-Ginzburg model $(Y,W,\omega)$.

Moreover, with respect to this subdivision,
fiberwise monomial admissibility (with fixed phase angles)
is preserved by parallel
transport between the fibers of\/ $W$ over any path $\gamma:[0,1]\to \C$
such that $|\gamma(0)|^2\geq (\varepsilon e^\delta)^{N/(N-1)}$ and
$|\gamma(t)|$ is non-decreasing.
\end{corollary}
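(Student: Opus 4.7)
The plan is to verify the four conditions of Definition \ref{def:admissibility} in turn, and then to deduce the parallel-transport invariance from Proposition \ref{prop:invce_Y} combined with the observation, immediate from Lemma \ref{l:partransred} and formula \eqref{eq:mualpha}, that horizontal parallel transport for $W\co Y\to\C$ preserves the first $n$ moment-map coordinates $\xi$. (The claim about $\xi$ is precisely the projection to $Y$ of the statement in Section \ref{s:toricCN} that the differences $\mu_\alpha-\mu_\beta$ are preserved in $\C^{P_\Z}$.)

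For condition (1), each extremal monomial $z^{\v}$ is regular on $Y$ by Definition \ref{def:extremalv}, and properness of the joint map $(z^{\v})_{\v\in\mathcal{V}}\co Y\to\C^{|\mathcal{V}|}$ follows from the fact that the $\v\in\mathcal{V}$ span the primitive tangent directions to all unbounded edges of $\Delta_Y$: a bound on all $|z^{\v}|$ confines the moment-map image to a compact subset of $\Delta_Y$. For condition (2), the defining inequality $\|\xi\|\leq K_0'\max(1,|W|^2)$ of $\Omega$ bounds $\xi$ in terms of $|W|$, while \eqref{eq:mumin_bound} yields $\eta=f_c(\xi)\leq \varphi(\xi)+4Ne^{2N\delta}|c|^2$ and hence bounds $\eta$ as well; thus $W|_{\Omega}$ is proper.

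Conditions (3) and (4) I would deduce directly from Proposition \ref{prop:largest_invce_Y}. Given $z\in Y\setminus\Omega$ with moment coordinates $(\xi,\eta)$, the inequality $\|\xi\|>K_0'\max(1,|W(z)|^2)\geq K_0\,|W(z)|^2$ supplies the second hypothesis of that proposition, while the first hypothesis $|W(z)|^2\geq (\varepsilon e^{\delta})^{N/(N-1)}$ is arranged by the standing choice (in line with Remark \ref{rmk:simplify_K_potential}) that $\varepsilon,\delta$ are small enough for the region $|W|^2<(\varepsilon e^{\delta})^{N/(N-1)}$ to be absorbed into $\Omega$ via the $\max(1,|W|^2)$ factor. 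The proposition then places $z$ in $C_{\v_0,\gamma_0}=C_{\v_0}$ for the $\v_0$ maximising $|z^{\v}|^{1/d(\v)}$, simultaneously yielding (3) (every $z\notin \Omega$ belongs to some $C_{\v}$) and (4) (the maximiser lands in the corresponding cell).

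For the second assertion, since horizontal lifts preserve $\xi$, membership in each $C_{\v}=C_{\v,\gamma_0}$, a condition on $\xi$ alone, is preserved along $\gamma$. Monotonicity of $|\gamma(t)|$ implies that the inequality $\|\xi(p_t)\|\leq K_0'\max(1,|\gamma(t)|^2)$ is monotone in $t$, so $\{t\,:\,p_t\notin\Omega\}$ forms an initial segment $[0,t_0)\subset[0,1]$. On $[0,t_0)$ and for each $\v$ with $p_0\in C_{\v}$, the inequalities $\|\xi\|>K_0'\,|\gamma(t)|^2\geq R(\gamma(t),\gamma_0)$ and $|\gamma(t)|^2\geq |\gamma(0)|^2\geq (\varepsilon e^{\delta})^{N/(N-1)}$ verify the hypotheses of Proposition \ref{prop:invce_Y}, so $z^{\v}$ is locally invariant under parallel transport at $p_t$. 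By connectedness $z^{\v}(p_t)\equiv z^{\v}(p_0)$ on $[0,t_0)$, and in particular $\arg(z^{\v}(p_t))=\varphi_{\v}$, preserving fiberwise monomial admissibility of the parallel transported Lagrangian. The main technical content is already encapsulated in Propositions \ref{prop:invce_Y} and \ref{prop:largest_invce_Y}; the only real bookkeeping is the handling of the region near $W^{-1}(0)$, which is why $\Omega$ is defined with the $\max(1,|W|^2)$ factor rather than simply $|W|^2$.
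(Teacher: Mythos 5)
Your proposal follows essentially the same route as the paper's own proof: conditions (1)--(3) are read off the construction, condition (4) comes from Proposition~\ref{prop:largest_invce_Y}, and the parallel-transport statement combines Proposition~\ref{prop:invce_Y} with the observations that $\xi$ is preserved under horizontal transport and that monotonicity of $|\gamma(t)|$ keeps $C_{\mathbf{v}}\setminus\Omega$ invariant along the transport.

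One point is off, however: you assert that the region $\{|W|^2<(\varepsilon e^{\delta})^{N/(N-1)}\}$ is ``absorbed into $\Omega$ via the $\max(1,|W|^2)$ factor.'' That is not what $\Omega$ does. Since $\Omega=\{\|\xi\|\leq K_0'\max(1,|W|^2)\}$ is a bound on $\|\xi\|$, there are plenty of points with $|W|$ arbitrarily small but $\|\xi\|>K_0'$ which lie outside $\Omega$, so the region near $W^{-1}(0)$ is not contained in $\Omega$. The effect of the $\max(1,\cdot)$ is only to make $\Omega$ contain a fixed cylinder $\{\|\xi\|\leq K_0'\}$ near the zero fibre rather than pinching to a cone. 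The hypothesis $|W|^2\geq(\varepsilon e^{\delta})^{N/(N-1)}$ of Propositions~\ref{prop:invce_Y} and \ref{prop:largest_invce_Y} therefore cannot be conjured from the definition of $\Omega$; it is handled in the paper (implicitly, and more explicitly in Remark~\ref{rmk:horiz_exclude_crit} and Section~\ref{ss:L0setup}) by restricting attention to Lagrangians whose $W$-image avoids a small disc $\Delta'$ around the origin, which is exactly the extra hypothesis $|\gamma(0)|^2\geq(\varepsilon e^{\delta})^{N/(N-1)}$ in the parallel-transport part of the corollary. The rest of your proof --- in particular the verification that $K_0'|\gamma(t)|^2\geq R(\gamma(t),\gamma_0)$ because $K_0'\geq 8N^2e^{N(N+3)\delta}\gamma_0^{-1}$ --- is precisely the paper's computation.
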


\proof
The fact that the collection of extremal monomials
$(z^\mathbf{v})_{\mathbf{v}\in \mathcal{V}}$ defines a proper map is a
classical fact of toric geometry, but can also be seen directly from the
lower bound \eqref{eq:maxzv_lowerbound}. Properties (2) and (3) of 
Definition \ref{def:admissibility} are clear from the construction, and property (4) follows from 
Proposition \ref{prop:largest_invce_Y}. 

When $|W|^2\geq (\varepsilon
e^{\delta})^{N/(N-1)}$, Proposition \ref{prop:invce_Y} implies that
$z^{\mathbf{v}}$ is invariant under parallel transport at every point of
$C_{\mathbf{v}}$ which lies outside of $\Omega$. Thus, the property that
$\arg(z^{\mathbf{v}})=\varphi_{\mathbf{v}}$ is preserved under parallel
transport. The reason why we require $|\gamma(t)|$ to be non-decreasing
with respect to $t$ is to ensure that $C_{\mathbf{v}}\setminus
(C_{\mathbf{v}}\cap \Omega)$ is preserved under parallel transport
(using the fact that $\xi$ is preserved under parallel transport).
\endproof

\subsection{The wrapping Hamiltonian}\label{ss:wrappingHam}

We now define a Hamiltonian $H:Y\to \R$ whose flow preserves both the
fibers of $W$ and monomial admissibility within them.
This Hamiltonian is constructed by reduction from the case of $\C^N$.
The construction involves a smooth approximation of the minimum
function, conceptually similar to Definition \ref{def:Max} but with $N$
variables.

\begin{definition}\label{def:Min}
Given a constant $\delta'>0$, denote by $m:\R^N\to \R$ a smooth concave
function with the following properties:
\begin{enumerate}
\item letting $I=\{i\,|\,u_i< \min(u_1,\dots,u_N)+\delta'\}$, locally
$m(u_1,\dots,u_N)$ depends only on $(u_i)_{i\in I}$, and if $I=\{i_0\}$ then
$m(u_1,\dots,u_N)=u_{i_0}=\min(u_1,\dots,u_N)$;
\item $m(u_1+a,\dots,u_N+a)=m(u_1,\dots,u_N)+a$ for all $a\in \R$;
\item $m$ is symmetric, i.e.\
$m(u_{\sigma(1)},\dots,u_{\sigma(N)})=m(u_1,\dots,u_N)$ for all $\sigma \in
\mathfrak{S}_N$.
\end{enumerate}
These conditions imply that $m$ is monotonically increasing in
all  variables, and $$\min(u_1,\dots,u_N)-\delta' \leq m(u_1,\dots,u_N)\leq
\min(u_1,\dots,u_N).$$
\end{definition}
\noindent For instance, for $\delta'\geq N\delta$ we can take
$$\textstyle
m(u_1,\dots,u_N)=-\frac{1}{N!} \sum_{\sigma\in \mathfrak{S}_N}
M(-u_{\sigma(1)},M(\dots,M(-u_{\sigma(N-1)},-u_{\sigma(N)})\dots)).$$

Denoting $\mu_1,\dots,\mu_N$ the moment map coordinates for the
chosen toric K\"ahler form on $\C^N$, the Hamiltonian we consider is
\begin{equation}\label{eq:wrapping_ham_CN}
H_0=\sum_{i=1}^N \mu_i-N m(\mu_1,\dots,\mu_N).
\end{equation}
Setting $N=|P_\Z|$ and viewing $Y$ as a symplectic reduction of $\C^{P_\Z}$,
recall that the moment map coordinates $\mu_1,\dots,\mu_N$ descend to functions
$(\mu_\alpha)_{\alpha\in P_\Z}$ on the moment polytope $\Delta_Y$, given by
\eqref{eq:mualpha}. We then define the Hamiltonian $H$ on $Y$ via reduction:

\begin{definition} \label{def:wrapping_ham}
Given a point of\/ $Y$ with moment map
coordinates $(\xi,\eta)\in \Delta_Y$, set
$\mu_\alpha=\eta-\langle \alpha,\xi\rangle+\nu(\alpha)$ for all $\alpha\in
P_\Z$ as before. Then we define $H:Y\to\R$ by
\begin{equation}\label{eq:wrapping_ham}
H=\sum_{\alpha\in P_\Z} \mu_\alpha\,-\,|P_\Z|\, m(\{\mu_\alpha\}_{\alpha\in P_\Z}).
\end{equation}
\end{definition}

\begin{proposition}\label{prop:wrapping_ham}
$H$ only depends on the moment map coordinates $(\xi_1,\dots,\xi_n)$, and as
a function of these variables it is proper, convex, and grows linearly at
infinity.
In particular, the flow of $H$ preserves the fibers of $W$, and the 
restriction of $H$ to every fiber of $W$ is proper.
\end{proposition}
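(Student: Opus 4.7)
The plan is to extract everything from the explicit form of \eqref{eq:wrapping_ham}. First I would verify that $H$ is independent of $\eta$: a shift $\eta\mapsto \eta+a$ translates every $\mu_\alpha$ by $a$, so $\sum_\alpha \mu_\alpha$ grows by $|P_\Z|\,a$, while property (2) of Definition \ref{def:Min} forces $|P_\Z|\,m(\{\mu_\alpha\})$ to grow by exactly the same amount; the two contributions cancel. Writing $\tilde\mu_\alpha(\xi):=-\langle\alpha,\xi\rangle+\nu(\alpha)$ for the resulting affine functions of $\xi$ (so that $-\varphi(\xi)=\min_\alpha \tilde\mu_\alpha(\xi)$), we have $H(\xi)=\sum_\alpha \tilde\mu_\alpha(\xi)-|P_\Z|\,m(\{\tilde\mu_\alpha(\xi)\})$. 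Convexity is then immediate: $\sum_\alpha \tilde\mu_\alpha$ is affine, $m$ is concave, precomposition with the affine map $\xi\mapsto(\tilde\mu_\alpha(\xi))$ preserves concavity, and multiplication by $-|P_\Z|$ turns it into a convex function of $\xi$.

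For linear growth and properness, I would sandwich $H$ using the bounds $\min-\delta'\le m\le\min$ provided by Definition \ref{def:Min}. Together with $\min_\alpha \tilde\mu_\alpha=-\varphi$ this yields
\begin{equation*}
|P_\Z|\,\bigl(\varphi(\xi)-\langle\bar\alpha,\xi\rangle\bigr)+\sum_\alpha\nu(\alpha)\,\le\, H(\xi)\,\le\, |P_\Z|\,\bigl(\varphi(\xi)-\langle\bar\alpha,\xi\rangle\bigr)+\sum_\alpha\nu(\alpha)+|P_\Z|\,\delta',
\end{equation*}
where $\bar\alpha$ denotes the centroid of $P_\Z$. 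The upper bound is $\le C\|\xi\|+C'$ by Cauchy--Schwarz. For the matching lower bound, the key observation is that $\bar\alpha$ lies in the \emph{interior} of $P$: if it were on the boundary, then every $\alpha\in P_\Z$ would have to saturate a common supporting hyperplane of $P$, contradicting the assumption that $P=\mathrm{conv}(P_\Z)$ has non-empty interior. Interiority of $\bar\alpha$ together with a compactness argument on the unit sphere produces a constant $c>0$ such that $\max_{\alpha\in P_\Z}\langle\alpha,\xi\rangle-\langle\bar\alpha,\xi\rangle\ge c\|\xi\|$ for all $\xi$, and since $\varphi$ dominates this maximum up to an additive constant, we conclude $H(\xi)\ge c|P_\Z|\|\xi\|-C''$. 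This gives properness of $H$ on $\R^n$ and the claimed linear growth.

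Finally, the geometric conclusions follow from the fact that $H$ factors through the moment map as a function of $(\xi_1,\dots,\xi_n)$. Its Hamiltonian vector field is therefore a linear combination of the fundamental vector fields of the first $n$ circle factors of $\T^{n+1}$, and the subgroup they generate acts trivially on $z^{(0,\dots,0,1)}=-W$; consequently the flow of $H$ preserves $W$ pointwise, hence preserves each fiber. Properness of $H|_{W^{-1}(c)}$ for $c\ne 0$ then follows from the factorisation $H|_{W^{-1}(c)}=H\circ\xi$, in which $\xi\co W^{-1}(c)\to\R^n$ is the moment map for the residual $\T^n$-action on $W^{-1}(c)\cong(\C^*)^n$ and is therefore proper. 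I expect the only step of any real substance to be the linear lower bound, which hinges entirely on the interiority of the centroid $\bar\alpha$ in $P$.
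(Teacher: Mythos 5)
Your proof is correct and follows essentially the same route as the paper: $\eta$-independence from property (2) of Definition~\ref{def:Min}, convexity from concavity of $m$, properness and linear growth from the non-emptiness of the interior of $P$, and the geometric conclusions from $X_H$ lying in the span of the first $n$ torus generators. Your self-contained sandwich estimate pivoted on the centroid $\bar\alpha$ (and the observation that it lies in the interior of $P$) is a tidy repackaging of what the paper obtains by citing the estimate \eqref{eq:maxmu_lowerbound}, which rests on exactly the same geometric input.
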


\proof Clearly $H$ is a function of the moment map coordinates $(\xi_1\dots,\xi_n,\eta)$. 
Since $\partial \mu_\alpha/\partial \eta=1$ for all $\alpha\in
P_\Z$, property (2) of Definition \ref{def:Min} implies that $\partial
H/\partial \eta=0$, i.e.\ $H$ only depends on $(\xi_1,\dots,\xi_n)$.
This in turns implies that $X_H$ is everywhere in the linear span of the
generators of the $\T^n$-action and preserves the fibers of $W$.

Since the coordinates $\mu_\alpha$ are affine linear functions of
$(\xi_1,\dots,\xi_n,\eta)$, the convexity of $H$ as a function of these
variables (and hence of $(\xi_1,\dots,\xi_n)$) follows from the concavity of $m$.
Meanwhile, the properness of $H$ follows from our assumption that $P$ has non-empty
interior, which yields the lower bound \eqref{eq:maxmu_lowerbound} on $\max
\{\mu_\alpha\}-\min \{\mu_\alpha\}$; the linear growth is manifest.
\endproof

\begin{proposition}\label{prop:wrapping_flatness}
The flow of $H$ preserves monomial admissibility with respect to the
fiberwise monomial subdivision of Corollary \ref{cor:invce_Y}. More precisely, if
$\ell\subset W^{-1}(c)$ is monomially admissible with phase angles
$\{\varphi_{\mathbf{v}},\ \mathbf{v}\in \mathcal{V}\}$, then its image under the
time $t$ flow is monomially admissible at infinity with phase angles
$\{\varphi_{\mathbf{v}}+t\,d(\mathbf{v}),\ \mathbf{v}\in \mathcal{V}\}$, where $d(\mathbf{v})\in \Z_+$ is given
by \eqref{eq:dofv}.
\end{proposition}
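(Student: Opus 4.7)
The plan is to reduce everything to a direct computation on the lift $H_0=\sum_\alpha \mu_\alpha - |P_\Z|\,m(\{\mu_\alpha\})$ of $H$ to $\C^{P_\Z}$, exploiting the fact that $H$ depends only on the moment map coordinates $(\xi,\eta)$, and in fact only on $\xi$ by Proposition \ref{prop:wrapping_ham}. Consequently the flow of $H$ preserves the entire moment map, acts within individual $\T^{n+1}$-orbits, preserves each level set of $W$, and in particular preserves both $\Omega$ and the subsets $C_{\v}=C_{\v,\gamma_0}$ of Corollary \ref{cor:invce_Y}. So the only thing left to verify is the stated formula for the phase angles.

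The heart of the argument is the following computation. Writing $z^{\v}=\prod_\alpha z_\alpha^{v^\alpha}$ and using that on $\C^{P_\Z}$ the Hamiltonian vector field of any function of the moment coordinates $h(\mu_1,\ldots,\mu_N)$ is $\sum_\alpha (\partial h/\partial\mu_\alpha)\,\partial_{\theta_\alpha}$, I obtain
$$\frac{d}{dt}\arg(z^{\v}) \;=\; \sum_\alpha v^\alpha \frac{\partial H_0}{\partial \mu_\alpha} \;=\; \sum_\alpha v^\alpha\!\left(1 - |P_\Z|\frac{\partial m}{\partial \mu_\alpha}\right) \;=\; d(\v) \,-\, |P_\Z|\sum_\alpha v^\alpha \frac{\partial m}{\partial \mu_\alpha}.$$
The key step is to show that the correction term vanishes at every point of $C_{\v}\setminus\Omega$. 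At any such point, $\xi\in S_{\v,\gamma_0}$, and combining \eqref{eq:Svgamma} with \eqref{eq:mualpha} yields $\mu_\alpha \geq \min_\beta \mu_\beta + \gamma_0\|\xi\|$ for every $\alpha\notin A_{\v}$. Provided the constant $K'_0$ in the definition of $\Omega$ is chosen large enough that $\gamma_0 K'_0\geq \delta'$ (which only strengthens Corollary \ref{cor:invce_Y}), property (1) of Definition \ref{def:Min} implies that locally $m$ depends only on the variables $\{\mu_\alpha\,|\,\alpha\in A_{\v}\}$, so $\partial m/\partial \mu_\alpha=0$ for $\alpha\notin A_{\v}$. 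Since by definition of $A_{\v}$ we have $v^\alpha=0$ for $\alpha\in A_{\v}$, the entire sum vanishes and the rate of change is exactly $d(\v)$.

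Putting these pieces together: if $\ell\subset W^{-1}(c)$ is monomially admissible with phase angles $\{\varphi_{\v}\}$, then its image under the time-$t$ flow of $H$ stays in $W^{-1}(c)$ by Proposition \ref{prop:wrapping_ham}, lies in the same $C_{\v}$'s (outside $\Omega$) because the flow preserves the moment map, and has $\arg(z^{\v})=\varphi_{\v}+t\,d(\v)$ on each $C_{\v}\setminus\Omega$ by integrating the rate computed above. Beyond checking that the smoothing parameter $\delta'$ of $m$ and the constant $K'_0$ defining $\Omega$ are compatible --- essentially the only mild obstacle, resolved by enlarging $K'_0$ if necessary --- the argument is purely algebraic once the flow is expressed in moment coordinates.
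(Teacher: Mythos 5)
Your proof is correct and follows essentially the same route as the paper's: you lift to $H_0$ on $\C^{P_\Z}$, observe that $\partial H_0/\partial\mu_\alpha=1$ (equivalently $\partial m/\partial\mu_\alpha=0$) for the coordinates $\alpha\notin A_\v$ that actually appear in $z^\v$, using \eqref{eq:mualpha_lowerbound} to verify the $\delta'$-separation of moment coordinates on $C_\v\setminus\Omega$, and conclude that $z^\v$ rotates at rate $d(\v)$. The only cosmetic difference is that the paper handles the compatibility condition $\gamma_0\|\xi\|\geq\delta'$ by shrinking $\delta'$ rather than enlarging $K'_0$, but both devices are equivalent here.
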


\proof On $\C^N$, the Hamiltonian $H_0$ defined by \eqref{eq:wrapping_ham_CN} is a
function of the moment map coordinates $\mu_1,\dots,\mu_N$. Letting
$I=\{i\,|\,\mu_i<\min(\mu_1,\dots,\mu_N)+\delta'\}$ as in Definition
\ref{def:Min}~(1), we observe that $\partial H_0/\partial \mu_i\equiv 1$ for all
$i\not \in I$. Thus, the flow of $H_0$ rotates the coordinates $z_i$
uniformly at unit speed for all $i\not\in I$. Moreover, this flow is $\T_M$-equivariant,
preserves $\mu^{-1}(\lambda)\subset \C^N$, and descends to
$Y=\mu^{-1}(\lambda)/\T_M$ as the Hamiltonian flow generated by $H$.

Using the same notations as in the previous section,
fix $\mathbf{v}\in \mathcal{V}$, and consider a point
of $C_{\mathbf{v}}=C_{\mathbf{v},\gamma_0}\subset Y$ with moment map coordinates
$(\xi,\eta)$ such that $\|\xi\|\geq \gamma_0^{-1}
\delta'$. (Choosing $\delta'$ sufficiently small in Definition \ref{def:Min},
we can ensure that every point outside of $\Omega$ satisfies this
inequality.)  Denote $\mu_{\alpha_0}=\min \{\mu_\alpha\}$.
By \eqref{eq:mualpha_lowerbound}, for $\alpha \not\in A_{\mathbf{v}}$
we have $\mu_\alpha\geq \mu_{\alpha_0}+\gamma_0\|\xi\|\geq
\mu_{\alpha_0}+\delta'$. Thus, $m(\{\mu_\alpha\}_{\alpha\in P_\Z})$ only
depends on $(\mu_\alpha)_{\alpha \in A_{\mathbf{v}}}$, and
the flow generated by $H$ rotates all the other coordinates ($z_\alpha$,
$\alpha\not\in A_{\mathbf{v}}$) at unit speed. Recalling that
$z^{\mathbf{v}}=\prod_\alpha z_\alpha^{v^\alpha}$ with $v^\alpha=0$ whenever
$\alpha\in A_{\mathbf{v}}$, we conclude that the flow of $X_H$ rotates
$z^{\mathbf{v}}$ uniformly at a rate of $\sum_{\alpha}
v^\alpha=d(\mathbf{v})$ at every point of $C_{\mathbf{v}}$ which lies
outside of $\Omega$. The result follows.
\endproof

\begin{remark}
Essentially any Hamiltonian satisfying the conditions of Proposition
\ref{prop:wrapping_ham} and Proposition \ref{prop:wrapping_flatness}
(possibly with different values of the phase shifts $d(\v)$, as long
as these remain positive) would be equally suitable for our purposes;
see e.g.\ Hanlon's work \cite{Hanlon} for another construction.
The Hamiltonian of Definition \ref{def:wrapping_ham} is particularly natural from the perspective 
of symplectic reduction from $\C^N$ to $Y$, but the category $\Wrap(Y,W)$ is, up to equivalence, independent of the
choice, as will be clear from the arguments in Section \ref{s:calculation}.
\end{remark}



\section{Computation of fiberwise wrapped Floer cohomology}
\label{s:calculation}

\subsection{Geometric setup}\label{ss:L0setup}

We now fix the geometric data needed for our construction of the admissible
Lagrangian $L_0\in \Wrap(Y,W)$, besides the
K\"ahler forms and wrapping Hamiltonians defined in Section \ref{s:toric},
and check that the various conditions imposed in Section \ref{s:Fukayacat}
are satisfied.

Let $(Y,W=-z^{(0,\dots,0,1)})$ be the Landau-Ginzburg model constructed in Section \ref{s:LGmodel}, 
equipped with the toric K\"ahler form $\omega$ which is the result of symplectic
reduction by $\T_M$ of the K\"ahler form on $\C^{P_\Z}$ introduced in
Definition \ref{def:K_potential}. Let $\mathcal{V}\subset \Z^{n+1}$ be the
set of extremal vectors of Definition \ref{def:extremalv}, $d(\v)$ the
positive integers given by \eqref{eq:dofv}, and the subsets $C_\v$
and $\Omega$ of $Y$ as in Corollary \ref{cor:invce_Y}.
We consider the height function
\begin{equation}\label{eq:height_fn}
h=\max\{h_\v,\ \v\in \mathcal{V}\}:Y\to [0,\infty),
\quad \text{where}\quad h_\v=|z^\v|^{1/d(\v)},
\end{equation}
and the wrapping Hamiltonian $H$ introduced in Definition
\ref{def:wrapping_ham}. 

We fix a properly embedded U-shaped arc $\gamma_0:\R\to\C$ such that 
$\gamma_0(0)=-1$; $|\gamma_0(s)|$ passes through a minimum at $s=0$ and
increases monotonically as a function of $|s|$; $\arg \gamma_0(s)$
increases monotonically as a function of $s$;
$\arg \gamma_0(s)=\theta_0$ for $s\ll 0$ and $\arg \gamma_0(s)=2\pi-\theta_0$
for $s\gg 0$, for some positive angle $0<\theta_0<\frac{\pi}{2}$.
(Thus, $\gamma_0$ intersects the negative real axis transversely
at $-1$, remains at distance at least 1 from the origin, and outside of a compact subset it coincides with the
rays $e^{\pm i\theta_0}\R_+$.)

Given a monomially admissible Lagrangian submanifold $\ell\subset
W^{-1}(-1)\cong (\C^*)^n$ (in the sense of Definition
\ref{def:flat_at_infinity}), with all phase angles equal to zero, 
we denote by $L=\cup\ell$ the fibered Lagrangian submanifold of $Y$ 
obtained from $\ell$ by parallel transport in the fibers of $W$ over the
arc $\gamma_0$. It follows from Corollary \ref{cor:invce_Y} that $L$ is
fiberwise monomially admissible, with all phase angles equal to zero.
We will in particular consider the case where $\ell=\ell_0$ is the real
positive locus of $W^{-1}(-1)$, i.e.\ the set of points where
all toric monomials are real positive and $z^{(0,\dots,0,1)}=1$, and denote
its parallel transport by $L_0=\cup\ell_0$.

As in Section \ref{sec:defin-direct-categ}, we choose an autonomous flow $\rho^t$
in the complex plane which fixes the 
negative real axis pointwise as well as a small neighborhood of the origin,
specifically the disc $\Delta'$ of radius $(\varepsilon e^\delta)^{N/(2N-2)}$
(in particular $\rho^t$ fixes the points $-1$ and $0$), maps radial lines to radial
lines outside of a compact subset, and moves all radial lines other than
the negative real axis in the counterclockwise direction. We will furthermore assume
that the flow rotates the tangent vector to $\gamma_0$ at $-1$ (the
imaginary axis) counterclockwise, so that 
\begin{enumerate}
\item for $t\neq 0$,
$\gamma_t=\rho^t(\gamma_0)$ intersects $\gamma_0$ transversely at $-1$,
\item $\gamma_0\cap \gamma_t=\{-1\}$ for $|t|\in (0,t_0)$, where $t_0$ 
is the value of $t$ for which $\rho^{t}$ pushes the ray
$e^{-i\theta_0}\R_+$ past $e^{i\theta_0}\R_+$, 
and 
\item for $|t|>t_0$, $\gamma_0$ and $\gamma_t$
intersect transversely in exactly two points ($-1$ and one other intersection). 
\end{enumerate}
(These requirements on $\gamma_0\cap \gamma_t$ are natural and easy to 
achieve given the other requirements on $\rho^t$; see Figure \ref{fig:L0}).

Since the arcs $\gamma_t$ are strictly radial outside of a bounded subset,
their mutual intersections, and the bounded polygonal 
regions they delimit in the complex plane are all contained within a
bounded subset, say the disc of radius $R_0$.
For $R\in \R_{\geq 0}$, let $r(R)$ be the maximum of $h$ on the compact
subset $\Omega\cap \{|W|\leq \max(R,R_0)\}$ of $Y$. Then $r$ is a non-decreasing
function, constant over $[0,R_0]$, and we take the closed subset $Y^{in}\subset Y$ appearing in
Section \ref{sec:LGmodel-setup} to be the set of points of $Y$ where 
$h\leq r(|W|)$. This ensures that $Y^{in}$ contains $\Omega$.

Finally, we take the almost-complex structure $J$ to be the standard
complex structure of $Y$ outside of the bounded subset 
\begin{equation}\label{eq:W<epsilon}
Y^{in}\cap \{|W|<\epsilon\}
\end{equation} for some
$\epsilon\ll 1$ (smaller than the radius of $\Delta'$), and a generic small perturbation of the standard complex
structure inside that subset. This ensures that simple $J$-holomorphic spheres which
intersect this subset are regular, and evaluation maps for rigid somewhere
injective discs and spheres are mutually transverse, as explained in Remark 
\ref{rmk:genericJtransverse}.

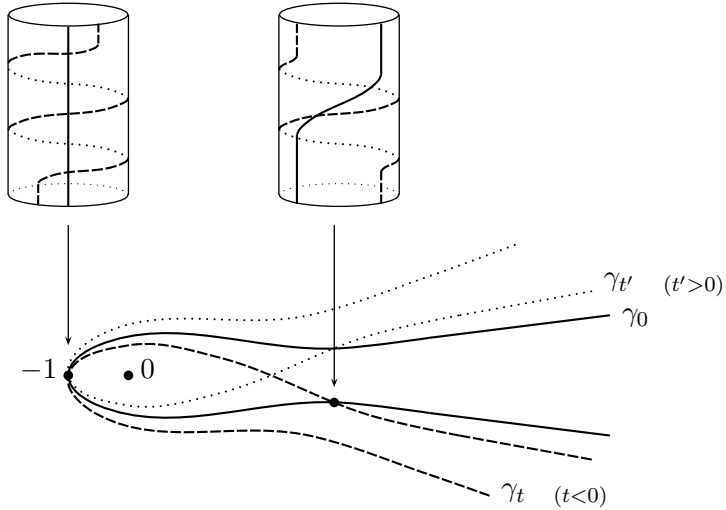
\begin{figure}[t]
\setlength{\unitlength}{8mm}
\begin{picture}(10,8.5)(-2,-2)
\psset{unit=\unitlength,linewidth=0.8pt,dotsep=2pt,dash=4pt 1pt}
\pscircle*(0,0){0.08}
\pscircle*(-1,0){0.08}
\pscurve(5,0.625)(3.5,0.45)(-0.5,0.5)(-1,0)(-0.5,-0.5)(3.5,-0.45)(5,-0.625)
\psline(5,-0.625)(8,-1)
\psline(5,0.625)(8,1)
\pscurve[linestyle=dotted](5.5,1)(4,0.66)(1.5,-0.3)(0,-0.5)(-1,0)(-0.6,0.6)(3,1)(5,1.65)(6.5,2.2)
\psline[linestyle=dotted](5.5,1)(7.7,1.4)
\pscurve[linestyle=dashed](5.5,-1)(4,-0.66)(1.5,0.3)(0,0.5)(-1,0)(-0.6,-0.6)(3,-1)(5,-1.65)(6,-2)
\psline[linestyle=dashed](5.5,-1)(7.7,-1.4)
\put(8.2,0.9){\small $\gamma_0$}
\put(7.9,1.5){\small $\gamma_{t'\quad (t'>0)}$}
\put(6.2,-2){\small $\gamma_{t\quad (t<0)}$}
\put(-1.8,-0.05){\small $-1$}
\put(0.2,-0.05){\small $0$}
\pscircle*(3.42,-0.45){0.08}
\psline[linewidth=0.5pt]{->}(-1,2.5)(-1,0.5)
\psline[linewidth=0.5pt]{->}(3.42,2.5)(3.42,-0.2)
\psellipticarc[linewidth=0.5pt](-1,3)(1,0.2){180}{360}
\psellipticarc[linewidth=0.5pt,linestyle=dotted](-1,3)(1,0.2){0}{180}
\psellipse[linewidth=0.5pt](-1,6)(1,0.2)
\psline[linewidth=0.5pt](-2,6)(-2,3)
\psline[linewidth=0.5pt](0,6)(0,3)
\psellipticarc[linewidth=0.5pt](3.5,3)(1,0.2){180}{360}
\psellipticarc[linewidth=0.5pt,linestyle=dotted](3.5,3)(1,0.2){0}{180}
\psellipse[linewidth=0.5pt](3.5,6)(1,0.2)
\psline[linewidth=0.5pt](2.5,6)(2.5,3)
\psline[linewidth=0.5pt](4.5,6)(4.5,3)
\psline(-1,5.8)(-1,2.8)
\psline[linestyle=dashed](-0.5,5.85)(-0.5,5.52)
\pscurve[linestyle=dashed](-0.5,5.5)(-0.55,5.45)(-1.95,5.25)(-2,5.2)
\pscurve[linestyle=dotted](-2,5.1)(-1.95,5.05)(-0.05,4.65)(0,4.6)
\pscurve[linestyle=dashed](-2,4.1)(-1.95,4.15)(-0.05,4.55)(0,4.6)
\pscurve[linestyle=dotted](-2,4.1)(-1.95,4.05)(-0.05,3.65)(0,3.6)
\pscurve[linestyle=dashed](0,3.6)(-0.05,3.55)(-1.45,3.25)(-1.5,3.2)
\psline[linestyle=dashed](-1.5,2.85)(-1.5,3.18)
\psline(4.2,5.85)(4.2,5)
\pscurve(4.2,5)(4.15,4.9)(2.85,4.1)(2.8,4)
\psline(2.8,4)(2.8,2.85)
\psline[linestyle=dashed](2.8,5.85)(2.8,5.3)
\pscurve[linestyle=dashed](2.8,5.3)(2.75,5.25)(2.55,5.15)(2.5,5.1)
\pscurve[linestyle=dotted](2.5,5.1)(2.55,5.05)(4.45,4.65)(4.5,4.6)
\pscurve[linestyle=dashed](2.5,4.1)(2.55,4.15)(4.45,4.55)(4.5,4.6)
\pscurve[linestyle=dotted](2.5,4.1)(2.55,4.05)(4.45,3.65)(4.5,3.6)
\pscurve[linestyle=dashed](4.5,3.6)(4.45,3.55)(4.25,3.45)(4.2,3.4)
\psline[linestyle=dashed](4.2,2.85)(4.2,3.4)
\end{picture}
\caption{The Lagrangians $L_0$ and $L_0(t)=\phi^{t}\rho^{t}(L_0)$ ($t<0$).}
\label{fig:L0}
\end{figure}

\begin{proposition}
The above geometric data on $Y$\! satisfy the requirements listed in Section
\ref{sec:LGmodel-setup}, and the Lagrangian submanifolds
$L_0(t)=\phi^t\rho^t(L_0)$ are admissible in the sense of Definition
\ref{def:admissibleLagrangian}.
\end{proposition}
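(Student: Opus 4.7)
The verification has two parts: (a) checking that the auxiliary data $(J,h,H,Y^{in})$ satisfy the compatibility conditions (1)--(4) of Section \ref{sec:LGmodel-setup}, and (b) checking that $L_0$ itself is admissible, from which the admissibility of $L_0(t)=\phi^t\rho^t(L_0)$ follows via Lemma \ref{l:admissible_invariant}, since $\rho^t$ is a lifted admissible isotopy by construction (it fixes $\Delta\cup\Delta'$ and the negative real axis, maps radial lines to radial lines at infinity, and rotates all other rays counterclockwise) and $\phi^t$ is the Hamiltonian flow of $H$. The strategy throughout is to invoke Remark \ref{rmk:maxprinciple_maxhv} in order to reduce statements about $h=\max_\v h_\v$ to statements about each smooth piece $h_\v=|z^\v|^{1/d(\v)}$ on the region $C_\v\setminus\Omega$ where it achieves the maximum; all the required properties then follow by tracking the behavior of $z^\v$ under the flow of $H$ and under parallel transport, both already controlled in Section \ref{s:toric}.

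For the auxiliary data: each $h_\v=|z^\v|^{1/d(\v)}$ is plurisubharmonic as a positive power of the modulus of a holomorphic function, so $h=\max h_\v$ is weakly plurisubharmonic. Properness of $h$ on fibers of $W$ follows because the map $(z^\v)_{\v\in\mathcal{V}}\co Y\to \C^{|\mathcal{V}|}$ is proper (Corollary \ref{cor:invce_Y}) and $h$ dominates each $|z^\v|^{1/d(\v)}$. Properness of $H$ on fibers and the identity $dW(X_H)=0$ are Proposition \ref{prop:wrapping_ham}; moreover, since $H$ depends only on $(\xi_1,\dots,\xi_n)$ and these coordinates are preserved by parallel transport (as they are moment components of the fiberwise $\T^n$-action, which commutes with $W$), the identity $dH(\xi^\#)=0$ holds everywhere. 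Nonnegativity of $H$ is immediate from $m\leq\min\mu_\alpha$ and $\mu_\alpha\geq 0$ on $\Delta_Y$. For a point $p$ outside $Y^{in}$, which by construction lies outside $\Omega$, pick $\v$ with $p\in C_\v$ and $h(p)=h_\v(p)$; a direct computation in the polar coordinates $(\log|z^\v|,\arg z^\v)$ gives $d^c h_\v=(h_\v/d(\v))\,d\arg(z^\v)$. Proposition \ref{prop:wrapping_flatness} yields $X_H(|z^\v|)=0$ and $X_H(\arg z^\v)=d(\v)$, which combined with $d(d\arg z^\v)=0$ and Cartan's formula gives all of $dh(X_H)=0$, $d^ch(X_H)=h_\v\geq 0$, and $\mathcal{L}_{X_H}d^ch_\v=0$. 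Corollary \ref{cor:invce_Y} supplies $\xi^\#(z^\v)=0$ (away from $W^{-1}(\Delta')$, but see Remark \ref{rmk:horiz_exclude_crit}), which gives $dh(\xi^\#)=d^ch(\xi^\#)=0$ and $\mathcal{L}_{\xi^\#}d^ch_\v=0$ by the same Cartan-formula computation.

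For admissibility of $L_0$: outside a small closed disc around $-1$, the image $W(L_0)$ coincides with the two halves of $\gamma_0$, each an admissible arc by construction. For the flatness condition, fix a point $p\in L_0\cap C_\v$ outside $Y^{in}$; we must show $d^ch_\v|_{T_pL_0}=0$, equivalently $d\arg(z^\v)|_{T_pL_0}=0$. Tangent vectors to $L_0$ split into the horizontal lift $\dot\gamma_0^\#$ and the parallel-transported image of $T\ell_0$. The horizontal lift annihilates $d\arg(z^\v)$ because parallel transport preserves $z^\v$ at $p$. For the fiberwise tangent vectors, monomial admissibility of $\ell_0$ with phase angles zero gives $d\arg(z^\v)|_{T\ell_0}=0$, and parallel transport preserves this vanishing because $\arg z^\v$ is invariant under parallel transport. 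Hence $d^ch|_{L_0}=0$ outside $Y^{in}$, and $L_0$ is admissible.

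The main technical point is simply the careful matching of regions: the identities from Section \ref{s:toric} controlling the behavior of $z^\v$ hold only at points of $C_\v\setminus\Omega$ with $|W|^2\geq(\varepsilon e^\delta)^{N/(N-1)}$. This is arranged by the definitions of $Y^{in}\supset\Omega$ (so ``outside $Y^{in}$'' forces us into some $C_\v$ with the optimal index) and $\Delta'$ (a sufficiently small disc around $0\in\C$), combined with the fact that $L_0(t)$ fibers over $\gamma_t$, which stays outside $\Delta'$, and the support of $\rho^t$ avoids $\Delta'$ as required in Remark \ref{rmk:horiz_exclude_crit}.
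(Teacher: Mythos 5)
Your proposal is correct and follows essentially the same route as the paper: reduce to the pieces $h_\v$ on their dominating regions via Remark \ref{rmk:maxprinciple_maxhv}, invoke Propositions \ref{prop:invce_Y}, \ref{prop:largest_invce_Y}, \ref{prop:wrapping_ham}, \ref{prop:wrapping_flatness} and Corollary \ref{cor:invce_Y} to control $z^\v$ under parallel transport and the flow of $H$, then deduce admissibility of $L_0$ from fiberwise monomial admissibility with zero phases, and finish with Lemma \ref{l:admissible_invariant}. The small variations (moment-coordinate invariance rather than a Poisson-bracket argument for $dH(\xi^\#)=0$, and a tangent-space decomposition rather than ``$\arg(z^\v)\equiv 0$ on an open set'' for the flatness of $L_0$) are cosmetic and do not change the substance.
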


\proof
We start with the geometric conditions in Section \ref{sec:LGmodel-setup}.
First, the properness of $h=\max\{|z^\v|^{1/d(\v)}\}$ follows from that of
the map $(z^\v)_{\v\in\mathcal{V}}:Y\to \C^{|\mathcal{V}|}$ (item~(1) in
Definition~\ref{def:admissibility}).
Next, we have already seen in Proposition \ref{prop:wrapping_ham} that $H$ is
proper on every fiber of $W$, and its Hamiltonian flow preserves the
fibers of $W$, i.e.\ $dW(X_H)=0$. Thus, $H$ Poisson
commutes with the real and imaginary parts of $W$, whose Hamiltonian vector
fields span the horizontal distribution; it follows that $dH$ vanishes on
horizontal vector fields. Moreover, since $H$ is a function of the moment map coordinates only,
$X_H$ is in the span of the vector fields generating the toric action, hence
its flow preserves the norms of all toric monomials, and so $dh(X_H)=0$.

Next we consider the behavior of $h$ along the horizontal distribution --
or more precisely, by Remark \ref{rmk:maxprinciple_maxhv}, the behavior of
the term(s) $h_\v$ that achieve the maximum in $h=\max\{h_\v\}$.
By Proposition \ref{prop:invce_Y}, for each $\v\in \mathcal{V}$, and at 
every point of $C_\v$ which lies outside of $Y^{in}\cup W^{-1}(\Delta')$, 
the monomial $z^\v$ is invariant under parallel transport. Therefore,
$dh_\v=\frac{1}{d(\v)}h_\v\, d\/\log|z^\v|$ and 
$d^c h_\v=\frac{1}{d(\v)}h_v\, d\/\arg(z^\v)$ both vanish on horizontal
vectors, and their Lie derivatives along horizontal vector fields also
vanish. It then follows from Proposition \ref{prop:largest_invce_Y} that,
everywhere outside of $Y^{in}\cup W^{-1}(\Delta')$,
these properties hold for any $h_\v$ that achieves the maximum in
$h=\max\{h_\v\}$. 

Finally, Proposition \ref{prop:wrapping_flatness}
implies that the flow of $X_H$ rotates $z^\v$ uniformly at a rate of $d(\v)$
at every point of $C_\v$ which lies outside of $Y^{in}$. Therefore, 
$dh_\v(X_H)=0$, $d^c h_\v(X_H)=\frac{1}{d(\v)} h_v\, d\arg(z^\v)(X_H)=h_\v\geq 0$,
and $\cL_{X_H}(d^c h_\v)=0$. As before, these properties hold everywhere in
$Y\setminus Y^{in}$ for any $h_\v$ that achieves the maximum in $h$.
This completes the verification of the requirements listed in Section
\ref{sec:LGmodel-setup}.

Next we prove the admissibility of $L_0$ in the sense of Definition
\ref{def:admissibleLagrangian}. The construction of the U-shaped arc
$\gamma_0$ ensures that its two halves connecting $-1$ to infinity are
admissible arcs in the sense of Definition \ref{def:admissiblearc}. 
The monomial admissibility of $\ell_0=(\R_+)^n\subset
W^{-1}(-1)$ and the compatibility of parallel transport with fiberwise monomial
admissibility (Corollary \ref{cor:invce_Y}) imply that $L_0$ is fiberwise
monomially admissible; therefore, $\arg(z^\v)$ vanishes identically on the
portion of $L_0$ which lies in $C_\v\setminus (C_\v\cap Y^{in})$, which in
turn implies the vanishing of $d^c h_\v=\frac{1}{d(\v)}h_\v d\arg(z^\v)$.
It follows that the restriction of $d^c h$ to $L_0$ vanishes outside of
$Y^{in}$ (wherever $h$ is differentiable, and otherwise in the sense of
Remark \ref{rmk:maxprinciple_maxhv}).

Since $L_0(t)=\phi^t\rho^t(L_0)$ is obtained from the admissible Lagrangian
$L_0$ by the admissible lifted isotopy $\rho^t$ and the flow of the wrapping 
Hamiltonian $H$, it is also admissible by Lemma
\ref{l:admissible_invariant}. (Alternatively,
$\ell_0(t)=\phi^t(\ell_0)\subset W^{-1}(-1)$ is monomially admissible by
Proposition \ref{prop:wrapping_flatness}, and the two portions of the arc
$\gamma_t=\rho^t(\gamma_0)$ connecting $-1$ to infinity are admissible in
the sense of Definition \ref{def:admissiblearc}; since $L_0(t)$ is obtained
by parallel transport of $\ell_0(t)$ over $\gamma_t$, its admissibility
follows from the same argument as above.)
\endproof

\subsection{The Floer complex $CF^*(L_0(t'),L_0(t))$}\label{ss:CFL0prelim}

Recall that $L_0(t)$ is fibered over $\gamma_t$, and fiberwise monomially
admissible with phase angles $\varphi_\v=d(\v)t$ (by Proposition
\ref{prop:wrapping_flatness}). Thus, the asymptotic directions of the
noncompact ends of $L_0(t)$ and $L_0(t')$ are disjoint whenever
$t'-t\in U=\R\setminus (\{\pm t_0\} \cup 
\frac{2\pi}{d_0}\Z)$, where we denote by $d_0$ the least common
multiple of the positive integers $d_\v$, $\v\in \mathcal{V}$. Since the arcs
$\gamma_t$ are strictly radial outside of the disc of radius $R_0$, and
monomial admissibility precludes the existence of intersections outside of $Y^{in}$ when the
phase angles are distinct,
for $t'-t\in U$ all the intersections of $L_0(t)$ and $L_0(t')$ lie within the
compact subset $Y^{in}\cap \{|W|\leq R_0\}$. 

The intersections of $L_0(t)$ and $L_0(t')$ are concentrated in the fibers
of $W$ above the intersection points of $\gamma_t$ and $\gamma_{t'}$;
we will now see that Lagrangian Floer theory for these submanifolds can be 
expressed in terms of the fiberwise Floer complexes in those fibers and 
counts of holomorphic sections of $W:Y\to \C$ over regions of the complex plane
delimited by the arcs $\gamma_t$ and $\gamma_{t'}$.

Because our construction of the wrapping Hamiltonian does not guarantee that
$L_0(t')$ and $L_0(t)$ intersect transversely, we will allow ourselves to
modify our Lagrangians by small Hamiltonian isotopies supported inside $Y^{in}$ 
(and preserving the fibers of $W$, so that admissibility is not affected)
in order to achieve transversality of intersections; we will see in the next sections that
our main calculation reduces to a cohomology-level argument, so we do not
specify the exact choice of perturbation involved in the definition of the
Floer complex.

For $t'-t\in \R_+\cap U$, we denote by $C_0(t',t)$ the portion of the Floer
complex $CF^*(L_0(t'),L_0(t))$ generated by intersection points which lie in the 
fiber $W^{-1}(-1)$, i.e.\ the Floer complex of the monomially admissible 
Lagrangian submanifolds
$\ell_0(t')=\phi^{t'}(\ell_0)$ and $\ell_0(t)=\phi^t(\ell_0)$ inside 
$W^{-1}(-1)\simeq (\C^*)^n$ (possibly after a small compactly supported
perturbation to achieve transversality). We similarly denote by $C_1(t',t)$ 
the portion of the Floer complex generated by intersection points which lie in the fiber above
the other intersection point $c_{t',t}$ of $\gamma_{t'}$ and $\gamma_t$ for
$t'-t>t_0$; this amounts to the Floer complex of the monomially admissible
Lagrangian submanifolds $\ell_-(t')$ and $\ell_+(t)$ of $W^{-1}(c_{t',t})$
obtained by parallel transport of $\ell_0(t')$ and $\ell_0(t)$ along the
portions of $\gamma_{t'}$ and $\gamma_t$ which run from $-1$ to $c_{t',t}$
(clockwise on $\gamma_{t}$, and counterclockwise on $\gamma_{t'}$).
For $t'-t<t_0$ we set $C_1(t',t)=0$. 

The choice of a grading (for instance the usual one) on 
$\ell_0=(\R_+)^n\subset (\C^*)^n$ and on the arc $\gamma_0$ in the complex
plane induces a grading on the admissible Lagrangian $L_0$, and also, by
following the various isotopies, on the monomially admissible Lagrangians 
$\ell_0(t)$ and their images under parallel transport, as well as $L_0(t)$.
We view $C_0(t',t)$ and $C_1(t',t)$ as the Floer complexes of the monomially
admissible Lagrangian submanifolds $\ell_0(t'),\ell_0(t)$ (resp.\
$\ell_-(t'),\ell_+(t)$) of $W^{-1}(-1)$ and $W^{-1}(c_{t',t})$,
with the grading induced by that of $\ell_0$; in the case of $C_0(t',t)$
this coincides with the grading of $CF^*(L_0(t'),L_0(t))$, but in the case
of $C_1(t',t)$ the grading in $CF^*(L_0(t'),L_0(t))$ is one less than the
fiberwise degree, due to the phase angles of the arcs $\gamma_t,\gamma_{t'}$ 
at $c_{t',t}$ differing by an amount in the interval $(\pi,2\pi)$ for
$t'-t>t_0$ (see Figure \ref{fig:L0}). Thus,
\begin{align}
\nonumber CF^*(L_0(t'),L_0(t))&=C_0(t',t)\oplus C_1(t',t)[1]\\
&=\begin{cases} 
CF^*(\ell_0(t'),\ell_0(t))\oplus CF^*(\ell_-(t'),\ell_+(t))[1] & (t'-t>t_0)\\
CF^*(\ell_0(t'),\ell_0(t)) & (0<t'-t<t_0).
\end{cases}
\end{align}

Because the almost-complex structure $J$ coincides with the standard one
outside of the subset $Y^{in}\cap \{|W|<\epsilon\}$ introduced in
\eqref{eq:W<epsilon}, $J$-holomorphic curves satisfy the open mapping principle
with respect to the projection $W:Y\to\C$ and intersect positively with the
fibers of $W$ outside of the disc of radius $\epsilon$. (However this fails
near the origin.) This implies immediately that $J$-holomorphic discs with
boundary on a union of fibered Lagrangian submanifolds (disjoint from
the region where $|W|<\epsilon$) are either contained in the fibers of $W$,
or behave (away from the zero fiber) like sections or multisections of $W:Y\to\C$ over 
regions of the complex plane delimited by the arcs over which the
Lagrangians fiber. By abuse of terminology, we call such $J$-holomorphic discs 
``sections'' when their intersection number with the fibers is one, even though they need not be 
genuine sections over the disc of radius $\epsilon$.

The fibers of $W$ outside of the origin are isomorphic to $(\C^*)^n$,
and the monomially admissible Lagrangians $\ell_0(t)$ and their images under
parallel transport do not bound any holomorphic discs inside the fibers of
$W$ (e.g.\ because they are contractible and hence exact). It follows
that $L_0(t)$ does not bound any $J$-holomorphic discs. 

Moreover, our choice of $J$ ensures that we can also avoid sphere bubbling
by arguing as in Remark \ref{rmk:genericJtransverse}. Since the 
intersections of $L_0(t')$ and $L_0(t)$ lie within the region of $Y$ where
$|W|\leq R_0$ and $h\leq r(R_0)$, the maximum principles for $W$ and $h$
(Propositions \ref{prop:maxprinciple_base} and \ref{prop:maxprinciple_fiber})
imply that the $J$-holomorphic discs contributing to the Floer differential
(and later on, to continuation maps or product operations) also remain within $Y^{in}\cap
\{|W|\leq R_0\}$. Since the fibers of $W$ away from the origin are
aspherical, the only possible sphere bubbles are configurations contained in
the region where $|W|\leq \epsilon$, at least one component of which must
pass within $Y^{in}\cap \{|W|<\epsilon\}$.  The choice of a generic
perturbation of the standard complex structure within this subset ensures that 
the underlying simple spheres are disjoint from all $J$-holomorphic discs
in the 0- or 1-dimensional moduli spaces we consider, and hence that
no sphere bubbles can form.

We can now state and prove the main result of this section, which describes the
structure of the Floer differential on $CF^*(L_0(t'),L_0(t))$.

\begin{proposition}\label{prop:CF_is_cone}
For $0<t'-t<t_0$, the Floer complex $CF^*(L_0(t'),L_0(t))$ in $Y$ is isomorphic
to the Floer complex $CF^*(\ell_0(t'),\ell_0(t))$ in $W^{-1}(-1)\simeq
(\C^*)^n$. 

For $t'-t>t_0$, $CF^*(L_0(t'),L_0(t))$ is isomorphic to the
mapping cone 
\begin{equation}\label{eq:CF_is_cone}
CF^*(\ell_0(t'),\ell_0(t))\oplus
CF^*(\ell_-(t'),\ell_+(t))[1],\qquad \partial=\begin{pmatrix}
\partial_0 & s \\ 0 & \partial_1\end{pmatrix}
\end{equation}
where the diagonal entries are the Floer differentials on the fiberwise
Floer complexes, and the off-diagonal term
\begin{equation}\label{eq:s_t't}
s=s^0_{\ell_0,t',t}:CF^*(\ell_-(t'),\ell_+(t))\to
CF^*(\ell_0(t'),\ell_0(t))
\end{equation} is a chain map defined by a (weighted) count
of $J$-holomorphic sections of $W:Y\to\C$ over 
the bounded region of the complex plane delimited by $\gamma_t$ and
$\gamma_{t'}$ (cf.\ Figure \ref{fig:L0}).
\end{proposition}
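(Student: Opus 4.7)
The argument centers on the projection $w = W \circ u : \Sigma \to \C$ of any $J$-holomorphic strip $u : \R \times [0,1] \to Y$ contributing to the Floer differential, with boundary conditions on $L_0(t)$ and $L_0(t')$. Since $J$ agrees with the standard complex structure outside $Y^{in} \cap \{|W| < \epsilon\}$, and since both the Lagrangian boundary conditions and all intersection points of $L_0(t')$ and $L_0(t)$ lie in the region $|W| \geq 1 > \epsilon$, the map $w$ is holomorphic outside a small compact subset of $\Sigma$ and there satisfies the open mapping principle. By Proposition \ref{prop:maxprinciple_base}, $w(\Sigma)$ is confined to a bounded region of $\C$ whose boundary lies in $\gamma_t \cup \gamma_{t'}$, with limits at the punctures equal to the projections of the intersection points.

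For $0 < t' - t < t_0$, the arcs $\gamma_t$ and $\gamma_{t'}$ meet only at $-1$ and $\C \setminus (\gamma_t \cup \gamma_{t'})$ has no bounded components. A winding/degree argument standard in the Seidel TQFT framework, combined with the open mapping principle, then forces $w \equiv -1$, so every contributing strip lies in the fiber $W^{-1}(-1) \cong (\C^*)^n$. Since the restriction of $L_0(t)$ to this fiber coincides with $\ell_0(t)$ and the restriction of $J$ is compatible with the fiberwise symplectic form, the Floer complexes $CF^*(L_0(t'), L_0(t))$ and $CF^*(\ell_0(t'), \ell_0(t))$ are canonically identified.

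For $t' - t > t_0$, the arcs bound a unique compact region $D \subset \C$ with corners $-1$ and $c_{t',t}$. Positivity of intersection of $w$ with generic vertical lines in the region $|W| > \epsilon$ shows that the degree of $w$ over interior points of $D$ is either $0$ or $1$, and that no positive degree occurs over the unbounded components of $\C \setminus (\gamma_t \cup \gamma_{t'})$. Degree-zero strips have constant projection and therefore lie either in $W^{-1}(-1)$, contributing the fiberwise differential $\partial_0$ on $C_0(t',t)$, or in $W^{-1}(c_{t',t})$, contributing $\partial_1$ on $C_1(t',t)$. Degree-one strips project to holomorphic sections of $W$ over $D$; the phase-angle convention responsible for the grading shift $C_1(t',t)[1]$, together with the induced orientations on $\partial D$, forces such sections to have input in $W^{-1}(c_{t',t})$ and output in $W^{-1}(-1)$, producing the off-diagonal map $s$ with no contribution in the reverse direction. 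The chain map property of $s$, equivalent to $\partial^2 = 0$ on the total complex, follows from the standard boundary analysis of one-parameter moduli spaces of such sections: Gromov-type broken configurations at the two corners of $D$ contribute $\partial_0 \circ s$ and $s \circ \partial_1$ respectively.

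The main technical subtlety will be the degree-counting argument for $w$ and the exclusion of multiply-covered or mixed (vertical-plus-horizontal) bubbling configurations; this requires combining the open mapping principle away from the zero fiber with control over strips that pass through the small neighborhood of the origin where $J$ is perturbed. The fact that $c_1(Y) = 0$, that the smooth fibers of $W$ are aspherical, and that regularity for simple spheres holds for generic $J$ (as already set up in Section \ref{s:Fukayacat} and Remark \ref{rmk:genericJtransverse}) ensures that these exclusions can be carried out cleanly, and that the decomposition of the moduli space of strips into fiberwise and section components is preserved under the small compactly supported perturbations needed for transversality.
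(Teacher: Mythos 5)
Your proposal follows essentially the same route as the paper: project via $w = W\circ u$, use the maximum principle (Proposition \ref{prop:maxprinciple_base}) and the open mapping principle (valid where $J$ is standard) to confine the image of $w$, and decompose the contributing strips into fiberwise discs and sections over the bounded region $D$ delimited by $\gamma_t$ and $\gamma_{t'}$. The chain-map property of $s$ read off from $\partial^2 = 0$ is exactly the paper's closing remark, and your appeal to Remark \ref{rmk:genericJtransverse} for sphere-bubbling and regularity is the correct supporting infrastructure. The paper's proof is terser because it leans on the two preceding paragraphs that establish the open-mapping / fiber-versus-section dichotomy; you re-derive that dichotomy in the body of the proof, which is fine.

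One step is imprecisely attributed. You write that ``positivity of intersection \ldots shows that the degree of $w$ over interior points of $D$ is either $0$ or $1$.'' Positivity of intersection gives only the lower bound $\deg \geq 0$ (each local intersection with a fiber is positive); it does not by itself rule out degree $\geq 2$. The upper bound comes from the boundary: the maximum principle confines $|w|$ by $|c_{t',t}|$ (or by $1$ in the first case), so each boundary arc of the strip maps into the connected sub-arc of $\gamma_t$ (resp.\ $\gamma_{t'}$) inside that disc, with endpoints at the intersection points $-1$ or $c_{t',t}$; the winding number of $w|_{\partial\Sigma}$ around an interior point of $D$ is therefore $0$ (if both ends coincide) or $\pm 1$ (mixed ends), and positivity of intersection then excludes $-1$. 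It is this boundary winding computation, not positivity alone, that forces degree $\leq 1$ and settles the direction of $s$; the orientation/grading remark you give is an equivalent way to exclude the wrong direction, but the degree bound itself needs the winding argument. This is a misattribution rather than a gap that breaks the proof, but in a write-up you should replace the invocation of positivity with the winding argument when asserting $\deg \leq 1$.
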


\proof 
The open mapping principle implies that the $J$-holomorphic discs that 
contribute to the Floer differential on $CF^*(L_0(t'),L_0(t))$ are either
contained within the fibers of $W$, or (for $t'-t>t_0$) sections of $W$ over
the bounded region of the complex plane delimited by $\gamma_t$ and
$\gamma_{t'}$.
The contributions of discs contained within $W^{-1}(-1)$ and $W^{-1}(c_{t',t})$
correspond exactly to the Floer differentials on the
fiberwise Floer complexes $C_0(t',t)=CF^*(\ell_0(t'),\ell_0(t))$ and 
$C_1(t',t)=CF^*(\ell_-(t'),\ell_+(t))$, while the sections
contribute the off-diagonal term $s$. The fact that $s$ is a chain map
follows directly from the vanishing of the square of the Floer differential.
\endproof

It follows that the Floer cohomology group $HF^*(L_0(t'),L_0(t))$ is
isomorphic to $HF^*(\ell_0(t'),\ell_0(t))$ for $0<t'-t<t_0$, while for
$t'-t>t_0$ it is determined by the map induced by $s$ on cohomology,
which we again denote by
\begin{equation}
\label{eq:s_t't_hf}
s=s^0_{\ell_0,t',t}:HF^*(\ell_-(t'),\ell_+(t))\to HF^*(\ell_0(t'),\ell_0(t)).
\end{equation}

Even though the Floer complexes and the chain map \eqref{eq:s_t't} depend
on the choices made in the construction, the maps constructed from different 
choices are related by homotopies, so that the cohomology-level map \eqref{eq:s_t't_hf} is
independent of choices.

Indeed, deformations of Floer data among the set of choices which
satisfy our technical requirements (e.g.\ compactly
supported fiberwise Hamiltonian isotopies, modifications of $J$ near
$W^{-1}(0)$, or even admissible isotopies of the arcs
$\gamma_t$, $\gamma_{t'}$ which preserve transversality at all times)
induce continuation quasi-isomorphisms
on the Floer complexes \eqref{eq:CF_is_cone}. In every instance,
by considering the projection $W:Y\to\C$ one shows that
continuation trajectories, just like contributions to the Floer differential, can map 
generators in $W^{-1}(c_{t',t})$ to generators in $W^{-1}(-1)$ but
not vice-versa. Thus, our continuation homomorphisms are upper-triangular with respect to the decomposition
\eqref{eq:CF_is_cone} and induce quasi-isomorphisms on the summands
$C_0=CF^*(\ell_0(t'),\ell_0(t))$ and
$C_1=CF^*(\ell_-(t'),\ell_+(t))$.
Denoting by $C_0$ and $C_1$ the two summands in
\eqref{eq:CF_is_cone} with respect to one set of choices, and $C'_0$ and
$C'_1$ the two summands for the other set of choices, we obtain a diagram

\begin{equation}\label{eq:continuationcone}
\begin{psmatrix}[colsep=1.5cm,rowsep=1.5cm]
C_1 & C_0 \\
C'_1 & C'_0
\end{psmatrix}
\psset{nodesep=5pt,arrows=->}
\ncline{1,1}{2,1} \tlput{f_1}
\ncline{1,1}{1,2} \taput{s}
\ncline{2,1}{2,2} \tbput{s'}
\ncline{1,1}{2,2} \taput{\makebox(0,0)[lc]{\ $h$}}
\ncline{1,2}{2,2} \trput{f_0}
\end{equation}
\vskip3mm
\noindent
where $f_0,f_1,h$ are the components of the continuation homomorphism,
and $f_0:(C_0,\partial_0)\to (C'_0,\partial'_0)$ and $f_1:(C_1,\partial_1)\to 
(C'_1,\partial'_1)$ are quasi-isomorphisms. 

The fact that the continuation homomorphism is a chain map implies that
$$f_0\circ s + h\circ \partial_1=s'\circ f_1+\partial'_0\circ h.$$
Therefore $f_0\circ s$ and $s'\circ f_1$ are homotopic, and so
the cohomology level maps induced by $s$ and $s'$ coincide under the
isomorphisms $H^*(C_1,\partial_1)\simeq H^*(C'_1,\partial'_1)$ and
$H^*(C_0,\partial_0)\simeq H^*(C'_0,\partial'_0)$ induced by $f_1$ and
$f_0$. In this sense, the map \eqref{eq:s_t't_hf} is independent of the
choices made in the construction and invariant under admissible isotopies.

To put this in proper context, the map $s$ is part of the ``Seidel TQFT''
(cf.\ \cite{SeBook}) associated to the symplectic fibration $W:Y\to\C$.
As a general principle,
counts of $J$-holomorphic sections over given 
domains in the complex plane with boundary on given fibered Lagrangian submanifolds 
give rise to maps between the respective fiberwise Floer complexes which are independent of choices up to homotopy
and satisfy algebraic relations that can be understood in terms of gluing
axioms (we shall not elaborate on the latter point here; see \cite{SeBook}
for details).

\subsection{Floer cohomology for monomially admissible Lagrangians in $(\C^*)^n$}
\label{ss:fiberwise-floer}

To proceed further, we need to discuss Floer theory for monomially
admissible Lagrangian submanifolds in the fibers of $W$, which we identify
with $(\C^*)^n$ by considering the toric monomials $z_1,\dots,z_n$ on the
open stratum of $Y$ whose weights correspond to first $n$ basis vectors.
(So, for each $\v=(\vec{v},v^0)=(v_1,\dots,v_n,v^0)\in \mathcal{V}$, the
monomial $z^\v$ restricts to $W^{-1}(c)$ as $(-c)^{v^0}z_1^{v_1}\dots z_n^{v_n}$).
The material in this section closely parallels Hanlon's work \cite[Section
3.4]{Hanlon}.

The moment map for the standard $\T^n$-action on $W^{-1}(c)\simeq (\C^*)^n$
is given by the first $n$ coordinates $(\xi_1,\dots,\xi_n)$ of the moment
map of $Y$, and for each $\v\in \mathcal{V}$
the intersection of $C_\v$ with $W^{-1}(c)$ 
is the inverse image under the moment map of the subset 
$S_\v=S_{\v,\gamma}\subset \R^n$ defined by \eqref{eq:Svgamma}
(for an appropriate value of the constant $\gamma>0$, matching that used
for $C_\v$ at the beginning of \S \ref{ss:L0setup}).

We consider Lagrangian submanifolds of $(\C^*)^n$ which are sections over the moment map
projection; any such Lagrangian is the graph of the differential of a function
$K=K(\xi):\R^n\to \R$, i.e.\ the arguments $\arg(z_j)=\theta_j$ are
determined as functions of the moment map variables $(\xi_1,\dots,\xi_n)$
by $\theta_j=\partial K/\partial \xi_j$. (For a given Lagrangian, $K$ is
unique up to an affine function whose gradient is $2\pi$ times an integer
vector.) The monomial admissibility
condition can then be expressed in terms of the gradient of~$K$: 
the graph $\ell=\Gamma_{dK}\subset W^{-1}(c)\simeq (\C^*)^n$ is monomially admissible with phase angles
$\{\varphi_\v\}$ if and only if, outside of a compact subset,
\begin{equation}\label{eq:mon_adm_gradient}
\langle \nabla K(\xi), \vec{v}\rangle \equiv \varphi_\v-v^0\arg(-c) \ \mod\ 2\pi\Z
\quad \forall \xi \in S_\v,\quad \forall \v=(\vec{v},v^0)\in \mathcal{V}.
\end{equation}

\begin{definition}\label{def:mon_adm_slope}
The {\em slope} of the monomially admissible Lagrangian section
$\ell=\Gamma_{dK}$ is
the tuple $\sigma(K)=(\sigma_\v(K))_{\v\in \mathcal{V}}\in
\R^{|\mathcal{V}|}$, where 
$\sigma_\v(K)=\langle \nabla K(\xi),\vec{v}\rangle_{|S_\v}$.

When $K$ is a convex function, we associate to its slope $\sigma=\sigma(K)$ the polytope
\begin{equation}\label{eq:mon_adm_polytope}
P(\sigma)=\{u\in \R^n\,|\,\langle u,\vec{v}\rangle\leq \sigma_\v
\ \ \forall \v=(\vec{v},v^0)\in \mathcal{V}\}.
\end{equation}
\end{definition}

Recall that the vectors $\vec{v}$ appearing in \eqref{eq:mon_adm_polytope} are
the primitive normal vectors to the facets of the Newton polytope $P$ associated to the Laurent
polynomial $f$ (cf.\ Definition \ref{def:extremalv}). Given any vertex $\alpha \in
P_\Z$ of $P$, the subsets $S_\v$ 
associated to the various facets of $P$ which meet at $\alpha$ have a non-empty
and unbounded intersection $U_\alpha$ (comprising most of the region of $\R^n$ where 
$\alpha$ achieves the maximum in the tropicalization of $f$, cf.\ Figure
\ref{fig:Svgamma}). Over $U_\alpha$, the value of $\nabla K$ is 
fully constrained by the slope $\sigma=\sigma(K)$, since
$\langle \nabla K,\vec{v}\rangle =\sigma_\v$
whenever $\vec{v}$ is the normal vector to a facet of $P$ 
containing $\alpha$. This corresponds to the equality
case in the inequalities \eqref{eq:mon_adm_polytope} for a maximal
collection of linearly independent $\vec{v}$, i.e.\ a vertex of the polytope
$P(\sigma)$. From this and standard facts about convex functions we deduce:

\begin{lemma}\label{l:mon_adm_polytope}
If $K$ is convex with slope $\sigma$, then $P(\sigma)$ is a
convex polytope with the same normal vectors and normal fan as $P$, and
the range of values taken by the gradient $\nabla K$ is exactly
$P(\sigma)$.
\end{lemma}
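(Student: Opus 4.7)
The plan is to verify the three claims in sequence: (i) $P(\sigma)$ is a convex polytope whose defining inequalities each cut out a facet, (ii) its normal fan agrees with that of $P$, and (iii) $\nabla K(\R^n) = P(\sigma)$. For (i) and (ii), I will construct a vertex $u_\alpha \in P(\sigma)$ for each vertex $\alpha$ of $P$. As noted in the paragraph preceding the lemma, on each unbounded set $U_\alpha = \bigcap_{\vec{v}\in F_\alpha} S_\v$, with $F_\alpha$ the collection of facet normals of $P$ incident to $\alpha$, the gradient $\nabla K$ is constrained by the equalities $\langle \nabla K,\vec v\rangle = \sigma_\v$ for all $\vec v \in F_\alpha$. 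Because $\alpha$ is a vertex, the positive span of $\{\vec v : \vec v\in F_\alpha\}$ is the full-dimensional normal cone to $\alpha$ in $P$, so this linear system has a unique solution $u_\alpha$, which is then a vertex of $P(\sigma)$ whose normal cone coincides with that of $\alpha$. Since the collection of these normal cones covers $\R^n$, the two polytopes have identical normal fans (and in particular no defining inequality is redundant).

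For the inclusion $\nabla K(\R^n) \subseteq P(\sigma)$, I fix $\xi_0 \in \R^n$ and a facet normal $\vec v$. Convexity of $K$ makes $t \mapsto \langle \nabla K(\xi_0 + t\vec v),\vec v\rangle$ non-decreasing. For sufficiently large $t$, the ray $\xi_0 + t\vec v$ enters $S_\v$: indeed, along this ray the maximum in $\varphi(\xi) = \max_\alpha \{\langle\alpha,\xi\rangle - \nu(\alpha)\}$ is eventually achieved by some $\alpha \in A_\v$, and the separation from the remaining terms grows linearly, exceeding the $\gamma\|\xi\|$ threshold built into \eqref{eq:Svgamma}. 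On $S_\v$ one has $\langle \nabla K,\vec v\rangle = \sigma_\v$ by definition of the slope, so $\langle \nabla K(\xi_0),\vec v\rangle \le \sigma_\v$, as required.

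For the reverse inclusion, the key input is that $K$ is asymptotically the support function of $P(\sigma)$. Since $\nabla K$ is constant and equal to $u_\alpha$ on $U_\alpha$, the function $K$ is affine with slope $u_\alpha$ there; consequently the recession function $K_\infty(w) = \lim_{t\to\infty} K(\xi_0 + tw)/t$ equals $\langle u_\alpha,w\rangle$ whenever $w$ lies in the normal cone to $\alpha$, which coincides with $h_{P(\sigma)}(w) = \max_{u \in P(\sigma)}\langle u,w\rangle$. Given $u^* \in \mathrm{int}\,P(\sigma)$, the convex function $F(\xi) = K(\xi) - \langle u^*,\xi\rangle$ then satisfies $F(\xi_0 + tw)/t \to h_{P(\sigma)}(w) - \langle u^*,w\rangle > 0$ for every $w \neq 0$; compactness of the unit sphere promotes this to uniform coercivity, so $F$ attains a global minimum at some $\xi^*$, where $\nabla K(\xi^*) = u^*$. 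Boundary points of $P(\sigma)$ lying on a face $F$ are attained by running the same coercivity argument within the corresponding stratum of $\R^n$ (the union of those $U_\alpha$ and $S_\v$ whose defining equalities correspond to $F$), or equivalently by closure of the image using continuity of $\nabla K$ and the fact that vertex values $u_\alpha$ are attained on $U_\alpha$.

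I expect the main technical obstacle to be the last surjectivity step, specifically verifying that boundary points of $P(\sigma)$ lying on positive-dimensional faces are genuinely in the image rather than merely in its closure. The bookkeeping across the correspondence between the stratification of $\R^n$ by the regions $\{U_\alpha\}$ and $\{S_\v\}$ on the one hand, and the face poset of $P(\sigma)$ on the other, requires some care, but the coercivity argument on each stratum is the right tool and reduces each case to a lower-dimensional instance of the same statement.
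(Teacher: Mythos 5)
Your proposal matches, in spirit and in most details, the argument the paper gestures at (the paper simply writes ``from this and standard facts about convex functions we deduce\ldots'' and provides no proof). Your identification of the vertices $u_\alpha$ of $P(\sigma)$ attained on the regions $U_\alpha$, the matching of normal cones, and the ray argument for the inclusion $\nabla K(\R^n) \subseteq P(\sigma)$ are all correct and constitute more detail than the paper itself offers.

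The one place I would push back is the boundary surjectivity step, which you correctly single out as the delicate point, but where both of your proposed fixes as stated are slightly off. (1) ``Running the same coercivity argument within the corresponding stratum'' does not literally work: for $u^*$ in the relative interior of a face $F$ with normal cone $N_F$, the function $\xi \mapsto K(\xi) - \langle u^*,\xi\rangle$ has vanishing recession function along $N_F$, so it is \emph{not} coercive on any neighborhood of that stratum --- coercivity fails in precisely the directions you would hope to exploit. (2) The alternative ``closure of the image'' phrasing is not a valid argument on its own: knowing that $\nabla K(\R^n)$ is contained in $P(\sigma)$, contains $\mathrm{int}\,P(\sigma)$, and contains the vertices does not force it to contain the rest of the boundary, since $\mathrm{int}\,P(\sigma)\cup\{u_\alpha\}$ is a set-theoretically admissible image that omits the relative interiors of positive-dimensional proper faces. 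The general theory (in terms of the Legendre transform $K^*$) only gives $\mathrm{int}\,P(\sigma)\subseteq \nabla K(\R^n)\subseteq P(\sigma)$, with the boundary genuinely undetermined.

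What rescues the boundary case is the \emph{strict} (not merely asymptotic) equality $\langle \nabla K,\vec{v}\rangle=\sigma_\v$ on $S_\v$, which forces $K$ to be genuinely affine (not just asymptotically affine) on the unbounded cone $U_\alpha$. For $u^*\in\mathrm{relint}\,F$, this makes $K(\xi)-\langle u^*,\xi\rangle$ \emph{eventually constant} along directions in $N_F$ while growing linearly in all other directions, which is enough to conclude that the infimum is attained (although the function is not coercive); equivalently, one can argue that $\nabla K$ maps $S_\v$ into the facet $F_\v$ and run an induction on face dimension. So the geometric picture you have is right, but the phrase ``coercivity'' should be replaced by ``eventual constancy along $N_F$, linear growth transverse to $N_F$,'' and the ``closure'' alternative should be dropped.
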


\begin{example}\label{ex:slopes_L0}
The monomially admissible section $\ell_0(t)=\phi^t(\ell_0)\subset
W^{-1}(-1)$ is the graph
of $d(tH)$, so by Proposition \ref{prop:wrapping_flatness} and
\eqref{eq:mon_adm_gradient} its
slope is
\begin{equation}\label{eq:slope_l0}
\sigma_0(t):=\sigma(tH)=(\sigma_\v(tH))_{\v\in \mathcal{V}}=(t\,d(\v))_{\v\in
\mathcal{V}}.
\end{equation}
Moreover, for $t'-t>t_0$, parallel transport of $\ell_0(t)$ and $\ell_0(t')$ 
from $W^{-1}(-1)$ to $W^{-1}(c_{t',t})$ along the relevant portions of
$\gamma_t$ and $\gamma_{t'}$ preserves the phase angles
$\varphi_\v=t\,d(\v)$, so by \eqref{eq:mon_adm_gradient} the monomially
admissible Lagrangian sections $\ell_-(t')$ and $\ell_+(t)$ in $W^{-1}(c_{t',t})$ 
have slopes
\begin{eqnarray}
\label{eq:slope_l-}
\sigma_-(t')&=&(t'd(\v)-v^0\,(\arg(c_{t',t})+\pi))_{\v\in \mathcal{V}}
\quad \text{and}\\
\label{eq:slope_l+}
\sigma_+(t)&=&(t\,\,d(\v)-v^0\,(\arg(c_{t',t})-\pi))_{\v\in\mathcal{V}},
\end{eqnarray}
where we take $\arg(c_{t',t})\in (-\pi,\pi)$; the values of 
$\arg(-c_{t',t})$ in these two formulas differ by $2\pi$ because
we consider parallel transport from $-1$ to $c_{t',t}$ clockwise around the
origin for $\ell_+(t)$ and counterclockwise for $\ell_-(t')$.
\end{example}

Let $\ell$ and $\ell'$ be two monomially admissible Lagrangian sections,
expressed as the graphs of $dK$ and $dK'$. If the slopes of $K$ and $K'$
differ by amounts that aren't multiples of $2\pi$, 
then the intersections of $\ell$ and $\ell'$ remain within a compact
subset of $(\C^*)^n$, and their Floer cohomology is well-defined.
We claim that $HF^*(\ell',\ell)$ only depends on the slopes. (As we shall
see in the argument below, this is an instance of the invariance of Floer cohomology under 
Hamiltonian isotopies which preserve admissibility and disjointness at infinity, and follows from the
existence of well-defined continuation maps;
see \cite[Lemma 3.21]{GPS1} for the analogous result in the setting
of Liouville sectors.)

\begin{proposition}\label{prop:mon_adm_hf}
Let $\ell=\Gamma_{dK}$ and $\ell'=\Gamma_{dK'}$ be two monomially admissible 
Lagrangian sections, with slopes $\sigma(K)=\sigma$ and $\sigma(K')=\sigma'$, 
and assume that $\sigma_\v-\sigma'_\v\not\in 2\pi\Z$ $\forall \v\in
\mathcal{V}$. Then $HF^*(\ell',\ell)$ only depends on the slopes
$\sigma$ and $\sigma'$ of $K$ and $K'$. Moreover, if $K'-K$ is convex then the Floer
cohomology is concentrated in degree zero and 
\begin{equation}\label{eq:mon_adm_hf}
HF^0(\ell',\ell)\cong \bigoplus_{p\in P(\sigma'-\sigma)\cap (2\pi \Z)^n} \K\cdot
\vartheta_p.
\end{equation}
\end{proposition}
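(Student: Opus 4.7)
The plan is to adapt the standard picture for Lagrangian sections of $T^*\T^n$, in the spirit of Hanlon's treatment in \cite{Hanlon}, working in three steps: slope-invariance via continuation, an explicit enumeration of intersection points using Lemma \ref{l:mon_adm_polytope}, and a Morse-theoretic grading computation that collapses the complex to a single degree.

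First, to show that $HF^*(\ell',\ell)$ depends only on the slopes, I would interpolate linearly between any two generating functions $K_0$ and $K_1$ having the same slope $\sigma$. The key point is that along $K_s=(1-s)K_0+sK_1$ one has $\langle\nabla K_s,\vec v\rangle=\sigma_\v$ identically over each $S_\v$, so the whole family $\ell_s=\Gamma_{dK_s}$ consists of monomially admissible sections with the same fixed phase angles. The hypothesis $\sigma_\v-\sigma'_\v\notin 2\pi\Z$ forces $\ell_s\cap\ell'$ to remain in a common compact set throughout the isotopy, and the fiberwise analogue of the Floer continuation apparatus of Section \ref{sec:maximum-principle} (applied with $\alpha=0$ and trivial base direction) then produces a continuation quasi-isomorphism. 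An identical argument handles $\ell$, so both variables depend only on their slopes.

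Second, assuming $\tilde K:=K'-K$ is convex, I would identify the generators of $CF^*(\ell',\ell)$ explicitly. Transverse intersections of $\Gamma_{dK}$ and $\Gamma_{dK'}$ correspond to points $\xi\in\R^n$ with $\nabla\tilde K(\xi)\in 2\pi\Z^n$. After a compactly supported perturbation making $\tilde K$ strictly convex (which preserves the slope $\sigma'-\sigma$ and hence the Floer cohomology by the first step), Lemma \ref{l:mon_adm_polytope} identifies the image of $\nabla\tilde K$ with the polytope $P(\sigma'-\sigma)$, and strict convexity guarantees a unique preimage $\xi_p$ for each $p\in P(\sigma'-\sigma)\cap 2\pi\Z^n$. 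At each such intersection, the Maslov index of the pair $(\ell',\ell)$ with respect to the gradings induced by $K$ and $K'$ equals the Morse index of $\tilde K-\langle p,\cdot\rangle$ at $\xi_p$, which vanishes because the Hessian is positive definite. Therefore every $\vartheta_p$ sits in degree zero, and the Floer differential vanishes for degree reasons, yielding the claimed direct sum decomposition.

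The main obstacle will be justifying the continuation step rigorously: one must verify that the linear interpolation gives a Hamiltonian isotopy through honestly monomially admissible sections with controlled topological energy, so that the compactness machinery of Section \ref{sec:maximum-principle} applies in the fiber, and that the space of admissible perturbations is large enough to achieve transversality without leaving the monomially admissible class. A secondary issue is checking that the grading on $\Gamma_{dK}$ induced by our global conventions on $Y$ matches the Morse-theoretic convention used in the degree calculation, which is a routine but delicate bookkeeping task needed to conclude that all $\vartheta_p$ genuinely appear in degree zero rather than in some uniformly shifted degree.
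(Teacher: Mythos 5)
Your proposal follows essentially the same route as the paper's proof: linear interpolation $K_s=(1-s)K_0+sK_1$ for slope-invariance, Lemma~\ref{l:mon_adm_polytope} to parametrize generators by $P(\sigma'-\sigma)\cap(2\pi\Z)^n$, and a Morse-index argument to put all generators in degree zero so that the differential vanishes. The one technical point you flag as your ``main obstacle''~--- justifying the continuation maps~--- is exactly where the paper does some actual work: it observes that since $K_1-K_0$ (resp.\ $K'_1-K'_0$) has slope zero, the generating Hamiltonian vector fields $X$ and $X'$ preserve each $z^\v$ on $C_\v$ outside $Y^{in}$, hence $d^ch(X)=d^ch(X')=0$ there, and the maximum principle of Proposition~\ref{prop:maxprinciple_fiber} applies to the strips with moving boundary conditions. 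You correctly recognize that this is the crucial estimate to supply, but you appeal to the machinery of Section~\ref{sec:maximum-principle} generically rather than pinning down this specific $d^ch$ vanishing. One small difference: you propose perturbing $K'-K$ to be \emph{globally strictly convex}, which is stronger than needed and harder to arrange by a compactly supported perturbation when the Hessian degenerates on noncompact sets; the paper instead only perturbs so that, for each $p$, the critical points of $K'-K-\langle p,\cdot\rangle$ are nondegenerate, a condition which is genuinely achievable and suffices for the degree count.
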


\proof First we prove invariance. Given any two Hamiltonians $K_0,K_1$ with
$\sigma(K_0)=\sigma(K_1)=\sigma$, the convex combinations
$K_s=(1-s)K_0+sK_1$ also have slope $\sigma$, and the graphs
$\ell_s=\Gamma_{dK_s}$ are monomially admissible Lagrangian sections.
Similarly, given $K'_0,K'_1$ with $\sigma(K'_0)=\sigma(K'_1)=\sigma'$,
we set $K'_t=(1-s)K'_0+sK'_1$ and $\ell'_s=\Gamma_{dK'_s}$.  We 
then define continuation maps 
$$\Phi_{01}:CF^*(\ell'_0,\ell_0)\to CF^*(\ell'_1,\ell_1)\quad
\text{and}\quad \Phi_{10}:CF^*(\ell'_1,\ell_1)\to CF^*(\ell'_0,\ell_0)$$
by counting 
index zero $J$-holomorphic strips $u:\R\times [0,1]\to (\C^*)^n$ with moving 
boundary conditions given by $\ell'_s$ (for $s$ a suitable function of the
real coordinate) along $\R\times 0$ and $\ell_s$ along $\R\times 1$.

Since the slopes of $K_1-K_0$ and $K'_1-K'_0$ are all zero, the 
Hamiltonian vector fields $X=X_{K_1-K_0}$ and $X'=X_{K'_1-K'_0}$ (which generate the isotopies of the
moving boundary conditions $\ell_s$ and $\ell'_s$) satisfy
$d^c h(X)=d^c h(X')=0$ outside of a compact subset (where we recall that
$h=\max\{h_\v\}=\max\{|z^\v|^{1/d(\v)}\}$). More precisely, 
the vanishing of $\langle \nabla(K_1-K_0),\vec{v}\rangle$
and $\langle \nabla(K'_1-K'_0),\vec{v}\rangle$ implies the invariance of
the monomial $z^\v$ under the flows of $X$ and $X'$ at all points of $C_\v\cap W^{-1}(c)$ which lie outside of $Y^{in}$,
and hence the vanishing of $d^c h_\v(X)=\frac{1}{d(\v)}h_v
d\arg(z^\v)(X)$ and $d^c h_\v(X')$.

This in turn implies that $J$-holomorphic strips with moving boundary
conditions $\ell_s$ and $\ell'_s$ satisfy the maximum principle with 
respect to the proper function $h$ outside of a compact subset of
$(\C^*)^n$, and hence that the continuation maps $\Phi_{01}$ and $\Phi_{10}$ are well-defined. The argument is similar to 
the last part of the proof of Proposition 
\ref{prop:maxprinciple_fiber}: the vanishing of $d^c h$ on the tangent spaces to the
monomially admissible Lagrangians $\ell_s,\ell'_s$ and on the vector fields $X$ and $X'$ along
which these boundary conditions move implies that the restriction of
$d^c(h\circ u)$ to the boundary of the strip $\R\times [0,1]$ vanishes 
identically (outside of $u^{-1}(Y^{in})$), and the result then follows from
the maximum principle with Neumann boundary conditions. 

The usual argument for Floer continuation maps then shows that $\Phi_{01}$
and $\Phi_{10}$ are chain maps, and that $\Phi_{01}\circ \Phi_{10}$ and
$\Phi_{10}\circ \Phi_{01}$ are homotopic to identity; it follows that
$HF^*(\ell'_0,\ell_0)\simeq HF^*(\ell'_1,\ell_1)$.

We now turn to the second part of the statement. Assume that $K'-K$ is
convex, and observe that the generators of $CF^*(\ell',\ell)$ correspond to
points where $dK'$ and $dK$ differ by an integer multiple of $2\pi$, i.e.\ 
$\nabla(K'-K)\in (2\pi \Z)^n$. By Lemma \ref{l:mon_adm_polytope}, the
set of possible values of $\nabla(K'-K)$ is $P(\sigma'-\sigma)$. For
each $p\in P(\sigma'-\sigma)\cap (2\pi\Z)^n$, the function $K'(\xi)-K(\xi)-\langle
p,\xi\rangle$ is convex; up to a small perturbation (preserving convexity)
we can assume that its critical points are non-degenerate. Convexity then
ensures that the critical point (guaranteed to exist
by Lemma \ref{l:mon_adm_polytope}) is unique and a minimum, so that it
contributes a single generator to $CF^0(\ell',\ell)$, which (up to a
suitable rescaling, see below) we denote by
$\vartheta_p$. Taking the direct sum over all $p$, we find that the Floer 
complex $CF^*(\ell',\ell)=\mathrm{span}\,\{\vartheta_p,\ p\in P(\sigma'-\sigma)\cap
(2\pi\Z)^n\}$ is concentrated in degree zero, which in turn implies the
vanishing of the Floer differential, and \eqref{eq:mon_adm_hf} follows.
\endproof

As a general convention, we rescale all generators of the Floer complexes
for monomially admissible Lagrangian sections by their action (suitably
defined, see below), using the exactness of these Lagrangians to
eliminate geometrically irrelevant powers of the Novikov variable and ensure
that continuation isomorphisms map generators to generators. In the
setting of Proposition \ref{prop:mon_adm_hf}, given $\ell=\Gamma_{dK}$ and
$\ell'=\Gamma_{dK'}$ with $K'-K$ convex and $p\in P(\sigma'-\sigma)\cap
(2\pi\Z)^n$, and denoting by $\xi_p$ the critical
point of $K'-K-\langle p,\cdot\rangle$, we define the {\em action} of this
intersection point to be the associated critical value of $K'-K-\langle
p,\cdot\rangle$, and the generator we denote by
$\vartheta_p$ is
actually $t^{K'(\xi_p)-K(\xi_p)-\langle p,\xi_p\rangle}$ times the standard
generator associated to the intersection point. (Of note: our basis depends
not only on the Lagrangians $\ell$ and $\ell'$ but also on the 
normalizations of $K$ and $K'$; different choices yield
differently scaled bases, which can be related explicitly by isomorphisms
mapping each generator to a power of $t$ times a generator.)

\begin{proposition}\label{prop:mon_adm_hf_product}
Let $\ell=\Gamma_{dK}$, $\ell'=\Gamma_{dK'}$, and $\ell''=\Gamma_{dK''}$ be
three monomially admissible Lagrangian sections, such that none of the pairwise
differences of their slopes $\sigma,\sigma',\sigma''$ is a multiple of
\,$2\pi$.  Assume moreover that $K''-K'$ and $K'-K$ are convex.
Then for any
$p\in P(\sigma'-\sigma)\cap (2\pi\Z)^n$ and $p'\in P(\sigma''-\sigma')\cap
(2\pi\Z)^n$, the Floer product of $\vartheta_p\in HF^0(\ell',\ell)$ and
$\vartheta_{p'}\in HF^0(\ell'',\ell')$ is given by
\begin{equation}\label{eq:mon_adm_hf_product}
\vartheta_{p}\cdot \vartheta_{p'}=\vartheta_{p+p'}\in HF^0(\ell'',\ell).
\end{equation}
\end{proposition}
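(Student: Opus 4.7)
The plan is to compute the Floer product by counting rigid $J$-holomorphic triangles in $(\C^*)^n$ with boundary on $\ell, \ell', \ell''$ and corners at $\vartheta_p, \vartheta_{p'}, \vartheta_q$ for $q \in P(\sigma''-\sigma) \cap (2\pi\Z)^n$. First I would check that the right-hand side of \eqref{eq:mon_adm_hf_product} makes sense: adding the inequalities defining $p \in P(\sigma'-\sigma)$ and $p' \in P(\sigma''-\sigma')$ gives $\langle p+p', \vec v\rangle \leq \sigma''_{\v} - \sigma_{\v}$ for every $\v \in \mathcal{V}$, so $p+p' \in P(\sigma''-\sigma) \cap (2\pi\Z)^n$ and $\vartheta_{p+p'}$ is a valid generator of $HF^0(\ell'',\ell)$. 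Since all three Floer complexes are concentrated in degree zero by Proposition \ref{prop:mon_adm_hf}, their differentials vanish and the product is determined purely by chain-level triangle counts.

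The second step is to pass to the universal cover $T^*\R^n \to (\C^*)^n$, under which each $\ell^{(\bullet)}$ lifts to an embedded section $\tilde\ell^{(\bullet)}$ and the deck group acts by translations of cotangent fibers by elements of $(2\pi\Z)^n$. The generator $\vartheta_p$ corresponds to the unique intersection $\xi_p = \tilde\ell \cap (\tilde\ell' - p)$, which by Proposition \ref{prop:mon_adm_hf} is the critical minimum of the convex function $K'-K-\langle p, \cdot\rangle$; similarly $\vartheta_{p'}$ lifts to $\xi_{p'} \in (\tilde\ell'-p) \cap (\tilde\ell''-(p+p'))$. A $J$-holomorphic triangle with the specified inputs lifts to a triangle in $T^*\R^n$ bounded by $\tilde\ell, \tilde\ell'-p, \tilde\ell''-(p+p')$, whose third corner must lie in $\tilde\ell \cap (\tilde\ell''-(p+p'))$, a single point corresponding to $\vartheta_{p+p'}$. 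Thus only the coefficient of $\vartheta_{p+p'}$ in $\vartheta_p \cdot \vartheta_{p'}$ can be non-zero.

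The final step is to compute this coefficient. The Novikov exponents cancel automatically: applying Stokes' theorem to a triangle in $T^*\R^n$ bounded by three sections whose generating functions are the shifted primitives $K$, $K'-\langle p, \cdot\rangle$, $K''-\langle p+p', \cdot\rangle$ with corners at $\xi_p, \xi_{p'}, \xi_{p+p'}$ yields a symplectic area equal to $(K''-K-\langle p+p', \cdot\rangle)(\xi_{p+p'}) - (K'-K-\langle p, \cdot\rangle)(\xi_p) - (K''-K'-\langle p', \cdot\rangle)(\xi_{p'})$. Since each generator $\vartheta_\bullet$ is normalized by $t$ to the power of its action (the corresponding critical value), the Novikov factors telescope and the product equals $\pm \vartheta_{p+p'}$. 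The main obstacle is the analytic claim that there is exactly one rigid $J$-holomorphic triangle (up to reparametrization) with these corners and boundary conditions, contributing with sign $+1$. I would handle this by deforming $K, K', K''$ through convex Hamiltonians of fixed slopes $\sigma, \sigma', \sigma''$ (using the continuation invariance established in the proof of Proposition \ref{prop:mon_adm_hf}) to a quadratic model in which the three Lagrangians become affine and the triangle is written down explicitly; alternatively one can appeal to Fukaya-Oh-type Morse-Floer correspondences for graphs of differentials in cotangent bundles, in the spirit of Hanlon's analysis in \cite{Hanlon} of closely related products, whereby rigid triangles correspond to trivalent Morse flow trees and convexity with unique critical minima guarantees a unique such tree of positive sign.
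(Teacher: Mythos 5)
Your first three steps match the paper's argument: checking $p+p'\in P(\sigma''-\sigma)$, lifting to the universal cover (your shift convention $\tilde\ell,\ \tilde\ell'-p,\ \tilde\ell''-(p+p')$ is a global translate of the paper's $\tilde\ell+p,\ \tilde\ell',\ \tilde\ell''-p'$, so these are the same), and the Stokes telescoping of actions so that the Novikov weights cancel. Where you diverge is the final step, and the first alternative you propose contains a genuine inconsistency. You cannot deform $K,K',K''$ ``through convex Hamiltonians of fixed slopes $\sigma,\sigma',\sigma''$ \ldots to a quadratic model in which the three Lagrangians become affine.'' A quadratic $K$ has unbounded gradient, hence no well-defined slope $\sigma_\v=\langle\nabla K,\vec v\rangle_{|S_\v}$ in the sense of Definition~\ref{def:mon_adm_slope}; equivalently, an affine Lagrangian section is not monomially admissible. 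So a deformation that preserves slopes can never reach a quadratic model, and if you drop the slope constraint you lose both the continuation invariance from Proposition~\ref{prop:mon_adm_hf} and the maximum-principle control that makes the Floer complexes and products well-defined in the first place.

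The paper's resolution is cheaper and stays inside the admissible class: following \cite[Proposition 3.22]{Hanlon}, one replaces $K$ and $K''$ by $\hat K$ and $\hat K''$ obtained from $K,K''$ by a translation in the $\xi$-variable, chosen so that all three critical points $\xi_p$, $\xi_{p'}$, $\xi_{p+p'}$ move to the origin. This keeps $K'-\hat K$ and $\hat K''-K'$ convex with the \emph{same} slopes $\sigma'-\sigma$, $\sigma''-\sigma'$, so the deformation is through admissible data, yet now all three Lagrangian lifts pass through one point; Stokes' theorem then forces the area of any rigid triangle to vanish, so only the constant disc contributes, and a standard linearization argument gives regularity and sign $+1$. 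If you want to fix your writeup while preserving your strategy, replace ``quadratic model'' with this $\xi$-translation, and note (as the paper does) that the translated data fail monomial admissibility on a slightly smaller subset, which does not affect the maximum principle. Your second alternative, appealing to a Fukaya--Oh correspondence with Morse flow trees, is plausibly correct but would need a dedicated argument that the correspondence applies to these non-compact Lagrangian graphs with linear-growth slopes, so it should not be invoked as a black box.
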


\proof 
We lift $\ell,\ell',\ell''$ to the universal cover $T^*\R^n$ of $(\C^*)^n$ by
considering the graphs $\tilde\ell$, $\tilde\ell'$ and $\tilde\ell''$ of
$d(K+\langle p,\cdot\rangle)$, $dK'$, and $d(K''-\langle p',\cdot\rangle)$
respectively. By construction, the generator $\vartheta_p$ lifts to an
intersection point of $\tilde\ell$ and $\tilde\ell'$, and similarly
$\vartheta_{p'}$ lifts to an intersection of $\tilde\ell'$ with
$\tilde\ell''$.
Thus, any holomorphic disc in $(\C^*)^n$ contributing to
the Floer product of $\vartheta_p$ and $\vartheta_{p'}$ lifts to a
disc in the universal cover with boundary on $\tilde\ell$, $\tilde\ell'$ and
$\tilde\ell''$. It follows that the output of the disc corresponds to an intersection
of $\tilde\ell$ with $\tilde\ell''$, i.e.\ a critical point of $K''-K-\langle
p+p',\cdot\rangle$; hence
$\vartheta_p\cdot \vartheta_{p'}$ is a multiple of $\vartheta_{p+p'}$.

Denote by $\xi_p$, $\xi_{p'}$, and $\xi_{p+p'}\in \R^n$ the critical
points of the convex functions \hbox{$K'-K-\langle p,\cdot\rangle$,} $K''-K'-\langle p',\cdot\rangle$,
and $K''-K-\langle p+p',\cdot\rangle$ respectively. By Stokes' theorem, the
symplectic area of any holomorphic triangle contributing to the coefficient of
$\vartheta_{p+p'}$ in $\vartheta_p\cdot \vartheta_{p'}$ is equal to the difference of
the actions of the input and output generators, i.e.\ 
\begin{multline}
\label{eq:actiontriangle}
(K''(\xi_{p+p'})-K(\xi_{p+p'})-\langle p+p',\xi_{p+p'}\rangle)\\ 
-(K'(\xi_p)-K(\xi_p)-\langle p,\xi_p\rangle)
-(K''(\xi_{p'})-K'(\xi_{p'})-\langle p',\xi_{p'}\rangle).
\end{multline}
Thus, since our chosen bases of the Floer complexes are already rescaled by action, the
powers of $t$ cancel out and each holomorphic disc contributes $\pm 1$.

It remains to show that the overall count of discs is $+1$. Since our calculation
is at the level of Floer cohomology, the count we consider is homotopy invariant
and we can deform the Lagrangian submanifolds $\tilde\ell$, $\tilde\ell'$ and $\tilde\ell''$ 
to simplify the problem. We use the same trick as \cite[Proposition 3.22]{Hanlon},
and replace $K$ and $K''$ by modified functions $\hat{K}$ and $\hat{K}''$ such that
$$(K'-\hat{K})(\xi)=(K'-K)(\xi+\xi_p)\quad\text{and}\quad
(\hat{K}''-K')(\xi)=(K''-K)(\xi+\xi_{p'}).$$ This modification ensures that $K'-\hat{K}$ and
$\hat{K}''-K'$ remain convex and have the same slopes at infinity as
$K'-K$ and $K''-K$, but the critical points of
$K'-\hat{K}-\langle p,\cdot\rangle$ and
$\hat{K}''-K'-\langle p',\cdot\rangle$ now lie at the origin; considering
their sum, the critical point of $\hat{K}''-\hat{K}-\langle p+p',\cdot\rangle$
also lies at the origin. 
(A note of caution: modifying $K'-K$ and $K''-K'$ by 
translations in the $\xi$ coordinate in this manner doesn't quite preserve monomial admissibility, as
the control over $\arg(z^\v)$ is now achieved over a slightly smaller subset of 
$(\C^*)^n$; since the collection of these modified subsets still covers the
complement of a compact subset, this does not affect in any significant manner the maximum principle arguments we use
to control holomorphic curves.)
Thus we have reduced the problem to the case where 
$\tilde\ell$, $\tilde\ell'$ and $\tilde\ell''$ all intersect (transversely)
in a single point (near which they are the graphs of the differentials of functions
whose differences have non-degenerate minima). The formula \eqref{eq:actiontriangle} now shows that
any holomorphic disc contributing to the Floer product must have area zero,
i.e.\ the only contribution is from the constant map. By linearization and
reduction to a product setting, the constant disc
is easily checked to be regular and contribute $+1$ to the count (using
the preferred trivializations of the orientation lines at even degree
generators and the sign conventions from \cite[Section 13c]{SeBook}).
\endproof

Next, we consider continuation elements (quasi-units) for the action of the wrapping
Hamiltonian $H$ on monomially admissible Lagrangian sections in
$W^{-1}(-1)\simeq (\C^*)^n$. 
Recall that $H$ is proper and convex by Proposition
\ref{prop:wrapping_ham}; to simplify normalizations, we assume that its
minimum value is zero (otherwise the formula below should be corrected by
a factor of $t^{\tau\min H}$).

\begin{proposition}\label{prop:mon_adm_hf_quasiunit} 
Let $\ell=\Gamma_{dK}$ be a monomially admissible Lagrangian
section in $(\C^*)^n$, and $\ell'=\phi^\tau(\ell)=\Gamma_{d(K+\tau H)}$ its image
under the
time $\tau$ flow of the wrapping Hamiltonian $H$ for $\tau>0$ chosen so that $\tau d(\v)\not\in 2\pi
\Z$ $\forall\v\in\mathcal{V}$.  Then the quasi-unit $e=e_{\ell',\ell}\in
HF^0(\ell',\ell)$ is the generator $e=\vartheta_0$ corresponding to the
minimum of $H$.
\end{proposition}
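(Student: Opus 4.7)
The plan is to identify the quasi-unit $e_{\ell',\ell}$ with $\vartheta_0$ by combining a lifting argument (which pins down which generator $\vartheta_p$ appears) with a localized PSS-type computation (which determines the coefficient).

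Recall from Section \ref{ss:quasiunits} that the chain-level quasi-unit is defined by counting rigid solutions to a Cauchy-Riemann equation on a disc $\Sigma$ with one boundary puncture, considered as output, with moving boundary condition $\Lambda_z = \phi^{\tau\chi(z)}(\ell)$ for a monotone cut-off $\chi \colon \partial\Sigma \to [0,1]$ that equals $1$ far from the puncture (giving $\ell' = \phi^\tau(\ell)$) and $0$ near the puncture (giving $\ell$). Since this isotopy moves only in the direction of negative wrapping, Proposition \ref{prop:maxprinciple_fiber} applies with $\alpha = 0$, confining solutions to a fixed compact subset of $W^{-1}(-1)$; sphere bubbling is excluded by asphericity of $(\C^*)^n$ and disc bubbling by the exactness of monomially admissible sections.

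The next step is a lifting argument determining the output generator. By Proposition \ref{prop:wrapping_ham}, $H$ is a well-defined convex function of the moment coordinates $\xi \in \R^n$, so each Lagrangian $\phi^{\tau s}(\ell)$ lifts canonically to the graph $\Gamma_{d(K + \tau s H)}$ in the universal cover $T^*\R^n \to (\C^*)^n$, and these graphs vary continuously in $s$. Because $\Sigma$ is simply connected with connected boundary, any contributing solution lifts uniquely to $T^*\R^n$ with boundary tracing the canonical lifts of the moving condition. The lifted output therefore lies in $\Gamma_{d(K+\tau H)} \cap \Gamma_{dK}$ over the unique critical point $\xi_0$ of $\tau H$, which under the labeling convention of Proposition \ref{prop:mon_adm_hf} is precisely $\vartheta_0$. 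Thus $e_{\ell',\ell}$ is a scalar multiple of $\vartheta_0$.

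To show the scalar is $1$, I use homotopy invariance of the PSS count under deformations preserving convexity and asymptotic slopes (analogous to the rescaling trick in the proof of Proposition \ref{prop:mon_adm_hf_product}) to replace $\tau H$ by a convex function agreeing with a small positive-definite quadratic form near its minimum. The moduli space then localizes to a neighborhood of the cotangent fiber over $\xi_0$, reducing the problem to a standard PSS calculation in $T^*\R^n$: the constant disc at the minimum is the unique rigid solution and contributes $+1$ under the sign conventions of \cite[Section 13c]{SeBook}. Our normalization of $\vartheta_0$ by the critical value $\tau \min H = 0$ ensures that no power of the Novikov parameter enters the coefficient. The main obstacle lies here: one must interpolate through convex Hamiltonians with the prescribed slopes $(d(\v))_{\v \in \mathcal{V}}$ so that fiberwise monomial admissibility, and hence the maximum principle, holds uniformly throughout the homotopy; this can be arranged using the convexity of the space of convex generating functions with fixed slopes at infinity.
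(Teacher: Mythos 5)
Your first step — lifting the PSS-type disc to the universal cover $T^*\R^n$ to pin down the output as $\vartheta_0$ — is exactly the paper's opening move and is correct. Your second step, however, has a genuine gap: you assert that after replacing $\tau H$ by a convex function that is quadratic near the minimum, ``the moduli space localizes to a neighborhood of the cotangent fiber over $\xi_0$,'' but you give no argument for this. The moving boundary condition $\Gamma_{d(K+\tau s H)}$ is a global family of Lagrangian sections, not a perturbation supported near $\xi_0$, and a priori nothing prevents a solution from being a large non-constant disc whose boundary wanders far from the minimum while its single output marked point still converges to $\vartheta_0$. Localization of this kind in PSS theory normally requires either a $C^2$-smallness hypothesis on the full Hamiltonian (not satisfied here: $\tau H$ has prescribed nonzero slopes at infinity) or an a priori energy bound, neither of which you establish.

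The paper closes this gap differently. Rather than deforming $H$, it chooses the Hamiltonian perturbation 1-form $\alpha$ in the inhomogeneous equation $(du - X_H\otimes\alpha)^{0,1} = 0$ so that $\alpha|_{\partial\Sigma}=\eta$ exactly matches the boundary variation, and then a Stokes computation gives the sharp identity $E_{top}(u) = \tau\,H_{out}$. Since $H_{out}=0$ at $\vartheta_0$ and $0\le E_{geo}\le E_{top}$, every solution has zero energy and is therefore the constant map at the minimum; this kills the localization problem outright, with no deformation needed. Note also that you set $\alpha = 0$ in your compactness remark, so your setup does not have access to this identity. Finally, your sign determination (``contributes $+1$ under the sign conventions'') is asserted rather than argued: the paper instead derives the sign from the multiplicativity relation $e_{\ell'',\ell} = e_{\ell',\ell}\cdot e_{\ell'',\ell'}$, which is a more robust way to fix it, since a direct sign analysis of a constant disc with one boundary output and a moving boundary condition is not immediate from \cite[Section 13c]{SeBook}.
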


\proof
As in Section \ref{ss:quasiunits} (now working in $(\C^*)^n$ rather than in
$Y$), the quasi-unit $e_{\ell',\ell}$ is defined by counting solutions to a
Cauchy-Riemann equation whose domain $\Sigma$ is a disc with a single output boundary puncture,
with moving boundary condition along $\partial \Sigma$ given by the images by $\ell$ under the
flow generated by $H$. Such a disc lifts to the universal cover $T^*\R^n$
as a disc whose output marked point maps to an intersection of the graphs
of $dK$ and $d(K+\tau H)$; it follows that $e$ is a multiple of $\vartheta_0$. 

The count of solutions to the Cauchy-Riemann 
equation is homotopy invariant, so we modify the setting slightly from
Section \ref{ss:quasiunits} in order to make it apparent that the
only contribution is from the constant solution at the point of 
$\ell$ where $H$ reaches its minimum. Denote by $\eta$ the
1-form on $\partial\Sigma$ (vanishing near the puncture) such that the 
variation of the boundary condition 
along $\partial\Sigma$ is induced by the flow of $X_H\otimes \eta$.
Then we consider the perturbed
Cauchy-Riemann equation 
\begin{equation}\label{eq:CRquasiunit}
(du-X_H\otimes \alpha)^{0,1}=0,
\end{equation}
where $\alpha$ is a sub-closed 1-form on $\Sigma$
($d\alpha\leq 0$) which vanishes in the output strip-like end and satisfies
$\alpha_{|\partial\Sigma}=\eta$.

As in \cite[Appendix B]{AbGen}, the geometric energy $$E_{geo}(u)=\int_\Sigma \|du-X_H\otimes \alpha\|^2=
\int_\Sigma u^*\omega-u^*(dH)\wedge \alpha$$ of a solution to
\eqref{eq:CRquasiunit} and the topological energy
$$E_{top}(u)=\int_\Sigma u^*\omega-d(u^*(H)\alpha)=E_{geo}(u)-\int_\Sigma u^*(H)\,d\alpha$$
satisfy $0\leq E_{geo}(u)\leq E_{top}(u)$ (since $H\geq 0$ and
$d\alpha\leq 0$). Denoting by $s$ a coordinate along $\partial\Sigma$ and
by $t(s)$ the function such that the boundary condition at $s$ is given
by $\phi^{t(s)}(\ell)=\Gamma_{K+t(s)H}$ (so $t(s)$ decreases from $\tau$ to
zero along the boundary, and its differential coincides with $\eta$), Stokes' theorem gives
$$E_{top}(u)=\int_{\partial\Sigma} -(u^*(dK)+t(s)u^*(dH))-u^*(H)\,\eta=
\int_{\partial\Sigma} -d(u^*K+t(s)u^*H)=\tau\,H_{out},$$
where $H_{out}$ is the value of $H$ at the output marked point, i.e.\ zero.
Thus any solution has vanishing geometric and topological
energies, i.e.\ it is a constant map at the point where $H$ reaches its
minimum. Moreover, the constant map is regular (using the fact that its
index equals the degree of the output generator, i.e.\ zero, and the
linearized Cauchy-Riemann operator is injective since essentially the same
argument as above shows that the energy of any element of the kernel must
be zero); thus the count of solutions is $\pm 1$. Since
the sign is independent of $\ell$ and $\tau$, it follows from the multiplicativity of
quasi-units ($e_{\ell'',\ell}=e_{\ell',\ell}\cdot e_{\ell'',\ell'}$, see
e.g.\ \cite[Proposition 3.15]{Hanlon})
that the sign is $+1$, and thus $e=\vartheta_0$. 
\endproof

Finally, we consider the Floer theory of admissible sections with Lagrangian tori, which will allow us in the next part to reduce Floer-theoretic computations involving non-compact Lagrangians to computations involving only tori. Given  $x=(x_1,\dots,x_n)\in (\K^*)^n$, we denote by
$\mathfrak{t}_{x}$ the 
Lagrangian torus $\{\xi\}\times T^n$ consisting of those points of $(\C^*)^n$ whose moment map
coordinates satisfy
$\xi_i=-\frac{1}{2\pi}\mathrm{val}(x_i)$ for all $i=1,\dots,n$, equipped with a rank one
unitary local system over $\K$ whose holonomy $y_i$ around the $i$-th
$S^1$ factor satisfies $x_i=t^{-2\pi\xi_i} y_i^{-1}$.
Given a Lagrangian section $\ell=\Gamma_{dK}$, the Floer complex
$CF^*(\ell,\mathfrak{t}_x)$ has rank one, and we denote by $\varepsilon_x$
a suitably rescaled generator: namely, we define $\varepsilon_x$ to be
$t^{K(\xi)}$ times
the element of the local system at the intersection point $(\xi,dK(\xi))$
obtained by parallel transport of a fixed element at $(\xi,0)$ from the
origin to $dK(\xi)$ along $\mathfrak{t}_x$.

\begin{proposition}\label{prop:mon_adm_hf_eval}
Let $\ell=\Gamma_{dK}$ and $\ell'=\Gamma_{dK'}$ be two monomially admissible
Lagrangian sections, whose slopes $\sigma$ and $\sigma'$ do not differ by a
multiple of $2\pi$ and such that $K'-K$ is convex, and let
$\mathfrak{t}_{x}$ be the Lagrangian torus with local system associated
to the point $x\in (\K^*)^n$ as above. For $p\in P(\sigma'-\sigma)\cap (2\pi
\Z)^n$, the Floer product of the generators $\vartheta_p\in
HF^0(\ell',\ell)$ and $\varepsilon_x \in HF^0(\ell, \mathfrak{t}_x)$ is
given by
\begin{equation}
\varepsilon_x\cdot \vartheta_p = x^{\bar{p}}\, \varepsilon'_x,
\end{equation}
where $\bar{p}=p/2\pi\in \Z^n$, $x^{\bar{p}}=\prod x_i^{\bar{p}_i}\in \K^*$, and $\varepsilon'_x$ is the generator of $HF^0(\ell',\mathfrak{t}_x)$
rescaled in the same manner as $\varepsilon_x$.
\end{proposition}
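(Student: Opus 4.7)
The plan is to lift the computation to the universal cover $T^*\R^n$ of $(\C^*)^n$ and identify the triangles contributing to the Floer product. First I would choose lifts $\tilde\ell=\Gamma_{dK}$ and $\tilde{\mathfrak{t}}_x = \{\xi\}\times \R^n$, and lift $\ell'$ as $\tilde\ell' = \Gamma_{d(K'-\langle p,\cdot\rangle)}$, so that (by the convexity of $K'-K-\langle p,\cdot\rangle$ and the argument of Proposition \ref{prop:mon_adm_hf_product}) the unique intersection $\tilde\ell \cap \tilde\ell'$ lies at $(\xi_p, dK(\xi_p))$ and is a lift of $\vartheta_p$. Any holomorphic triangle contributing to $\mu^2(\varepsilon_x,\vartheta_p)$ then lifts uniquely to $T^*\R^n$, with third vertex forced to be the intersection $\tilde\ell'\cap \tilde{\mathfrak{t}}_x$ at $(\xi, dK'(\xi)-p)$, a lift of the $\varepsilon'_x$ basepoint $(\xi, dK'(\xi)\bmod 2\pi)$; so the product is a priori a scalar multiple of $\varepsilon'_x$.

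To identify the scalar, I would next compute the symplectic area of the lifted triangle by Stokes' theorem with the primitive $\lambda=-\theta\,d\xi$: the edges on $\tilde\ell$ and $\tilde\ell'$ contribute $K(\xi_p)-K(\xi)$ and $K'(\xi)-K'(\xi_p)+\langle p,\xi_p-\xi\rangle$ respectively, while the edge on $\tilde{\mathfrak{t}}_x$ contributes zero, for a total of $G(\xi)-G(\xi_p)$ with $G := K'-K-\langle p,\cdot\rangle$. The element of the local system at the output lift $(\xi, dK'(\xi)-p)$, obtained by parallel transport of the fixed $\mathbf{e}_0$ at $(\xi,0)$ along the straight segment in the cover, descends in $\mathfrak{t}_x$ to $y^{-\bar p}\,\mathbf{e}_{dK'(\xi)}$, since the two straight paths from $(\xi,0)$ to the $\varepsilon'_x$ basepoint differ in $\mathfrak{t}_x$ by a loop of class $\bar p\in\Z^n$ with monodromy $y^{\bar p}$. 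Combining this with the action rescalings $t^{K(\xi)}$, $t^{G(\xi_p)}$, $t^{K'(\xi)}$ built into $\varepsilon_x$, $\vartheta_p$, and $\varepsilon'_x$ respectively, the net exponent of $t$ simplifies to $-\langle p,\xi\rangle$; inserting the convention $y_i^{-1}=t^{2\pi\xi_i}x_i$ gives $y^{-\bar p}=t^{\langle p,\xi\rangle}x^{\bar p}$, so the $t$-factors cancel and the coefficient of $\varepsilon'_x$ comes out to $\pm x^{\bar p}$.

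Finally, to fix the count at exactly $+1$, I would mimic the deformation trick from the proof of Proposition \ref{prop:mon_adm_hf_product}: replace $K$ by a function $\hat K$ with the same slope $\sigma$ but satisfying $(K'-\hat K)(\eta) = (K'-K)(\eta+\xi_p-\xi)$, so that after deformation $\hat\xi_p=\xi$ and all three vertices of the lifted triangle coincide at $(\xi, d\hat K(\xi))$. The unique contributing solution is then the constant map, whose regularity and $+1$ orientation follow from the usual linearization argument and the sign conventions of \cite[Section 13c]{SeBook}. I expect the delicate point to be the bookkeeping between the three action factors and the local system holonomy; once the identity $y^{-\bar p}=t^{\langle p,\xi\rangle}x^{\bar p}$ is recognised, the remainder of the argument is essentially identical to that of Proposition \ref{prop:mon_adm_hf_product}.
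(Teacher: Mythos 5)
Your proposal is correct and follows essentially the same route as the paper: lift to $T^*\R^n$, compute the area by Stokes to cancel the action rescalings, identify the holonomy factor as $y^{-\bar p}$, and deform so that all three intersections coincide to reduce to the constant map. The only cosmetic differences are which section you shift by $\langle p,\cdot\rangle$ and whether the final degeneration moves the cotangent fiber $\xi\to\xi_p$ (as in the paper) or modifies $K$ as in Proposition \ref{prop:mon_adm_hf_product} (as you do); note also a small sign-bookkeeping looseness in your holonomy sentence (a loop ``of class $\bar p$ with monodromy $y^{\bar p}$'' yielding $y^{-\bar p}$), which does not affect the conclusion since it is only the direction of the comparison loop.
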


\proof The argument is similar to the proof of Proposition \ref{prop:mon_adm_hf_product}.
We lift $\ell$ and $\ell'$ to $T^*\R^n$ by considering the graphs $\tilde\ell$ and
$\tilde\ell'$ of $d(K+\langle p,\cdot\rangle)$ and $dK'$, which intersect
at a lift of $\vartheta_p$, and lift $\mathfrak{t}_x$ to the cotangent fiber
at $\xi=-\frac{1}{2\pi}\mathrm{val}(x)$. Any holomorphic disc contributing to the
Floer product of $\vartheta_p$ and $\varepsilon_x$ lifts to $T^*\R^n$, and
its symplectic area can be calculated by integrating $d(K'-K-\langle
p,\cdot\rangle)$ from $\xi_p$ to $\xi$, where $\xi_p$ is the
critical point of $K'-K-\langle p,\cdot\rangle$, which gives $$(K'(\xi)-K(\xi)-\langle p,\xi\rangle)-
(K'(\xi_p)-K(\xi_p)-\langle p,\xi_p\rangle).$$ The contribution to the Floer
product also involves a holonomy factor, given by the ratio between
the parallel transport of $\varepsilon_x$ along
$\mathfrak{t}_x$ from $(\xi,dK(\xi)+p)$ to $(\xi,dK'(\xi))$ and $\varepsilon'_x$.
Given the above choices of normalizations of the generators $\vartheta_p$,
$\varepsilon_x$, and $\varepsilon'_x$, we find that the contribution of each holomorphic
disc to the coefficient of $\varepsilon'_x$ in the product of
$\varepsilon_x$ and $\vartheta_p$ is, up to sign,
$t^{-\langle p,\xi\rangle}$ times the holonomy of $\mathfrak{t}_x$ along
a closed loop whose lift to the universal cover runs from
$(\xi,dK(\xi)+p)$ to $(\xi,dK(\xi))$. This loop represents the homotopy class
$-\bar{p}\in \Z^n\simeq \pi_1(T^n)$; hence, the holonomy can be expressed
as $y^{-\bar{p}}$, and one ends up with
$$t^{-\langle p,\xi\rangle}\,y^{-\bar{p}}=x^{\bar{p}}.$$
It only remains to show that the signed count of holomorphic discs
contributing to the Floer product of $\varepsilon_x$ and $\vartheta_p$ is
$+1$. Since this count is invariant under deformations, it does not depend
on the value of $\xi$ (the position of the cotangent fiber), and it suffices
to determine it for a particular
value of $\xi$. We take $\xi=\xi_p$, when all three
intersection points coincide and the only contribution is from the constant
map, which is regular and contributes $+1$.
\endproof


\subsection{Floer products on $HF^*(L_0(t'),L_0(t))$} \label{ss:CFL0product}

We now return to our main topic, namely the
calculation of the Floer cohomology $HF^*(L_0(t'),L_0(t))$ for
\hbox{$t'>t$} and its product operations. As seen in Example
\ref{ex:slopes_L0}, the slopes of the monomially
admissible Lagrangian sections $\ell_0(t'),\ell_0(t)\subset W^{-1}(-1)$ and
$\ell_-(t'),\ell_+(t)\subset W^{-1}(c_{t',t})$ (for $t'-t>t_0$) are given by
\eqref{eq:slope_l0}--\eqref{eq:slope_l+}.

\begin{definition}
For $\tau>0$, we define
\begin{equation}\label{eq:slopes_diff}
\sigma_0(\tau)=(\tau\,d(\v))_{\v\in \mathcal{V}}
\quad \text{and}\quad \sigma_1(\tau)=(\tau\,d(\v)-2\pi v^0)_{\v\in \mathcal{V}}
\end{equation}
and denote by $P_0(\tau),P_1(\tau)$ the corresponding polytopes defined by
\eqref{eq:mon_adm_polytope}.
\end{definition}

Since $H$ is convex by Proposition \ref{prop:wrapping_ham},
the results of Section \ref{ss:fiberwise-floer} apply to the pair
$(\ell_0(t'),\ell_0(t))$ whenever $t'-t>0$.  However, because the clockwise monodromy
of $W:Y\to\C$ does not act by a convex Hamiltonian, there
is no similar guarantee for the pair $(\ell_-(t'),\ell_+(t))$; nonetheless,
$\sigma_1(\tau)$ is the slope of a convex Hamiltonian for
$\tau=t'-t$ sufficiently large (larger than some constant $t_1\geq t_0$),
so Propositions \ref{prop:mon_adm_hf}--\ref{prop:mon_adm_hf_eval} apply
to the Floer cohomology $HF^*(\ell_-(t'),\ell_+(t))$ whenever $t'-t>t_1$.

\begin{proposition}\label{prop:CFL0explicit}
For $\tau=t'-t\in (0,t_0)\cap U$, the Floer complex $CF^*(L_0(t'),L_0(t))$ is concentrated
in degree zero, the Floer differential vanishes, and 
\begin{equation}\label{eq:CFL0_explicit_simple}
HF^0(L_0(t'),L_0(t))\simeq HF^0(\ell_0(t'),\ell_0(t))\simeq \bigoplus_{p\in
P_0(t'-t)\cap (2\pi \Z)^n} \K\cdot \vartheta_{p}^{t'\to t},
\end{equation}
where the generators $\vartheta_{p}^{t'\to t}$ correspond to the intersections of
$\ell_0(t')$ and $\ell_0(t)$ inside $W^{-1}(-1)$, rescaled by action
as explained in Section \ref{ss:fiberwise-floer}.

For $\tau=t'-t\in (t_1,\infty)\cap U$, the Floer cohomology
$HF^*(L_0(t'),L_0(t))$ is isomorphic to the cohomology of the complex
\begin{multline}\label{eq:CFL0_explicit}
\Bigl\{
HF^0(\ell_-(t'),\ell_+(t))\stackrel{s}{\longrightarrow}
HF^0(\ell_0(t'),\ell_0(t))\Bigr\}\simeq\\
\Biggl\{
\bigoplus_{p\in P_1(t'-t)\cap (2\pi \Z)^n} \!\!\K\cdot \zeta_p^{t'\to t}\,\,
\stackrel{s}{\longrightarrow}
\bigoplus_{p\in P_0(t'-t)\cap (2\pi \Z)^n} \!\!\K\cdot \vartheta_p^{t'\to t}
\Biggr\}
\end{multline}
where the generators $\zeta_p^{t'\to t}$ $($in degree $-1)$ and
$\vartheta_p^{t'\to t}$
(in degree zero) correspond to intersections of $\ell_-(t')$ and $\ell_+(t)$
inside $W^{-1}(c_{t',t})$ and to intersections of $\ell_0(t')$ and
$\ell_0(t)$ inside $W^{-1}(-1)$, rescaled by action within the fibers of \,$W$;
and $s=s^0_{\ell_0,t',t}$ is defined by a weighted count of $J$-holomorphic
sections of\, $W:Y\to\C$ over the bounded region of the complex plane
delimited by $\gamma_t$ and $\gamma_{t'}$.
\end{proposition}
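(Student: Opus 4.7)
The plan is to combine Proposition \ref{prop:CF_is_cone}, which presents $CF^*(L_0(t'),L_0(t))$ as a mapping cone of fiberwise Floer complexes over the two intersection points of $\gamma_t$ and $\gamma_{t'}$, with the explicit computation of Floer cohomology for monomially admissible Lagrangian sections from Proposition \ref{prop:mon_adm_hf} and the slope formulas of Example \ref{ex:slopes_L0}.

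First, when $\tau = t'-t \in (0, t_0) \cap U$, the arcs $\gamma_t$ and $\gamma_{t'}$ meet only at $-1$, so by the setup of Section \ref{ss:CFL0prelim} the summand $C_1(t',t)$ vanishes and Proposition \ref{prop:CF_is_cone} reduces the computation to $CF^*(\ell_0(t'), \ell_0(t))$ in $W^{-1}(-1)$. Writing $\ell_0(t) = \Gamma_{d(tH)}$ and $\ell_0(t') = \Gamma_{d(t'H)}$, I take $K = tH$ and $K' = t'H$, so that $K' - K = \tau H$ is convex by Proposition \ref{prop:wrapping_ham}. The slope difference is $\sigma_0(\tau)$ by Example \ref{ex:slopes_L0}, so Proposition \ref{prop:mon_adm_hf} yields the concentration in degree zero and the basis indexed by $P_0(\tau) \cap (2\pi\Z)^n$, giving \eqref{eq:CFL0_explicit_simple}. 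For the second case, when $\tau > t_1$, Proposition \ref{prop:CF_is_cone} directly provides the mapping cone \eqref{eq:CF_is_cone} with off-diagonal chain map $s = s^0_{\ell_0,t',t}$ defined by the section count. The $C_0$-summand is identified as before, while for the $C_1$-summand Example \ref{ex:slopes_L0} gives slope difference $\sigma_-(t') - \sigma_+(t) = \sigma_1(\tau)$. I then need to apply Proposition \ref{prop:mon_adm_hf} to identify $HF^*(\ell_-(t'), \ell_+(t))$ with the sum over $P_1(\tau) \cap (2\pi\Z)^n$; the degree shift by $-1$ in \eqref{eq:CFL0_explicit} relative to fiberwise degree is then forced by the convention of Section \ref{ss:CFL0prelim}.

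The main delicate point is the passage to a convex representative for the pair $(\ell_-(t'), \ell_+(t))$: the clockwise monodromy of $W$ about the origin does not act by a convex Hamiltonian on fiberwise sections, so the Floer pair is not a priori in convex position as required by the second part of Proposition \ref{prop:mon_adm_hf}. The threshold $t_1 \ge t_0$ is chosen precisely so that for $\tau > t_1$, the rescaled slope $\sigma_1(\tau)/\tau = (d(\v) - 2\pi v^0/\tau)_\v$ is close enough to the convex slope $(d(\v))_\v$ of $H$ that the polytope $P_1(\tau)$ defined by \eqref{eq:mon_adm_polytope} is non-degenerate with the same normal fan as $P$; this is exactly the condition (cf.\ Lemma \ref{l:mon_adm_polytope}) for $\sigma_1(\tau)$ to be realized as the slope of a convex function. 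Using the invariance part of Proposition \ref{prop:mon_adm_hf}, I may then deform $\ell_-(t')$ and $\ell_+(t)$ within their monomially admissible Hamiltonian isotopy classes (preserving slopes, and hence preserving Floer cohomology and compatibility with $s$) so that their generating functions have convex difference, at which point Proposition \ref{prop:mon_adm_hf} supplies the stated basis indexed by $P_1(\tau) \cap (2\pi\Z)^n$ in degree zero.
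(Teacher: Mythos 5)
Your proposal is correct and takes essentially the same route as the paper: the paper's proof is a one-liner citing Propositions \ref{prop:CF_is_cone} and \ref{prop:mon_adm_hf}, and the convexity caveat about $(\ell_-(t'),\ell_+(t))$ and the role of $t_1$ that you flag as the delicate point is exactly what the paper addresses in the prose immediately preceding the proposition. Your unpacking of the slopes via Example \ref{ex:slopes_L0} and the deformation to a convex representative also matches item~(1) of the remark following the proposition.
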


\proof This follows immediately from Propositions \ref{prop:CF_is_cone} and
\ref{prop:mon_adm_hf}. \endproof

\begin{remark}
There are two ways to understand the complex \eqref{eq:CFL0_explicit}  and its relation to the Floer complex $CF^*(L_0(t'),L_0(t))$
for $t'-t>t_1$.  

(1)
Perturbing $L_0(t')$ or $L_0(t)$ by
an admissible Hamiltonian isotopy
(preserving the fibers of $W$, and preserving fiberwise monomial
admissibility) if necessary, we can assume that (suitably perturbed versions
of) the monomially admissible Lagrangian sections $\ell_-(t')$ and $\ell_+(t)$ 
differ by a convex Hamiltonian. After such a perturbation, both of the Floer complexes
$CF^*(\ell_-(t'),\ell_+(t))$ and $CF^*(\ell_0(t'),\ell_0(t))$ are 
concentrated in degree $0$ and their differentials vanish, so that
$CF^*(L_0(t'),L_0(t))$ is given by \eqref{eq:CFL0_explicit}.

(2) Alternatively, consider the filtration
$0\subset CF^*(\ell_0(t'),\ell_0(t))\subset
CF^*(L_0(t'),L_0(t))$, which is compatible with the Floer differential
and products, 
as any holomorphic disc contributes in a manner that
decreases the filtration index by 
its intersection number with the fibers of $W$ near the origin.%
\footnote{Reinterpreting Floer generators as Hamiltonian chords
on $L_0$, their filtration index is their intersection number 
with the preimage under $W$ of the real positive axis, making this an
instance of the filtration associated to a stop (and its removal) in partially
wrapped Floer theory \cite{Sylvan}.}
This filtration gives rise to a spectral
sequence computing $HF^*(L_0(t'),L_0(t))$, in which the second page 
(after taking the cohomology of the portion of the differential which
preserves the filtration index, i.e.\ the contributions of holomorphic discs contained
in the fibers of $W$)
is precisely \eqref{eq:CFL0_explicit}.
\end{remark}

\begin{definition}
We call the complex \eqref{eq:CFL0_explicit} $($or
\eqref{eq:CFL0_explicit_simple} for $t'-t\in (0,t_0))$ the {\em vertical Floer
complex} of $L_0(t')$ and $L_0(t)$, and denote it by
$CF^*_{vert}(L_0(t'),L_0(t))$.
\end{definition}

The vertical Floer complex carries a Floer product operation 
\begin{equation}\label{eq:HFvert_product}
CF_{vert}^*(L_0(t'),L_0(t))\otimes CF^*_{vert}(L_0(t''),L_0(t'))\to
CF^*_{vert}(L_0(t''),L_0(t))
\end{equation}
for $t''>t'>t$; this can
be understood either as the chain-level product $\mu^2$ after suitable
fiberwise perturbations, or as an induced product on the second page of
the spectral sequence computing the Floer cohomology (using the fact that
the product operation is compatible with the filtration). It follows from
the algebraic properties of the Floer product that this operation is
associative and satisfies the Leibniz rule with respect to the
section-counting differential $s$.

\begin{proposition}\label{prop:CFL0_product} 
Assume $t''>t'>t$, and $t'-t,t''-t,t''-t'\in U$, and label the generators
as in Proposition \ref{prop:CFL0explicit}. Then
the Floer product \eqref{eq:HFvert_product} is given by:
\begin{itemize}
\item for $p\in P_0(t'-t)\cap (2\pi\Z)^n$ and $p'\in P_0(t''-t')\cap
(2\pi\Z)^n$, $$\vartheta_p^{t'\to t}\cdot \vartheta_{p'}^{t''\to
t'}=\vartheta_{p+p'}^{t''\to t}\in HF^0(\ell_0(t''),\ell_0(t));$$
\item when $t'-t>t_1$, for $p\in P_1(t'-t)\cap (2\pi\Z)^n$ and $p'\in
P_0(t''-t')\cap(2\pi\Z)^n$,
$$\zeta_p^{t'\to t}\cdot \vartheta_{p'}^{t''\to t'}=C_{t''\to t',t}\,\zeta_{p+p'}^{t''\to t}\in HF^0(\ell_-(t''),\ell_+(t)),$$
where $C_{t''\to t',t}$ is a nonzero constant (independent of $p$ and $p'$);
\smallskip
\item when $t''-t'>t_1$, for $p\in P_0(t'-t)\cap (2\pi\Z)^n$ and $p'\in
P_1(t''-t')\cap(2\pi\Z)^n$,
$$\vartheta_p^{t'\to t}\cdot \zeta_{p'}^{t''\to t'}=C_{t'',t'\to t}\,\zeta_{p+p'}^{t''\to t}\in
HF^0(\ell_-(t''),\ell_+(t)),$$
where $C_{t'',t'\to t}$ is a nonzero constant (independent of $p$ and $p'$);
\smallskip
\item when $t'-t>t_1$ and $t''-t'>t_1$, for all $p$ and $p'$, $\zeta_p^{t'\to
t}\cdot \zeta_{p'}^{t''\to t'}=0$.
\end{itemize}
\end{proposition}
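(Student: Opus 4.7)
The four cases can be analyzed uniformly by projecting contributing $J$-holomorphic triangles $u: T^2 \to Y$ to $\C$ via $W$, using the open mapping principle (applicable since $J$ agrees with the standard complex structure outside $Y^{in}\cap \{|W|<\epsilon\}$). The projection $W\circ u$ is a holomorphic triangle in $\C$ with boundary on the arcs $\gamma_t, \gamma_{t'}, \gamma_{t''}$ and corners at the projections of the specified input/output intersection points: either $-1$ (for $\vartheta$ generators) or the secondary intersections $c_{t',t}, c_{t'',t'}, c_{t'',t}$ (for $\zeta$ generators). This identifies a ``base triangle'' in $\C$ for each case, and the count of triangles in $Y$ factors into a count of section-like discs over the base, together with the fiberwise Floer product supplied by Proposition \ref{prop:mon_adm_hf_product}. \textbf{Case~4} is immediate from grading: $CF^*_{vert}$ is concentrated in degrees $-1$ and $0$, while $\mu^2$ has degree $0$, so a product of two $\zeta$ classes has total input degree $-2$ and no target in the complex.

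For \textbf{Case~1}, all three corners of the base triangle lie at $-1$. After an arbitrarily small admissible perturbation, the arcs $\gamma_t, \gamma_{t'}, \gamma_{t''}$ have pairwise distinct tangent directions at $-1$ and so locally behave like three lines through a common point; three such lines divide a neighborhood into six sectors but enclose no non-degenerate region with all vertices at the common point. Hence $W\circ u$ must be the constant map at $-1$, so $u$ lies entirely in the fiber $W^{-1}(-1)\cong(\C^*)^n$, and the product reduces to the fiberwise Floer product $\vartheta_p\cdot\vartheta_{p'}=\vartheta_{p+p'}$ established in Proposition \ref{prop:mon_adm_hf_product}.

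For \textbf{Cases~2 and 3}, the base triangle is non-degenerate: it has one corner at $-1$ and its other two corners at the two relevant secondary intersections, bounded by definite segments of the three arcs. Contributing triangles in $Y$ are section-like with respect to $W$ over this base region; their fiberwise behavior over each interior point, understood as a family of fiberwise Floer triangles between parallel transports of $\ell_0(t), \ell_0(t'), \ell_0(t'')$, is controlled by Proposition \ref{prop:mon_adm_hf_product} and yields the output lattice point $p+p'$, giving $\zeta_{p+p'}^{t''\to t}$. The overall coefficient $C_{t''\to t',t}$ (respectively $C_{t'',t'\to t}$) is then the signed, Novikov-weighted count of section-like discs over the base, weighted by the exponential of its symplectic area; by construction this count depends only on the base triangle and the ambient symplectic geometry, not on the fiberwise lattice indices $p,p'$.

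The main obstacle will be verifying nonvanishing (and $(p,p')$-independence) of the constants $C_{t''\to t',t}$ and $C_{t'',t'\to t}$. The approach I would take is a deformation argument: admissibly deform the arcs $\gamma_t, \gamma_{t'}, \gamma_{t''}$ through a one-parameter family in which the base triangle shrinks to a point. In the limit the section moduli space degenerates to the moduli space of fiberwise Floer triangles already analyzed in Proposition \ref{prop:mon_adm_hf_product}, which is regular, rigid, and has signed count $+1$; invariance of section counts under such deformations (a standard feature of the Seidel TQFT for $W:Y\to\C$) implies the original section count is $\pm 1$, and the action-rescaling conventions of Section \ref{ss:fiberwise-floer} absorb the Novikov exponentials, so the constants $C$ are nonzero elements of $\K^*$. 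The same deformation decouples the section count from the fiberwise generators, yielding the $(p,p')$-independence.
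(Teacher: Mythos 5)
Your proposal follows the same overall approach as the paper's proof: projecting to $\C$ via $W$, invoking the open mapping principle and degree constraints to restrict the geometry of contributing discs, handling Case~1 by the collapse of the base triangle at $-1$, handling Case~4 by a degree count, and using a deformation shrinking the base triangle to reduce Cases~2 and~3 to the fiberwise product of Proposition \ref{prop:mon_adm_hf_product}. That much is fine.

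The gap is in the final step, where you assert that the scaling constant ``depends only on the base triangle and the ambient symplectic geometry, not on the fiberwise lattice indices $p, p'$'' and later that ``the same deformation decouples the section count from the fiberwise generators, yielding the $(p,p')$-independence.'' This is precisely the nontrivial point, and the assertion does not follow from the deformation argument alone. The sections contributing to $\zeta_p^{t'\to t}\cdot\vartheta_{p'}^{t''\to t'}$ lie in different relative homology classes for different $(p,p')$, and their symplectic areas are not constant in $(p,p')$: they vary with the positions of the Floer intersection points in the fibers over the three corners of $\mathcal{T}_{t''\to t',t}$. Equivalently, the continuation maps induced by your shrinking deformation rescale the generators $\zeta_p^{t'\to t}$, $\vartheta_{p'}^{t''\to t'}$, $\zeta_{p+p'}^{t''\to t}$ by $p$- and $p'$-dependent powers of the Novikov parameter, and you need to show these powers conspire so that the ratio $\kappa_p\kappa_{p'}/\kappa_{p+p'}$ is independent of $(p,p')$. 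The paper verifies this explicitly by decomposing the relative homology class of a contributing section into (i) a reference ``zero section'' over $\mathcal{T}_{t''\to t',t}$ whose area $A_{t''\to t',t}$ is manifestly $(p,p')$-independent, (ii) boundary chains over the edges of zero symplectic area, and (iii) chains over the three vertices whose areas equal exactly the fiberwise actions of the generators, which cancel against the action-rescaling convention. Without that (or an equivalent) computation, the claim that the constants $C$ are independent of $p$ and $p'$ remains unjustified.
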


\proof
Since the projection $W:Y\to\C$ is holomorphic away from a
neighborhood of the zero fiber, it follows from the open mapping principle
and from degree constraints
that all the holomorphic discs contributing to the Floer product are either
contained in the fiber $W^{-1}(-1)$ or sections over a triangular region
of the complex plane delimited by the arcs $\gamma_{t''}$, $\gamma_{t'}$ and
$\gamma_t$ (see Figure \ref{fig:L0}). 

When both inputs lie in $W^{-1}(-1)$, the output must also lie in
$W^{-1}(-1)$ for degree reasons, and the only contributions come from
discs contained inside $W^{-1}(-1)$. Given the relative positions of
the tangent lines to $\gamma_{t''}$, $\gamma_{t'}$ and $\gamma_t$ at $-1$,
the base of the fibration $W:Y\to\C$ doesn't contribute anything to the index of
the Cauchy-Riemann operator, so the product operation agrees with the
product on the Floer complexes of the monomially admissible sections
$\ell_0(t'')$, $\ell_0(t')$ and $\ell_0(t)$ within $W^{-1}(-1)\simeq (\C^*)^n$. Hence,
using the same normalization of the generators as in Section
\ref{ss:fiberwise-floer}, it follows from Proposition \ref{prop:mon_adm_hf_product} 
that $\vartheta_p^{t'\to t}\cdot \vartheta_{p'}^{t''\to t'}=\vartheta_{p+p'}^{t''\to t}$.

Next we consider the case where one input lies in $W^{-1}(c_{t',t})$ (with
$t'-t>t_1$) and the other one is in $W^{-1}(-1)$. The output then
necessarily lies in $W^{-1}(c_{t'',t})$ for degree reasons, and the
contributions to the Floer product come from holomorphic sections over the
triangle $\mathcal{T}_{t''\to t',t}$ delimited by $\gamma_{t''}$, $\gamma_{t'}$ and $\gamma_t$ with
vertices at $-1$, $c_{t',t}$, and $c_{t'',t}$. Since we are considering
cohomology-level operations on the fiberwise Floer complexes, the count
we consider is homotopy invariant under deformations; it is in fact one
of the operations of the cohomology-level ``Seidel TQFT'' \cite{SeBook} associated to the
fibration $W:Y\to\C$ (in a fairly simple case,
since the region over which we count sections
does not contain the critical value $0$). Thus, we can simplify the
counting problem either by trivializing the fibration and deforming the symplectic and complex structures
to product ones over $\mathcal{T}_{t''\to t',t}$, or more simply, by
deforming the arc $\gamma_t$ (without crossing the origin) by a
compactly-supported isotopy in order to bring the intersection points
$c_{t',t}$ and $c_{t'',t}$ to $-1$ and shrink the triangular region
$\mathcal{T}_{t''\to t',t}$ to a single point. 
After this deformation, we are once again reduced to a calculation of the
Floer product for the admissible Lagrangian sections within a fiber of
$W$, as the horizontal direction does not contribute to the index of the
Cauchy-Riemann operator. Since the slopes of the relevant admissible 
Lagrangian sections differ by $\sigma_1(t'-t)$ at one input and by
$\sigma_0(t''-t')$ at the other, it
follows again from Proposition \ref{prop:mon_adm_hf_product} that, for
all $p\in P_1(t'-t)\cap (2\pi \Z)^n$ and $p'\in P_0(t''-t')\cap (2\pi\Z)^n$, 
the product of $\zeta_p^{t'\to t}$ and $\vartheta_{p'}^{t''\to t'}$ is equal to 
$\zeta_{p+p'}^{t''\to t}$ up to a scaling factor (some power of the Novikov
parameter) coming from the amount of
symplectic area swept in the deformation to a single fiber.

Next we show that, when all the generators are normalized by action within the
fibers of $W$, the coefficient of $\zeta_{p+p'}^{t''\to t}$ in the product
of  $\zeta_p^{t'\to t}$ and $\vartheta_{p'}^{t''\to t'}$ depends only on $t'',t',t$ but not on $p$ and $p'$.
Let $K_c:\R^n\to\R$ (resp.\ $K'_c,K''_c$) 
be such that the intersection of $L_0(t)$ (resp.\ $L_0(t'),L_0(t'')$)
with $W^{-1}(c)$ is the graph of $dK_c$ (resp.\ $dK'_c,dK''_c$) for each $c\in \gamma_t$ (resp.\ $\gamma_{t'},\gamma_{t''}$). Normalizing
$K_c,K'_c,K''_c$ suitably, we can ensure that they vanish at $\xi=0$, and that 
a holomorphic section $u$ of $W\co Y\to\C$ over $\mathcal{T}_{t''\to t',t}$
which contributes to 
the product of $\zeta_p^{t'\to t}$ and $\vartheta_{p'}^{t''\to t'}$ 
lifts to the universal cover of $W^{-1}(\mathcal{T}_{t''\to t',t})$ as
a section with boundary values on the graphs of $dK_c+p$, $dK'_c$, and
$dK''_c-p'$ for each $c\in \partial \mathcal{T}_{t''\to t',t}$.
With this understood, the holomorphic section $u$ represents the same relative homology class as the chain obtained by adding together:
\begin{enumerate}
\item the ``zero section'' of $W$ over $\mathcal{T}_{t''\to t',t}$, 
consisting of the points with moment map coordinates $\xi=0$
and angular coordinates $\theta_i=\arg(z_i)=0$ in each fiber;\smallskip
\item over each edge of $\mathcal{T}_{t''\to t',t}$, a path in each fiber
$W^{-1}(c)$, $c\in \partial \mathcal{T}_{t''\to t',t}$, connecting the
zero section to the boundary value $u(c)$ of the holomorphic section $u$ 
by running first along $\xi=0$ from the origin to $dK_c(0)+p$, $dK'_c(0)$, or
$dK''_c(0)-p'$, and then
along the graph of $dK_c+p$, $dK'_c$, or $dK''_c-p'$ from $\xi=0$ in a 
straight line to the $\xi$-coordinate of $u(c)$;\smallskip
\item over each vertex of $\mathcal{T}_{t''\to t',t}$, a chain in
$W^{-1}(c)$ ($c\in \{-1,c_{t',t}$, $c_{t'',t}\}$) which lies over a
straight line path from $\xi=0$ to the $\xi$-coordinate of $u(c)$,
and for each $\xi$-value
runs in a straight line from $dK'_c(\xi)$ to $dK''_c(\xi)-p'$ (for $c=-1$), resp.\
$dK_c(\xi)+p$ to $dK'_c(\xi)$ ($c=c_{t',t}$), resp.\ $dK_c(\xi)+p$ to $dK''_c(\xi)-p'$
($c=c_{t'',t}$).
\end{enumerate}

Denote by $A_{t''\to t',t}$ the symplectic area of the first part of our chain 
(the ``zero section''), which manifestly does not depend on $p$ and $p'$.
The second portion of our chain (over the edges of $\mathcal{T}_{t''\to
t',t}$) runs partly along the Lagrangians obtained by
parallel transport of the torus $\{\xi=0\}$ over $\gamma_t,\gamma_{t'},\gamma_{t''}$, 
and partly along the Lagrangians $L_0(t),L_0(t'),L_0(t'')$, so its
symplectic area vanishes. Finally, the third piece (over the vertices)
contributes at each vertex an area equal to the fiberwise action of the 
corresponding Floer generator, given that we have normalized the Hamiltonians
$K_c,K'_c,K''_c$ so that they vanish at $\xi=0$.
For instance, the portion
which lies in $W^{-1}(c_{t',t})$, over the path from $0$ to $\xi=\xi_p$ and
between the graphs of $dK_c+p$ and $dK'_c$, has symplectic area given by
the integral of $dK'_c-dK_c-p$ from zero to $\xi_p$, i.e.\ 
$(K'_c(\xi_p)-K_c(\xi_p)-\langle p,\xi_p\rangle)-(K'_c(0)-K_c(0))$,
which coincides with the fiberwise action for the generator $\zeta_p^{t'\to
t}$ within $W^{-1}(c_{t',t})$ since the last term vanishes. 
Similarly at the two other vertices. Because a rescaling by action is built
into the definition of our Floer generators, this implies that the coefficient of
$\zeta^{t''\to t}_{p+p'}$ in the product $\zeta_p^{t'\to t}\cdot
\vartheta_{p'}^{t''\to t'}$ is $C_{t''\to t',t}=t^{A_{t''\to t',t}}$.

The case of the product $\vartheta_p^{t'\to t}\cdot \zeta_{p'}^{t''\to t'}$
is handled by exactly the same argument, deforming the problem from a count
of sections over a triangular region of the complex plane to a fiberwise
Floer product and appealing to Proposition \ref{prop:mon_adm_hf_product}.
Finally, the product of two degree $-1$ generators vanishes for degree reasons.
\endproof

For $x=(x_1,\dots,x_n)\in (\K^*)^n$ and $t\in \R$, we denote by $T_x(t)$
the admissible Lagrangian with local system obtained by parallel
transport over the arc $\gamma_t$ of the Lagrangian torus with local system 
$\mathfrak{t}_x$ introduced in Section \ref{ss:fiberwise-floer}.
To be more specific, we fix a $T^n$-equivariant structure on the local
system of $\mathfrak{t}_x$, i.e.\ a family of isomorphisms between the local
system and its pullbacks under rotations by elements of $T^n$. (This can be
done e.g.\ by thinking of the local system as a trivial complex
line bundle equipped with a translation-invariant connection.) 
With this understood, $\mathfrak{t}_x$ is invariant under both parallel
transport between the fibers of $W$ and the action of the
wrapping Hamiltonian, and the restriction of $T_x(t)$ to
the fiber of $W$ over any point of $\gamma_t$ can be identified
(as a Lagrangian submanifold with local system) with $\mathfrak{t}_x$.

For $t'-t>t_0$, $L_0(t')$ and
$T_x(t)$ intersect transversely once in $W^{-1}(-1)$ and once
in $W^{-1}(c_{t',t})$; we denote by $\varepsilon_x^{t'\to t}\in
HF^0(\ell_0(t'),\mathfrak{t}_x)$ and $\eta_x^{t'\to t}\in
HF^0(\ell_-(t'),\mathfrak{t}_x)$ the corresponding Floer generators,
rescaled by action as in Section \ref{ss:fiberwise-floer}. We now consider
the Floer product
\begin{equation}\label{eq:HFvert_eval}
CF_{vert}^*(L_0(t'),T_x(t))\otimes CF_{vert}^*(L_0(t''),L_0(t')) \to
CF_{vert}^*(L_0(t''),T_x(t)).
\end{equation}

\begin{proposition}\label{prop:CFL0_eval}
For $t'-t>t_0$, $CF^*_{vert}(L_0(t'),T_x(t))=CF^*(L_0(t'),T_x(t))$ is given by
\begin{equation}\label{eq:CFL0_eval}
\Bigl\{\K\cdot \eta_x^{t'\to t} \stackrel{s_x}{\longrightarrow}
\K\cdot \varepsilon_x^{t'\to t}\Bigr\},
\end{equation}
where the generators $\eta_x^{t'\to t}$ $($in degree $-1)$ and 
$\varepsilon_x^{t'\to t}$ (in degree zero) correspond to intersections of
$\ell_-(t')$ and $\ell_0(t')$ with $\mathfrak{t}_x$ inside $W^{-1}(c_{t',t})$
and $W^{-1}(-1)$ respectively, rescaled by action, and $s_x$ is defined by
a weighted count of $J$-holomorphic sections of $W:Y\to\C$ over the bounded
region of the complex plane delimited by $\gamma_t$ and $\gamma_{t'}$.

Moreover, given $t''>t'>t$ with $t'-t>t_0$, the Floer product
\eqref{eq:HFvert_eval} is given by:

\begin{itemize}
\item for $p=2\pi \bar{p}\in P_0(t''-t')\cap (2\pi \Z)^n$,
\begin{eqnarray*}
\varepsilon_x^{t'\to t}\cdot \vartheta_p^{t''\to t'}
&=& x^{\bar{p}}\,\varepsilon_x^{t''\to t}\in HF^0(\ell_0(t''),\mathfrak{t}_x)
\quad \text{and}\\
\eta_x^{t'\to t}\cdot \vartheta_p^{t''\to t'}
&=& C_{\xi;t''\to t',t}\,x^{\bar{p}}\,\eta_x^{t''\to t}\in HF^0(\ell_-(t''),\mathfrak{t}_x);
\end{eqnarray*}
\item if moreover $t''-t'>t_1$, then for $p=2\pi \bar{p}\in P_1(t''-t')\cap
(2\pi \Z)^n$,
$$\varepsilon_x^{t'\to t}\cdot \zeta_p^{t''\to t'}\ =\ 
C_{\xi;t'',t'\to t}\,x^{\bar{p}}\,\eta_x^{t''\to t}\in HF^0(\ell_-(t''),\mathfrak{t}_x)$$
and $\eta_x^{t'\to t}\cdot \zeta_p^{t''\to t'}=0$.
\end{itemize}
Here $C_{\xi;t''\to t',t}$ and $C_{\xi;t'',t'\to t}$ are non-zero constants
which depend on $t'',t',t$ and possibly on $\xi=-\frac{1}{2\pi}\mathrm{val}(x)$ but not 
on $p$.
\end{proposition}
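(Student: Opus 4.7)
The plan for the first assertion is to apply Proposition \ref{prop:CF_is_cone} verbatim to the pair $(L_0(t'),T_x(t))$, since the torus $\mathfrak{t}_x$ is invariant under both parallel transport in the fibers of $W$ and under the wrapping flow (by the chosen $T^n$-equivariant structure on its local system), so the setup of Section \ref{ss:CFL0prelim} carries over. The fiberwise Floer complex $CF^*(\ell_0(t'),\mathfrak{t}_x)$ is rank one, concentrated in degree $0$ on the generator $\varepsilon_x^{t'\to t}$ arising from the unique transverse intersection in $W^{-1}(-1)$; similarly $CF^*(\ell_-(t'),\mathfrak{t}_x)$ is rank one on $\eta_x^{t'\to t}$ in $W^{-1}(c_{t',t})$. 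Since these two fibers account for all intersections of $L_0(t')$ with $T_x(t)$, the vertical complex coincides with the full Floer complex, and Proposition \ref{prop:CF_is_cone} identifies the off-diagonal differential $s_x$ with a count of $J$-holomorphic sections over the lens bounded by $\gamma_t$ and $\gamma_{t'}$.

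For the four product formulas, the plan is to parallel the proof of Proposition \ref{prop:CFL0_product}. First, the open mapping principle for $W\circ u$, combined with the asymptotic and boundary constraints imposed by the fibered structure of $L_0(t)$, $L_0(t')$, $L_0(t'')$ and $T_x(t)$, restricts a $J$-holomorphic disc contributing to $\mu^2$ to one of two types: either it lies entirely in a single fiber of $W$, or it is a section over one of the triangular regions delimited by segments of $\gamma_t$, $\gamma_{t'}$, $\gamma_{t''}$. A degree count, using that the relative grading of generators in $W^{-1}(c_{\bullet,\bullet})$ versus those in $W^{-1}(-1)$ is shifted by $-1$, then determines which of the four products corresponds to which geometric type.

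For $\varepsilon_x^{t'\to t}\cdot\vartheta_p^{t''\to t'}$ both inputs and the unique possible output lie over $-1$, so the relevant discs are contained in $W^{-1}(-1)$ and the formula $\varepsilon_x^{t'\to t}\cdot\vartheta_p^{t''\to t'}=x^{\bar p}\,\varepsilon_x^{t''\to t}$ follows directly from Proposition \ref{prop:mon_adm_hf_eval}. The product $\eta_x^{t'\to t}\cdot\zeta_p^{t''\to t'}$ vanishes for degree reasons. The two remaining products, $\eta_x^{t'\to t}\cdot\vartheta_p^{t''\to t'}$ and $\varepsilon_x^{t'\to t}\cdot\zeta_p^{t''\to t'}$, count holomorphic sections over the triangular region bounded by the three arcs; here the plan is, as in Proposition \ref{prop:CFL0_product}, to invoke cohomology-level deformation invariance to shrink the triangle to a single point (away from the critical value of $W$), reducing the count to a fiberwise Floer product to which Proposition \ref{prop:mon_adm_hf_eval} again applies, producing the $x^{\bar p}$ factor together with a scaling constant $C_{\xi;t''\to t',t}$ (resp.\ $C_{\xi;t'',t'\to t}$) encoding the symplectic area swept during the deformation.

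The main technical point will be to verify that this scaling constant is genuinely independent of $p$ and of the choice of $x$ beyond its dependence on $\xi=-\tfrac{1}{2\pi}\mathrm{val}(x)$. The plan is to adapt the chain-decomposition argument from the proof of Proposition \ref{prop:CFL0_product}, replacing the ``zero section'' over the triangle by a section lying over $\xi$ in the direction of the torus (this is possible because $T_x(t)$ is $T^n$-invariant and the arcs $\gamma_t$, etc., avoid the critical value of $W$). The contributions at the two vertices on $\gamma_t$ cancel against the action normalization of $\varepsilon_x$ and $\eta_x$, while the contribution at the vertex where the two section-type Lagrangians meet reproduces the action normalization of $\vartheta_p$ or $\zeta_p$; the edges contribute zero by the Lagrangian condition, so only a $p$-independent, $\xi$-dependent area term remains, giving $C_{\xi;t''\to t',t}$ and $C_{\xi;t'',t'\to t}$ as powers of the Novikov parameter. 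Nonvanishing of these constants is automatic since they are Novikov-monomials.
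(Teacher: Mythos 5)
Your proposal is correct and follows essentially the same route as the paper: the first assertion is the torus specialization of Proposition \ref{prop:CF_is_cone}, and the product formulas are obtained exactly as in the proof of Proposition \ref{prop:CFL0_product} but with Proposition \ref{prop:mon_adm_hf_eval} replacing Proposition \ref{prop:mon_adm_hf_product} for the fiberwise products, and with the reference section over the triangle placed at $\xi=-\tfrac{1}{2\pi}\mathrm{val}(x)$ (rather than $\xi=0$), which is precisely how the paper explains the appearance and $\xi$-dependence of the constants $C_{\xi;t''\to t',t}$ and $C_{\xi;t'',t'\to t}$.
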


\proof
The proof is identical to that of Proposition \ref{prop:CFL0_product}, 
except after reduction to a Floer product within the fiber of $W$ we now
appeal to Proposition \ref{prop:mon_adm_hf_eval}. The other difference with
our previous argument is that the scaling constant
$C_{\xi;t''\to t',t}$ is now determined by the symplectic area of
a reference section of $W$ over $\mathcal{T}_{t''\to t',t}$ whose edge
along $\gamma_t$ lies
at the $\xi$-value of $\mathfrak{t}_x$, i.e.\ 
$\xi=-\frac{1}{2\pi}\mathrm{val}(x)$, rather than at $\xi=0$, hence it
generally depends on $\xi$; similarly for $C_{\xi;t'',t'\to t}$.
\endproof

Our next result concerns the quasi-units induced by continuation:

\begin{proposition}\label{prop:CFL0_quasiunit}
For $t'>t$, the quasi-unit $e^{t'\to t}\in HF^0(L_0(t'),L_0(t))$ is
given by $e^{t'\to t}=\vartheta_0^{t'\to t}$.
\end{proposition}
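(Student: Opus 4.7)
The strategy is to reduce the count of rigid solutions defining $e^{t'\to t}$ to the fiberwise count already carried out in Proposition~\ref{prop:mon_adm_hf_quasiunit}, which identified the quasi-unit of the pair $(\ell_0(t'),\ell_0(t))$ inside $W^{-1}(-1)\simeq (\C^*)^n$ with the generator $\vartheta_0$ associated to the minimum of the wrapping Hamiltonian $H$. A first simplification comes for free from the grading conventions: the chain-level quasi-unit is a degree-zero element of $CF^0(L_0(t'),L_0(t))$, and the cone decomposition of Proposition~\ref{prop:CF_is_cone}, combined with the grading conventions of \S\ref{ss:CFL0prelim}, places every generator $\zeta^{t'\to t}_p$ in total degree $-1$; consequently, the output of any disc contributing to $e^{t'\to t}$ lies in $W^{-1}(-1)$ and is of the form $\vartheta^{t'\to t}_p$.

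Next I would argue that every $J$-holomorphic disc $u$ contributing to $e^{t'\to t}$ is entirely contained in the fiber $W^{-1}(-1)$. The boundary of $u$ maps, under the isotopy defining the chain-level quasi-unit, to the moving Lagrangian $L_0(s)=\phi^s\rho^s(L_0)$ for $s$ varying between $t$ and $t'$, which in turn projects to the moving arc $\gamma_s$. By the open mapping principle applied to $W\circ u$ (holomorphic on the preimage of $\{|W|\geq \epsilon\}$ by our choice of $J$), the projection is a holomorphic map from the disc to $\C$ whose boundary lies on the family of arcs $\{\gamma_s\}_{s\in[t,t']}$ and whose output puncture maps to $-1$. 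Since every arc in this family passes through $-1$, I would argue $W\circ u\equiv -1$ by deforming the autonomous flow $\rho$ so that it fixes pointwise a prescribed neighborhood $U$ of $-1$ large enough to contain the region swept by $\{\gamma_s\}_{s\in[t,t']}$, keeping all arcs admissible and preserving the maximum principles of Section~\ref{sec:maximum-principle}. Homotopy invariance of the count then forces the base projection of any rigid solution to lie inside $U$; shrinking $U$ collapses $W\circ u$ to the constant map at $-1$. Alternatively, a direct boundary maximum principle in the spirit of Proposition~\ref{prop:maxprinciple_base}, adapted to the moving boundary, reaches the same conclusion.

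Once $u$ is confined to $W^{-1}(-1)$, the moving boundary condition reduces to the isotopy $\ell_0(s)=\phi^s(\ell_0)$ generated by the wrapping Hamiltonian $H$ from $\ell_0(t')$ to $\ell_0(t)$, and the count defining $e^{t'\to t}$ coincides with the count computing the fiberwise quasi-unit in Proposition~\ref{prop:mon_adm_hf_quasiunit}. That proposition gives $e^{t'\to t}=\vartheta_0^{t'\to t}$.

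The main obstacle is the step confining $u$ to $W^{-1}(-1)$: the moving boundary condition prevents a direct application of a stationary maximum principle, and one must carefully propagate the admissibility and maximum-principle structures through the deformation. A useful consistency check, and an alternative route for general $t'>t$, is the multiplicativity $e^{t''\to t}=e^{t'\to t}\cdot e^{t''\to t'}$ combined with Proposition~\ref{prop:CFL0_product}, which yields $\vartheta_0^{t'\to t}\cdot \vartheta_0^{t''\to t'}=\vartheta_0^{t''\to t}$, bootstrapping the result from the case of $t'-t$ arbitrarily small where the moving-boundary analysis is most transparent.
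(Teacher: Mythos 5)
There is a genuine gap in the central step, namely the claim that every contributing disc must be confined to the fiber $W^{-1}(-1)$. Neither of the two mechanisms you offer for this confinement works. Deforming the autonomous flow $\rho$ so that it fixes a neighborhood $U$ of $-1$ pointwise is self-defeating: if all the arcs $\gamma_s$ agree on an open set containing $-1$, then $\gamma_t$ and $\gamma_{t'}$ coincide along a whole segment rather than meeting transversely at the single point $-1$, so the intersection $L_0(t')\cap L_0(t)$ is no longer contained in a discrete union of fibers and the very Floer complex you are trying to identify degenerates. Moreover, the appeal to homotopy invariance to conclude that "the base projection of any rigid solution lies inside $U$" does not follow: homotopy invariance constrains the signed count, not the geographical location of individual solutions. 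And a boundary maximum principle in the spirit of Proposition~\ref{prop:maxprinciple_base} bounds $|W\circ u|$; it does not pin the map to a single value such as $-1$, especially since the output puncture lies at $-1$ while the rest of the boundary ranges over the full family of arcs.

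The paper sidesteps the difficulty by never confining the disc to a single fiber. Instead it first reduces, via multiplicativity of quasi-units and Proposition~\ref{prop:CFL0_product}, to the case $t'-t\in(0,t_0)$ (so the arcs $\gamma_t,\gamma_{t'}$ meet only at $-1$), chooses the lifted-isotopy Hamiltonian to be supported over a neighborhood $V$ of $\bigcup_{\tau\in[t,t']}\gamma_\tau$ disjoint from the origin, and uses the open mapping principle only to confine solutions to $W^{-1}(V)$. Over $W^{-1}(V)$ the K\"ahler form is exact, and the energy argument of Proposition~\ref{prop:mon_adm_hf_quasiunit} applies verbatim to force the geometric and topological energies to vanish, leaving only the constant map. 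Two further remarks on your write-up: the multiplicativity reduction, which you present as an optional consistency check or alternative route, is in fact the \emph{first} and essential step, because the exactness of $\omega$ over $W^{-1}(V)$ fails once $V$ must enclose the origin (i.e.\ once $t'-t>t_0$); and the degree constraint you note at the outset, while correct, only constrains the \emph{output} to lie in $W^{-1}(-1)$, not the rest of the disc, so it does not by itself effect the confinement you need.
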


\proof
It suffices to prove the result for
$t'-t\in (0,t_0)$, as the general case follows using the multiplicative
property of quasi-units ($e^{t''\to t}=e^{t'\to t}\cdot e^{t''\to t'}$ 
for $t''>t'>t$) and Proposition \ref{prop:CFL0_product}.

Recall that the quasi-unit is defined by counting solutions to a
Cauchy-Riemann equation whose domain $\Sigma$ is a disc with a single output
boundary puncture, with moving boundary condition given by the Lagrangians
$L_0(\tau)$ for $\tau$ varying between $t$ and $t'$. 
Along $\partial\Sigma$, the boundary condition is obtained from
the flow of $X_K\otimes \eta$ for
some 1-form $\eta$ on $\partial\Sigma$ and some Hamiltonian $K$, namely
the sum of a Hamiltonian generating the admissible lifted isotopy
$\rho^\tau$, cf.\ Lemma \ref{l:ham_lifted_isotopy}, which we assume to be
supported over a neighborhood $V$ of $\bigcup_{\tau \in [t,t']} \gamma_\tau$, 
and the wrapping Hamiltonian $H$. The restriction of $K$ to $L_0(\tau)$
is proper and achieves its minimum at the point of $W^{-1}(-1)$ where $H$ has its minimum;
we normalize $K$ so that this minimum value is zero.

As in the proof of
Proposition \ref{prop:mon_adm_hf_quasiunit}, we consider solutions to the
perturbed Cauchy-Riemann equation
$(du-X_K\otimes \alpha)^{0,1}=0$,  and $\alpha$ is a sub-closed 1-form on $\Sigma$ whose
restriction to $\partial\Sigma$ agrees with $\eta$.
Solutions to this equation satisfy the
open mapping principle with respect to the projection $W:Y\to\C$ everywhere
outside of $V$ (where $X_K$ is
not purely vertical) and a neighborhood of the origin (where $W$ isn't
necessarily $J$-holomorphic); this implies that solutions remain within
$W^{-1}(V)$, where the K\"ahler form is exact and the same energy argument
as in the proof of Proposition \ref{prop:mon_adm_hf_quasiunit} shows
that the only solution is the constant map at the point of $W^{-1}(-1)$
where $H$ reaches its minimum.  It follows that $e^{t'\to
t}=\vartheta_0^{t'\to t}$.
\endproof

\subsection{The Floer differential}

Propositions \ref{prop:CFL0explicit}--\ref{prop:CFL0_quasiunit} give all
the information needed to determine the fiberwise wrapped Floer cohomology
$H\Wrap^*(L_0,L_0)$ and its ring structure, except for one key piece of data: the
differential of the complex \eqref{eq:CFL0_explicit}, i.e.\ the
section-counting map $s=s^0_{\ell_0,t',t}:HF^0(\ell_-(t'),\ell_+(t))\to
HF^0(\ell_0(t'),\ell_0(t))$. We will first show that this map is given by multiplication with a Laurent polynomial, then show that this polynomial also controls the section-counting map for the parallel transport of the tori $\mathfrak{t}_x$.

Fix $t_+>t_-$ with $t_+-t_->t_1$, and for  $p=2\pi\bar{p}\in P_0(t_+-t_-)\cap (2\pi\Z)^n$, denote by
$c_{\bar{p}}\in \K$ the coefficients such that 
\begin{equation}\label{eq:s0zeta0ref}
s^0_{\ell_0,t_+,t_-}(\zeta_0^{t_+\to t_-})=
\sum_p c_{\bar{p}}\, \vartheta_p^{t_+\to t_-}.
\end{equation}

\begin{lemma}\label{l:s0zeta}
For all $t'>t$ such that $t'-t>t_1$ and all $p'\in P_1(t'-t)\cap (2\pi\Z)^n$, 
\begin{equation}\label{eq:s0zeta}
s^0_{\ell_0,t',t}(\zeta_{p'}^{t'\to t})=C(t',t)\,\sum_p c_{\bar{p}}\,\vartheta_{p+p'}^{t'\to t}
\end{equation}
where $C(t',t)$ is a nonzero constant depending only on $t$ and $t'$.
Moreover, if $c_{\bar{p}}\neq 0$ then $\bar{p}\in P_\Z$.
\end{lemma}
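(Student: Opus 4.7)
The plan is to derive \eqref{eq:s0zeta} from the Leibniz rule for the Floer product $\mu^2$, using the explicit product formulas of Proposition \ref{prop:CFL0_product}, and then separately to verify the support condition $\bar{p}\in P_\Z$ via positivity of intersections for $J$-holomorphic sections.

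First I would exploit the fact that $s$ is the off-diagonal piece of the Floer differential on the vertical complex, so that the Leibniz identity for $\mu^2$ reads $s(\mu^2(\alpha,\beta)) = \mu^2(s(\alpha),\beta) \pm \mu^2(\alpha,s(\beta))$, and the second term drops whenever the relevant factor is a degree-zero $\vartheta$-type generator. Combined with the product formulas of Proposition \ref{prop:CFL0_product} (in particular $\zeta_0^{t_+\to t_-}\cdot \vartheta_q^{t^{**}\to t_+} = C_{t^{**}\to t_+,t_-}\,\zeta_q^{t^{**}\to t_-}$ and $\vartheta_0^{t_-\to t^{***}}\cdot \zeta_q^{t^{**}\to t_-} = C_{t^{**},t_-\to t^{***}}\,\zeta_q^{t^{**}\to t^{***}}$), the Leibniz identity immediately yields
\begin{equation*}
s^0_{\ell_0,t^{**},t^{***}}(\zeta_q^{t^{**}\to t^{***}})
= \frac{\pm 1}{C_{t^{**}\to t_+,t_-}\,C_{t^{**},t_-\to t^{***}}}\,\sum_{\bar p}c_{\bar p}\,\vartheta_{p+q}^{t^{**}\to t^{***}}
\end{equation*}
whenever $t^{**}\geq t_+$, $t^{***}\leq t_-$, and $q\in P_0(t^{**}-t_+)\cap(2\pi\Z)^n$. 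This establishes the formula on a restricted range of parameters.

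To extend to arbitrary $(t',t,p')$ with $t'-t>t_1$ and $p'\in P_1(t'-t)\cap(2\pi\Z)^n$, I would choose auxiliary times $t^{**}\geq\max(t',t_+)$ large enough that $p'\in P_0(t^{**}-t_+)$, and $t^{***}\leq\min(t,t_-)$, and then apply Leibniz to the identity $\vartheta_0^{t\to t^{***}}\cdot \zeta_{p'}^{t'\to t}\cdot \vartheta_0^{t^{**}\to t'} = C'\,\zeta_{p'}^{t^{**}\to t^{***}}$ (with $C'\neq 0$ a product of two constants from Proposition \ref{prop:CFL0_product}). Writing $s(\zeta_{p'}^{t'\to t})=\sum_q a_q\,\vartheta_q^{t'\to t}$ and noting that $\vartheta_0\cdot\vartheta_q\cdot\vartheta_0 = \vartheta_q$, comparison of the two expressions for $s(\zeta_{p'}^{t^{**}\to t^{***}})$ together with the linear independence of the $\vartheta_q^{t^{**}\to t^{***}}$'s forces $a_q = C(t',t)\,c_{\overline{q-p'}}$ for a nonzero constant $C(t',t)$ independent of $p'$ and $q$; this is exactly \eqref{eq:s0zeta}. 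The bookkeeping step I expect to demand care is checking that $C(t',t)$ really is independent of the auxiliary choices of $t^{**},t^{***}$; this will follow from the fact that $s(\zeta_{p'}^{t'\to t})$ itself is intrinsically defined, so that any ambiguity in the ratio $C'/(C_{t^{**}\to t_+,t_-}\,C_{t^{**},t_-\to t^{***}})$ must cancel.

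Finally, for the support condition, I would analyze the relative homology class of a $J$-holomorphic section $u$ contributing to $c_{\bar p}$. Such $u$ is a section of $W:Y\to\C$ over the bounded region enclosed by $\gamma_{t_+}$ and $\gamma_{t_-}$, hence its intersection number with the zero fiber $W^{-1}(0)=\bigcup_\alpha Z_\alpha$ equals $1$. Since $J$ is the standard toric complex structure near the toric divisors (or a generic perturbation thereof), positivity of intersections forces $u$ to meet exactly one irreducible component $Z_{\alpha_0}$ transversely at a single point, with $\alpha_0\in P_\Z$. The index $\bar p\in\Z^n$ records the $\pi_1((\C^*)^n)=\Z^n$-class of the difference of the boundary arcs of $u$ on $\ell_0(t_+)$ and $\ell_0(t_-)$ in $W^{-1}(-1)$, which a standard winding computation (via the affine map \eqref{eq:incldual} realizing $Y$ as a toric reduction of $\C^{P_\Z}$) identifies with $\sum_\alpha (u\cdot Z_\alpha)\,\alpha = \alpha_0$. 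The main obstacle will be carrying out this last identification rigorously and tracking the effect of the bulk deformation $\bb$, but the geometric content is that a degree-one section can only ``use up'' a single toric divisor $Z_{\alpha_0}$, so $\bar p=\alpha_0\in P_\Z$.
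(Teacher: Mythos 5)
Your argument for the main formula \eqref{eq:s0zeta} is, in essence, the paper's own: you isolate the two Leibniz identities obtained by pairing $s$ with the product formulas of Proposition \ref{prop:CFL0_product}, and compose them to relax the reference times $(t_+,t_-)$ to an arbitrary pair $(t',t)$. The paper arranges this as two sequential applications (pushing $t_-$ down and $t_+$ up via a large intermediate $t''$, then pulling $t''$ back down to $t'$) rather than the three-fold sandwich $\vartheta_0\cdot\zeta_{p'}\cdot\vartheta_0$, but this is a cosmetic rearrangement, and the consistency-of-constants point you flag is handled the same way (the ratio of constants is pinned down because $s(\zeta_{p'}^{t'\to t})$ is intrinsic).

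Where you genuinely diverge is the support statement $\bar p\in P_\Z$. The paper deduces it \emph{algebraically} from the formula just proved: since $\vartheta_{p+p'}^{t'\to t}$ must be a legal generator, $p+p'\in P_0(t'-t)$ for every $p'\in P_1(t'-t)\cap(2\pi\Z)^n$; then by choosing $(t',t)$ and $p'$ to saturate the inequality defining $P_1(t'-t)$ in the direction of each $\v=(\vec v,v^0)\in\mathcal V$, one extracts $\langle\bar p,\vec v\rangle\le v^0$, i.e.\ $\bar p\in P\cap\Z^n=P_\Z$. You instead propose a \emph{geometric} argument via the relative homology class of the contributing $J$-holomorphic sections: intersection number one with $W^{-1}(0)=\bigcup Z_\alpha$ plus positivity of intersections forces $u$ to meet a single $Z_{\alpha_0}$, and you want to identify the winding index $\bar p$ with $\alpha_0$. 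This is the content that the paper actually develops later for the torus boundary condition (Propositions \ref{prop:torusdiscs}--\ref{prop:torusdiscweights}), and your argument would have to be adapted to the $\ell_0=(\R_+)^n$ boundary and to the degenerations of $J$-holomorphic sections into a disc plus sphere bubbles in $W^{-1}(0)$. Two cautions if you pursue this route: sphere bubbles carried by $\beta\in H_2(Y)$ do not affect the boundary class, so they do not spoil the identification, but this needs to be said; and the identification of the $\pi_1$-class of the boundary with $\alpha_0$ is exactly the calculation carried out later via the affine structure on $\Delta_Y$, so you would be front-loading a nontrivial piece of enumerative geometry to prove what the paper gets for free as a corollary of the Leibniz computation. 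Both routes are valid; the paper's is shorter and defers the geometry to where it is actually needed.
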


\proof
The compatibility of the Floer product with the differential (i.e.\ the Leibniz
rule), together with the product formulas of Proposition
\ref{prop:CFL0_product}, implies that
\begin{equation}\label{eq:s0leibniz1}
s^0_{\ell_0,t',t}(\zeta_{p_1}^{t'\to t})\cdot \vartheta_{p_2}^{t''\to t'}=
C_{t''\to t',t}\,s^0_{\ell_0,t'',t}(\zeta_{p_1+p_2}^{t''\to t})
\end{equation}
for all $(t''>t'>t)$ with $t'-t>t_1$, $p_1\in P_1(t'-t)\cap (2\pi \Z)^n$, $p_2\in P_0(t''-t')\cap (2\pi\Z)^n$; and
\begin{equation}\label{eq:s0leibniz2}
\vartheta_{p_1}^{t'\to t}\cdot s^0_{\ell_0,t'',t'}(\zeta_{p_2}^{t''\to t'})=
C_{t'', t'\to t}\,s^0_{\ell_0,t'',t}(\zeta_{p_1+p_2}^{t''\to t})
\end{equation}
for all $(t''>t'>t)$ with $t''-t'>t_1$, $p_1\in P_0(t'-t)\cap (2\pi\Z)^n$, $p_2\in P_1(t''-t')\cap (2\pi\Z)^n$.

We now deduce the lemma from these two identities. First, choose $t''>\max(t',t_+)$ 
such that $P_1(t'-t)\subset P_0(t''-t_+)$.
It follows from \eqref{eq:s0leibniz1} for $(t''>t_+>t_-)$, $p_1=0$ and $p_2=p'$
that, for all $p'\in
P_1(t'-t)\cap (2\pi\Z)^n\subset P_0(t''-t_+)\cap (2\pi\Z)^n$,
$$
s^0_{\ell_0,t'',t_-}(\zeta^{t''\to t_-}_{p'})=C_{t''\to t_+,t_-}^{-1}\,
s^0_{\ell_0,t_+,t_-}(\zeta_0^{t_+\to t_-})\cdot \vartheta_{p'}^{t''\to t_+}
=C_{t''\to t_+,t_-}^{-1}\,\sum_p c_{\bar{p}}\,\vartheta_{p+p'}^{t''\to t_-}.
$$
Next, considering \eqref{eq:s0leibniz2} for either $(t''>t_->t)$ or
$(t''>t>t_-)$, with $p_1=0$ and $p_2=p'$ again, yields
$$s^0_{\ell_0,t'',t}(\zeta_{p'}^{t''\to t})=C(t'',t)\sum_p c_{\bar{p}}\,
\vartheta^{t''\to t}_{p+p'}$$
for all $p'\in P_1(t'-t)\cap (2\pi\Z)^n$, where
$C(t'',t)$ equals $C_{t''\to t_+,t_-}^{-1} C_{t'',t_-\to t}^{-1}$ if $t<t_-$, or $C_{t''\to t_+,t_-}^{-1} C_{t'',t\to t_-}$
if $t>t_-$.
This is precisely \eqref{eq:s0zeta}, except with $t''$ everywhere instead of
$t'$. Finally, we use \eqref{eq:s0leibniz1}, now for $(t''>t'>t)$, $p_1=p'$,
and $p_2=0$, to conclude that $$s^0_{\ell_0,t',t}(\zeta^{t'\to t}_{p'})=
C_{t''\to t',t} C(t'',t)\,\sum_p c_{\bar{p}}\,\vartheta_{p+p'}^{t'\to t},$$
which is the desired result. 

Moreover, the final step of the calculation
implies that $p+p'\in P_0(t'-t)\cap (2\pi \Z)^n$ for all $p=2\pi\bar{p}$ such that
$c_{\bar{p}}\neq 0$ and for all $p'\in P_1(t'-t)\cap (2\pi \Z)^n$.
Recall that $P_0(t'-t)$ is defined by the inequalities 
\begin{equation}\label{eq:P0ineq}
\langle \vec{v},\cdot\rangle \leq (t'-t)\,d(\v)\end{equation} 
for all $\v=(\vec{v},v^0)\in\mathcal{V}$, while $P_1(t'-t)$ is defined
by \begin{equation}\label{eq:P1ineq}
\langle \vec{v},\cdot\rangle \leq (t'-t)\,d(\v)-2\pi v^0\end{equation}
for all $\v\in
\mathcal{V}$, and $P$ is defined by
the inequalities $\langle \vec{v},\cdot \rangle \leq v^0$ for all
$\v\in\mathcal{V}$ (cf.\ Definition \ref{def:extremalv}). For every $\v\in\mathcal{V}$, we can choose 
$t$ and $t'$ such that $P_1(t'-t)\cap (2\pi\Z)^n$ contains some $p'$
which realizes the equality in \eqref{eq:P1ineq}. Thus, since $p+p'$
satisfies \eqref{eq:P0ineq} whenever $c_{\bar{p}}\neq 0$, it follows
that $\langle p,\vec{v}\rangle\leq 2\pi v^0$, i.e.\ $\langle
\bar{p},\vec{v}\rangle \leq v^0$, whenever $c_{\bar{p}}\neq 0$.
Since this holds for all $\v\in\mathcal{V}$, it follows that $\bar{p}\in
P\cap \Z^n=P_\Z$.
\endproof

Lemma \ref{l:s0zeta} implies that the coefficients $c_{\bar{p}}\in \K$ 
($\bar{p}\in P_\Z$) suffice to determine the fiberwise wrapped Floer cohomology of
$L_0$. More explicitly:

\begin{proposition}\label{prop:HWL0}
Let $g(x)=\sum_p c_{\bar{p}} x^{\bar{p}}\in \K[x_1^{\pm 1},\dots,x_n^{\pm
1}]$, and assume that $g$ is not identically zero.
Then $H\Wrap^*(L_0,L_0)$ is isomorphic to the quotient $\K[x_1^{\pm
1},\dots,x_n^{\pm 1}]/(g)$ of the ring of Laurent polynomials by the ideal
generated by $g$.
\end{proposition}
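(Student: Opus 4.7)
The plan is to combine Corollary \ref{cor:colimit}, which expresses $H\Wrap^*(L_0,L_0)$ as a direct limit of Floer cohomology groups, with Proposition \ref{prop:CFL0explicit}, which identifies each such group with the cohomology of the two-term vertical Floer complex. Specifically, setting $k=0$ in Corollary \ref{cor:colimit} gives
\[
H\Wrap^*(L_0,L_0)\ \simeq\ \colim_{j\to\infty} HF^*(L_0, L_0(-\epsilon j)),
\]
and for $\epsilon j>t_1$ the right-hand side is the cohomology of the two-term complex $\{C_1(\epsilon j)\xrightarrow{s}C_0(\epsilon j)\}$ of Proposition \ref{prop:CFL0explicit}. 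Since filtered colimits of vector spaces are exact, I can interchange the direct limit with taking cohomology and reduce the calculation to computing the colimit of vertical Floer complexes.

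I would then identify this colimit complex explicitly. Because $d(\v)>0$ for every $\v\in\mathcal{V}$, the polytopes $P_0(\tau)$ and $P_1(\tau)$ exhaust $\R^n$ as $\tau\to\infty$, so $P_0(\tau)\cap(2\pi\Z)^n$ and $P_1(\tau)\cap(2\pi\Z)^n$ exhaust the lattice $(2\pi\Z)^n$. Combining Propositions \ref{prop:CFL0_product} and \ref{prop:CFL0_quasiunit}, the bonding maps of the direct system (Floer multiplication by the quasi-unit $\vartheta_0^{-\epsilon j\to-\epsilon(j+1)}=e^{-\epsilon j\to-\epsilon(j+1)}$) send $\vartheta_p^{0\to-\epsilon j}\mapsto\vartheta_p^{0\to-\epsilon(j+1)}$ and rescale $\zeta_p^{0\to-\epsilon j}$ by a nonzero scalar independent of $p$. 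After absorbing these scalars (together with the nonzero prefactor $C(0,-\epsilon j)$ from Lemma \ref{l:s0zeta}) by a stage-dependent rescaling of the $\zeta$-generators, the colimits of the degree $0$ and degree $-1$ parts both become isomorphic to $\K[x_1^{\pm 1},\dots,x_n^{\pm 1}]$ via $\vartheta_p\leftrightarrow x^{\bar p}$ and (the rescaled) $\zeta_p\leftrightarrow x^{\bar p}$, where $\bar p=p/(2\pi)\in\Z^n$; under these identifications Lemma \ref{l:s0zeta} becomes the statement that the colimit differential is multiplication by $g(x)=\sum_{\bar p\in P_\Z}c_{\bar p}\,x^{\bar p}$.

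Therefore $H\Wrap^*(L_0,L_0)$ is computed by the cohomology of the complex $\K[x_1^{\pm 1},\dots,x_n^{\pm 1}]\xrightarrow{\cdot g}\K[x_1^{\pm 1},\dots,x_n^{\pm 1}]$. Since $g\not\equiv 0$ and the Laurent polynomial ring over the field $\K$ is an integral domain, multiplication by $g$ is injective, so the cohomology vanishes in degree $-1$ and equals $\K[x_1^{\pm 1},\dots,x_n^{\pm 1}]/(g)$ in degree $0$. The ring structure then follows from the first bullet of Proposition \ref{prop:CFL0_product}: the Floer product $\vartheta_p^{t'\to t}\cdot\vartheta_{p'}^{t''\to t'}=\vartheta_{p+p'}^{t''\to t}$ corresponds to $x^{\bar p}\cdot x^{\bar p'}=x^{\bar p+\bar p'}$, which descends to the usual Laurent polynomial product on the quotient. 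I expect the main technical point to be justifying that the direct limit commutes with the filtration/spectral-sequence machinery underlying the vertical Floer complex of Proposition \ref{prop:CFL0explicit}: either one works at the chain level after suitable fiberwise perturbations (so that the vertical Floer complex is a genuine cochain complex computing $HF^*$ and the bonding maps are actual chain maps), or one checks that the bonding maps preserve the filtration by fiber so that the induced maps on $E_2$-pages assemble into a directed system whose colimit is the two-term complex above. With this technical point in place, the remainder of the argument is a formal consequence of the results already established.
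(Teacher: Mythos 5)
Your proposal is correct and follows essentially the same route as the paper. Both arguments combine Corollary \ref{cor:colimit}, Proposition \ref{prop:CFL0explicit}, Lemma \ref{l:s0zeta}, and Propositions \ref{prop:CFL0_product}--\ref{prop:CFL0_quasiunit}, absorb the stage-dependent rescaling factors on the $\zeta$-generators into the identification with Laurent monomials, and use injectivity of multiplication by $g\neq 0$ to conclude. The only difference is one of ordering: you take the colimit of the vertical Floer complexes and then pass to cohomology, whereas the paper computes $HF^0$ at each stage as a quotient of a space of truncated Laurent polynomials and then takes the colimit of those quotients; since filtered colimits of $\K$-vector spaces are exact, the two are interchangeable, and indeed the paper's choice sidesteps the commutation-with-spectral-sequence worry you flag at the end (while your remark about working at the chain level after fiberwise perturbations is the justification the paper itself supplies in the discussion following Proposition \ref{prop:CFL0explicit}).
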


\proof By Corollary \ref{cor:colimit}, we can calculate $H\Wrap^*(L_0,L_0)$ as
a colimit of Floer cohomology groups $HF^*(L_0(t'),L_0(t))$ for $t'-t\to
\infty$. For $t'-t>t_1$, we use Proposition \ref{prop:CFL0explicit} and
Lemma \ref{l:s0zeta} to identify $CF^*_{vert}(L_0(t'),L_0(t))$ 
with a subcomplex of the chain complex
\begin{equation}\label{eq:Laurent_g}\K[x_1^{\pm 1},\dots,x_n^{\pm 1}]\stackrel{g}{\longrightarrow}
\K[x_1^{\pm 1},\dots,x_n^{\pm 1}]\end{equation}
where in degree $0$ we identify $\vartheta_p^{t'\to t}$ with the
monomial $x^{\bar{p}}$ for all $p\in P_0(t'-t)\cap (2\pi\Z)^n$, and
in degree $-1$ we identify $\zeta_p^{t'\to t}$ with $C(t',t)\,x^{\bar{p}}$
for all $p\in P_1(t'-t)\cap (2\pi\Z)^n$, and the subcomplex corresponds to
those Laurent polynomials whose Newton polytopes are contained inside
$\frac{1}{2\pi}P_0(t'-t)$ resp.\ $\frac{1}{2\pi}P_1(t'-t)$.

It follows from Proposition \ref{prop:CFL0_product} that, with these
identifications, the product operations on these Floer complexes are given
by multiplication of Laurent polynomials; and 
Proposition \ref{prop:CFL0_quasiunit} implies that the
continuation maps as $t'-t$ increases to infinity are given by inclusion.
Thus, the naive limit of the complexes \eqref{eq:CFL0_explicit} as $t'-t\to
\infty$ is given by \eqref{eq:Laurent_g}.

Since by assumption $g$ is not zero, multiplication by $g$ is injective,
and the cohomology of \eqref{eq:CFL0_explicit} is 
concentrated in degree zero; specifically, $HF^0(L_0(t'),L_0(t))$ is
the quotient of the space of
Laurent polynomials whose Newton polytope is contained in $\frac1{2\pi}
P_0(t'-t)$ by the subspace of those which are $g$ times a Laurent polynomial
with Newton polytope contained in $\frac{1}{2\pi} P_1(t'-t)$. Taking
the colimit under inclusion maps as $t'-t\to \infty$, we conclude that
$H\Wrap^*(L_0,L_0)$ is also concentrated
in degree zero, and we have an isomorphism of $\K$-vector spaces
$$H\Wrap^0(L_0,L_0)\simeq \K[x_1^{\pm 1},\dots,x_n^{\pm 1}]/(g).$$ 
This isomorphism is compatible with the ring structure,
since by Proposition \ref{prop:CFL0_product} the Floer product operation
corresponds to multiplication of Laurent polynomials.
\endproof

Given Proposition \ref{prop:HWL0}, the proof of Theorem \ref{thm:main}
reduces to the determination of the Laurent polynomial $g$. More
precisely, we need to show that, after equipping $Y$ with a suitable bulk
deformation class, $g$ can be assumed to coincide with the Laurent
polynomial $f$ defining the hypersurface $H$ up to an overall scaling
factor. To this end, we first reinterpret
$g$ as a count of holomorphic sections with boundary on the objects
$T_x(t)$ obtained by parallel transport of product tori with rank one local
systems. Recalling the calculation of the vertical Floer complex
$CF^*_{vert}(L_0(t'),T_x(t))$ from Proposition \ref{prop:CFL0_eval}, we have:

\begin{proposition}\label{prop:s0zeta_eval}
For $t'-t>t_1$, and for $x\in (\K^*)^n$, the differential on the complex 
$CF^*_{vert}(L_0(t'),T_x(t))$ is given by $$s_x(\eta_x^{t'\to
t})=C_\xi(t',t)\,g(x)\,\varepsilon_x^{t'\to t},$$
where $C_\xi(t',t)$ is a nonzero constant depending only on $t$, $t'$, and
$\xi=-\frac{1}{2\pi}\mathrm{val}(x)$.
\end{proposition}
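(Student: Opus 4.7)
\proof[Proof proposal]
My plan is to derive the formula algebraically from what is already known, in the same spirit as Lemma \ref{l:s0zeta}: I will combine the Leibniz rule for the section-counting differential on the vertical Floer complexes with the product formulas of Proposition \ref{prop:CFL0_eval} and the formula for $s^0_{\ell_0}$ obtained in Lemma \ref{l:s0zeta}. The key observation is that on the vertical Floer complex, the differential $s$ (which couples $s^0_{\ell_0}$ for the $L_0$-pairs and $s_x$ for pairs involving $T_x$) is compatible with the Floer product \eqref{eq:HFvert_eval}, i.e.\ it satisfies the graded Leibniz rule; this is explained after Proposition \ref{prop:CFL0explicit} in terms of either chain-level work after suitable fiberwise perturbations or the derivation property on the second page of the filtration spectral sequence.

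The main computation will go as follows. Write $T-\tau>t_1$ for the pair appearing in the statement, and choose an intermediate time $t'$ with $\tau<t'<T$ and $T-t'>t_1$; such a choice is possible precisely because $T-\tau>t_1$, and after a generic perturbation of $t'$ we may assume that all relevant pairs of Lagrangians interact transversely. Then consider the product
\begin{equation*}
\varepsilon_x^{t'\to\tau}\cdot \zeta_0^{T\to t'} \,\in\, CF^*_{vert}(L_0(T),T_x(\tau))
\end{equation*}
of the degree-$0$ generator $\varepsilon_x^{t'\to\tau}\in CF^*_{vert}(L_0(t'),T_x(\tau))$ with the degree-$(-1)$ generator $\zeta_0^{T\to t'}\in CF^*_{vert}(L_0(T),L_0(t'))$. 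By Proposition \ref{prop:CFL0_eval}, this product equals $C_{\xi;T,t'\to\tau}\,\eta_x^{T\to\tau}$, and applying $s_x$ gives $C_{\xi;T,t'\to\tau}\,s_x(\eta_x^{T\to\tau})$. On the other hand, the Leibniz rule gives
\begin{equation*}
s\bigl(\varepsilon_x^{t'\to\tau}\cdot \zeta_0^{T\to t'}\bigr)
=s_x(\varepsilon_x^{t'\to\tau})\cdot \zeta_0^{T\to t'} \,+\, \varepsilon_x^{t'\to\tau}\cdot s^0_{\ell_0,T,t'}(\zeta_0^{T\to t'}).
\end{equation*}
The first term vanishes because $\varepsilon_x^{t'\to\tau}$ sits in degree $0$ and there is nothing of higher degree for it to map to. For the second term, Lemma \ref{l:s0zeta} identifies $s^0_{\ell_0,T,t'}(\zeta_0^{T\to t'})=C(T,t')\sum_p c_{\bar p}\,\vartheta_p^{T\to t'}$, and applying the product formula from Proposition \ref{prop:CFL0_eval} to each term converts $\varepsilon_x^{t'\to\tau}\cdot\vartheta_p^{T\to t'}$ into $x^{\bar p}\,\varepsilon_x^{T\to\tau}$. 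Summing yields $C(T,t')\,g(x)\,\varepsilon_x^{T\to\tau}$. Equating the two expressions gives
\begin{equation*}
s_x(\eta_x^{T\to\tau})\;=\;\frac{C(T,t')}{C_{\xi;T,t'\to\tau}}\,g(x)\,\varepsilon_x^{T\to\tau},
\end{equation*}
and I define $C_\xi(T,\tau)$ to be this prefactor; it is nonzero since both $C(T,t')$ and $C_{\xi;T,t'\to\tau}$ are nonzero by Lemma \ref{l:s0zeta} and Proposition \ref{prop:CFL0_eval}, and it manifestly depends only on $T$, $\tau$, and $\xi$ (the apparent dependence on $t'$ must cancel because the left-hand side does not involve $t'$; this can also be checked directly using associativity for three-way products).

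The steps that will require the most care are not conceptual but bookkeeping: verifying that the Leibniz relation genuinely holds at the level where the product formulas of Proposition \ref{prop:CFL0_eval} and Lemma \ref{l:s0zeta} are stated, keeping track of signs so that $C_\xi(T,\tau)$ has a well-defined sign (though these affect only the overall sign, which is absorbed into the constant), and confirming that the range of admissible intermediate parameters $t'$ is non-empty for every $T-\tau>t_1$ (which it is, precisely by the hypothesis). No new geometric input beyond Propositions \ref{prop:CFL0explicit}--\ref{prop:CFL0_quasiunit} and Lemma \ref{l:s0zeta} is needed; in particular there is no direct enumeration of holomorphic sections with torus boundary, the entire count being pinned down by compatibility with the product and with the previously established sections count.
\endproof
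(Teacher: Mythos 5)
Your argument is correct in spirit and coincides with the alternative route the paper itself records in the Remark immediately following Proposition \ref{prop:s0zeta_eval}, rather than with the primary proof. Both deduce the formula from the Leibniz rule together with Lemma \ref{l:s0zeta} and the product formulas of Proposition \ref{prop:CFL0_eval}; the difference is in the choice of auxiliary product. The paper's primary proof applies the Leibniz rule to the \emph{vanishing} product $\eta_x^{t'\to t}\cdot\zeta_0^{t''\to t'}=0$, where the auxiliary time $t''$ is placed beyond $t'$ (so $t''>t'+t_1$); since $t''$ is unconstrained from above, this imposes nothing on $t'-t$ beyond the hypothesis $t'-t>t_1$. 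Your argument (like the Remark) applies the Leibniz rule to the \emph{nontrivial} product $\varepsilon_x^{t'\to\tau}\cdot\zeta_0^{T\to t'}=C_{\xi;T,t'\to\tau}\,\eta_x^{T\to\tau}$, inserting the auxiliary time $t'$ into the interval $(\tau,T)$.

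There is one genuine loose end in your write-up: you assert that a suitable intermediate $t'$ exists ``precisely because $T-\tau>t_1$'', recording only the constraint $T-t'>t_1$. But the product formulas of Proposition \ref{prop:CFL0_eval} which you invoke (for $\varepsilon_x^{t'\to\tau}\cdot\zeta_0^{T\to t'}$ and $\varepsilon_x^{t'\to\tau}\cdot\vartheta_p^{T\to t'}$) are stated under the additional hypothesis $t'-\tau>t_0$, needed so that the fiberwise complex over $(L_0(t'),T_x(\tau))$ has the asserted two-term form and the triangular region over which sections are counted is nondegenerate. The two constraints $t'-\tau>t_0$ and $T-t'>t_1$ can be met simultaneously only when $T-\tau>t_0+t_1$, which is strictly stronger than the stated hypothesis $T-\tau>t_1$. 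This is not fatal to the strategy, but as written your proof only covers $T-\tau>t_0+t_1$; the remaining range $t_1<T-\tau\leq t_0+t_1$ either requires switching to the paper's primary argument (which has no such constraint), or an explicit check that the product formulas of Proposition \ref{prop:CFL0_eval} extend to $0<t'-\tau\leq t_0$ (where $\gamma_{t'}$ and $\gamma_\tau$ meet only over $-1$ and the fiberwise complex is a single $\varepsilon_x^{t'\to\tau}$), which is plausible by the same reduction to a count of sections over a triangle but is not stated in the paper.
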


\proof For $t''>t'+t_1$, the compatibility of the Floer product
\eqref{eq:HFvert_eval} with the differentials on the vertical Floer
complexes implies that
$$s_x(\eta_x^{t'\to t})\cdot \zeta_{0}^{t''\to t'}-\eta_x^{t'\to t}\cdot
s^0_{\ell_0,t'',t}(\zeta_{0}^{t''\to t'})=0.$$
Using Lemma \ref{l:s0zeta} and Proposition \ref{prop:CFL0_eval}, this yields:
$$s_x(\eta_x^{t'\to t})\cdot \zeta_{0}^{t''\to t'}=C(t'',t')\sum_p
c_{\bar{p}}\,\eta_x^{t'\to t}\cdot \vartheta^{t''\to t'}_{p}=C(t'',t')\,
C_{\xi;t''\to t',t}\,g(x)\,\eta_x^{t''\to t}.$$
Since $s_x(\eta_x^{t'\to t})$ is a multiple of $\varepsilon_x^{t'\to t}$,
comparing with the formula for $\varepsilon_x^{t'\to t}\cdot \zeta_{0}^{t''\to
t}$ given by Proposition \ref{prop:CFL0_eval} we conclude that
$$s_x(\eta_x^{t'\to t})=C_{\xi;t'',t'\to t}^{-1}\,C(t'',t')\,C_{\xi;t''\to t',t}\,
g(x)\,\varepsilon_x^{t'\to t}.$$
The result follows, setting $C_{t'',t'\to t}^{-1}\,C(t'',t')\,C_{t''\to t',t}=C_\xi(t',t)$.
\endproof

\begin{remark} Another way to prove Proposition \ref{prop:s0zeta_eval}, still
using the Leibniz rule, Lemma \ref{l:s0zeta}, and
Proposition \ref{prop:CFL0_eval}, is to argue that, for $t''>t'>t$ with $t''-t'>t_1$, 
\begin{align*}
s_x(\eta_x^{t''\to t})&=C_{\xi;t'',t'\to t}^{-1}\,s_x(\varepsilon_x^{t'\to t}\cdot
\zeta_0^{t''\to t'})=C_{\xi;t'',t'\to t}^{-1}\,\varepsilon_x^{t'\to t}\cdot
s^0_{\ell_0,t'',t'}(\zeta_0^{t''\to t'})\\
&=C_{\xi;t'',t'\to t}^{-1}\,C(t'',t')\,\sum_p c_{\bar{p}}\,\varepsilon_x^{t'\to
t}\cdot \vartheta_p^{t''\to t'}=C_\xi(t'',t)\,g(x)\,\varepsilon_x^{t''\to t}.
\end{align*}
\end{remark}

Next we consider the Floer complex $CF_{vert}^*(T_x(t'),T_x(t))$ for $t'-t>t_0$. 
The Lagrangian submanifolds (with local systems) $T_x(t')$ and $T_x(t)$ 
obtained by parallel transport of $\mathfrak{t}_x$ over the arcs
$\gamma_{t'}$ and $\gamma_t$ intersect cleanly along tori within the fibers
$W^{-1}(-1)$ and $W^{-1}(c_{t',t})$, rather than
transversely, so the definition of their Floer complex requires a bit of
care. One approach is to use a small Hamiltonian perturbation to achieve
transversality within the fibers of $W$; another approach that is better
suited to computations is to use a ``Morse-Bott'' model. Namely, we
choose a Morse function on the 
$n$-torus, and consider holomorphic discs with boundary in
$T_x(t')\cup T_x(t)$ together with Morse
flow lines (within a component of $T_x(t')\cap T_x(t)$) from the boundary marked points of the disc to
critical points of the Morse function; see for example Section 4 of \cite{Sheridan}
(with the difference that we only use Morse theory within the fibers of $W$,
while in the base direction we have usual strip-like ends). Equivalently,
instead of involving Morse flow lines, one could simply
require the boundary marked points of the holomorphic discs to lie on
the stable or unstable manifolds of the Morse critical points.

Regardless of the chosen approach, the Floer complex is built from two
copies of the fiberwise Floer complex
$CF^*(\mathfrak{t}_x,\mathfrak{t}_x)$, corresponding to generators and
Floer trajectories which lie entirely within each of the two fibers
$W^{-1}(-1)$ and $W^{-1}(c_{t',t})$, together with a connecting differential
which counts $J$-holomorphic sections of $W:Y\to \C$ over the region delimited
by $\gamma_t$ and $\gamma_{t'}$ (with the usual caveat regarding our use of
the word ``section'' since $J$ differs from the standard complex structure
near $W^{-1}(0)$), with boundary on $\mathfrak{t}_x$, and satisfying incidence
conditions at $-1$ and at $c_{t',t}$.

As before, we denote by $CF^*_{vert}(T_x(t'),T_x(t))$ the ``vertical Floer
complex'' obtained by taking the cohomology with respect to the
contributions to the Floer differential which lie entirely within a fiber of $W$.
Since $\mathfrak{t}_x\subset (\C^*)^n$ does not bound any holomorphic discs,
the Floer differential on $CF^*(\mathfrak{t}_x,\mathfrak{t}_x)$ only involves a classical part, and reduces to the
usual cohomology of $T^n$ (with coefficients in endomorphisms of the local
system, which are canonically isomorphic to the ground field $\K$).
We claim:

\begin{proposition}\label{prop:s0torus}
For $t'-t>t_0$, and for $x\in (\K^*)^n$, 
the vertical Floer complex $CF^*_{vert}(T_x(t'),T_x(t))$ is given by
\begin{equation}\label{eq:s0torus}
\Bigl\{ H^*(T^n,\K) \stackrel{s_x}{\longrightarrow}
H^*(T^n,\K)\Bigr\},
\end{equation} where the connecting differential $s_x$, defined by a
weighted count of $J$-holomorphic sections of $W:Y\to\C$ over the
region delimited by $\gamma_t$ and $\gamma_{t'}$, with 
incidence conditions on cycles in $\mathfrak{t}_x$ at $-1$ and $c_{t',t}$,
is given by multiplication by $C'_\xi(t',t)\,g(x)\in\K$ for some non-zero
constant $C'_\xi(t',t)$ depending only on $t,t'$ and
$\xi=-\frac{1}{2\pi}\mathrm{val}(x)$.
\end{proposition}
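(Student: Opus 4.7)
My plan is to reduce the claim to the already established Proposition \ref{prop:s0zeta_eval} via a combination of the Morse-Bott cascade description of Floer cohomology for cleanly intersecting Lagrangians, a module-structure argument using the Leibniz rule, and an explicit computation via the Floer product with mixed $L_0$-torus morphisms.

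First, I would establish the claimed structure of $CF^*_{vert}(T_x(t'), T_x(t))$. Since $T_x(t')$ and $T_x(t)$ intersect cleanly along copies of $\mathfrak{t}_x$ in the two fibers $W^{-1}(-1)$ and $W^{-1}(c_{t',t})$, the Morse-Bott model (or a small fiberwise perturbation making intersections transverse) makes the portion of the Floer complex supported in each fiber compute the Morse cohomology $H^*(T^n, \K)$ of $\mathfrak{t}_x \cong T^n$. By Propositions \ref{prop:maxprinciple_base} and \ref{prop:maxprinciple_fiber} together with the open mapping principle, holomorphic discs contributing to the differential either remain in a single fiber (giving the fiberwise Morse differential, whose cohomology is $H^*(T^n, \K)$ on each side) or project non-trivially to $\C$ as sections of $W$ over the bounded region delimited by $\gamma_t$ and $\gamma_{t'}$. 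By positivity of intersection and grading considerations these sections only define a map $s_x$ from the $c_{t',t}$ summand to the $-1$ summand, giving the form \eqref{eq:s0torus}.

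Next, I would show that $s_x$ equals multiplication by a scalar via a module-structure argument. Applying the Leibniz rule for the Floer differential to the Floer product
$$\mu^2: HF^*(T_x(t+\epsilon), T_x(t)) \otimes HF^*_{vert}(T_x(t'), T_x(t+\epsilon)) \to HF^*_{vert}(T_x(t'), T_x(t))$$
for small $\epsilon > 0$ (and an analogous product from the other side), one obtains that $s_x$ is $H^*(T^n, \K)$-linear: indeed, both copies of $T_x$ in the self-Floer complex are fibered over the same arc $\gamma_t$, so there is no second clean-intersection fiber, the section-counting part of the differential is trivially zero, and the Leibniz rule collapses to $s_x(\gamma \cdot \alpha) = \pm \gamma \cdot s_x(\alpha)$. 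Since $H^*(T^n,\K)$ is a free rank-one module over itself, $s_x(\alpha) = \alpha \cup \psi_x$ with $\psi_x := s_x(1) \in H^*(T^n, \K)$. A Fredholm-index computation together with the grading conventions in Proposition \ref{prop:CF_is_cone} shows $s_x$ preserves the fiberwise grading, forcing $\psi_x \in H^0(T^n, \K) = \K$.

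Finally, to identify $\psi_x$ with $C'_\xi(t',t)\,g(x)$, I would fix $t'' > t'$ with $t'' - t' < t_0$, so that $HF^*_{vert}(L_0(t''), T_x(t')) = \K\cdot\varepsilon_x^{t''\to t'}$ is concentrated in degree zero with vanishing differential, and apply the Leibniz rule for $s$ to the Floer product
$$\mu^2: HF^*_{vert}(T_x(t'), T_x(t)) \otimes \K\cdot\varepsilon_x^{t''\to t'} \to HF^*_{vert}(L_0(t''), T_x(t))$$
evaluated on $\alpha = 1_c$, the Morse fundamental class in the $c_{t',t}$ summand. The term $1_c \cdot s(\varepsilon_x^{t''\to t'})$ vanishes. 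The mixed product $1_c \cdot \varepsilon_x^{t''\to t'}$ is handled by the same area-deformation argument used in the proof of Proposition \ref{prop:CFL0_product}: degenerating the triangle over $\{-1, c_{t',t}, c_{t'', t}\}$ reduces the count of holomorphic sections to a fiberwise Floer product and yields a nonzero constant multiple of $\eta_x^{t''\to t}$. Applying Proposition \ref{prop:s0zeta_eval} then expresses the left-hand side as a nonzero constant times $C_\xi(t'',t)\,g(x)\,\varepsilon_x^{t''\to t}$, while the right-hand side $s(1_c)\cdot\varepsilon_x^{t''\to t'} = \psi_x\cdot\varepsilon_x^{t''\to t}$ by the unital property of the fiberwise Floer product in $W^{-1}(-1)$. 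Comparing yields $\psi_x = C'_\xi(t',t)\,g(x)$ for a suitable nonzero constant depending only on $t$, $t'$, and $\xi$.

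The main obstacle will be the degree-preservation step: verifying via Fredholm index theory that $s_x$ strictly preserves the fiberwise grading on $H^*(T^n,\K)$, forcing $\psi_x \in H^0$. This requires using the Calabi-Yau condition $c_1(Y) = 0$ so that Maslov indices of sections with boundary on $\mathfrak{t}_x$ are controlled by topological intersection with $W^{-1}(0)$, together with the observation that all sections over the relevant region intersect $W^{-1}(0)$ with the same topological multiplicity, and careful Morse-Bott bookkeeping at the clean intersection loci in both fibers.
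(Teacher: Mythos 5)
Your proposal is essentially correct and follows the same strategy as the paper: both reduce the identification of $s_x$ to the Leibniz rule applied to products mixing $CF^*_{vert}(T_x(t'),T_x(t))$ with an $L_0$-$T_x$ complex so as to import Proposition \ref{prop:s0zeta_eval}, together with the product formulas of Lemma \ref{l:s0toruseval}. The main organizational difference is that you first establish $H^*(T^n,\K)$-linearity of $s_x$ (via the module action of $HF^*(T_x(t+\epsilon),T_x(t))$) and then evaluate only on the degree-zero generator, whereas the paper computes $s_x(\delta_x^{t'\to t})$ first and then extends to $s_x(\alpha\,\delta_x^{t'\to t})$ by a second Leibniz-rule step using Lemma \ref{l:s0torusproduct}; these are the same mechanism packaged differently, and yours is arguably slightly cleaner. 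Your evaluation step also uses the nonvanishing product $\delta_x^{t'\to t}\cdot\varepsilon_x^{t''\to t'}$ (with $s(\varepsilon_x^{t''\to t'})=0$), while the paper uses the vanishing product $\delta_x^{t'\to t}\cdot\eta_x^{t''\to t'}$; both work.

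Two caveats are worth flagging. First, your insistence on $t''-t'<t_0$ is both unnecessary and potentially problematic: applying Proposition \ref{prop:s0zeta_eval} to the target complex $CF^*_{vert}(L_0(t''),T_x(t))$ requires $t''-t>t_1$, and the window $(t'+0,t'+t_0)\cap(t+t_1,\infty)$ may be empty when $t'-t$ is close to $t_0$ and $t_1>2t_0$. The fix is simply to drop the upper bound on $t''$: the vanishing $s(\varepsilon_x^{t''\to t'})=0$ holds for every $t''>t'$ (it is the target of the section-counting differential regardless of the number of clean-intersection fibers), so you may take $t''$ as large as you like and cite Lemma \ref{l:s0toruseval} directly for the product $\delta_x^{t'\to t}\cdot\varepsilon_x^{t''\to t'}$, as the paper does. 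Second, the closing paragraph about Fredholm index theory and the Calabi-Yau condition is an overcomplication: that $s_x$ preserves the fiberwise grading (and hence $\psi_x\in H^0(T^n,\K)=\K$) is immediate bookkeeping, since the $c_{t',t}$ summand of $CF^*_{vert}(T_x(t'),T_x(t))$ carries an overall degree shift of $[1]$ relative to its fiberwise Morse grading while $s_x$ is a degree-$1$ differential; the paper disposes of this with the parenthetical ``(and degree constraints)''.
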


The first part of the statement is clear from the above description of the
Floer complex $CF^*(T_x(t'),T_x(t))$; the remaining part, namely showing
that the differential $s_x$ is given by multiplication by $g(x)$, relies
on an algebraic argument similar to the proof of Proposition
\ref{prop:s0zeta_eval} using the Leibniz rule. Thus, we first need to 
establish a couple of lemmas (analogous to Propositions
\ref{prop:CFL0_product} and \ref{prop:CFL0_eval}), before providing the proof.

We denote respectively by $\delta_x^{t'\to t}$ and $1_x^{t'\to t}$ the
elements which correspond to $1\in H^0(T^n,\K)$ in
the left and right summands of \eqref{eq:s0torus};
given $\alpha\in H^*(T^n,\K)$, the corresponding elements of the
left and right summands of
\eqref{eq:s0torus} are denoted by $\alpha\,\delta_x^{t'\to t}$ 
and $\alpha\,1_x^{t'\to t}$. With this notation, we have:

\begin{lemma}\label{l:s0torusproduct}
Assuming $t'-t>t_0$ and $t''-t'>t_0$, the Floer product
$$CF^*_{vert}(T_x(t'),T_x(t))\otimes CF^*_{vert}(T_x(t''),T_x(t'))\to
CF^*_{vert}(T_x(t''),T_x(t))$$ is as follows: for
all $\alpha,\alpha'\in H^*(T^n,\K)$,
\begin{eqnarray*}
(\alpha\,1_x^{t'\to t})\cdot (\alpha'\,1_x^{t''\to t'})&=&(\alpha\cupprod
\alpha')\,1_x^{t''\to t},\\
(\alpha\,\delta_x^{t'\to t})\cdot (\alpha'\,1_x^{t''\to t'})&=&
C'_{\xi;t''\to t',t}\,(\alpha\cupprod \alpha')\,\delta_x^{t''\to t},\\
(\alpha\,1_x^{t'\to t})\cdot (\alpha'\,\delta_x^{t''\to t'})&=&
C'_{\xi;t'',t'\to t}\,(\alpha\cupprod \alpha')\,\delta_x^{t''\to t},\\
(\alpha\,\delta_x^{t'\to t})\cdot (\alpha'\,\delta_x^{t''\to t'})&=&0
\end{eqnarray*}
where $C'_{\xi;t''\to t',t},C'_{\xi;t'',t'\to t}\in \K^*$
depend only on $t,t',t''$ and $\xi=-\frac{1}{2\pi}\mathrm{val}(x)$.
\end{lemma}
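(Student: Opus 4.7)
The plan is to adapt the strategy of Proposition \ref{prop:CFL0_product} to the Morse-Bott setting imposed by the clean intersections of $T_x(t)$, $T_x(t')$, $T_x(t'')$ along tori parallel to $\mathfrak{t}_x$ within the fibers $W^{-1}(-1)$ and $W^{-1}(c_{\cdot,\cdot})$. The Floer product can be computed either after a small fiberwise Hamiltonian perturbation enforcing transverse intersections, or in a Morse-Bott model where boundary marked points of holomorphic discs are constrained by (un)stable manifolds of a chosen Morse function on $T^n$. As in the proof of Proposition \ref{prop:CFL0_product}, the open mapping principle for $W$ (which holds since $W$ is $J$-holomorphic away from a neighborhood of $W^{-1}(0)$ avoided by the Lagrangian boundary conditions) together with degree constraints implies that the $J$-holomorphic discs contributing to $\mu^2$ are either entirely contained within a single fiber of $W$, or project to genuine sections over a triangular subregion of $\C$ delimited by portions of $\gamma_t,\gamma_{t'},\gamma_{t''}$.

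For $(\alpha\,1_x^{t'\to t})\cdot (\alpha'\,1_x^{t''\to t'})$, all generators lie in $W^{-1}(-1)$, so the contributions come from fiberwise discs and the product reduces to the Floer product on $CF^*(\mathfrak{t}_x,\mathfrak{t}_x)$. Since $\mathfrak{t}_x$ bounds no nonconstant holomorphic discs in the aspherical space $(\C^*)^n$, only classical Morse-theoretic contributions survive and give the cup product on $H^*(T^n,\K)$, with no scaling factor because the relevant fiberwise actions vanish under our normalization of the generators $1_x^{\cdot}$. For the two mixed products, the contributing sections project to the triangular regions $\mathcal{T}_{t''\to t',t}$ and $\mathcal{T}_{t'',t'\to t}$; since the induced product on the $E_2$-page is invariant under compactly supported deformations of the arcs (not crossing the origin), we may collapse the triangle to a single fiber. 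By the same area computation as in Proposition \ref{prop:CFL0_product}, this deformation produces a scaling factor equal to $t^A$ where $A$ is the symplectic area of a reference section running along $\xi = -\frac{1}{2\pi}\mathrm{val}(x)$ (depending only on $t,t',t''$ and $\xi$), and the residual fiberwise Floer product is again the cup product on $H^*(T^n,\K)$.

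Finally, for $(\alpha\,\delta_x^{t'\to t})\cdot(\alpha'\,\delta_x^{t''\to t'})$, we invoke a filtration argument: the vertical Floer product is induced on the $E_2$-page of the spectral sequence associated to the two-step filtration of $CF^*(T_x(t''),T_x(t))$ by intersection index with $W^{-1}(\R_+)$, under which the $1_x^{\cdot}$ generators lie in filtration degree $0$ and the $\delta_x^{\cdot}$ generators in degree $1$. This induced product strictly preserves filtration degree, so $\mu^2$ of two degree-$1$ generators lands in $E_2^2 = 0$, forcing the vanishing. The main technical obstacle will be in the mixed product case: one must verify that the deformation of the triangular region to a single fiber can be carried out while maintaining fiberwise monomial admissibility throughout and never crossing the origin in $\C$, and that the residual fiberwise product is correctly identified with the cup product on $H^*(T^n,\K)$ while handling the rank-one local system holonomies compatibly with our equivariant normalization of the generators. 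The independence of the scaling constant from $\alpha,\alpha'$ will then follow by lifting the boundary conditions to the universal cover in the fiber direction, where the symplectic area contribution reduces to that of a $\xi$-constant reference section, manifestly independent of Morse-theoretic data.
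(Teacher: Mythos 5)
Your proof is correct and takes essentially the same route as the paper: classify contributing discs via the open mapping principle into fiberwise discs and sections over triangles, compute fiberwise products as cup products on $H^*(T^n,\K)$ (since $\mathfrak{t}_x$ bounds no discs), and for the mixed terms collapse the triangular region to a single fiber, extracting the scaling constant from the symplectic area of a reference section at $\xi=-\frac{1}{2\pi}\mathrm{val}(x)$.

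The one place where you genuinely improve on the paper's exposition is the vanishing $(\alpha\,\delta_x^{t'\to t})\cdot(\alpha'\,\delta_x^{t''\to t'})=0$. The paper's proof of this lemma merely says ``the proof is essentially the same as for Proposition~\ref{prop:CFL0_product},'' whose analogous case is dispatched ``for degree reasons''; but that degree argument is specific to the $L_0$ setting, where all degree-$(-1)$ generators lie in a single homogeneous piece and the degree $-2$ output space is empty. In the torus setting the generators $\alpha\,\delta_x$ and $\alpha\,1_x$ spread over degrees $-1$ through $n$, and the total degree $|\alpha|+|\alpha'|-2$ of the putative output does fall in the allowed range for many $\alpha,\alpha'$, so the naive degree count does not suffice. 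Your spectral-sequence argument --- that the product on the page of the filtration by intersection with $W^{-1}(\R_+)$ preserves filtration degree, and the output filtration $1+1=2$ exceeds the length of the filtration --- is the correct way to establish the vanishing, and is implicitly what the paper relies on via the compatibility of the product with the filtration noted just after Proposition~\ref{prop:CFL0explicit}. (One small indexing remark: the vertical Floer complex is what the paper calls the ``second page,'' so whether you label it $E_1$ or $E_2$ is a matter of convention; the argument is unaffected.) The remaining caveats you flag about monomial admissibility during the deformation of the triangle, and compatibility of local-system holonomies with the $T^n$-equivariant normalization of the generators, are genuine points but are already addressed in the proofs of Propositions~\ref{prop:CFL0_product} and \ref{prop:mon_adm_hf_eval}, which the paper invokes by reference.
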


\proof The proof is essentially the same as for Proposition
\ref{prop:CFL0_product}: by considering the projection under $W:Y\to \C$,
we find that the only holomorphic discs contributing to the Floer product
are either contained in $W^{-1}(-1)$, or sections over one of the two 
triangular regions delimited by $\gamma_{t''}$, $\gamma_{t'}$ and
$\gamma_t$; in the latter case, we use a deformation argument to shrink
the triangular region to a single point and reduce to a count within the
fiber of $W$. Either way, things reduce to the Floer product on
$HF^*(\mathfrak{t}_x,\mathfrak{t}_x)\simeq H^*(T^n,\K)$, which coincides
with the ordinary cup product since there are no non-constant holomorphic
discs in $(\C^*)^n$ with boundary on $\mathfrak{t}_x$.
As in the proof of Proposition \ref{prop:CFL0_product}, the constant factors
$C'_{\xi;t''\to t',t}$ and $C'_{\xi;t'',t'\to t}$ account for the symplectic area
of a reference section (now chosen to lie at the same $\xi$-value as
$\mathfrak{t}_x$, i.e.\ $\xi=-\frac{1}{2\pi}\mathrm{val}(x)$) over the appropriate triangular region of the complex
plane, which turns out to coincide with the amount of area swept in the 
deformation used to reduce to a single fiber.
\endproof

\begin{lemma}\label{l:s0toruseval}
Assume $t'-t>t_0$ and $t''-t'>t_0$. The Floer product
$$CF^*_{vert}(T_x(t'),T_x(t))\otimes CF^*_{vert}(L_0(t''),T_x(t'))\to
CF^*_{vert}(L_0(t''),T_x(t))$$ vanishes identically on elements 
of the form $(\alpha\,1_{x}^{t'\to t})$ or $(\alpha\,\delta_x^{t'\to t})$
whenever $\alpha$ is a cohomology class of positive degree, whereas
\begin{eqnarray*}
1_x^{t'\to t}\cdot \eta_x^{t''\to t'}=C''_{\xi;t'',t'\to t}\,\eta_x^{t''\to t},
&\quad& 
1_x^{t'\to t}\cdot \varepsilon_x^{t''\to t'}=\varepsilon_x^{t''\to t},\\
\delta_x^{t'\to t}\cdot \varepsilon_x^{t''\to t'}=C''_{\xi;t''\to
t',t}\,\eta_x^{t''\to t},
&\quad& 
\delta_x^{t'\to t}\cdot \eta_x^{t''\to t'}=0,
\end{eqnarray*}
where $C''_{\xi;t''\to t',t},C''_{\xi;t'',t'\to t}\in \K^*$
depend only on $t,t',t''$ and $\xi=-\frac{1}{2\pi}\mathrm{val}(x)$.
\end{lemma}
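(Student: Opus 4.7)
The proof follows the blueprint of Proposition \ref{prop:CFL0_eval} and Lemma \ref{l:s0torusproduct}. The plan is to classify the contributing $J$-holomorphic discs by their projection under $W$, then reduce the relevant counts to Floer products within a single fiber of $W$, where the torus Floer theory of Section \ref{ss:fiberwise-floer} applies directly.

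First I would invoke the open mapping principle for the holomorphic projection $W:Y\to\C$ (valid away from the neighborhood of the origin where $J$ is possibly non-standard) to show that any $J$-holomorphic disc contributing to the product is either contained in a fiber of $W$ or projects as a section over a triangular region of $\C$ bounded by arcs of $\gamma_t$, $\gamma_{t'}$, and $\gamma_{t''}$. The placements of the inputs at $-1$ or $c_{t',t}$ (for $1_x^{t'\to t}$ or $\delta_x^{t'\to t}$, respectively) and at $-1$ or $c_{t'',t'}$ (for $\varepsilon_x^{t''\to t'}$ or $\eta_x^{t''\to t'}$), together with the required boundary conditions, then pin down the unique triangular region and the location of the output vertex in each case. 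Once this topological data is fixed, homotopy invariance allows me to deform the arcs via a compactly supported isotopy (disjoint from the origin), collapsing the triangular region to a single point inside some fiber $W^{-1}(c)\simeq(\C^*)^n$, and reducing the Floer product to a count within that fiber. In the limit, the three Lagrangians restrict to one monomially admissible section (a perturbation of the restriction of $L_0(t'')$) together with two copies of the torus $\mathfrak{t}_x$ (which is preserved by parallel transport along with its $T^n$-equivariant local system), so the product becomes an instance of the fiberwise module action
\begin{equation*}
HF^*(\mathfrak{t}_x,\mathfrak{t}_x)\otimes HF^*(\ell,\mathfrak{t}_x)\to HF^*(\ell,\mathfrak{t}_x),
\end{equation*}
in which the left factor is $H^*(T^n,\K)$ and the right factor is, by Proposition \ref{prop:mon_adm_hf_eval}, concentrated in degree zero and of rank one.

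From this reduction the vanishing assertions follow at once: any class $\alpha\in H^{>0}(T^n,\K)$ acts as zero on a rank-one, degree-zero target, so multiplication by $(\alpha\cdot 1_x^{t'\to t})$ or $(\alpha\cdot \delta_x^{t'\to t})$ kills both $\varepsilon_x^{t''\to t'}$ and $\eta_x^{t''\to t'}$; the additional vanishing $\delta_x^{t'\to t}\cdot\eta_x^{t''\to t'}=0$ also follows by a degree count, since its total degree $-2$ lies outside the two-step target complex. For the four degree-zero formulas, the fiberwise product is the module identity on the rank-one group, and the prefactors $C''_{\xi;t''\to t',t}$ and $C''_{\xi;t'',t'\to t}$ arise as powers of the Novikov parameter recording the symplectic area of a reference section of $W$ over the triangular region at the $\xi$-level of $\mathfrak{t}_x$, by the same computation as in the proof of Proposition \ref{prop:CFL0_eval}. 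The absence of a prefactor in $1_x^{t'\to t}\cdot \varepsilon_x^{t''\to t'}=\varepsilon_x^{t''\to t}$ reflects that all three marked points already project to $-1$, so no deformation is needed and no area is swept.

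The main subtlety will be handling the clean (rather than transverse) intersections of $T_x(t')$ with $T_x(t)$ along torus fibers, which requires working in the Morse-Bott setup (or after a small compactly supported fiberwise Hamiltonian perturbation) introduced in the paragraph preceding the statement. I would need to check that the incidence constraint imposed by the class $\alpha\in H^*(T^n,\K)$ on the appropriate boundary marked point interacts cleanly with the degeneration to a single fiber, and that regularity of the moduli spaces and the absence of disc or sphere bubbling (controlled via Remark \ref{rmk:genericJtransverse} and the exactness of $\mathfrak{t}_x$ in $(\C^*)^n$) persist throughout the deformation. Once these technical points are verified, the stated formulas are a direct consequence of the module structure of $HF^*(\ell,\mathfrak{t}_x)$ over $HF^*(\mathfrak{t}_x,\mathfrak{t}_x)$.
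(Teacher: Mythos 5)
Your proposal is correct and follows essentially the same approach as the paper: project under $W$ via the open mapping principle, reduce to the fiberwise module action $HF^*(\mathfrak{t}_x,\mathfrak{t}_x)\otimes HF^*(\ell_0(t''),\mathfrak{t}_x)\to HF^*(\ell_0(t''),\mathfrak{t}_x)$, and conclude from the fact that the target is rank one in a single degree (so positive-degree classes act as zero and $1$ acts by identity, the latter also seen via cohomological unitality). The paper's proof is considerably terser, but your additional observations — the degree count for $\delta\cdot\eta=0$, the absence of a prefactor when all three marked points lie in $W^{-1}(-1)$, and the $\xi$-dependence of the reference-section areas — are all consistent with and implicit in the paper's argument.
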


\proof The argument is again similar, reducing to the calculation of
Floer products within the fiber $W^{-1}(-1)\simeq (\C^*)^n$, specifically
the product
$$HF^*(\mathfrak{t}_x,\mathfrak{t}_x)\otimes
HF^*(\ell_0(t''),\mathfrak{t}_x)\to HF^*(\ell_0(t''),\mathfrak{t}_x).$$
The vanishing for elements of $HF^*(\mathfrak{t}_x,\mathfrak{t}_x)\simeq H^*(T^n,\K)$ of positive degree then
follows from the fact that $HF^*(\ell_0(t''),\mathfrak{t}_x)$ has rank one
and is concentrated in a single degree; whereas $1\in H^0(T^n,\K)\simeq
HF^0(\mathfrak{t}_x,\mathfrak{t}_x)$ acts by identity by cohomological
unitality.
\endproof

\proof[Proof of Proposition \ref{prop:s0torus}]
Given $t,t'$ with $t'-t>t_0$, choose $t''$ so that $t''>t'+t_1$. 
The compatibility of Floer products and differentials on vertical Floer
complexes (the Leibniz rule) implies that
$$s_x(\delta_x^{t'\to t})\cdot \eta_x^{t''\to t'}-
\delta_x^{t'\to t}\cdot s_x(\eta_x^{t''\to t'})=0,$$
which using Proposition \ref{prop:s0zeta_eval} and Lemma \ref{l:s0toruseval} yields:
$$s_x(\delta_x^{t'\to t})\cdot \eta_x^{t''\to
t'}=C_\xi(t'',t')\,g(x)\,\delta_x^{t'\to t}\cdot \varepsilon_x^{t''\to t'}
=C''_{\xi;t''\to t',t}\,C_\xi(t'',t')\,g(x)\,\eta_x^{t''\to t}.$$
Using again Lemma \ref{l:s0toruseval} (and degree constraints), it follows
that $$s_x(\delta_x^{t'\to t})={C''}_{\!\!\!\!\xi;t'',t'\to t}^{-1}\,C''_{\xi;t''\to
t',t}\,C_\xi(t'',t')\,g(x)\,1_x^{t'\to t}.$$
Setting $C'_\xi(t',t)={C''}_{\!\!\!\!\xi;t'',t'\to t}^{-1}\,C''_{\xi;t''\to
t',t}\,C_\xi(t'',t')$, we rewrite this as
$$s_x(\delta_x^{t'\to t})=C'_\xi(t',t)\,g(x)\,1_x^{t'\to t}$$
whenever $t'-t>t_0$, which is the desired result for the generators of $H^0(T^n,\K)$.

To extend the result to higher degree cohomology classes, we use the
product formulas of Lemma \ref{l:s0torusproduct}: given $t'>t+t_0$, and
choosing $t''>t'+t_0$, the Leibniz rule implies that
\begin{multline*}
s_x(\alpha\,\delta_x^{t'\to t})\cdot \delta_x^{t''\to t'}=
(\alpha\,\delta_x^{t'\to t})\cdot s_x(\delta_x^{t''\to t'})=
C'_\xi(t'',t')\,g(x)\,(\alpha\,\delta_x^{t'\to t})\cdot 1_x^{t''\to t'}\\ 
=C'_{\xi;t''\to t',t}\,C'_\xi(t'',t')\,g(x)\,(\alpha\,\delta_x^{t''\to t}),
\end{multline*}
and hence
$$s_x(\alpha\,\delta_x^{t'\to t})={C'}_{\!\!\xi;t'',t'\to t}^{-1}\,C'_{\xi;t''\to
t',t}\,C'_\xi(t'',t')\,g(x)\,(\alpha\,1_x^{t'\to
t})=C'_\xi(t',t)\,g(x)\,(\alpha\,1_x^{t'\to t}),$$
where the identity $C'_\xi(t',t)={C'}_{\!\!\xi;t'',t'\to t}^{-1}\,C'_{\xi;t''\to
t',t}\,C'_\xi(t'',t')$ follows from considering the special case $\alpha=1$.
\endproof

Given Propositions \ref{prop:HWL0} and \ref{prop:s0torus}, the remaining
step in the proof of Theorem \ref{thm:main} is a direct calculation of the
differential in \eqref{eq:s0torus}, with the aim of showing that
the Laurent polynomials $f$ and $g$ agree up to a constant scaling factor.

\subsection{Holomorphic sections of $W$ with boundary on product tori}

We now turn to the problem of explicitly determining the differential
on the complex \eqref{eq:s0torus}, i.e.\ counting $J$-holomorphic sections of
$W:Y\to\C$ over the region delimited by $\gamma_{t'}$ and $\gamma_{t''}$,
with boundary in the product torus $\mathfrak{t}_x$ in each fiber.
(In this section we use $t'$ and $t''$ instead of $t$ and $t'$ to avoid notation
conflicts with the Novikov parameter).

By Proposition \ref{prop:s0torus}, the
differential $s_x$ is given by multiplication by some element of $\K$; 
thus it is enough to determine the image of the generator
of $H^0(T^n,\K)$ (or equivalently, that of $H^n(T^n,\K)$); this amounts to counting $J$-holomorphic sections 
whose boundary passes through some prescribed input point in $W^{-1}(-1)$ (or output point in
$W^{-1}(c_{t'',t'})$ if we consider $H^n$ rather than
$H^0$; or in fact a point anywhere on the Lagrangian boundary condition, as the end
result does not depend on this choice).

While our definitions involve a perturbation of the standard complex 
structure $J_0$ near $W^{-1}(0)$ in order to achieve regularity of moduli
spaces, actually counting discs in practice requires one to consider the
limit as $J$ converges to the (non-regular) standard complex structure $J_0$. 
Under this limit, the $J$-holomorphic discs contributing to the differential
$s_x$ converge either to holomorphic discs (holomorphic sections of 
$W:Y\to\C$), or to stable configurations consisting of a holomorphic
disc (a section of $W$) together with one or more rational curves contained
inside the singular fiber $W^{-1}(0)$. (This is a standard instance of
Gromov compactness for a $C^\infty$-convergent sequence of almost-complex structures, cf.\ 
\cite[Theorem 5.3.1]{mcduff-salamon} for the closed case; as usual when
considering sections, it follows from positivity of intersection of the
non-vertical components with
the fibers of $W$ that any bubbles arising in the limit must be 
contained in a fiber of $W$\!, hence in $W^{-1}(0)$.)
Thus, the first step is to
understand moduli spaces of holomorphic sections of $W$ bounded by
$T_x(t'')\cup T_x(t')$.

\begin{proposition}\label{prop:torusdiscs}
For $t''-t'>t_0$ and $x\in (\K^*)^n$, the homotopy classes of holomorphic sections of $W:Y\to\C$ with boundary
on $T_x(t'')\cup T_x(t')$ are in one-to-one correspondence with the elements
of $P_\Z$. For each such class, the moduli space of sections
consists of a single orbit under the action of $T^{n}$, and the count of
sections through any given point of $\mathfrak{t}_x\subset W^{-1}(-1)$
is equal to one.
\end{proposition}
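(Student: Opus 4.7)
The plan is to lift the counting problem from $Y$ to $\C^{P_\Z}$ via the Hamiltonian reduction description $Y=\mu^{-1}(\lambda)/\T_M$ from Section~\ref{ss:ham_red}. By Lemma~\ref{l:partransred}, parallel transport for $W$ commutes with reduction, so the Lagrangians $T_x(t'')\cup T_x(t')$ lift to product tori $\mathfrak{T}_{x''}\cup\mathfrak{T}_{x'}$ in $\mu^{-1}(\lambda)\subset \C^{P_\Z}$; via the usual equivalence of symplectic and GIT quotients, holomorphic sections $u\co\Sigma\to Y$ of $W$ with boundary on $T_x(t'')\cup T_x(t')$ correspond to $\T_M$-orbits of holomorphic maps $\tilde u\co\Sigma\to\C^{P_\Z}$ with $\prod_\alpha\tilde u_\alpha=-z$ and boundary on $\mathfrak{T}_{x''}\cup\mathfrak{T}_{x'}$. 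Here $\Sigma$ is the lens-shaped region enclosed by $\gamma_{t''}$ and $\gamma_{t'}$, with corners at $-1$ and $c_{t'',t'}$, containing the origin in its interior.

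First I would enumerate the homotopy classes using positivity of intersection. Since $W\circ u=\mathrm{id}_\Sigma$ and $0\in\mathrm{int}(\Sigma)$, the section $u$ has total algebraic intersection $+1$ with the singular fiber $W^{-1}(0)=\bigcup_{\alpha\in P_\Z} D_\alpha$. Each toric divisor $D_\alpha$ is $J$-holomorphic, so positivity of intersection forces $k_\alpha:=[u]\cdot[D_\alpha]$ to be a non-negative integer, and the sum constraint $\sum_\alpha k_\alpha=1$ forces exactly one $\alpha_0\in P_\Z$ to satisfy $k_{\alpha_0}=1$. This labeling gives the desired bijection with $P_\Z$.

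For each fixed $\alpha_0\in P_\Z$, a lift $\tilde u$ in the corresponding class has $\tilde u_{\alpha_0}$ vanishing simply at $z=0$ and $\tilde u_\alpha$ nowhere zero for $\alpha\neq\alpha_0$. Writing $\tilde u_\alpha=\exp g_\alpha$ (respectively $\tilde u_{\alpha_0}=-z\exp g_{\alpha_0}$) with $g_\alpha\co\Sigma\to\C$ holomorphic, the modulus conditions $|\tilde u_\alpha|=r_\alpha^{\pm}$ on the two arcs of $\partial\Sigma$ become a Dirichlet problem for $\Re g_\alpha$ with piecewise-constant boundary data, which admits a unique harmonic solution. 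The holomorphic extension of each $g_\alpha$ is then determined up to an additive imaginary constant, yielding $N=|P_\Z|$ phase parameters; the single relation $\prod\tilde u_\alpha=-z$ cuts this down to a $T^{N-1}$-torsor of lifts in $\C^{P_\Z}$, and quotienting by the rank-$(N-n-1)$ torus $\T_M$ produces a single $T^n$-orbit of sections in $Y$. Fixing the image of the corner $-1$ at a prescribed point of $\mathfrak{t}_x$ is a codimension-$n$ condition on this torsor and selects a unique element, giving count~$1$.

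The main technical point will be the verification of regularity and signs. After gauge-fixing the $\T_M$-action, the linearized Cauchy--Riemann operator decouples coordinate by coordinate into a Dirichlet-type operator on $\Sigma$ whose kernel is precisely the imaginary constants accounting for phase rotation; the Fredholm index of the resulting moduli problem matches the dimension of the $T^n$-orbit, confirming regularity at the explicit solutions (which moreover shows that Proposition~\ref{prop:energy}-type compactness in the limit as $J\to J_0$ can be promoted to a genuine count for the actual perturbed $J$). Since $c_1(Y)=0$, there are no multiply-covered sphere bubbles to worry about, and generic choice of $J$ near $W^{-1}(0)$ (as in Remark~\ref{rmk:genericJtransverse}) guarantees that sphere bubbles in the closed fiber $W^{-1}(0)$ are disjoint from the section moduli. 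The sign is $+1$ by the standard toric product-torus orientation conventions, as in the Cho--Oh calculation for Maslov-index-$2$ discs in toric Calabi-Yau manifolds.
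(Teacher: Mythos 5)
Your proposal takes a genuinely different route from the paper: you lift the problem to $\C^{P_\Z}$ via the Hamiltonian reduction description, whereas the paper stays inside $Y$ and works in the affine chart $Y_\alpha = Y\setminus\bigcup_{\alpha'\neq\alpha}Z_{\alpha'}\cong\C\times(\C^*)^n$, parametrizing sections by $W$ and reducing to $n$ scalar Dirichlet problems. The two approaches are structurally parallel (both rest on uniqueness of holomorphic maps with prescribed boundary modulus and a torus of phase freedom), and the ``$T^{N-1}/\T_M\cong T^n$'' count is arithmetically fine since $\T_M\subset\ker(\sum)\subset T^N$. However, there are two concrete issues.

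First, the boundary data is \emph{not} piecewise constant. The tori $T_x(t')$, $T_x(t'')$ are parallel transports of $\mathfrak{t}_x$ over the arcs $\gamma_{t'},\gamma_{t''}$, along which $|W|$ varies; since the moment coordinate $\eta$ (hence each $\mu_\alpha=\eta-\langle\alpha,\xi\rangle+\nu(\alpha)$) depends on $|W|$, the lifted radii $|\tilde u_\alpha|$ vary continuously along each arc. The paper treats exactly this point: the boundary condition is only H\"older continuous, and one must invoke the Riemann mapping theorem, the Schwarz integral formula, and regularity of the Hilbert transform to get a well-defined harmonic conjugate. Only in a special case --- discussed in a remark immediately after the proof, when $\xi$ lies in the intersection of $n$ of the regions $S_{\mathbf{v},\gamma}$ so that Proposition~\ref{prop:invce_Y} applies --- can one arrange the monomials to be invariant under parallel transport, making the boundary data literally constant. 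Your argument silently assumes this special situation.

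Second, the reduction to a $\T_M$-orbit problem is underjustified. ``The usual equivalence of symplectic and GIT quotients'' is an identification of spaces, not a correspondence of moduli spaces of holomorphic maps with Lagrangian boundary conditions. A holomorphic lift of $u$ to the GIT-stable locus of $\C^{P_\Z}$ is unique only up to a holomorphic map $\Sigma\to\T_{M,\C}$, and a generic lift will not have boundary on $\mathfrak{T}\subset\mu^{-1}(\lambda)$; bringing the boundary onto $\mathfrak{T}$ requires solving a separate Dirichlet problem for the $\T_{M,\C}$-gauge (prescribing $\mu|_{\partial\Sigma}=\lambda$), after which the residual ambiguity is genuinely $\T_M$. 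This is fixable, but it is an extra step that needs to be stated; the paper's choice to work in the chart $Y_\alpha$ sidesteps it entirely. Finally, the appeal to Cho--Oh for the sign is not quite apposite (that computation concerns Maslov-index-two discs in a fiber torus, not sections of a Lefschetz-type fibration through two parallel-transported tori); the paper instead obtains the sign by deforming to a constant boundary condition, where the only contribution is a regular constant map counting $+1$.
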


\proof Denote by $S$ the region of the complex plane delimited by $\gamma_{t''}$ and
$\gamma_{t'}$. Since $S$ contains the origin, a holomorphic section of $W:Y\to\C$ over
$S$ has intersection number one with $Z=W^{-1}(0)=\bigcup_\alpha Z_\alpha$, which is
the union of the irreducible toric divisors of $Y$. Hence it must intersect
exactly one of these, say $Z_\alpha$ for some $\alpha\in P_\Z$, and be
disjoint from $Z_{\alpha'}$ for all $\alpha'\neq \alpha$. 
For fixed $\alpha$, we are thus reduced to studying
holomorphic discs contained in $Y_\alpha=Y\setminus \bigcup_{\alpha'\neq \alpha}
Z_{\alpha'}$, the partial compactification of the open stratum of the toric
variety $Y$ obtained by adding the open stratum of $Z_\alpha$.

$Y_\alpha$ is biholomorphic to $\C\times (\C^*)^n$, and we choose such an
identification where the first coordinate is given by
$W=-z^{(0,\dots,0,1)}$, and the remaining coordinates $(z_1,\dots,z_n)\in(\C^*)^n$
are given by toric monomials, in such a way that product tori in the
fibers $W^{-1}(c)$, $c\in \partial S$ correspond to standard product tori in
$\{c\}\times (\C^*)^n$.

We parametrize holomorphic sections of $W_{|Y_\alpha}:Y_\alpha\to \C$ over $S$ by the first coordinate (i.e.,
$W$), so that the domain is $S$, and we are reduced to finding holomorphic
maps $S\to (\C^*)^n$, $w\mapsto (z_1(w),\dots,z_n(w))$ which satisfy the appropriate boundary conditions over
$\partial S$. Specifically, our boundary condition is given by product
tori in $(\C^*)^n$, i.e.\ the value of $|z_i|$ is prescribed at every point
of the boundary.  We claim that solutions, if they exist, are unique up to the action of 
$T^n$ on $(\C^*)^n$ by rotations. Indeed, if $z_i,\tilde{z}_i:S\to \C^*$
are both holomorphic and $|z_i(w)|=|\tilde{z}_i(w)|$ for all $w\in \partial S$,
then the ratio $\tilde{z}_i(w)/z_i(w)$ defines a holomorphic map from $S$ to $\C^*$,
taking values in the unit circle along $\partial S$; the open mapping principle thus
implies that it is constant, i.e.\ there exists $e^{i\theta}\in S^1$ such
that $\tilde{z}_i(w)=e^{i\theta}z_i(w)$ for all $w\in S$. Thus the moduli
space of sections in the given class consists of at most one $T^n$-orbit.

One approach to prove existence is to use complex analysis. For each $i\in
\{1,\dots,n\}$, the boundary condition prescribes the value of $\log|z_i|=\mathrm{Re}(\log z_i)$ 
at every point of $\partial S$. Using the Riemann mapping theorem to identify $S$ with the unit
disc, it is a classical result of Schwarz that, up to a pure imaginary
additive constant, there exists a unique analytic function $\log z_i:\mathrm{int}(S)\to\C$
(given by the Schwarz integral formula) whose real part has a continuous extension
and takes the prescribed values at the boundary of $S$ (see e.g.\
\cite[\S 4.6.3-4.6.4]{Ahlfors}).
Because the given real boundary condition along the unit circle is H\"older continuous
(even after pullback from $S$ to the disc, see e.g.\ \cite[Chapter 3]{Pommerenke}), the imaginary part $\mathrm{Im}(\log z_i)$
(the harmonic conjugate of $\mathrm{Re}(\log z_i)$) also has a (H\"older) continuous
extension to the boundary, given by the Hilbert transform of the real part
\cite[Theorem III.1.3]{Garnett}.
Exponentiating, we arrive at the desired mapping $z_i:S\to \C^*$, and
conclude that, up to the action of $T^n$ by rotation of the
coordinates of $(\C^*)^n$, there is a unique continuous map $w\mapsto
(z_1(w),\dots,z_n(w))$ from $S$ to $(\C^*)^n$ which is holomorphic over the
interior of $S$ and satisfies the given boundary conditions.

An alternative approach to existence is to use the invariance of the count of
holomorphic sections of $W$ upon deforming the given boundary
condition to a product one,
given by the same torus (in terms of the coordinates $z_i$) in all the fibers of $W$ over $\partial S$;
i.e.\ we modify the problem so that the prescribed value of $|z_i|$ is the
same at every point of $\partial S$, rather than possibly varying from one
point to another. (This can viewed either as deforming the totally real
boundary condition being imposed on the sections of $W$, or as keeping the
same Lagrangian boundary condition but modifying the
coordinates and the complex structure on $Y_\alpha$ by rescaling each of $z_1,\dots,z_n$ by an amount 
which varies smoothly over $S$.)
After this deformation, one is led to look for holomorphic
maps from $S$ to $(\C^*)^n$ such that $|z_i|$ is equal to a fixed constant
at every point of $\partial S$: in other terms, holomorphic discs
(parametrized by $S$) in $(\C^*)^n$ with boundary on a fixed product torus.
By the maximum principle, the only solutions are constant maps, and these are regular. 
Thus, in the deformed setting, the moduli space of sections consists of
precisely one $T^n$-orbit,
and the count of holomorphic sections through a given point is equal to one.
Because of the homotopy invariance of Floer-theoretic section-counting invariants under
deformations, it follows that the moduli space of sections for our initial
problem is also non-empty, consisting of a single $T^n$-orbit, and the count
of sections through a given point is equal to one.
\endproof

\begin{remark}
The argument can be simplified if we assume that
$\xi=-\frac{1}{2\pi}\mathrm{val}(x)$ lies
in the intersection of $n$ of the subsets $S_{\mathbf{v},\gamma}$,
$\mathbf{v}\in \mathcal{V}$ defined by \eqref{eq:Svgamma}; since non-empty such intersections always
exist, and our comparison of $f$ and $g$ only requires us to determine
the differential $s_x$ for $x$ of arbitrary fixed valuation, this simpler
setting would in fact suffice for our purposes. When $\xi$ lies in the
intersection of $n$ of the $S_{\mathbf{v},\gamma}$, by
Proposition \ref{prop:invce_Y} we can choose the toric monomials
$z_1,\dots,z_n$ in the above argument in such a way that they are all
invariant under parallel transport along $\partial S$ at all points of
$\mathfrak{t}_x$. This implies that the radii $|z_i|$ of the boundary
tori remain constant all along $\partial S$ (i.e., the boundary condition
consists of the same product torus
in $(\C^*)^n$ over each point of $\partial S$); we can then directly 
classify the holomorphic sections without appealing to complex analysis
nor to a deformation argument.
\end{remark}

Each of the families of holomorphic sections identified in Proposition
\ref{prop:torusdiscs}, representing a relative homology class $[D_\alpha]\in
H_2(Y,T_x(t'')\cup T_x(t'))$, contributes to the Floer differential on
$CF^*_{vert}(T_x(t''),T_x(t'))$ with a weight
\begin{equation}\label{eq:discweight}
\mathrm{weight}([D_\alpha])=t^{\,\raisebox{5pt}{$\int_{[D_\alpha]}\omega$}}\,
\mathrm{hol}([\partial D_\alpha])\,\exp(\textstyle\int_{[D_\alpha]}\mathfrak{b})\in \Lambda_{\geq 0}.
\end{equation}
In this formula, $\mathrm{hol}([\partial D_\alpha])$ denotes the holonomy of the local
system along the boundary of $D_\alpha$, which requires some clarification. Since the local
systems on $T_x(t')$ and $T_x(t'')$ are isomorphic over $T_x(t')\cap T_x(t'')$ 
(canonically over $W^{-1}(-1)$, and in a preferred manner up to a 
constant factor over $W^{-1}(c_{t'',t'})$ using the $T^n$-equivariant structure of $\mathfrak{t}_x$),
they can be glued into a local system on the portion of $T_x(t')\cup T_x(t'')$
which fibers over $\partial S$. Noting that this subset of $T_x(t')\cup
T_x(t'')$ can be deformed
isotopically to a product torus in $Y$, we choose the gluing at
$W^{-1}(c_{t'',t'})$ in such a way that the holonomy of the local system
along a loop which deforms to an orbit of the last $S^1$-factor of the
toric action (with moment map $\eta$) is equal to identity. (Meanwhile,
the holonomies along the first $n$ circle factors, within the fibers of $W$,
coincide with those of $\mathfrak{t}_x$.) With this choice
in hand, we define $\mathrm{hol}([\partial D_\alpha])$ to be the holonomy of the local system on $T_x(t')\cup T_x(t'')$ along
the boundary of $D_\alpha$.
Also, we denote by $\mathfrak{b}$ a representative of the bulk deformation
class which is supported near $W^{-1}(0)$ (so its pairing with $[D_\alpha]$ is well
defined). Specifically, we choose the bulk deformation to be of the form
\begin{equation}\label{eq:bulkcoeffs}
\mathfrak{b}=\sum_{\alpha \in P_\Z} \mathfrak{b}_\alpha\,\delta_{Z_\alpha},
\end{equation}
where the constants $\mathfrak{b}_{\alpha}\in \Lambda_{\geq 0}$ are coefficients to be
determined later, and $\delta_{Z_\alpha}$ is a representative of the
cohomology class Poincar\'e dual to the divisor $Z_\alpha$, supported in a
small neighborhood of $Z_\alpha$. Since $[D_\alpha]$ has intersection number
one with $Z_\alpha$ and zero with the other components of $W^{-1}(0)$, we
find that
$\exp(\textstyle\int_{[D_\alpha]}\mathfrak{b})=\exp(\mathfrak{b}_\alpha)$.

\begin{proposition}\label{prop:torusdiscweights}
For all $t''>t'+t_0$, $\alpha \in P_\Z$, and $x\in(\K^*)^n$, there exists a
nonzero constant $K_\xi(t'',t')$
depending only on $t',t''$ and $\xi=-\frac{1}{2\pi}\mathrm{val}(x)$ 
such that the weight of a holomorphic section of $W:Y\to\C$
bounded by $T_x(t'')\cup T_x(t')$ and representing the class $[D_\alpha]$ is given by
\begin{equation}\label{eq:torusdiscweights}
\mathrm{weight}([D_\alpha])=K_\xi(t'',t')\,t^{2\pi \nu(\alpha)}x^\alpha\,\exp(\mathfrak{b}_\alpha).
\end{equation}
\end{proposition}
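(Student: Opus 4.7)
The plan is to evaluate the three factors appearing in the weight formula \eqref{eq:discweight}---the bulk pairing, the boundary holonomy, and the symplectic area exponential---separately, extract their $\alpha$-dependence, and combine them to match the claimed expression.

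\emph{Bulk and holonomy.} Since $[D_\alpha]$ has intersection numbers $\delta_{\alpha\gamma}$ with the toric divisors $Z_\gamma$ by Proposition \ref{prop:torusdiscs}, and $\mathfrak{b}=\sum_\gamma\mathfrak{b}_\gamma\delta_{Z_\gamma}$, we obtain $\exp(\int_{[D_\alpha]}\mathfrak{b})=\exp(\mathfrak{b}_\alpha)$. For the holonomy, I lift $D_\alpha$ through the symplectic reduction $Y=\mu^{-1}(\lambda)/\T_M$ to a smooth map $\tilde D_\alpha\co S\to\mu^{-1}(\lambda)\subset\C^{P_\Z}$; along $\partial\tilde D_\alpha$ the winding numbers of $\theta_\beta=\arg z_\beta$ are topologically invariant and coincide (by $\T_M$-equivariance of the hyperplanes $\{z_\beta=0\}$ and homotopy invariance through $\T_{M,\C}$) with those of a holomorphic lift, hence $w_\beta=\delta_{\alpha\beta}$. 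Under the prescribed $T^n$-equivariant gluing that trivializes the local system in the direction of the last circle factor, the boundary class $[\partial D_\alpha]\in\Z^{n+1}$ is the image of $(\delta_{\alpha\beta})_\beta\in\Z^{P_\Z}$ under the surjection \eqref{eq:surjlattices}, namely $(-\alpha,1)$, so the holonomy factor is $y^{-\alpha}$.

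\emph{Area.} Since $\omega_Y$ is the restriction of $\omega_{\C^{P_\Z}}$ to $\mu^{-1}(\lambda)$ modulo $\T_M$-orbit directions, $\int_{D_\alpha}\omega=\int_{\tilde D_\alpha}\omega_{\C^{P_\Z}}$. On the open stratum the primitive $\lambda_0=\sum_\beta\mu_\beta\,d\theta_\beta$ satisfies $d\lambda_0=\omega$, and extends smoothly across each coordinate hyperplane $\{z_\beta=0\}$ since $\mu_\beta\sim|z_\beta|^2$ near this divisor (from the explicit form of the K\"ahler potential in Definition \ref{def:K_potential}) while $d\theta_\beta$ has a simple pole. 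Hence Stokes' theorem gives $\int_{D_\alpha}\omega=\int_{\partial\tilde D_\alpha}\lambda_0$ despite the single crossing of $\{z_\alpha=0\}$ by $\tilde D_\alpha$. Along $\partial\tilde D_\alpha$ we have $\mu_\beta=\eta(c)-\langle\beta,\xi\rangle+\nu(\beta)$, with $c=W\circ\tilde D_\alpha\in\partial S$ and $\eta(c)$ independent of $\beta$. Splitting $\mu_\beta=\mu_\beta^0+(\eta(c)-\eta_0)$, where $\mu_\beta^0=\eta_0+\nu(\beta)-\langle\beta,\xi\rangle$ is evaluated at a reference base point, and using $\sum_\beta d\theta_\beta=d\arg W$, the integral decomposes as
\begin{equation*}
\int_{\partial\tilde D_\alpha}\lambda_0 \;=\; 2\pi\mu_\alpha^0 \;+\; \int_{\partial S}(\eta(c)-\eta_0)\,d\arg c \;=\; 2\pi(\nu(\alpha)-\langle\alpha,\xi\rangle) + A(t'',t';\xi),
\end{equation*}
where $A(t'',t';\xi)$ is manifestly $\alpha$-independent. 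Using the identity $x^\alpha=t^{-2\pi\langle\alpha,\xi\rangle}y^{-\alpha}$ to merge the area and holonomy exponentials and absorbing $t^{A(t'',t';\xi)}$ into $K_\xi(t'',t')$ produces the claimed formula.

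The main technical obstacle is the Stokes computation at the divisor intersection: one must verify both that the candidate primitive $\lambda_0$ extends to a smooth global primitive of $\omega_{\C^{P_\Z}}$ near $\{z_\alpha=0\}$, which reduces to the local analysis $\mu_\alpha\sim|z_\alpha|^2$ combined with the simple pole of $d\theta_\alpha$, and that the $\T_M$-invariant intersection count governing the winding numbers applies to any smooth lift in $\mu^{-1}(\lambda)$ rather than just to the holomorphic lift in $\C^{P_\Z}$.
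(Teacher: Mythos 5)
Your proof is correct, and it takes a genuinely different route from the paper's.

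The paper establishes \eqref{eq:torusdiscweights} indirectly: using multiplicativity of the weight under addition of relative classes, it constructs for each edge of the subdivision $\mathcal{P}$ an explicit $2$-chain $D_{\alpha_1\alpha_2}$ (fibered by $S^1$-orbits over a path from $\partial S$ to the origin), computes its area from the moment map, deduces $[D_{\alpha_1\alpha_2}]=[D_{\alpha_2}]-[D_{\alpha_1}]$ from intersection numbers, and propagates the formula across all of $P_\Z$ by connectedness of the $1$-skeleton of $\mathcal{P}$; a separate holonomy argument then shows the remaining constant depends only on $\xi$. Your approach instead computes $\int_{D_\alpha}\omega_Y$ head-on: you lift $D_\alpha$ through the Hamiltonian reduction to $\mu^{-1}(\lambda)\subset\C^{P_\Z}$ and apply Stokes to the global primitive $\lambda_0=\sum_\beta\mu_\beta\,d\theta_\beta$, after checking that $\mu_\beta\sim|z_\beta|^2$ near the coordinate hyperplanes makes $\lambda_0$ smooth there (which is equivalent to smoothness of the K\"ahler form, Lemma~\ref{l:K_form_CN}, rather than a special feature of Definition~\ref{def:K_potential}). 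The winding numbers $\int_{\partial\tilde D_\alpha}d\theta_\beta=2\pi\delta_{\alpha\beta}$ follow for any lift from the fact that intersection numbers with $\{z_\beta=0\}$ pull back from $Y$ (Proposition~\ref{prop:torusdiscs}); holomorphy of the lift plays no role, so the caveat you raise at the end is not a real issue. The holonomy step is the same as the paper's. Your version has the advantages that it does not invoke the combinatorics of $\mathcal{P}$, obtains the $\xi$-only dependence of the constant in the same stroke (since after the $\eta_0$-dependence cancels using $\int_{\partial S}d\arg c=2\pi$, you get $A=\int_{\partial S}\eta(c)\,d\arg c$ explicitly), and it identifies $K_\xi(t'',t')=t^{A(t'',t';\xi)}$. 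The paper's version avoids having to produce and verify a global primitive, working entirely at the level of relative homology classes.
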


\proof The portion of $T_x(t'')\cup T_x(t')$ which fibers over $\partial S$
can be deformed by an isotopy into a product torus in $Y$ (by deforming $S$
to a disc), so $H_2(Y,T_x(t'')\cup T_x(t'))\simeq H_2(Y,T^{n+1})\simeq
\Z^{P_\Z}$ (where the latter isomorphism follows from standard facts in toric
geometry). Concretely, this means that relative
homology classes are uniquely determined by their algebraic intersection
numbers with each of the toric divisors $Z_\alpha$.

Let $\alpha_1,\alpha_2\in P_\Z$ be two lattice points which are connected
by an edge in the subdivision $\mathcal{P}$ of $P$ determined by the
tropicalization of the Laurent polynomial $f$ (see Section \ref{s:LGmodel}), 
i.e.\ such that the toric divisors $Z_{\alpha_1},Z_{\alpha_2}\subset Y$ intersect along an
$(n-1)$-dimensional toric stratum $Z_{\alpha_1\alpha_2}$. In terms of the
moment polytope $\Delta_Y$, $Z_{\alpha_1\alpha_2}$ corresponds to 
the codimension 2 stratum of points $(\xi,\eta)$ where $\alpha_1$ and
$\alpha_2$ both achieve the maximum in the piecewise linear polynomial
$\varphi$, and 
\begin{equation}\label{eq:faceZalpha1alpha2}
\eta=\varphi(\xi)=\langle \alpha_1,\xi\rangle-\nu(\alpha_1)=
\langle \alpha_2,\xi\rangle-\nu(\alpha_2).
\end{equation}
The stabilizer of the $T^{n+1}$-action on $Y$ along
$Z_{\alpha_1\alpha_2}$ is the subtorus spanned by the weights
$(-\alpha_1,1)$ and $(-\alpha_2,1)$ (the generators of the two rays of 
the fan $\Sigma_Y$ which span the cone corresponding to
$Z_{\alpha_1\alpha_2}$, or equivalently, the normal vectors to the
face \eqref{eq:faceZalpha1alpha2} of $\Delta_Y$). 
Thus, we can define a 2-chain $D_{\alpha_1\alpha_2}$ in $Y$, with boundary 
in $T_x(t'')\cup T_x(t')$, by considering a path in the complex plane which
connects some $w_1\in \partial S$ to the origin, and in every fiber of $W$
over this path, a suitably chosen orbit of the $S^1$-action with weight
$(\alpha_1-\alpha_2,0)$. We take these $S^1$-orbits
to lie at moment map values which start at $\xi_1=-\frac{1}{2\pi}\mathrm{val}(x)$ over $w_1\in \partial
S$ (so that the boundary of our 2-chain lies in $T_x(t'')\cup T_x(t')$), 
and end at a point $(\xi_0,\eta_0)$ which satisfies \eqref{eq:faceZalpha1alpha2} over the
origin (whence the $S^1$-orbit collapses to a point by our above observation
on the stabilizer along $Z_{\alpha_1\alpha_2}$).

By comparing intersection numbers with the toric divisors of $Y$, we find
that, for a suitable choice of orientation, $[D_{\alpha_1\alpha_2}]=[D_{\alpha_2}]-[D_{\alpha_1}]$. Thus, since the
weight formula \eqref{eq:discweight} is manifestly multiplicative, we
conclude that 
\begin{equation}\label{eq:weightratio}
\mathrm{weight}([D_{\alpha_2}])=\mathrm{weight}([D_{\alpha_1\alpha_2}])\cdot
\mathrm{weight}([D_{\alpha_1}]).
\end{equation}
On the other hand, the weight of $D_{\alpha_1\alpha_2}$ can be
calculated explicitly. Parametrizing this disc by a map $u:D^2\to Y$ and
using polar coordinates $\rho$
(along the path in the moment polytope $\Delta_Y$) and $\theta$ (along the
$S^1$-orbits), and observing that
$\omega(\cdot,\partial_\theta u)=d(\langle \alpha_1-\alpha_2,\xi\rangle)$
by definition of the moment map, we have
$$\int_{D_{\alpha_1\alpha_2}} \!\!\!\omega=\iint_{D^2} \omega(\partial_\rho u,
\partial_\theta u)\,d\rho\,d\theta=2\pi\int_0^1 \partial_\rho(\langle
\alpha_1-\alpha_2,\xi(\rho)\rangle)\,d\rho=2\pi \langle
\alpha_1-\alpha_2,\xi_1-\xi_0\rangle.$$
Since $\xi_0$ satisfies \eqref{eq:faceZalpha1alpha2},
$\langle \alpha_1-\alpha_2,\xi_0\rangle=\nu(\alpha_1)-\nu(\alpha_2)$,
so $$\int_{D_{\alpha_1\alpha_2}}\!\!\omega = \langle
\alpha_2-\alpha_1,\mathrm{val}(x)\rangle+2\pi \nu(\alpha_2)-2\pi
\nu(\alpha_1).$$
Denoting by $y=(y_1,\dots,y_n)$ the holonomies of the local system of $\mathfrak{t}_x$ along
the various circle factors, the holonomy along the boundary of
$D_{\alpha_1\alpha_2}$ is given by $y^{\alpha_1-\alpha_2}$.
Recalling that $x_i=t^{\mathrm{val}(x_i)} y_i^{-1}$, we conclude that the
weight of $D_{\alpha_1\alpha_2}$ is
\begin{eqnarray}
\mathrm{weight}([D_{\alpha_1\alpha_2}])&=&y^{\alpha_1-\alpha_2}\,t^{\langle
\alpha_2-\alpha_1,\mathrm{val}(x)\rangle+2\pi
\nu(\alpha_2)-2\pi\nu(\alpha_1)}\,
\exp(\mathfrak{b}_{\alpha_2}-\mathfrak{b}_{\alpha_1})\\ \nonumber
&=&x^{\alpha_2-\alpha_1}\,t^{2\pi\nu(\alpha_2)-2\pi\nu(\alpha_1)}\,
\exp(\mathfrak{b}_{\alpha_2}-\mathfrak{b}_{\alpha_1}).
\end{eqnarray}
In light of \eqref{eq:weightratio}, and using connectedness of the
1-skeleton of the subdivision $\mathcal{P}$ (i.e., any two elements of $P_\Z$
can be connected via a sequence of elements of $P_\Z$ such that
the above calculation can be applied to consecutive terms in the sequence),
this implies that for fixed $t',t'',x$, the weight of $D_\alpha$ is
proportional to 
\begin{equation}\label{eq:torusdiscweightnoconstant}
x^\alpha t^{2\pi\nu(\alpha)} \exp(\mathfrak{b}_\alpha).
\end{equation}
This is basically the desired formula \eqref{eq:torusdiscweights}, except 
we have not yet shown that the scaling constant depends only on the
valuation of $x$ (and $t',t'')$ rather than on $x$ itself.

To show the constant only depends on $\xi$ (and $t',t''$), we observe that for fixed
$\xi=-\frac{1}{2\pi}\mathrm{val}(x)$, the only role played by $x$ 
is in determining the holonomy of the local system. Recalling that
$T_x(t'')\cup T_x(t')$ (after restriction to $\partial S$) is isotopic
to a product torus $\mathfrak{t}_x\times S^1\simeq T^{n+1}$ in $Y$, and noting that the boundary
of $D_\alpha$ represents the class $(-\alpha,1)$ in
$\pi_1(T_x(t'')\cup T_x(t))\simeq \pi_1(T^{n+1})\simeq \Z^{n+1}$,
we find that $\mathrm{hol}([\partial D_\alpha])=y^{-\alpha}$, so that
the dependence of the weight of $D_\alpha$ on $x$ is indeed as in
\eqref{eq:torusdiscweightnoconstant}, and the scaling factor $K_\xi(t'',t')$ 
does not depend on the holonomy, i.e.\ it depends only on $\xi=-\frac{1}{2\pi}\mathrm{val}(x)$ and not on $x$ itself.
\endproof

We now return to the problem of counting $J$-holomorphic sections of
$W:Y\to\C$ with boundary on $T_x(t')\cup T_x(t'')$.
As previously noted, when $J$ converges to the standard complex structure
$J_0$, the $J$-holomorphic discs contributing
to the differential \eqref{eq:s0torus} limit to stable curves consisting
of a holomorphic disc, representing
one of the classes $[D_\alpha]$ for some $\alpha\in P_\Z$ (by Proposition
\ref{prop:torusdiscs}), and a (possibly
empty) configuration of rational curves contained in $Z=W^{-1}(0)$,
representing some homology class $\beta\in H_2(Y)$ (with $[\omega]\cdot
\beta>0$ whenever $\beta\neq 0$).

\begin{definition}
For fixed $t',t'',\xi$, and for each $\alpha\in P_\Z$ and $\beta\in H_2(Y)$, we denote by
$n_{\alpha,\beta}$ the (signed) count of $J$-holomorphic sections of $W$
(for generic $J$ close to $J_0$) whose relative homology class in $H_2(Y,T_x(t')\cup T_x(t''))$ is equal to
$[D_\alpha]+\beta$, passing through a generic point of $\mathfrak{t}_x\subset
W^{-1}(-1)$. 
\end{definition}

By considering the limit as $J\to J_0$ and using the classification of
holomorphic discs in Proposition \ref{prop:torusdiscs}, we
see that every $J$-holomorphic section under consideration is in one of
these homology classes, $n_{\alpha,0}=1$ for all $\alpha\in P_\Z$,
and $n_{\alpha,\beta}=0$ for all $\beta\neq 0$ such that $[\omega]\cdot
\beta\leq 0$.

\begin{remark}
The invariance of counts of holomorphic sections under deformations of the
Lagrangian boundary condition implies that
$n_{\alpha,\beta}$ is independent of $t'$, $t''$ (as long as $t''-t'>t_0$) and
$\xi$; hence the notation. However, our argument does not depend on it, so we will not elaborate
further.
\end{remark}

Since the weight of a section in the class $[D_\alpha]+\beta$ is given by
$$\mathrm{weight}([D_\alpha]+\beta)=\mathrm{weight}([D_\alpha])\,
t^{[\omega]\cdot \beta}\,\exp([\mathfrak{b}]\cdot \beta),$$
we arrive at:

\begin{proposition}\label{prop:correctedcount}
The Laurent polynomial $g$ of Propositions
\ref{prop:HWL0}--\ref{prop:s0torus} satisfies
\begin{equation}\label{eq:correctedcount}
C'_\xi(t'',t')\,g(x)=K_\xi(t'',t')\,\sum_{\alpha\in P_\Z} t^{2\pi\nu(\alpha)}
x^{\alpha} \exp(\mathfrak{b}_\alpha)\,
\Bigl(1+\sum_{\substack{\beta\in H_2(Y)\\ {}[\omega]\cdot
\beta>0}} n_{\alpha,\beta}\,t^{[\omega]\cdot \beta}
\exp([\mathfrak{b}]\cdot \beta)\Bigr).
\end{equation}
\end{proposition}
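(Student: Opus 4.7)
The plan is to directly combine the enumerative information assembled in Propositions \ref{prop:s0torus}, \ref{prop:torusdiscs}, and \ref{prop:torusdiscweights}, together with a Gromov compactness argument to handle sphere bubbling in the zero fiber. By Proposition \ref{prop:s0torus}, the differential $s_x$ on $CF^*_{vert}(T_x(t''),T_x(t'))$ evaluated on the generator of $H^0(T^n,\K)$ equals $C'_\xi(t'',t')\,g(x)\,1_x^{t''\to t'}$, and can alternatively be computed as the signed, weighted count of $J$-holomorphic sections of $W\co Y\to \C$ over the region delimited by $\gamma_{t'}$ and $\gamma_{t''}$, with boundary on $T_x(t'')\cup T_x(t')$, passing through a generic chosen point of $\mathfrak{t}_x\subset W^{-1}(-1)$. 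The strategy is to organize this count by the relative homology class of the section.

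First I would consider the limit as $J$ degenerates to the standard complex structure $J_0$. By the usual Gromov compactness argument for a $C^\infty$-convergent sequence of almost-complex structures (cf.\ \cite[Theorem 5.3.1]{mcduff-salamon}), each $J$-holomorphic section converges to a stable configuration consisting of a single $J_0$-holomorphic section together with a (possibly empty) tree of $J_0$-holomorphic rational curves. Positivity of intersection with the fibers of $W$ forces every bubble component to be contained in a single fiber, and asphericity of the smooth fibers $(\C^*)^n$ forces the bubbles to lie in $W^{-1}(0)=Z$. By Proposition \ref{prop:torusdiscs}, the underlying section represents the class $[D_\alpha]$ for a unique $\alpha\in P_\Z$; the attached rational curves combine to represent some class $\beta\in H_2(Y)$ with $[\omega]\cdot \beta\geq 0$ (and strictly positive unless $\beta=0$). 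Thus every contribution is labelled by a pair $(\alpha,\beta)$, and the signed count of $J$-holomorphic sections in relative class $[D_\alpha]+\beta$ passing through the generic marked point of $\mathfrak{t}_x$ is precisely $n_{\alpha,\beta}$ by definition; the $\beta=0$ contribution is $n_{\alpha,0}=1$ by the explicit classification in Proposition \ref{prop:torusdiscs}.

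Next I would assemble the weights. Multiplicativity of the weight formula \eqref{eq:discweight} under concatenation of chains gives
\begin{equation*}
\mathrm{weight}([D_\alpha]+\beta)=\mathrm{weight}([D_\alpha])\,t^{[\omega]\cdot \beta}\,\exp([\mathfrak{b}]\cdot \beta),
\end{equation*}
where holonomy contributions from $\beta$ are trivial since the sphere classes are closed. Substituting the explicit formula \eqref{eq:torusdiscweights} of Proposition \ref{prop:torusdiscweights} for $\mathrm{weight}([D_\alpha])$ and summing over all pairs $(\alpha,\beta)$ yields
\begin{equation*}
C'_\xi(t'',t')\,g(x)=\sum_{\alpha\in P_\Z}\sum_{\beta}\, n_{\alpha,\beta}\,K_\xi(t'',t')\,t^{2\pi\nu(\alpha)}x^{\alpha}\exp(\mathfrak{b}_\alpha)\,t^{[\omega]\cdot \beta}\exp([\mathfrak{b}]\cdot \beta),
\end{equation*}
which is exactly the asserted identity after isolating the $\beta=0$ term in the inner sum.

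The main obstacle is in verifying that the ``count of $J$-holomorphic sections'' for generic $J$ close to $J_0$ is genuinely equal to the sum $\sum_\beta n_{\alpha,\beta}\,t^{[\omega]\cdot \beta}\exp([\mathfrak{b}]\cdot \beta)$ over stable degenerations: one needs to know that regularization near $W^{-1}(0)$, as set up in \S\ref{ss:L0setup} and Remark \ref{rmk:genericJtransverse}, makes each stratum (corresponding to a fixed pair $(\alpha,\beta)$) a well-defined Floer-theoretic invariant, with the bulk deformation class $\mathfrak{b}$ supported near $W^{-1}(0)$ exactly accounting for the weights of the sphere components through the standard bulk insertion formalism. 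Since $c_1(Y)=0$ rules out multiply covered sphere bubbles from contributing at the relevant dimensions, and the perturbation is confined to a neighborhood of $W^{-1}(0)$ so that the discussion of \S\ref{sec:maximum-principle} and the classification of Proposition \ref{prop:torusdiscs} apply unchanged, this reduces to the standard bulk-deformed disc counting formalism applied to each class $[D_\alpha]+\beta$, giving the result.
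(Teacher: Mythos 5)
Your proposal is correct and follows the same route as the paper: compare the two computations of $s_x$ — Proposition \ref{prop:s0torus} gives multiplication by $C'_\xi(t'',t')\,g(x)$, while the direct count organized by relative class $[D_\alpha]+\beta$ gives $\sum_{\alpha,\beta}n_{\alpha,\beta}\,\mathrm{weight}([D_\alpha]+\beta)$, with the weights computed via Proposition \ref{prop:torusdiscweights} and multiplicativity of the weight formula. Your final paragraph worries about whether each class-by-class count $n_{\alpha,\beta}$ is a well-defined invariant, but this is not actually required: $n_{\alpha,\beta}$ is defined as a raw count for a single fixed generic $J$ near $J_0$, and the only thing that must be invariant is the total differential $s_x$, which is guaranteed by Proposition \ref{prop:s0torus} — the paper itself remarks that the invariance of the individual $n_{\alpha,\beta}$ is not used.
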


\proof This follows directly from a comparison of the weighted counts of sections
which determine the differential on \eqref{eq:s0torus}
(the coefficient of $1_x^{t''\to t'}$ in $s_x(\delta_x^{t''\to t'})$)
as given by Proposition \ref{prop:s0torus} and by direct calculation of
$\sum_{\alpha,\beta} n_{\alpha,\beta}\,\mathrm{weight}([D_\alpha]+\beta)$.
\endproof

\begin{corollary}\label{cor:correctedcount}
There exists a constant $C\in \K^*$ such that 
\begin{equation}\label{eq:correctedcount2}
g(x)=C\,\sum_{\alpha\in P_\Z} t^{2\pi\nu(\alpha)}
x^{\alpha} \exp(\mathfrak{b}_\alpha)\,
\Bigl(1+\sum_{\substack{\beta\in H_2(Y)\\ {}[\omega]\cdot
\beta>0}} n_{\alpha,\beta}\,t^{[\omega]\cdot \beta}
\exp([\mathfrak{b}]\cdot \beta)\Bigr).
\end{equation}
\end{corollary}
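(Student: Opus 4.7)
The plan is to deduce Corollary \ref{cor:correctedcount} as an essentially algebraic consequence of Proposition \ref{prop:correctedcount}: setting $C := K_\xi(t'',t')/C'_\xi(t'',t')$ and dividing both sides of the identity \eqref{eq:correctedcount} by $C'_\xi(t'',t')$ produces \eqref{eq:correctedcount2} immediately. Both $K_\xi(t'',t')$ and $C'_\xi(t'',t')$ lie in $\K^*$ (by Propositions \ref{prop:torusdiscweights} and \ref{prop:s0torus} respectively), so $C$ is a well-defined nonzero element of $\K$.

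The only point requiring attention is the claim implicit in the notation of the corollary, namely that $C$ and the coefficients $n_{\alpha,\beta}$ may be chosen independent of the auxiliary parameters $\xi, t', t''$. The cleanest way to see this is to note that the left-hand side $g(x)$ of \eqref{eq:correctedcount} is intrinsically $(\xi, t', t'')$-independent, being defined in \eqref{eq:s0zeta0ref} from a single fixed pair $(t_+, t_-)$; consequently \eqref{eq:correctedcount} exhibits the right-hand side as a scalar multiple of the fixed nonzero Laurent polynomial $g$. Invoking the invariance of the counts $n_{\alpha,\beta}$ under compactly supported Hamiltonian deformations of the boundary conditions $T_x(t'') \cup T_x(t')$ --- a standard cobordism argument whose compactness ingredients are supplied by the maximum principles of Propositions \ref{prop:maxprinciple_base} and \ref{prop:maxprinciple_fiber} applied along a 1-parameter family --- then gives $(\xi, t', t'')$-independence of the right-hand sum, so that $C$ is also parameter-independent.

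As the remark preceding Proposition \ref{prop:correctedcount} observes, this last invariance is not strictly necessary for the corollary itself: one may instead fix any single choice of $(\xi, t', t'')$ and interpret the $n_{\alpha,\beta}$ in \eqref{eq:correctedcount2} as the counts for that specific choice. There is therefore no real obstacle to the proof; the content of the corollary has already been absorbed into Proposition \ref{prop:correctedcount}, whose derivation via the comparison of Lagrangian Floer products (Proposition \ref{prop:s0zeta_eval}, Lemmas \ref{l:s0torusproduct}--\ref{l:s0toruseval}) and the direct disc classification (Propositions \ref{prop:torusdiscs}, \ref{prop:torusdiscweights}) constitutes the real heart of the matter.
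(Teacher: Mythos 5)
Your proof is essentially correct and follows the same route as the paper's, but it glosses over what the paper itself calls ``the key point,'' and you should make it explicit. Equation \eqref{eq:correctedcount} is a priori only an identity of elements of $\K$ that holds for $x\in(\K^*)^n$ with $\mathrm{val}(x)=-2\pi\xi$: the constants $C'_\xi(t'',t')$ and $K_\xi(t'',t')$, as well as the section counts $n_{\alpha,\beta}$, are defined relative to a fixed valuation slice, so ``dividing both sides'' only yields the equation $g(x)=C\sum_\alpha\cdots$ on that slice, not as an identity of Laurent polynomials. The step that upgrades this to the polynomial identity \eqref{eq:correctedcount2} is the observation that a Laurent polynomial in $\K[x_1^{\pm1},\dots,x_n^{\pm1}]$ is uniquely determined by its evaluations on the set $\{x:\mathrm{val}(x)=-2\pi\xi\}$; this is what the paper's proof opens with, and you invoke it only implicitly when you say the identity ``exhibits the right-hand side as a scalar multiple of the fixed Laurent polynomial $g$.'' State it, and your argument becomes complete.

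The second half of your proposal takes a detour the paper deliberately avoids. You propose a cobordism argument to show that the individual counts $n_{\alpha,\beta}$ are invariant under the deformations of $\xi,t',t''$; but the paper explicitly disclaims this (see the remark preceding Proposition \ref{prop:correctedcount}, which asserts the invariance without proof, and the closing parenthetical of the paper's proof of the corollary, which notes that what follows from the argument is only the weaker fact that the parenthesized power series are parameter-independent). You correctly observe at the end that the cobordism argument is not needed for the corollary --- a single choice of $(\xi,t',t'')$ suffices to define $C$ and the $n_{\alpha,\beta}$ --- so the safest course is simply to drop the cobordism paragraph: it neither strengthens the proof of the corollary nor matches the paper's more careful claim, and a rigorous justification would require some care about transversality and sphere bubbling along the one-parameter family of almost complex structures and boundary conditions.
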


\proof The key point is that, for any $\xi \in \R^n$, the coefficients of a Laurent polynomial in
$\K[x_1^{\pm 1},\dots,x_n^{\pm 1}]$ are determined by its evaluation at
points $x\in (\K^*)^n$ with fixed valuation $\mathrm{val}(x)=-2\pi\xi$.
Thus, comparing the left- and right-hand sides of \eqref{eq:correctedcount}
for fixed $\xi,t'',t'$ we find that $g(x)$ and the Laurent polynomial
appearing in the right-hand side coincide up to a constant factor.
Incidentally, this also implies that the ratio
$C'_\xi(t'',t')/K_\xi(t'',t')$ is a genuine constant independent of
$t',t''$ and $\xi$, and that the power series appearing as coefficients
in the right-hand side are independent of $t',t''$ and $\xi$ (in general this is slightly weaker
than asserting that the $n_{\alpha,\beta}$ themselves are independent of
these choices.)
\endproof

\begin{remark} \label{rem:mirror-map}
The power series in the right-hand side of \eqref{eq:correctedcount2} are
also exactly those which appear in expressions for the instanton-corrected
superpotential for product tori in the toric Calabi-Yau variety $Y$
(cf.\ e.g.\ \cite{AAK} and \cite{ChanLauLeung}), and more explicitly
in terms of Gromov-Witten invariants in \cite{ChanLauTseng}, where these quantities
are also interpreted as correction terms in the mirror map for the toric
variety $Y$. Indeed, deforming (a subset of) $T_x(t')\cup T_x(t'')$ to a
product torus in $Y$ it is apparent that the enumerative geometry problems
we consider here and those discussed in \cite{AAK,ChanLauLeung,ChanLauTseng} are equivalent.
\end{remark}

\begin{example}
Let $f(x)=t^{2\pi}x^{-1}+1+x$, so $H=f^{-1}(0)$ consists of two points. Then
$P_\Z=\{-1,0,1\}$, $\varphi(\xi)=\max(-\xi-1,0,\xi)$, and
$Y$ is isomorphic to the total space of $\O(-2)\to \CP^1$. In this example,
the term in \eqref{eq:correctedcount2} corresponding to $\alpha=0$
(i.e., discs in $Y$ which intersect the zero section $\CP^1$)
includes a non-trivial contribution from $\beta=[\CP^1]$, with
$n_{\alpha=0,[\CP^1]}=1$, whereas all the other $n_{\alpha,\beta}$ are zero (cf.\ e.g.\ \cite[Example
5.3.1]{ChanLauLeung}). Hence, $g(x)$ is
proportional to $$e^{\bb_{-1}}t^{2\pi}x^{-1}+e^{\bb_0}(1+t^{2\pi}e^{[\bb]\cdot[\CP^1]})
+e^{\bb_1}x,$$
which matches $f(x)$ when $\bb_1=\bb_{-1}=0$ and
$e^{\bb_0}(1+t^{2\pi}e^{-2\bb_0})=1$.
See also \cite[\S 5.3]{ChanLauLeung} for examples where infinitely many
$n_{\alpha,\beta}$ are non-zero.
On the other hand, 
the coefficients $n_{\alpha,\beta}$ all vanish
when every rational curve in $Y$ is contained in a toric stratum of complex codimension at least two.
\end{example}

Finally, we observe that, as in the above example, it is always possible by a suitable choice of the bulk deformation class
$[\mathfrak{b}]\in H^2(Y,\Lambda_{\geq 0})$ to ensure that the right-hand side
of \eqref{eq:correctedcount2} matches the Laurent polynomial $f$ used to
define the hypersurface $H$. 

\begin{proposition}\label{prop:matchbulk}
Given any collection of unitary (i.e., valuation zero) elements $a_\alpha\in \K^*$ 
for all $\alpha\in P_\Z$, there exists a unique collection of
unitary elements $e^{\mathfrak{b}_\alpha}\in \K^*$, $\alpha\in P_\Z$, such that
\begin{equation}\label{eq:matchbulk}
e^{\mathfrak{b}_\alpha}\,
\Bigl(1+\sum_{\substack{\beta\in H_2(Y)\\ {}[\omega]\cdot
\beta>0}} n_{\alpha,\beta}\,t^{[\omega]\cdot \beta}
e^{[\mathfrak{b}]\cdot \beta}\Bigr)=a_\alpha \quad \text{for all}\
\alpha\in P_{\Z}.\end{equation}
\end{proposition}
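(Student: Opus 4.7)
\bigskip

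The plan is to recognize \eqref{eq:matchbulk} as a fixed-point equation in the $t$-adic topology of the Novikov field $\K=\Lambda$, and solve it via the ultrametric Banach fixed-point theorem. Setting $u_\alpha := e^{\mathfrak{b}_\alpha}$ and using $e^{[\mathfrak{b}]\cdot\beta} = \prod_\gamma u_\gamma^{[Z_\gamma]\cdot\beta}$, the system \eqref{eq:matchbulk} becomes
\begin{equation*}
u_\alpha \;=\; a_\alpha \Bigl(1 + \sum_{\substack{\beta\neq 0\\ [\omega]\cdot\beta>0}} n_{\alpha,\beta}\,t^{[\omega]\cdot\beta}\prod_\gamma u_\gamma^{[Z_\gamma]\cdot\beta}\Bigr)^{\!-1}, \qquad \alpha\in P_\Z,
\end{equation*}
which we view as a fixed-point equation $u=\Phi(u)$ on the set $\mathcal{U}$ of $|P_\Z|$-tuples of unitary elements of $\K^*$. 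The map $\Phi$ is well defined on $\mathcal{U}$: the sum is $t$-adically summable because, by Gromov compactness applied to the sphere-bubble configurations arising in the limit $J\to J_0$ of Proposition \ref{prop:torusdiscs}, for each energy bound $E$ only finitely many $\beta\in H_2(Y)$ with $[\omega]\cdot\beta\leq E$ have $n_{\alpha,\beta}\neq 0$; moreover, each factor $u_\gamma^{[Z_\gamma]\cdot\beta}$ makes sense (even when $[Z_\gamma]\cdot\beta<0$) because $u_\gamma$ is unitary, hence invertible.

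The main step is to show that $\Phi$ is a contraction for the natural $t$-adic metric on $\mathcal{U}$. Let
\begin{equation*}
\omega_{\min} := \inf\{[\omega]\cdot\beta \;:\; \beta\in H_2(Y),\ \beta\neq 0,\ n_{\alpha,\beta}\neq 0 \text{ for some }\alpha\},
\end{equation*}
which is strictly positive (and in fact attained), again by Gromov compactness combined with positivity of $[\omega]$ on non-constant holomorphic spheres in $Y$. I would then show that for any $u,u'\in\mathcal{U}$ with $\mathrm{val}(u_\gamma - u'_\gamma)\geq \lambda$ for every $\gamma$, one has $\mathrm{val}(\Phi(u)_\alpha - \Phi(u')_\alpha)\geq \lambda+\omega_{\min}$ for every $\alpha$. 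This follows by expanding the telescoping difference $\prod_\gamma u_\gamma^{[Z_\gamma]\cdot\beta} - \prod_\gamma (u'_\gamma)^{[Z_\gamma]\cdot\beta}$ as a sum whose terms each have valuation $\geq \lambda$, multiplying by the factor $t^{[\omega]\cdot\beta}$ (which contributes at least $\omega_{\min}$), and observing that the outer operation of inverting an element of $1+\Lambda_{>0}$ and multiplying by the unitary $a_\alpha$ preserves valuations of differences.

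Granting the contraction estimate, the completeness of $\Lambda$ implies that the iteration $u^{(0)}_\alpha := a_\alpha$, $u^{(k+1)} := \Phi(u^{(k)})$ is Cauchy in the $t$-adic metric and converges to the unique fixed point $u\in\mathcal{U}$; any alternative unitary solution of \eqref{eq:matchbulk} is also a fixed point of $\Phi$ and must coincide with $u$. Setting $\mathfrak{b}_\alpha$ to be any logarithm of $u_\alpha$ in $\Lambda_{\geq 0}$ (which exists and is unique up to adding an element of $2\pi i\,\Z$ to the valuation-zero part, an ambiguity irrelevant to $e^{\mathfrak{b}_\alpha}$) produces the required bulk deformation. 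The main obstacle will be the contraction estimate itself, in particular the bookkeeping needed to handle negative exponents $[Z_\gamma]\cdot\beta$: here I would use that $u_\gamma^{-1}-(u'_\gamma)^{-1} = (u'_\gamma-u_\gamma)/(u_\gamma u'_\gamma)$, so inversion does not decrease valuations of differences on unitary elements, after which the telescoping argument goes through uniformly in $\alpha$.
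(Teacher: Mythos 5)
Your proposal is correct and takes essentially the same approach as the paper: the paper solves \eqref{eq:matchbulk} order by order in powers of $t^\lambda$ (with $\lambda$ the minimal positive valuation, bounded away from zero by Gromov compactness), which is exactly the iteration $u^{(k+1)}=\Phi(u^{(k)})$ you set up and the ultrametric contraction estimate you prove. Your version makes the $t$-adic completeness of $\Lambda$ and the handling of negative intersection numbers $[Z_\gamma]\cdot\beta$ explicit, but these are just spelled-out details of the same argument.
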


\proof We can solve for $e^{\bb_\alpha}$ order by order.
Namely, the series $\sum_\beta n_{\alpha,\beta}
t^{[\omega]\cdot \beta}\,e^{[\bb]\cdot \beta}$ consist of terms whose
valuations are positive and bounded below by some constant $\lambda>0$
(by Gromov compactness,
the symplectic areas of the rational curves which can appear in these
expressions form a discrete set).
Thus, \eqref{eq:matchbulk} implies that $e^{\bb_\alpha}=a_\alpha\ \mod\ t^\lambda$.
Moreover, once $e^{\bb_\alpha}$ is determined mod $t^{N\lambda}$ for some
$N\in\N$ and for all $\alpha\in P_\Z$, the power series appearing in the
left-hand side of \eqref{eq:matchbulk} are determined mod
$t^{(N+1)\lambda}$, and thus \eqref{eq:matchbulk} determines
$e^{\bb_\alpha}$ mod $t^{(N+1)\lambda}$ for all $\alpha\in P_\Z$.
\endproof

\proof[Proof of Theorem \ref{thm:main}]
We equip $Y$ with the bulk deformation class $\mathfrak{b}=\sum \bb_\alpha
\delta_{Z_\alpha}$, where the coefficients $\bb_\alpha$ are determined by Proposition
\ref{prop:matchbulk} so that the expression \eqref{eq:correctedcount2} 
agrees with the Laurent polynomial $f$ in \eqref{eq:intropoly} up to
scaling by a nonzero constant. The result then follows from Proposition
\ref{prop:HWL0} and Corollary \ref{cor:correctedcount}. \endproof

\section{Complete intersections} \label{s:ci}

\subsection{Geometric setup} \label{ss:ci_setup}
In this section we describe the geometric setup for extending Theorem \ref{thm:main}
to complete intersections in $(\C^*)^n$. 
Consider $k$ Laurent polynomials
\begin{equation}\label{eq:f_i}
f_i=\sum_{\alpha \in P_{i,\Z}} a_{i,\alpha}\,t^{2\pi\nu_i(\alpha)}x^\alpha
\in \K[x_1^{\pm 1},\dots,x_n^{\pm 1}],
\qquad 1\leq i\leq k,
\end{equation}
where the finite subsets $P_{i,\Z}\subset \Z^n$, the exponents
$\nu_i(\alpha)\in\R$, and the coefficients $a_{i,\alpha}$ ensure that the
hypersurfaces $H_i=f_i^{-1}(0)$ satisfy the same
``tropical smoothness'' conditions as in Section \ref{s:LGmodel}, and
additionally we assume that the tropical hypersurfaces associated to the
tropicalizations
\begin{equation}\label{eq:phi_i}
\varphi_i(\xi)=\max\{\langle \alpha,\xi\rangle-\nu_i(\alpha)\,|\,\alpha\in
P_{i,\Z}\}
\end{equation} are in generic position relative to each other 
(i.e., all intersections between strata are transverse).
Following \cite[Section 11]{AAK}, we define $Y$ to be the K\"ahler toric
$(n+k)$-fold defined by the moment polytope
\begin{equation}
\Delta_Y=\{(\xi,\eta_1,\dots,\eta_k)\in \R^n\oplus \R^k\,|\,\eta_i\geq
\varphi_i(\xi)\ \forall i=1,\dots,k\}.
\end{equation}
Dually, $Y$ is also described by a fan $\Sigma_Y\subseteq \R^n\oplus \R^k$, whose rays are
generated by the integer vectors $(-\alpha,e_i)$ for all $1\leq i\leq k$
and $\alpha\in P_{i,\Z}$, where 
$e_1,\dots,e_k$ is the standard basis of $\Z^k$.

For $1\leq i\leq k$, we define $W_i:Y\to\C$ to be the negative of the toric
monomial with weight $(0,e_i)=(0,\dots,0,1,0,\dots,0)$ (where the 1 is
in the $(n+i)$-th position). (Thus, the zero set of $W_i$ is the union of the toric divisors of $Y$
corresponding to the rays of $\Sigma_Y$ generated by
$(-\alpha,e_i)$, $\alpha\in P_{i,\Z}$, or equivalently, to 
the facets of $\Delta_Y$ on which $\eta_i=\varphi_i(\xi)$.)
The candidate mirror to the complete intersection $\mathbf{H}=H_1\cap\dots
\cap H_k$ is then the Landau-Ginzburg model $(Y,W_1+\dots+W_k)$; however,
our version of the (fiberwise wrapped) Fukaya category of this
Landau-Ginzburg model will involve Lagrangian submanifolds which are
simultaneously admissible for each of the projections $W_1,\dots,W_k$. 
Accordingly, we view our $k$ monomials as the components of a (toric) map 
$$\mathbf{W}=(W_1,\dots,W_k):Y\to\C^k.$$ We call $(Y,\mathbf{W})$ the
{\em toric Landau-Ginzburg mirror} to the complete intersection $\mathbf{H}$ determined
by the Laurent polynomials
$(f_1,\dots,f_k)$.

In the course of our argument, we will also
consider mirrors of partial intersections determined by a subset of the
Laurent polynomials $f_1,\dots,f_k$.
Given any subset $I\subset\{1,\dots,k\}$, denote by $W_I=(W_i)_{i\in I}:Y\to
\C^{|I|}$ the projection of $\mathbf{W}$ onto the subset of coordinates
associated to $I$. We also write $\overline{I}=\{1,\dots,k\}\!-\!I$ for the complement of $I$. 

\begin{proposition}\label{prop:ci_fiber}
Given 
any $c_{\overline{I}}\in
(\C^*)^{k-|I|}$, the submanifold
$Y_I=W_{\overline{I}}^{-1}(c_{\overline{I}})\subset Y$
equipped with the restriction of $W_I$ is isomorphic (as a
toric K\"ahler manifold together with an $|I|$-tuple of monomials) to the toric
Landau-Ginzburg mirror of the complete intersection determined by
$(f_i)_{i\in I}$.
\end{proposition}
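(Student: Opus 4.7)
The plan is to identify $Y_I$ with the Landau-Ginzburg mirror $Y^{(I)}$ of $(f_i)_{i\in I}$ in three stages — the complex toric variety, the $|I|$-tuple of monomials, and the K\"ahler form. The first two reduce to direct combinatorial identifications, while the third requires staging the symplectic reduction that defines $\omega_Y$.

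First I would note that the subtorus $\T'=\T^n\times\T^I\subset\T^{n+k}$ with cocharacter lattice $N'=\Z^n\oplus\Z^I=\ker(N\to\Z^{\overline{I}})$ is the kernel of the character map for $W_{\overline{I}}$, hence preserves $Y_I$ and acts on it with open dense orbit. Thus $Y_I$ is naturally a toric variety for $\T'$, and by the standard description of closures of subtori its fan is $\Sigma_{Y_I}=\{\sigma\cap N'_\R : \sigma\in\Sigma_Y\}$, which consists precisely of those cones of $\Sigma_Y$ generated only by rays $(-\alpha,e_i)$ with $i\in I$. Combinatorially, cones of $\Sigma_Y$ correspond to specifying, for each $i\in\{1,\dots,k\}$, a subset $A_i\subset P_{i,\Z}$ of simultaneously $\varphi_i$-maximizing lattice points with non-empty common locus; such a cone lies in $N'_\R$ iff $A_j=\emptyset$ for every $j\in\overline{I}$, which is exactly the characterization of cones of the fan of $Y^{(I)}$ defined from the tropicalizations $\varphi_i$ for $i\in I$ alone. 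Under the resulting isomorphism of complex toric varieties, the monomials $W_i=-z^{(0,e_i)}$ for $i\in I$ match their namesakes on $Y^{(I)}$ by identification of characters on the common open torus.

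For the K\"ahler form, I would stage the reduction construction of Section \ref{ss:ham_red}: $Y$ is $\mu^{-1}(\lambda)/\T_M$ with $M\subset\Z^N$ the kernel of the natural map to $\Z^{n+k}$, while $Y^{(I)}$ is obtained analogously from $\C^{N_I}$ by $\T_{M_I}$ with $N_I=\sum_{i\in I}|P_{i,\Z}|$. Although $M$ does not split as $M_I\oplus M_{\overline{I}}$ in general, one has a natural short exact sequence $0\to M_I\to M\to M/M_I\to 0$; fixing $W_{\overline{I}}=c_{\overline{I}}$ on $Y$ amounts to selecting a symplectic leaf of the residual $\T_{M/M_I}$-action together with a phase gauge, so that the restriction of $\omega_Y$ to $Y_I$ matches the K\"ahler form on $Y^{(I)}$ for an appropriate level parameter $\lambda_I=\lambda_I(c_{\overline{I}})$. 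The main obstacle will be to make this K\"ahler identification completely explicit: the failure of $M$ to split as a direct sum means the iterated reduction must be parameterized with care. However, since the whole construction is $\T_M$-equivariant and the K\"ahler potential of Section \ref{s:toricCN} on $\C^N$ is $\T^N$-invariant, this boils down to a finite-dimensional linear-algebraic computation on the lattice $M$ that reconciles the two reductions, with the choice of $c_{\overline{I}}\in(\C^*)^{k-|I|}$ accounting for both the moment level and the residual gauge freedom.
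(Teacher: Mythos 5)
Your Stage 1 (identification of the fan) and Stage 2 (matching the monomials $W_i$, $i\in I$) are correct and line up with the paper's proof, though the paper frames the combinatorics via the morphism of fans $\Sigma_Y\to\{\text{fan of }\C^{\overline{I}}\}$ and its fiber over the trivial cone, rather than via closures of subtori; both give $\Sigma_{Y_I}=\Sigma_Y\cap(\R^n\oplus\R^I)$.

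Stage 3 is where you depart from the paper, and this is where there is a genuine gap. Your plan is to stage the Hamiltonian reduction through $\T_{M_I}\subset\T_M$ and realize $Y_I$ as ``a symplectic leaf of the residual $\T_{M/M_I}$-action together with a phase gauge, ... for an appropriate level parameter $\lambda_I(c_{\overline{I}})$.'' This framing doesn't hold up: $Y_I=W_{\overline{I}}^{-1}(c_{\overline{I}})$ is the level set of a $\T^{n+k}$-\emph{character} (a holomorphic function), not of a moment map, and it is not a symplectic leaf nor a moment-level of the residual $\T_{M/M_I}$-action in any direct sense. In particular, $|W_i|=e^{\rho_{n+i}}$ is not a function of the moment coordinate $\eta_i=\partial\Phi/\partial\rho_{n+i}$ alone, so fixing $W_{\overline{I}}$ is not a moment-map constraint plus a phase gauge. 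Relatedly, the suggested $c_{\overline{I}}$-dependent level $\lambda_I(c_{\overline{I}})$ is a red herring: the moment polytope of $Y_I$ is exactly $\Delta_{Y|I}$ (independent of $c_{\overline{I}}$), which is precisely what the paper establishes. The paper's argument is much simpler than a staged reduction: it observes that the moment map $\mu_I$ for the subtorus $\T^n\times\T^I$ on $Y$ is just the full $\T^{n+k}$ moment map with the $\overline{I}$-components forgotten (so its image on $Y$ is $\Delta_{Y|I}$), and then shows that $\mu_I$ restricted to $Y_I$ is still surjective onto $\Delta_{Y|I}$: every stratum of $Y$ on which $(\C^*)^{\overline{I}}$ acts freely surjects onto $(\C^*)^{k-|I|}$ under $W_{\overline{I}}$, so $Y_I$ meets all strata over vertices and unbounded edges of $\Delta_{Y|I}$, and convexity of the moment image (plus the absence of other fixed points) forces $\mu_I(Y_I)=\Delta_{Y|I}$. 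You should replace your reduction-staging plan with this direct moment-map computation; as written, your third stage is an unresolved program rather than a proof.
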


\noindent For $I=\emptyset$, this says that the fiber of $\mathbf{W}$ over
a point of $(\C^*)^k$ is isomorphic to $(\C^*)^n$.

\proof Algebraically, $W_{\overline{I}}:Y\to \C^{k-|I|}$ is a dominant toric
morphism, induced by the morphism of fans from $\Sigma_Y$ to the fan
of $\C^{k-|I|}$ induced by the linear
map from $\R^n\oplus \R^k$ to $\R^{k-|I|}$ given by projection to 
the $(n+i)$-th coordinates for all $i\in \overline{I}$ (we call these the
components {\em indexed by} $\overline{I}$). Thus, the fibers of
$W_{\overline{I}}$ over the points of the open dense orbit $(\C^*)^{k-|I|}$
are all isomorphic, and described by the fiber of the morphism of fans over
the trivial cone $\{0\}$, i.e.\ the intersection of $\Sigma_Y$ with the
subspace $\R^n\oplus \R^{I}\subset \R^n\oplus\R^k$; or, dually, the
projection of $\Delta_Y$ from $\R^n\oplus \R^k$ onto $\R^n\oplus \R^{I}$
given by forgetting the components $\eta_i$ for $i\in \overline{I}$. This
agrees exactly with the toric variety $Y_I$ obtained by applying our
construction to the complete intersection determined by the Laurent
polynomials $f_i$ for $i\in I$. Moreover, it is clear that the monomials
$W_i$ for $i\in I$ restrict from $Y$ to $Y_I$ in the expected manner
(the toric weights match after forgetting the components indexed by the
elements of $\overline{I}$). 

Symplectically, we observe that the moment map $\mu_I:Y\to \R^n\times \R^I$ 
for the action of $\T^n\times
\T^I$ (the subtorus which preserves the fibers of $W_{\overline{I}}$) is
obtained from the moment map $\mu$ of the $\T^{n+k}$-action on $Y$ by forgetting the
components indexed by the elements of $\overline{I}$. The image of $\mu_I$
is therefore $$\Delta_{Y|I}=\{(\xi,(\eta_i)_{i\in I})\,|\,\eta_i\geq
\varphi_i(\xi)\ \forall i\in I\}\subset \R^n\oplus \R^I.$$ 
Moreover,
$W_{\overline{I}}$ maps every stratum of $Y$ on which $(\C^*)^{\overline{I}}$ acts
freely (i.e., the strata where $\eta_i>\varphi_i(\xi)$ $\forall i\in \overline{I}$) onto the open 
stratum $(\C^*)^{k-|I|}$; this implies that every such stratum intersects
$W_{\overline{I}}^{-1}(c_{\overline{I}})$. In particular,
$W_{\overline{I}}^{-1}(c_{\overline{I}})$ contains points in strata which
map to the vertices of $\Delta_{Y|I}$ under $\mu_I$, as well as strata which
map to its unbounded edges. By convexity of the moment map image (and 
given that there are no other toric fixed points, hence no additional vertices), this
implies that the restriction of $\mu_I$ to $W_{\overline{I}}^{-1}(c_{\overline{I}})$ is surjective onto $\Delta_{Y|I}$.
Thus the K\"ahler form on the generic fiber of
$W_{\overline{I}}$ has moment polytope equal to $\Delta_{Y|I}$, as expected.
\endproof

\begin{example}
One case where the geometry of $(Y,\mathbf{W})$ is particularly simple is
when $\mathbf{H}$ is a product of hypersurfaces
in $(\C^*)^{n_i}$, $i=1,\dots,k$, i.e.\ each Laurent polynomial $f_i$
involves a different subset of the coordinates $x_1,\dots,x_n$ ($n=\sum
n_i$). In this case, $Y$ ends up being the product of the mirrors we
associate to each hypersurface $f_i^{-1}(0)\subset (\C^*)^{n_i}$, with
$W_1,\dots,W_k$ the (pullbacks of the) respective superpotentials.
In general $Y$ is not a product, but the above considerations nonetheless make it
possible to argue in terms of subsets of the collection $\{f_1,\dots,f_k\}$.
\end{example}

We can also describe the toric K\"ahler manifold $Y$ in terms of toric reduction, as
we have done in \S \ref{ss:ham_red} for the case of hypersurfaces. We start from the
product $\prod_{i=1}^k \C^{P_{i,\Z}}$, equipped with the product of the toric
K\"ahler forms described in \S \ref{s:toricCN}. Denote by $M$ the kernel of
the surjective map 
\begin{equation}\label{eq:ci_surjlattices}
\textstyle \prod_{i=1}^k \Z^{P_{i,\Z}} \to \Z^n\oplus \Z^k
\end{equation}
which maps the generator corresponding to $\alpha\in
P_{i,\Z}$ to the element $(-\alpha,e_i)$ of $\Z^n\oplus \Z^k$, and by
$\T_M=M\otimes (\R/\Z)$ the corresponding subtorus of $\prod \T^{P_{i,\Z}}$.
Dualizing \eqref{eq:ci_surjlattices} we have a short exact sequence
$$0\to \R^{n+k}\stackrel{\iota}{\longrightarrow} \prod \R^{P_{i,\Z}}
\stackrel{\pi}{\longrightarrow} M_\R^*\to 0,$$
where the first map is given by
$$\iota(\xi_1,\dots,\xi_n,\eta_1,\dots,\eta_k)=\bigl(-\langle
\alpha,\xi\rangle+\eta_i\bigr)_{\alpha\in P_{i,\Z},\ 1\leq i\leq k}.$$
Viewing the exponents $\nu_i(\alpha)$ in \eqref{eq:f_i} as an
element $(\nu_1,\dots,\nu_k)$ of $\prod \R^{P_{i,\Z}}$, we
consider the reduction of $\prod \C^{P_{i,\Z}}$ by $\T_M$ at the level
$\lambda=\pi(\nu_1,\dots,\nu_k)$, and observe that
$$\mu^{-1}(\lambda)/\T_M\simeq Y,$$ since the moment polytope for the
action of $\T^{n+k}\simeq (\prod \T^{P_{i,\Z}})/\T_M$ on the reduced space
is the intersection of $\pi^{-1}(\lambda)=\mathrm{Im}(\iota)+(\nu_1,\dots,\nu_k)$ with the non-negative orthant in
$\prod \R^{P_{i,\Z}}$, which is naturally identified with $\Delta_Y$. 

The toric K\"ahler manifold $Y$, its K\"ahler form
$\omega_Y$, and $\mathbf{W}=(W_1,\dots,W_k)$ are thus obtained by
Hamiltonian reduction from the product of the spaces
$\C^{P_{i,\Z}}$ for $i=1,\dots,k$, each equipped with the toric K\"ahler form of 
\S \ref{s:toricCN} and the functions 
$W_{0,i}=-\prod_{\alpha\in P_{i,\Z}}
z_{i,\alpha}:\C^{P_{i,\Z}}\to\C$. (More precisely: the pullback of $W_{0,i}$ to
$\prod \C^{P_{i,\Z}}$ is $\T_M$-invariant and descends to $W_i:Y\to\C$.)

This description of $(Y,\mathbf{W})$ as a
reduction of the product of $k$ ``standard'' Landau-Ginzburg models
$(\C^{P_{i,\Z}},W_{0,i})$ corresponds to viewing 
$\mathbf{H}$ as the intersection of an $n$-dimensional algebraic subtorus  of the open
stratum of $\prod_{i=1}^k \PP(\K^{P_{i,\Z}})$ with a product of
$(|P_{i,\Z}|-2)$-dimensional pairs of pants, as in Remark \ref{rmk:reduction}.

\subsection{The fiberwise wrapped Fukaya category of $(Y,\mathbf{W})$}

The construction of the partially wrapped Fukaya category $\Wrap(Y,\mathbf{W})$ parallels that
introduced in  Section~\ref{s:Fukayacat}, except we now consider properly
embedded Lagrangian submanifolds of $Y$ whose image under {\em each} of the
projections $W_i:Y\to\C$ agrees outside of a compact subset with a finite
union of admissible arcs in the complex planes; in fact, we shall only
consider Lagrangians which fiber over
product of U-shaped arcs (the same arcs $\gamma_t$ as in our main
construction) with respect to $\mathbf{W}:Y\to\C^k$.

As before, we control the behavior of holomorphic curves by equipping $Y$ with
a compatible almost-complex structure $J$ making each of $W_1,\dots,W_k$
holomorphic outside of a neighborhood of the zero fiber (as before, $J$ will
be taken to agree with the standard complex structure of $Y$ except for a
small perturbation near $\bigcup_i W_i^{-1}(0)$), and by choosing
a continuous weakly plurisubharmonic function $h:Y\to [0,\infty)$ which
is proper on the fibers of $\mathbf{W}$; 
in addition, we fix 
a non-negative wrapping Hamiltonian $H:Y\to\R$. The functions $H$ and $h$ are required to satisfy the same
conditions as in Section \ref{s:Fukayacat} with respect to {\em each} of
$W_1,\dots,W_k$, i.e.\ with respect to the whole horizontal distribution
given by the symplectic orthogonals to the fibers of $\mathbf{W}:Y\to\C^k$,
thus ensuring that the maximum principle estimates of \S \ref{s:Fukayacat} (with
respect to $h$ and to the various $|W_i|$) continue to hold. Specific
choices of $h$ and $H$ satisfying these requirements are given below.

\subsubsection{Parallel transport preserves fiberwise monomial admissibility}
The function $h$ is again defined as the maximum of the
(rescaled) norms of certain monomials $z^\v\in \O(Y)$ for $\v$ in a set of
``extremal'' vectors $\mathcal{V}$ (primitive integer vectors parallel to
the unbounded edges of $\Delta_Y$),
\begin{equation}\label{eq:ci_hmaxzv}
h=\max\{|z^\v|^{1/\delta(\v)},\ \v\in\mathcal{V}\},
\end{equation}
where $\delta(\v)$ is defined below in \eqref{eq:ci_dofv}.
As in the case of hypersurfaces, the key point which ensures that $h$ has
all the required properties is that, at every point outside of a bounded
subset of each fiber of $\mathbf{W}$, any monomial $z^\v$ which achieves the maximum 
in \eqref{eq:ci_hmaxzv} is invariant under parallel transport between the
fibers of $\mathbf{W}$ (Propositions \ref{prop:ci_invce_Y} and
\ref{prop:ci_largest_invce_Y} below). This property, which amounts to a compatibility of fiberwise
monomial admissibility with parallel transport, is proved similarly to the
arguments in Section \ref{ss:invce_Y}.

Given a vector $\v=(\vec{v},v^{1,0},\dots,v^{k,0})\in \Z^n\oplus \Z^k$, the
toric monomial $z^\v$ defines a regular function on $Y$ if and only
\begin{equation}\label{eq:ci_zvorder}
v^{i,\alpha}:=(-\alpha,e_i)\cdot \v=v^{i,0}-\alpha\cdot\vec{v}\geq 0
\qquad\text{for all}\ 1\leq i\leq k\text{ and }\alpha\in P_{i,\Z}.
\end{equation}
In fact $z^\v$ vanishes to order $v^{i,\alpha}$ along the toric divisor
of $Y$ which corresponds to the ray $(-\alpha,e_i)$ of the fan $\Sigma_Y$.
Next we observe that the monomial $$\prod_{i=1}^k \prod_{\alpha\in P_{i,\Z}}
z_{i,\alpha}^{v^{i,\alpha}}\in \,\O\!\left({\textstyle \prod} \C^{P_{i,\Z}}\right)$$
is invariant under the action of $\T_M$ and descends to $z^\v\in \O(Y)$
under reduction.

For $\v\in \Z^{n+k}$ satisfying \eqref{eq:ci_zvorder}, $i\in \{1,\dots,k\}$,
and $\gamma>0$ small, we define a subset $S_{\v,i,\gamma}$ of $\R^n$
as in \eqref{eq:Svgamma}, namely we set
\begin{equation}\label{eq:ci_Svgamma}
S_{\v,i,\gamma}=\{\xi\in \R^n\,|\,\langle \alpha,\xi\rangle-\nu_i(\alpha)<
\varphi_i(\xi)-\gamma\|\xi\|\ \forall \alpha \in P_{i,\Z}\ \text{such
that}\ v^{i,\alpha}>0\}.
\end{equation}
The exact same argument as in the proof of Proposition \ref{prop:invce_Y}
then shows:

\begin{proposition} \label{prop:ci_invce_Y}
Given $\v\in\Z^{n+k}$ satisfying \eqref{eq:ci_zvorder} and $i\in
\{1,\dots,k\}$, the monomial $z^\v\in \O(Y)$ is locally invariant under
parallel transport between the fibers of the map $W_i:Y\to\C$ at every point $z\in
Y$ whose moment map coordinates $(\xi,\eta)$ satisfy $\xi\in S_{\v,i,\gamma}$
as well as lower bounds on $|W_i(z)|$ and on $\|\xi\|$ as in Proposition
\ref{prop:invce_Y}. \qed
\end{proposition}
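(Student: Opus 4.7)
The plan is to mimic the proof of Proposition \ref{prop:invce_Y} almost verbatim, exploiting the fact that parallel transport for $W_i:Y\to\C$ only involves the $i$-th factor of the reduction description from \S\ref{ss:ci_setup}. Recall that $(Y,\mathbf{W})$ is obtained as the reduction of $\prod_{j=1}^k \C^{P_{j,\Z}}$ (with the product K\"ahler form from \S\ref{s:toricCN}) by the torus $\T_M$, and that $W_i$ is induced by $W_{0,i}=-\prod_{\alpha\in P_{i,\Z}} z_{i,\alpha}$, which depends only on the $i$-th factor. Since the $\T_M$-action preserves each factor and the toric K\"ahler form is a product, the analogue of Lemma \ref{l:partransred} holds: the horizontal lifts for $W_i$ on $\C^{P_{i,\Z}}$ are $\T_M$-equivariant and tangent to $\mu^{-1}(\lambda)$, and descend to the horizontal lifts for $W_i:Y\to\C$. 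Hence it suffices to establish local invariance of the (lifted) monomial $\prod_{j,\alpha} z_{j,\alpha}^{v^{j,\alpha}}$ under parallel transport for $W_{0,i}$ on the $i$-th factor $\C^{P_{i,\Z}}$.

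Next, I would bound the moment map coordinates $(\mu_{i,\alpha})_{\alpha\in P_{i,\Z}}$ in the $i$-th factor in terms of $(\xi,\eta_i)$ via the analogue of \eqref{eq:mualpha}, namely $\mu_{i,\alpha}=\eta_i-\langle\alpha,\xi\rangle+\nu_i(\alpha)$. Let $\alpha_0\in P_{i,\Z}$ achieve the minimum, so that $\mu_{i,\alpha_0}=\eta_i-\varphi_i(\xi)$ and Lemma \ref{l:momentmapest}~(2) gives $2|W_i(z)|^2\leq \mu_{i,\alpha_0}\leq 4N_i e^{2N_i\delta}|W_i(z)|^2$ (where $N_i=|P_{i,\Z}|$). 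The definition \eqref{eq:ci_Svgamma} ensures that for every $\alpha\in P_{i,\Z}$ with $v^{i,\alpha}>0$ we have $\mu_{i,\alpha}\geq \mu_{i,\alpha_0}+\gamma\|\xi\|$. Imposing the lower bounds $|W_i(z)|^2\geq (\varepsilon e^\delta)^{N_i/(N_i-1)}$ and $\|\xi\|\geq 8N_i^2 e^{N_i(N_i+3)\delta}\gamma^{-1}|W_i(z)|^2$, Lemma \ref{l:momentmapest}~(3) then yields $|z_{i,\alpha}|/|z_{i,\alpha_0}|\geq e^{(N_i-1)\delta/2}$ for every such $\alpha$, and Lemma \ref{l:abovegap} implies that $z_{i,\alpha}$ lies above a $\delta$-gap in $\C^{P_{i,\Z}}$.

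Applying Lemma \ref{l:invce_CN} (to the $i$-th factor, with $W_{0,i}$ in place of $W_0$) we conclude that each such coordinate $z_{i,\alpha}$ is locally invariant under parallel transport for $W_{0,i}$. The coordinates $z_{j,\alpha}$ for $j\neq i$ are also preserved, since parallel transport for $W_{0,i}$ acts only on the $i$-th factor. The only coordinates not controlled by this argument are the $z_{i,\alpha}$ for $\alpha\in P_{i,\Z}$ with $v^{i,\alpha}=0$; but these enter the monomial $\prod_{j,\alpha} z_{j,\alpha}^{v^{j,\alpha}}$ with exponent zero, and so do not affect its value. Combining this with the compatibility of parallel transport with reduction, we conclude that $z^\v$ is locally preserved under parallel transport between the fibers of $W_i:Y\to\C$.

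The argument is essentially routine transcription of the hypersurface case; the only conceptual point worth emphasizing is that the constraint \eqref{eq:ci_Svgamma} is imposed only on those $\alpha\in P_{i,\Z}$ for which $v^{i,\alpha}>0$, which is precisely what is needed to make the gap argument work for the factors actually contributing to $z^\v$. No obstacle of substance arises, since parallel transport for a single $W_i$ decouples the factors of $\prod_j \C^{P_{j,\Z}}$ and reduces the problem to the already-established invariance statement on $\C^{P_{i,\Z}}$.
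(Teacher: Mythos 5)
Your proposal is correct and matches the paper's intended argument exactly: the paper states Proposition \ref{prop:ci_invce_Y} with a bare \qed and the preceding remark ``The exact same argument as in the proof of Proposition \ref{prop:invce_Y} then shows,'' and your write-up is precisely that transcription, correctly exploiting the product structure (parallel transport for $W_{0,i}$ is confined to the $i$-th factor $\C^{P_{i,\Z}}$, so Lemmas \ref{l:momentmapest}, \ref{l:abovegap}, and \ref{l:invce_CN} apply there with $N=N_i$) and noting that the coordinates $z_{i,\alpha}$ with $v^{i,\alpha}=0$ and all $z_{j,\alpha}$ with $j\neq i$ are irrelevant or untouched. No gaps.
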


The first consequence, setting $\v=(0,e_j)$
and observing that $S_{(0,e_j),i,\gamma}=\R^n$ for all $i\neq j$,
is that $W_j=-z^{(0,e_j)}$ is invariant under parallel transport in the
direction of $W_i$ for all $i\neq j$. (Inspection of the argument shows that
in this case no restriction on $|W_i(z)|$ or on
$\|\xi\|$ is needed: the point is that the lift of $W_j$ to $\prod \C^{P_{i,\Z}}$ 
only involves the variables $z_{j,\alpha}$, all of
which are preserved under parallel transport for the $i$-th component.)
This ensures that 
the parallel transports along the different factors in the base of the fibration
$\mathbf{W}:Y\to\C^k$ commute with each other, and that
the parallel transport of a Lagrangian in a fiber of $\mathbf{W}$ over a product of 
arcs in $\C^k$ is well-defined.

Next, to each $\vec{v}\in \Z^n$, we associate an element of $\Z^{n+k}$ as
follows: set $v^{i,0}=\max\{\alpha\cdot \vec{v},\
\alpha\in P_{i,\Z}\}$, and $\v=(\vec{v},v^{1,0},\dots,v^{k,0})$.
 Denote by $A_{\vec{v},i}$ the set of $\alpha\in P_{i,\Z}$ which
achieve the maximum in the definition of $v^{i,0}$, or equivalently, those
$\alpha$ for which $v^{i,\alpha}$ as defined by
\eqref{eq:ci_zvorder} is zero. Denoting by $\Delta_{\alpha,i}$ the
polyhedral subset of $\R^n$ where $\alpha$ achieves the maximum in
$\varphi_i$, we observe that $S_{\v,i,\gamma}$ is nonempty (for sufficiently
small $\gamma$) and is a retract 
of $\bigcup_{\alpha \in A_{\vec{v},i}} \Delta_{\alpha,i}$ obtained by
removing those points which are too close to some other $\Delta_{\alpha',i}$,
$\alpha'\not\in A_{\vec{v},i}$.  We also note that the $\Delta_{\alpha,i}$
appearing in this union are those which are unbounded in the direction of
$\vec{v}$.  Given this, we define $\mathcal{V}$ to be the set of all $\v$
obtained by this process from some $\vec{v}\in \Z^n$ which is the primitive outward normal
vector to any facet of the Newton polytope $P_i$ of any of the Laurent
polynomials $f_i$, $1\leq i\leq k$. Equivalently and much more concisely, the elements of
$\mathcal{V}$ are the primitive tangent vectors to the unbounded edges of
$\Delta_Y$. 

For $\v\in \mathcal{V}$ and $v^{i,\alpha}$ as in \eqref{eq:ci_zvorder}, we set 
\begin{equation}\label{eq:ci_dofv}
\delta(\v)=\sum_{i=1}^k \frac{d_i(\v)}{2N_i}, \qquad
\text{where}\ d_i(\v)=\sum_{\alpha \in P_{i,\Z}} v^{i,\alpha}\quad
\text{and}\ N_i=|P_{i,\Z}|.
\end{equation}

For sufficiently small $\gamma>0$,
and for all $\v\in \mathcal{V}$, $S_{\v,\gamma}:=\bigcap_{i=1}^k S_{\v,i,\gamma}$ is non-empty
(it is a retract of the union of those regions of $\R^n$ delimited by the union of the
tropical hypersurfaces of $\varphi_1,\dots,\varphi_k$ which are unbounded in
the direction of $\vec{v}$), and the union $\bigcup_{\v\in \mathcal{V}}
S_{\v,\gamma}$ covers the complement of a compact subset in $\R^n$. 
We have the following analogue of Proposition \ref{prop:largest_invce_Y}:

\begin{proposition} \label{prop:ci_largest_invce_Y}
There exist positive constants $\gamma_0$ and $K_0$ such that, at every
point $z\in Y$ with $|W_i(z)|^2\geq (\varepsilon
e^{\delta})^{N_i/(N_i-1)}$ $\forall i$ 
and whose moment map coordinates $(\xi,\eta)$ satisfy
$\|\xi\| \geq K_0 |\mathbf{W}(z)|^2$,
if $\v_0\in \mathcal{V}$ achieves the maximum in
\eqref{eq:ci_hmaxzv} then $\xi\in S_{\v_0,\gamma_0}$.
\end{proposition}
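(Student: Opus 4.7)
The plan is to run the proof of Proposition \ref{prop:largest_invce_Y} in parallel within each factor $\C^{P_{i,\Z}}$ of the product from which $Y$ is obtained as a Hamiltonian reduction (cf.\ \S \ref{ss:ci_setup}). I lift $z\in Y$ to coordinates $(z_{i,\alpha})$ with moment map components $\mu_{i,\alpha}=\eta_i-\langle\alpha,\xi\rangle+\nu_i(\alpha)$, and for each $i$ I pick $\alpha_{0,i}\in P_{i,\Z}$ achieving the maximum in $\varphi_i(\xi)$ (so $\mu_{i,\alpha_{0,i}}$ is minimal in the $i$-th factor and comparable to $|W_i|^2$) and $\alpha_{1,i}$ indexing the largest $|z_{i,\alpha}|$.

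Running the per-factor estimates from the proof of Proposition \ref{prop:largest_invce_Y}, Lemma \ref{l:momentmapest}(3) produces two families of bounds. When $\xi\in S_{\v,i,\gamma}$, every $\alpha\in P_{i,\Z}$ with $v^{i,\alpha}>0$ satisfies $\mu_{i,\alpha}\geq\mu_{i,\alpha_{0,i}}+\gamma\|\xi\|$, and hence $|z_{i,\alpha}|\geq C\gamma^{1/2}|z_{i,\alpha_{1,i}}|$; when $\xi\notin S_{\v,i,\gamma}$, some $\alpha_*$ with $v^{i,\alpha_*}>0$ satisfies $\mu_{i,\alpha_*}\leq 2\gamma\|\xi\|$, so $|z_{i,\alpha_*}|\leq C'\gamma^{1/(2N_i)}|z_{i,\alpha_{1,i}}|$. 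Assembling $z^\v=\prod_{i,\alpha}z_{i,\alpha}^{v^{i,\alpha}}$, and using that $\xi\in S_{\v,\gamma}$ iff $\xi\in S_{\v,i,\gamma}$ for all $i$ (while $\xi\notin S_{\v,\gamma}$ iff at least one factor fails), I obtain, for every $\v\in\mathcal{V}$, a lower bound
\[
|z^\v|^{1/\delta(\v)}\geq c_1\gamma^{a_1}\prod_i|z_{i,\alpha_{1,i}}|^{d_i(\v)/\delta(\v)}\quad\text{when }\xi\in S_{\v,\gamma},
\]
and an upper bound
\[
|z^\v|^{1/\delta(\v)}\leq c_2\gamma^{a_2}\prod_i|z_{i,\alpha_{1,i}}|^{d_i(\v)/\delta(\v)}\quad\text{when }\xi\notin S_{\v,\gamma},
\]
with positive exponents $a_1,a_2$ bounded away from $0$ and $\infty$ as $\v$ ranges over the finite set $\mathcal{V}$.

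The crux -- and the reason for the weighting $\delta(\v)=\sum_i d_i(\v)/(2N_i)$ -- is that the reference quantity $\prod_i|z_{i,\alpha_{1,i}}|^{d_i(\v)/\delta(\v)}$ has the same asymptotic size, namely $\|\xi\|$, for every $\v\in\mathcal{V}$. Indeed, Lemma \ref{l:momentmapest}(2) applied in the $i$-th factor gives $\mu_{i,\alpha_{1,i}}\sim|z_{i,\alpha_{1,i}}|^{2N_i}$; combined with $\mu_{i,\alpha_{0,i}}=O(|W_i|^2)$ and the lower bound \eqref{eq:maxmu_lowerbound}, this gives $\mu_{i,\alpha_{1,i}}\sim\|\xi\|$ whenever $\|\xi\|\geq K_0|\mathbf{W}|^2$ with $K_0$ taken large enough. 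Hence $|z_{i,\alpha_{1,i}}|\sim\|\xi\|^{1/(2N_i)}$, and $\prod_i|z_{i,\alpha_{1,i}}|^{d_i(\v)/\delta(\v)}\sim\|\xi\|^{\sum_i d_i(\v)/(2N_i\delta(\v))}=\|\xi\|$, independently of~$\v$.

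To conclude, I pick $\gamma_1>0$ small enough that the sets $S_{\v,\gamma_1}$ ($\v\in\mathcal{V}$) cover the complement of a bounded subset of $\R^n$; any $\v'$ with $\xi\in S_{\v',\gamma_1}$ then gives $\max_{\v'}|z^{\v'}|^{1/\delta(\v')}\geq c_1'\gamma_1^{A_1}\|\xi\|$, while for $\v$ with $\xi\notin S_{\v,\gamma_0}$ the upper bound above reads $|z^\v|^{1/\delta(\v)}\leq c_2'\gamma_0^{A_2}\|\xi\|$. Choosing $\gamma_0$ small enough that $c_2'\gamma_0^{A_2}<c_1'\gamma_1^{A_1}$ (and enlarging $K_0$ if needed to cover the various scaling and geometric hypotheses), any such $\v$ is strictly beaten by some $\v'$ and cannot realize the maximum; hence a $\v_0$ that does must satisfy $\xi\in S_{\v_0,\gamma_0}$. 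The main care needed is bookkeeping the constants across factors and verifying that the comparison in the final step is genuinely uniform in $\v$, a uniformity that rests squarely on the scaling identity of the third paragraph, and therefore on the particular form of the weight $\delta(\v)$.
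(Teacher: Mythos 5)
Your argument is correct and follows essentially the same route as the paper's: lift to $\prod_i \C^{P_{i,\Z}}$, apply Lemma \ref{l:momentmapest} per factor to get $\mu_{i,\alpha_{i,1}}\sim\|\xi\|$ and hence $|z_{i,\alpha_{i,1}}|\sim\|\xi\|^{1/(2N_i)}$ when $\|\xi\|\gg|\mathbf{W}|^2$, then use the per-factor bounds \eqref{eq:zalpha_lowerbound}/\eqref{eq:zalpha_upperbound} to compare $|z^\v|^{1/\delta(\v)}$ against $\|\xi\|$ and pick $\gamma_1\gg\gamma_0$ so that the lower bound from a covering $S_{\v',\gamma_1}$ beats the upper bound whenever $\xi\notin S_{\v_0,\gamma_0}$. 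You also correctly isolate the role of the weight $\delta(\v)=\sum_i d_i(\v)/(2N_i)$, which is exactly the point the paper's proof makes by showing the lower and upper bounds both scale as $\|\xi\|^{\delta(\v)}$.
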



\proof  Consider a point
$z\in Y$ and its lift $(z_{i,\alpha})\in \mu^{-1}(\lambda)\subset \prod
\C^{P_{i,\Z}}$. 
For each $i$, let $\alpha_{i,0},\alpha_{i,1}\in P_{i,\Z}$
correspond to the smallest, resp.\ largest $|z_{i,\alpha}|$ (or
equivalently, moment map coordinate $\mu_{i,\alpha}$) of all $\alpha\in
P_{i,\Z}$.  By Lemma \ref{l:momentmapest} (2), up to bounded constant
factors, $\mu_{i,\alpha_{i,0}}\!\sim\! |W_i(z)|^2$, while
$\mu_{i,\alpha_{i,1}}\!\sim\! |z_{i,\alpha_{i,1}}|^{2N_i}$. 
Bounding $\mu_{i,\alpha_{i,1}}\!-\mu_{i,\alpha_{i,0}}$ in terms of $\|\xi\|$
 as in the proof of Proposition
\ref{prop:largest_invce_Y}, we find that $\mu_{i,\alpha_{i,1}}\!\sim\! \|\xi\|$ and 
hence $|z_{i,\alpha_{i,1}}|\!\sim\!\|\xi\|^{1/(2N_i)}$ up to a bounded factor whenever $\|\xi\|\gg |W_i(z)|^2$.

We now proceed as in the proof of Proposition \ref{prop:largest_invce_Y}: if $\xi \in S_{\v,\gamma}$
then $|z_{i,\alpha}|$ satisfies a lower bound \eqref{eq:zalpha_lowerbound} by a constant
multiple of $|z_{i,\alpha_{i,1}}|\!\sim\!\|\xi\|^{1/(2N_i)}$
for all $\alpha\in P_{i,\Z}-A_{\vec{v},i}$ (the constant depends on $\gamma$).
Hence, $|z^\v|$ has a lower bound by a
constant multiple of $\|\xi\|^{\sum v^{i,\alpha}/2N_i}=\|\xi\|^{\delta(\v)}$
(where the constant again depends on $\gamma$). Applying this for some fixed
$\gamma=\gamma_1>0$ such that $\bigcup_{\v\in\mathcal{V}} S_{\v,\gamma}$
covers the complement of a compact subset in $\R^n$, we find that
$h(z)=\max\{|z^\v|^{1/\delta(\v)},\ \v\in\mathcal{V}\}$ is bounded from below by a constant
$c(\gamma_1)$ times $\|\xi\|$ (still assuming that $\|\xi\|\gg |\mathbf{W}|^2$).

Conversely, if $\xi\not\in S_{\v,\gamma}$ for $\gamma>0$ (now chosen much
smaller than $\gamma_1$) then there exists some $i$ and $\alpha\in
P_{i,\Z}-A_{\vec{v},i}$ such that $|z_{i,\alpha}|$ satisfies the upper bound
\eqref{eq:zalpha_upperbound}, which implies that $|z^\v|$ is
bounded by a constant times $\gamma^{1/2N_i}$ times $\|\xi\|^{\delta(\v)}$.
Choosing $\gamma=\gamma_0$ sufficiently small (so that
$\gamma_0^{1/(2N_i\delta(\v))}$ is much smaller than $c(\gamma_1)$), this
implies that $|z^\v|^{1/\delta(v)}$ cannot achieve the maximum in
\eqref{eq:ci_hmaxzv}.
\endproof

Propositions \ref{prop:ci_invce_Y} and \ref{prop:ci_largest_invce_Y} imply
that $h=\max\{|z^\v|^{1/\delta(v)},\ \v\in\mathcal{V}\}$ is invariant under
parallel transport between the fibers of $\mathbf{W}$ outside of a compact
subset of each fiber. This in turn implies, first, that perturbed
holomorphic curves satisfy maximum principles with
respect to $|\mathbf{W}|$ and $h$ as in
Propositions \ref{prop:maxprinciple_base}--\ref{prop:maxprinciple_fiber},
and second, that we can construct admissible
Lagrangian submanifolds of $Y$ by parallel transport of (monomially
admissible) Lagrangian
submanifolds of
the fiber of $\mathbf{W}$ (i.e., $(\C^*)^n$) over products of admissible arcs.

\subsubsection{The wrapping Hamiltonian}
We define the wrapping Hamiltonian $H:Y\to\R$ as in Section
\ref{ss:wrappingHam}: the moment map coordinates of $\prod
\C^{P_{i,\Z}}$ descend to real-valued functions
$\mu_{i,\alpha}$ on $Y$ ($i=1,\dots,k$,
 $\alpha\in P_{i,\Z}$), given by
$$\mu_{i,\alpha}=\eta_i-\langle \alpha,\xi\rangle+\nu_i(\alpha).$$
We then define $H:Y\to\R$ by
\begin{equation}\label{eq:ci_wrappingHam}
H=\sum_{i=1}^k \Bigl( \sum_{\alpha\in P_{i,\Z}}
\mu_{i,\alpha}-|P_{i,\Z}|\,m(\{\mu_{i,\alpha}\}_{\alpha\in P_{i,\Z}})\Bigr),
\end{equation}
where $m$ is a smooth approximation of the minimum function as in Definition
\ref{def:Min}. Propositions \ref{prop:wrapping_ham} and
\ref{prop:wrapping_flatness} carry over with essentially the same proofs.
To summarize:

\begin{proposition}\label{prop:ci_wrappingHam}
The wrapping Hamiltonian $H$ only depends on $(\xi_1,\dots,\xi_n)$, and as a function of these coordinates
it is proper and convex. The flow generated by $H$
preserves the fibers of $\mathbf{W}$, and within each fiber
it preserves monomial admissibility with respect to the collection of
monomials $z^\v$, $\v\in \mathcal{V}$: if $\ell\subset
\mathbf{W}^{-1}(c)$ is monomially admissible with phase
angles $\arg(z^\v)=\varphi_\v$, $\v\in\mathcal{V}$, then its image under the
time $t$ flow is monomially admissible at infinity with phase angles
$\varphi_\v+t\,d(\v)$, where $d(\v)=\sum_{i=1}^k d_i(\v)=
\sum\limits_{i,\alpha} v^{i,\alpha}$. \qed
\end{proposition}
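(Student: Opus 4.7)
The strategy is to mimic the proofs of Propositions \ref{prop:wrapping_ham} and \ref{prop:wrapping_flatness} one $i$ at a time, then add the resulting contributions. The coordinates $(\xi_1,\dots,\xi_n,\eta_1,\dots,\eta_k)$ enter the definition \eqref{eq:ci_wrappingHam} only through the functions $\mu_{i,\alpha}=\eta_i-\langle\alpha,\xi\rangle+\nu_i(\alpha)$, and for each fixed $i$ the collection $(\mu_{i,\alpha})_{\alpha\in P_{i,\Z}}$ depends only on $\xi$ and $\eta_i$ (not on the other $\eta_j$).

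First I would show that $H$ is a function of $\xi$ alone. Property~(2) of Definition \ref{def:Min} gives $m(\{\mu_{i,\alpha}+a\}_\alpha)=m(\{\mu_{i,\alpha}\}_\alpha)+a$, so increasing $\eta_i$ by $a$ shifts both $\sum_\alpha \mu_{i,\alpha}$ and $|P_{i,\Z}|\,m(\{\mu_{i,\alpha}\}_\alpha)$ by $|P_{i,\Z}|\,a$; these cancel in the $i$-th summand of \eqref{eq:ci_wrappingHam}, and the remaining summands do not involve $\eta_i$. Hence $\partial H/\partial\eta_i\equiv 0$ for each $i$, so $H=H(\xi_1,\dots,\xi_n)$. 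It then follows that $X_H$ lies in the span of the vector fields generating the $\T^n$-action on the first $n$ coordinates, which preserves each $W_j=-z^{(0,e_j)}$; hence the flow preserves each fiber of $\mathbf{W}$. Convexity in $\xi$ is immediate, since each $\mu_{i,\alpha}$ is affine in $\xi$ and $-m$ is convex; properness follows summand-by-summand from the generic-position lower bound \eqref{eq:maxmu_lowerbound} applied to each tropical polynomial $\varphi_i$, whose Newton polytope $P_i$ has non-empty interior by the tropical smoothness hypothesis. Convexity then yields that the fiberwise restriction is proper as well.

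For the third statement, I would run the symplectic-reduction argument used in Proposition \ref{prop:wrapping_flatness}. Lift to $\prod\C^{P_{i,\Z}}$: the Hamiltonian $H_0=\sum_i \bigl(\sum_\alpha \mu_{i,\alpha}-N_i\,m(\{\mu_{i,\alpha}\}_\alpha)\bigr)$ is $\T_M$-invariant, preserves $\mu^{-1}(\lambda)$, and descends to $H$. For each $i$, let $I_i=\{\alpha\,|\,\mu_{i,\alpha}<\min_\beta\mu_{i,\beta}+\delta'\}$; property~(1) of Definition \ref{def:Min} shows $\partial H_0/\partial\mu_{i,\alpha}\equiv 1$ for $\alpha\notin I_i$, so the flow of $H_0$ rotates those coordinates $z_{i,\alpha}$ uniformly at unit speed. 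Fix $\v\in\mathcal{V}$ and a point $z$ outside the appropriate compact set whose moment-map coordinates satisfy $\xi\in S_{\v,\gamma_0}=\bigcap_{i=1}^k S_{\v,i,\gamma_0}$; the estimate \eqref{eq:mualpha_lowerbound} applied within each $S_{\v,i,\gamma_0}$ shows that for any $\alpha\in P_{i,\Z}$ with $v^{i,\alpha}>0$ (equivalently $\alpha\notin A_{\vec{v},i}$), $\mu_{i,\alpha}\geq\mu_{i,\alpha_{i,0}}+\gamma_0\|\xi\|\geq\mu_{i,\alpha_{i,0}}+\delta'$, so $\alpha\notin I_i$. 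Consequently the lifted flow rotates $z_{i,\alpha}$ at unit speed for every $(i,\alpha)$ with $v^{i,\alpha}>0$, and leaves the factors $z_{i,\alpha}$ with $v^{i,\alpha}=0$ arbitrary. Differentiating $\arg z^\v=\sum_{i,\alpha} v^{i,\alpha}\arg z_{i,\alpha}$ along the flow then gives a uniform rate of $\sum_{i,\alpha} v^{i,\alpha}=d(\v)$.

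The only step that requires any real care is ensuring that the subsets $S_{\v,\gamma_0}=\bigcap_i S_{\v,i,\gamma_0}$ (and the associated $C_\v$) indeed cover the complement of a bounded subset of each fiber of $\mathbf{W}$, and that the choice of $\delta'$ in the smoothing $m$ is compatible with the $\gamma_0$ fixed by Proposition \ref{prop:ci_largest_invce_Y}; this is where the relative-general-position assumption on the tropical hypersurfaces of $\varphi_1,\dots,\varphi_k$ enters, ensuring the necessary simultaneous gap for every $i$. Once $\delta'$ is taken sufficiently small relative to $\gamma_0$ and the lower bound on $\|\xi\|$ is chosen so that $\gamma_0\|\xi\|\geq\delta'$ on the complement of $\Omega$, all three assertions follow exactly as in the hypersurface case.
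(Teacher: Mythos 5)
Your proof is correct and takes essentially the same route as the paper, which simply asserts that Propositions~\ref{prop:wrapping_ham} and \ref{prop:wrapping_flatness} ``carry over with essentially the same proofs''; you are supplying exactly those details, and the key ingredients --- the summand-by-summand cancellation of $\eta_i$-dependence via property~(2) of Definition~\ref{def:Min}, termwise convexity and properness, the lift to $\prod_i\C^{P_{i,\Z}}$, and the gap estimate forcing $\alpha\notin I_i$ whenever $v^{i,\alpha}>0$ at a point of $C_\v$ with $\|\xi\|$ large, so that $z_{i,\alpha}$ rotates at unit speed --- are all correctly deployed. One small terminological slip: the lower bound \eqref{eq:maxmu_lowerbound} comes from each $P_i$ being full-dimensional (implicit in the tropical smoothness hypothesis), not from the relative general position of the tropical hypersurfaces; the latter assumption is what guarantees the non-emptiness of $S_{\v,\gamma}=\bigcap_i S_{\v,i,\gamma}$ and that these sets cover a neighbourhood of infinity, as you correctly identify in your final paragraph.
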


\subsubsection{The fiberwise wrapped category}

As in Section \ref{s:Fukayacat} we first associate to $(Y,\mathbf{W})$ a
directed category whose objects are a given collection of admissible Lagrangian
submanifolds of $Y$, whose
images under each of the projections $W_1,\dots,W_k$ agree near infinity with
some fixed collection of radial straight lines in the complex plane, and
their images under an autonomous flow $L(t)=\phi^t\rho^t(L)$, where $\rho^t$
is the lifted admissible isotopy generated by applying the same autonomous
flow $\rho$ as in \S \ref{sec:defin-direct-categ} to each factor of $\C^k$, and $\phi^t$ is the flow generated by
the wrapping Hamiltonian $H$. This geometric setup gives rise to quasi-units
and continuation maps with the exact same properties as in Section
\ref{ss:quasiunits}, and we again define $\Wrap(Y,\mathbf{W})$ to be the
localization of the directed category with respect to the quasi-units.

\begin{remark}
Our construction of $\Wrap(Y,\mathbf{W})$ is rather {\em ad hoc}, but it
can be recast in the language of monomial admissibility on $Y$, using the
collection of toric monomials $\{z^\v,\ \v\in\mathcal{V}\}\cup\{W_1,\dots,W_k\}$.
Indeed, our conditions on objects of $\Wrap(Y,\mathbf{W})$ require
each of these monomials to have locally constant argument
(equal to a prescribed phase angle, or a pair of possible phase angles in the case
of $W_i$) over each end of the Lagrangian within a suitable subset of
$Y$; and the flow we consider has the effect of
increasing the phase angles within the interval $(-\pi,\pi)$ for each $W_i$,
and in an unbounded manner for $z^\v$ (i.e., we have removed the ``stops'' that
monomial admissibility would normally place at each $\arg(z^\v)=\pi$).

Even though the appropriate notions have yet to be developed outside of the
Liouville setting, one also expects that monomial admissibility 
can be recast in the language of stops in the sense of \cite{GPS2} 
(see \cite{HanlonHicks} for an instance of this), or even better, 
wrapped Floer theory on a (non-exact) sector with sectorial 
corners, in the spirit of \cite[Section 12]{GPS2}.
A rough candidate for the appropriate sector with corners is the subset of $Y$
consisting of those points where $\Re(W_i)\geq -R$ for all $i=1,\dots,k$,
for some $R\gg 0$; however, making the collection of hypersurfaces
$\{\Re(W_i)=-R\}$, $i=1,\dots,k$ sectorial requires a modification of the K\"ahler form on $Y$.
\end{remark}

\subsection{The main theorem}

As in Section \ref{ss:L0setup}, we fix a properly embedded U-shaped admissible arc $\gamma_0$ in the complex
plane which crosses the real axis at $-1$, and consider the admissible Lagrangian submanifold
$L_0\subset Y$ obtained by parallel transport over $\gamma_0\times
\dots\times \gamma_0\subset \C^k$ of the real positive locus $\ell_0\cong(\R_+)^n$
in $\mathbf{W}^{-1}(-1,\dots,-1)\cong (\C^*)^n$. 

\begin{theorem}\label{thm:ci_main}
For a suitable choice of bulk deformation class $\mathfrak{b}\in
H^2(Y,\Lambda_{\geq 0})$, the fiberwise wrapped Floer cohomology ring
$H\Wrap^*(L_0,L_0)$ is isomorphic to the quotient $\K[x_1^{\pm
1},\dots,x_n^{\pm 1}]/(f_1,\dots,f_k)$, i.e.\ the ring of functions of the
complete intersection $\mathbf{H}$.
Hence, the derived category of coherent sheaves of $\mathbf{H}$ admits a
fully faithful quasi-embedding into $\Wrap(Y,\mathbf{W})$.
\end{theorem}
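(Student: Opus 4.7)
The plan is to mimic the argument of Section \ref{s:calculation} in a multi-parameter setting, exploiting the Koszul-type structure produced by the product of $k$ U-shaped arcs in $\C^k$. The object $L_0$ is the parallel transport of $\ell_0=(\R_+)^n \subset \mathbf{W}^{-1}(-1,\ldots,-1)$ over $\gamma_0^{\times k}\subset \C^k$, and its image $L_0(t)=\phi^t\rho^t(L_0)$ under the admissible isotopy applied to each factor of $\C^k$, composed with the wrapping flow of Proposition \ref{prop:ci_wrappingHam}, remains admissible. For $t'-t$ in a suitable dense subset of large values, the intersections of $L_0(t')$ with $L_0(t)$ are concentrated in the $2^k$ fibers $\mathbf{W}^{-1}(c_I)$ for $I\subset\{1,\ldots,k\}$, where $c_I\in \C^k$ has $i$-th coordinate $c_{t',t}$ for $i\in I$ and $-1$ otherwise. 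By Proposition \ref{prop:ci_fiber} each such fiber is isomorphic to $(\C^*)^n$, and the corresponding piece $C_I(t',t)$ of the Floer complex is, via Proposition \ref{prop:mon_adm_hf}, concentrated in degree zero with basis indexed by the lattice points of an appropriate polytope $P_I(t'-t)$.

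Next I would analyze the Floer differential. The open mapping principle applied to each $W_i:Y\to\C$ together with the maximum principle of Proposition \ref{prop:maxprinciple_base} with respect to each $|W_i|$ shows that a $J$-holomorphic disc with boundary on $L_0(t')\cup L_0(t)$ can only connect generators in fibers $c_I$ and $c_J$ with $J\subset I$. The contributions with $|I\setminus J|=1$ are counted by $J$-holomorphic sections of the single projection $W_i$, $i\in I\setminus J$, over the bounded region bounded by $\gamma_t$ and $\gamma_{t'}$, and trivial in the other $\C$-factors; these define $k$ chain-level operations $s_1,\ldots,s_k$. In the direct limit $t'-t\to\infty$, these endow the total complex $\bigoplus_{I}\K[x_1^{\pm 1},\ldots,x_n^{\pm 1}][|I|]$ with the structure of a Koszul complex for some Laurent polynomials $g_1,\ldots,g_k$, and the algebraic argument of Lemma \ref{l:s0zeta} and Proposition \ref{prop:HWL0} generalizes to show that each $s_i$ is multiplication by the Laurent polynomial $g_i$.

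To identify $g_i$ with $f_i$ up to a nonzero scalar, I would introduce the parallel transport $T_x$ of the torus with local system $\mathfrak{t}_x$ over $\gamma_0^{\times k}$ and compute the vertical Floer complex $CF^*_{vert}(T_x(t'),T_x(t))$, following Propositions \ref{prop:s0torus}--\ref{prop:torusdiscweights}. Freezing all factors other than the $i$-th at a generic point and using Proposition \ref{prop:ci_fiber} to identify the relevant fiber of $W_{\{1,\ldots,k\}\setminus\{i\}}$ with the mirror $Y_{\{i\}}$ of the single hypersurface $H_i$, the enumerative problem reduces to the hypersurface computation applied to $(Y_{\{i\}},W_i)$, so that $g_i(x)$ is a nonzero constant multiple of $\sum_{\alpha\in P_{i,\Z}}t^{2\pi\nu_i(\alpha)}x^\alpha \exp(\mathfrak{b}_{i,\alpha})$ times the usual instanton correction factor coming from rational curves in $W_i^{-1}(0)$. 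Decomposing the bulk class as $\mathfrak{b}=\sum_{i,\alpha}\mathfrak{b}_{i,\alpha}\,\delta_{Z_{i,\alpha}}$ on the toric divisors of $Y$ and applying the order-by-order argument of Proposition \ref{prop:matchbulk} independently in each group of indices, we can arrange $g_i = C_i f_i$ simultaneously for all $i$.

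The tropical smoothness and genericity assumptions on $\varphi_1,\ldots,\varphi_k$ imply that $(f_1,\ldots,f_k)$ is a regular sequence in $\K[x_1^{\pm 1},\ldots,x_n^{\pm 1}]$, so the Koszul complex is a resolution of $\K[x_1^{\pm 1},\ldots,x_n^{\pm 1}]/(f_1,\ldots,f_k)$; consequently $H\Wrap^*(L_0,L_0)$ is concentrated in degree zero and isomorphic to the coordinate ring of $\mathbf{H}$, and the embedding $D^b\mathrm{Coh}(\mathbf{H})\hookrightarrow \Wrap(Y,\mathbf{W})$ follows as in Corollary \ref{cor:main}. The main obstacle I anticipate is not the identification of the individual $g_i$ but the verification that the operations $s_1,\ldots,s_k$ strictly anticommute and that no genuine higher $A_\infty$ "corner" contributions appear between more than two fibers $\mathbf{W}^{-1}(c_I)$. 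Heuristically, $J$-holomorphic sections of $\mathbf{W}$ that are nontrivial in two or more $\C$-factors should form moduli spaces of virtual codimension at least two, but rigorously establishing this in the presence of bulk deformations and sphere bubbling in $\bigcup_i W_i^{-1}(0)$ requires a careful analysis of the multi-filtered structure of the Floer complex and its compatibility with the Seidel TQFT of each $W_i$ separately, which is the technical heart of the argument.
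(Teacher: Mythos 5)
Your proposal follows essentially the same route as the paper: decompose the Floer complex according to which fibers $\mathbf{W}^{-1}(c_I)$ the generators lie in, filter by $|I|$, use the open mapping principle to show the differential respects the filtration, compute each fiberwise piece via Proposition \ref{prop:mon_adm_hf}, and reduce the edge maps $s_i$ to the hypersurface calculation via Proposition \ref{prop:ci_fiber}. That much matches.

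However, the issue you flag as ``the technical heart of the argument'' --- verifying that the $s_i$ anticommute and that no genuine corner contributions arise between fibers $\mathbf{W}^{-1}(c_I)$ and $\mathbf{W}^{-1}(c_{I'})$ with $|I\setminus I'|\geq 2$ --- is in fact a non-issue, and the ``virtual codimension at least two'' heuristic you propose is the wrong tool. The correct observation is purely about grading. For $t'-t$ large enough that all the slopes $\sigma_I(t'-t)$ in \eqref{eq:ci_slopes} are slopes of convex Hamiltonians, each fiberwise Floer cohomology $HF^*(\ell_{I,-}(t'),\ell_{I,+}(t))$ is concentrated in degree zero by Proposition \ref{prop:mon_adm_hf}. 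After the shift by $|I|$ in \eqref{eq:ci_CFL0structure}, the piece indexed by $I$ sits in degree $-|I|$. The induced differential on the vertical Floer complex has degree $+1$, so a map from degree $-|I|$ to degree $-|I'|$ has degree $|I|-|I'|$, forcing $|I\setminus I'| = 1$: all corner maps vanish on cohomology for degree reasons alone, and the filtration spectral sequence degenerates immediately. Anticommutativity of the surviving $s_{I,i}$ then follows automatically from $\partial^2=0$ on the total complex --- there is nothing to check. Your codimension heuristic is not only unnecessary but also not obviously correct: the existence of sections over corner regions is governed by Maslov indices, not codimension, and in fact such sections do exist at the chain level; they simply cannot contribute after passing to the fiberwise cohomology because of the degree constraint. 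Substituting this degree argument for your codimension discussion would make your proof complete along the same lines as the paper's. (One minor addition: the paper does not phrase the final step in terms of a regular sequence; it simply observes that the truncated pieces \eqref{eq:ci_truncatedlaurent} include naively into each other and the colimit is $\K[x_1^{\pm1},\dots,x_n^{\pm1}]/(f_1,\dots,f_k)$. Your remark that tropical genericity ensures $(f_1,\dots,f_k)$ is regular, so that the Koszul complex is a resolution and $H\Wrap^*$ is concentrated in degree zero, is a reasonable way to justify the same conclusion.)
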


As in Section \ref{s:calculation}, the main step to calculate the fiberwise wrapped Floer cohomology
$H\Wrap^*(L_0,L_0)$ is to determine the Floer complex of $L_0(t')$ and
$L_0(t)=\phi^t\rho^t(L_0)$ for $t'-t$ sufficiently positive. We start by
observing that $L_0(t)$ is obtained from $\ell_0(t)=\phi^t(\ell_0)$ by
parallel transport over $\gamma_t\times \dots\times \gamma_t$ (where
$\gamma_t=\rho^t(\gamma_0)$ as in Section \ref{s:calculation}). Thus, for
$t'-t>t_0$, the intersections of $L_0(t')$ and $L_0(t)$ 
lie in the fibers of $\mathbf{W}$ above the
$2^k$ points $(c_1,\dots,c_k)\in \C^k$ where each $c_i$ belongs to
$\gamma_t\cap \gamma_{t'}=\{-1,c_{t',t}\}.$  

For $I\subset \{1,\dots,k\}$ we
denote by $c_I\in \C^k$ the point with coordinates $c_i=-1$ if $i\not\in I$
and $c_i=c_{t',t}$ if $i\in I$. We then find that, for $t'-t>t_0$,
\begin{equation}\label{eq:ci_CFL0structure}
CF^*(L_0(t'),L_0(t))=\bigoplus_{I\subset \{1,\dots,k\}} C_I(t',t)[|I|],
\end{equation}
where $C_I(t',t)=CF^*(\ell_{I,-}(t'),\ell_{I,+}(t))$ is the Floer complex of the fiberwise Lagrangians 
obtained by intersecting $L_0(t')$ and $L_0(t)$ with $\mathbf{W}^{-1}(c_I)$, and the 
grading shift by $|I|$ comes from considering the grading contributions of the phase
angles of the arcs $\gamma_t$ and $\gamma_{t'}$ in the various factors of
$\C^k$. Moreover, by considering intersection numbers of holomorphic discs with
fibers of $\mathbf{W}$ (outside a small neighborhood of the coordinate planes), we find that
the Floer differential maps each summand $C_I(t',t)$ of
\eqref{eq:ci_CFL0structure} to the span of the $C_{I'}(t',t)$ for
$I'\subseteq I$.

Thus, the complex \eqref{eq:ci_CFL0structure} carries a natural filtration
(by $|I|$); we can proceed as in Section \ref{s:calculation} and calculate $HF^*(L_0(t'),L_0(t))$ as 
the cohomology of a ``vertical Floer complex'' built from the fiberwise
Floer cohomology groups $$H^*(C_I(t',t))=HF^*(\ell_{I,-}(t'),\ell_{I,+}(t)),$$
together with the maps from $H^*(C_I(t',t))$ to
$H^*(C_{I'}(t',t))$ for $I'\subsetneq I$ induced by the relevant portions
of the Floer differential on \eqref{eq:ci_CFL0structure} (i.e., discs
which are not contained within the fibers of $\mathbf{W}$).

Observing that for each $\v=(\vec{v},v^{1,0},\dots,v^{k,0})$
the restriction of the monomial $z^\v$ to
$\mathbf{W}^{-1}(c_1,\dots,c_k)\simeq (\C^*)^n$
is given by $\prod_{i=1}^k (-c_i)^{v^{i,0}}\,z_1^{v_1}\dots z_n^{v_n}$, the same calculation as in 
Example \ref{ex:slopes_L0} shows that the monomially
admissible Lagrangian sections $\ell_{I,-}(t')$ and $\ell_{I,+}(t)$ in
$\mathbf{W}^{-1}(c_I)$ have slopes 
\begin{eqnarray*}
\sigma_{I,-}(t')&=&\left(t'd(\v)-\bigl({\textstyle \sum_{i\in I}}
v^{i,0}\bigr)(\arg(c_{t',t})+\pi)\right)_{\v\in \mathcal{V}}\quad
\text{and}\\
\sigma_{I,+}(t)&=&\left(t\,d(\v)-\bigl({\textstyle \sum_{i\in I}}
v^{i,0}\bigr)(\arg(c_{t',t})-\pi)\right)_{\v\in \mathcal{V}.}
\end{eqnarray*}
Because $H$ is convex, for $t'-t$ sufficiently large (larger than some constant $t_1\geq t_0$)
\begin{equation}\label{eq:ci_slopes}
\sigma_I(t'-t)=\sigma_{I,-}(t')-\sigma_{I,+}(t)=\left((t'-t)\,d(\v)-2\pi {\textstyle\sum_{i\in I}}
v^{i,0}\right)_{\v\in \mathcal{V}}
\end{equation}
is the slope of a convex Hamiltonian for all $I\subset \{1,\dots,k\}$, so
that the results of Section \ref{ss:fiberwise-floer} apply to the Floer
cohomology groups $HF^*(\ell_{I,-}(t'),\ell_{I,+}(t))$.
In particular, these cohomology groups are concentrated in degree zero;
since the differential on the vertical Floer complex has degree 1, the only
non-zero connecting maps are those which take $H^0(C_I(t',t))$ to
$H^0(C_{I'}(t',t))$ for $I'\subset I$, $|I'|=|I|-1$. Writing
$I=I'\cup\{i\}$, we denote by $s_{I,i}$ the relevant portion of the
differential.

Next, we recall that for $t'-t>t_1$ and $I\subset\{1,\dots,k\}$,
$HF^0(\ell_{I,-}(t'),\ell_{I,+}(t))$  has a basis consisting of action-rescaled
Floer generators $\zeta_{I,p}^{t'\to t}$, whose elements are indexed by the
points of $P_I(t'-t)\cap (2\pi \Z)^n$, where
$P_I(t'-t)$ is the polytope associated to the slope $\sigma_I(t'-t)$ by
\eqref{eq:mon_adm_polytope}. For $I=\emptyset$ we also use the notation
$\vartheta_p^{t'\to t}=\zeta_{\emptyset,p}^{t'\to t}$. Hence:

\begin{proposition}
For $t'-t>t_1$, the Floer cohomology $HF^*(L_0(t'),L_0(t))$ is isomorphic to
the cohomology of the vertical Floer complex
\begin{equation}\label{eq:ci_CFL0vert}
CF^*_{vert}(L_0(t'),L_0(t))=
\bigoplus_{I\subset \{1,\dots,k\}}
HF^{*+|I|}(\ell_{I,-}(t'),\ell_{I,+}(t))\simeq 
\!\!\!\!\bigoplus_{\substack{I\subset \{1,\dots,k\}\\ p\in P_I(t'-t)\cap (2\pi\Z)^n}}
\!\!\!\!\K\cdot \zeta_{I,p}^{t'\to t},
\end{equation}
where the generators $\zeta_{I,p}^{t'\to t}$ $($in degree $-|I|)$ correspond
to intersections in $\mathbf{W}^{-1}(c_I)$, rescaled by action within
the fiber; together with a differential which is a sum of maps
$$s_{I,i}: HF^0(\ell_{I,-}(t'),\ell_{I,+}(t))\to
HF^0(\ell_{I',-}(t'),\ell_{I',+}(t))$$
for all $I=I'\sqcup \{i\}\subset \{1,\dots,k\}$.
\end{proposition}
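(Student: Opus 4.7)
The plan is to combine the filtration structure observed just before the proposition statement with the fiberwise computations of Section \ref{ss:fiberwise-floer} and a degree count. The strategy is to construct a chain-level model for $CF^*(L_0(t'),L_0(t))$ in which each summand $C_I(t',t)[|I|]$ of the decomposition \eqref{eq:ci_CFL0structure} sits in a single Floer degree, and then show that the filtration together with the degree constraint leaves room only for the maps $s_{I,i}$ as components of the Floer differential.

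For the chain-level model I would apply a small Hamiltonian perturbation supported in a compact region and preserving the fibers of $\mathbf{W}$, so that within each fiber $\mathbf{W}^{-1}(c_I)\simeq (\C^*)^n$ the monomially admissible sections $\ell_{I,-}(t')$ and $\ell_{I,+}(t)$ become graphs of differentials of Hamiltonians whose difference is strictly convex. This is possible for $t'-t>t_1$ because, by the observation following \eqref{eq:ci_slopes}, each slope $\sigma_I(t'-t)$ is already realized by a convex Hamiltonian. Proposition \ref{prop:mon_adm_hf} and its proof then guarantee that, after this perturbation, every $C_I(t',t)$ is concentrated in degree zero with vanishing fiberwise Floer differential and canonical basis $\{\zeta_{I,p}^{t'\to t}\}$ indexed by $P_I(t'-t)\cap(2\pi\Z)^n$, so that the shifted summand $C_I[|I|]$ sits entirely in total degree $-|I|$. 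Combined with the filtration property already recalled (the Floer differential sends $C_I$ into $\bigoplus_{I'\subseteq I} C_{I'}$, proved by projecting a hypothetical $J$-holomorphic strip via each $W_i$ and invoking the open mapping principle together with positivity of intersection with the fibers of $W_i$), and the fact that the total Floer differential raises degree by $+1$, the only allowed non-zero components of the differential go from $C_I[|I|]$ to $C_{I\setminus\{i\}}[|I|-1]$ for some $i\in I$. These are by construction the maps $s_{I,i}$, and assembling them over all $I$ and $i\in I$ produces exactly the vertical Floer complex \eqref{eq:ci_CFL0vert}.

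The main technical point I expect to require care is the filtration/positivity argument in the presence of the perturbed almost complex structure near $\bigcup_i W_i^{-1}(0)$. I would handle this as in the hypersurface case (cf.\ Section \ref{ss:CFL0prelim} and Remark \ref{rmk:genericJtransverse}): arrange for $J$ to agree with the integrable complex structure of $Y$ outside arbitrarily small neighborhoods of the singular fibers, so that each $W_i$ is $J$-holomorphic wherever positivity of intersection is actually invoked, and observe that the homological intersection numbers of Floer discs with fibers of each $W_i$ remain the topologically expected ones. Sphere bubbles in the singular fibers are excluded from zero- and one-dimensional moduli spaces via the standard codimension argument, using genericity of the perturbation, regularity of simple spheres, and $c_1(Y)=0$ to preclude multiply covered sphere bubbles; a secondary but routine check is that the fiber-preserving perturbation used to make each $C_I$ concentrated in degree zero can be chosen to be compatible with all of $I$ simultaneously, which follows from the fact that the different fibers $\mathbf{W}^{-1}(c_I)$ are disjoint so the perturbations within each can be chosen independently.
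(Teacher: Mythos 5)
Your proposal is correct and follows essentially the same route as the paper: decomposition of $CF^*(L_0(t'),L_0(t))$ by the $2^k$ fibers $\mathbf{W}^{-1}(c_I)$, filtration by $|I|$ via positivity of intersection with the fibers of each $W_i$, convexity of the slopes $\sigma_I(t'-t)$ for $t'-t>t_1$ forcing each $HF^*(\ell_{I,-}(t'),\ell_{I,+}(t))$ to be concentrated in degree zero, and a degree count showing the differential can only connect $C_I[|I|]$ to $C_{I\setminus\{i\}}[|I|-1]$. The one superficial difference is that you work at the chain level by perturbing to make each $C_I$ concentrated in a single degree, while the paper's phrasing in the complete intersection section invokes the filtration/spectral-sequence viewpoint, but the paper itself notes in the remark following Proposition~\ref{prop:CFL0explicit} that these two viewpoints are interchangeable, and your observation that the perturbations over the $2^k$ disjoint fibers can be chosen independently is the correct reason the chain-level perturbation is unproblematic here.
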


Since the projections $W_1,\dots,W_k:Y\to \C$ are
holomorphic outside of a small neighborhood of the origin, 
the open mapping principle implies that
any $J$-holomorphic disc which contributes to the portion of the Floer
differential mapping $C_I(t',t)$ to $C_{I'}(t',t)$ ($I'\subset I$) is
contained within a single fiber of $W_i$ (over either $-1$ or $c_{t',t}$)
whenever $i\in I'$ or $i\not\in I$, while for $i\in I-I'$ it is a section
(except possibly near the origin) of $W_i:Y\to\C$ over the bounded region
delimited by $\gamma_{t'}$ and $\gamma_t$.

Thus, in the case at hand, the contributions to the differentials
$s_{I,i}$ correspond to holomorphic discs which are contained in a
level set of $W_{\overline{\imath}}=(W_j)_{j\neq i}:Y\to \C^{k-1}$.
By Proposition \ref{prop:ci_fiber}, this fiber $Y_i$, equipped with the
restriction of $W_i$, is isomorphic to the mirror of the hypersurface
$H_i=f_i^{-1}(0)$ considered in our main argument. Moreover, the
restrictions of $L_0(t')$ and $L_0(t)$ to $(Y_i,W_i)$ are exactly the same
sort of fibered admissible
Lagrangians we have considered in Section \ref{s:calculation} -- even though
for $I'\neq \emptyset$ the relevant fiberwise monomially admissible Lagrangian
sections differ from those
previously considered by the monodromy of $W_{i'}$ around the origin for $i'\in
I'$, as is manifest from the expression \eqref{eq:ci_slopes} for the slopes 
$\sigma_I(t'-t)$ and $\sigma_{I'}(t'-t)$.
Despite this minor difference, the core calculation of Section \ref{s:calculation} applies
to this setting, and implies:

\begin{proposition}
For all $I=I'\sqcup \{i\}\subset \{1,\dots,k\}$, the differential
$$s_{I,i}: HF^0(\ell_{I,-}(t'),\ell_{I,+}(t))\to HF^0(\ell_{I',-}(t'),\ell_{I',+}(t))$$
is, up to a nonzero multiplicative constant $C_{I,i}(t',t)\in \K^*$,
given by multiplication by a Laurent polynomial $g_i(x)=\sum_{\bar{p}\in P_{i,\Z}} c_{i,\bar{p}}
x^{\bar{p}}\in \K[x_1^{\pm 1},\dots,x_n^{\pm 1}]$ with the same Newton polytope
as $f_i$. Namely, for $p'\in P_I(t'-t)\cap (2\pi\Z)^n$,
$$s_{I,i}(\zeta_{I,p'}^{t'\to t})=C_{I,i}(t',t)\sum_{\bar{p}\in P_{i,\Z}}
c_{i,\bar{p}}\, \zeta_{I',p'+2\pi \bar{p}}^{t'\to t}.$$
\end{proposition}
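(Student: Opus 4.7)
My plan is to reduce the statement to the hypersurface calculation of Section \ref{s:calculation}, applied fiberwise to the sub-Landau--Ginzburg model obtained by fixing the other $k-1$ base coordinates. Since each projection $W_j$ with $j \neq i$ is $J$-holomorphic outside the small perturbation region near $\bigcup_m W_m^{-1}(0)$, the open mapping principle forces any $J$-holomorphic disc contributing to $s_{I,i}$ to be $W_j$-constant for every $j \neq i$. The intersection points underlying $\zeta_{I,p'}^{t'\to t}$ and $\zeta_{I',p'+2\pi\bar p}^{t'\to t}$ (with $I = I' \sqcup \{i\}$) have identical $W_j$-coordinates for $j \neq i$, namely $c_{t',t}$ if $j \in I'$ and $-1$ if $j \notin I$, so every contributing disc lies in a single common fiber $Y_i$ of $(W_j)_{j \neq i}$. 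By Proposition \ref{prop:ci_fiber}, $(Y_i, W_i|_{Y_i})$ is isomorphic, as a toric K\"ahler manifold with monomial superpotential, to the mirror of the single hypersurface $H_i$, and the restrictions of $L_0(t')$ and $L_0(t)$ to $Y_i$ are precisely the fibered admissible Lagrangians produced by parallel transport of monomially admissible sections over $\gamma_{t'}$ and $\gamma_t$.

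Once reduced to this hypersurface setting, the proof of Lemma \ref{l:s0zeta} applies essentially verbatim inside $Y_i$, using the Leibniz rule for the vertical Floer product together with the product formulas of Proposition \ref{prop:CFL0_product} adapted to $Y_i$. This shows that $s_{I,i}(\zeta_{I,p'}^{t'\to t})$ is a multiple of $\sum_{\bar p} c_{i,\bar p}^{(I)} \zeta_{I',p'+2\pi\bar p}^{t'\to t}$ for coefficients $c_{i,\bar p}^{(I)} \in \K$ supported on $P_{i,\Z}$ (the support bound is the final clause of Lemma \ref{l:s0zeta}), with the overall scaling factor depending only on $I,t',t$. To see that the Newton polytope of the resulting Laurent polynomial coincides with that of $f_i$, so that every vertex coefficient is genuinely non-zero, I would run the product-torus section enumeration of Propositions \ref{prop:torusdiscs} and \ref{prop:torusdiscweights} inside the fiber $Y_i$; this produces exactly one family of holomorphic discs for each $\alpha \in P_{i,\Z}$ contributing a non-vanishing monomial proportional to $t^{2\pi\nu_i(\alpha)} x^\alpha \exp(\mathfrak{b}_{i,\alpha})$. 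Note that Proposition \ref{prop:ci_invce_Y} applied to $\v = (0,e_j)$ for $j \neq i$ shows that the other base monomials $W_j$ are invariant under parallel transport in the $W_i$-direction, so that these fiberwise enumerations are consistent with the ambient geometry.

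Finally, to eliminate the dependence of $g_i^{(I)}$ on $I$ (justifying the notation $g_i$) and collapse all scaling ambiguity into a single constant $C_{I,i}(t',t)$, I would use a Leibniz-rule argument parallel to the one proving Lemma \ref{l:s0zeta}: for $j \in \overline{I}$, composing with a suitable reference generator corresponding to a minimal holomorphic section in the $j$-th base direction intertwines $s_{I \sqcup \{j\}, i}$ with $s_{I,i}$ via a $p'$-independent scalar, so the polynomial content transports unchanged across different choices of $I$, and one can bootstrap down to the single reference case $I = \{i\}$. The main technical obstacle I anticipate is controlling the scaling constants and the bulk deformation $\mathfrak{b} \in H^2(Y,\Lambda_{\geq 0})$ simultaneously across all $i = 1,\ldots,k$: arranging that every $g_i$ matches $f_i$ up to a nonzero scalar in Theorem \ref{thm:ci_main} requires an analogue of Proposition \ref{prop:matchbulk} for the complete-intersection geometry, balancing the coefficients $\mathfrak{b}_{i,\alpha}$ along each toric divisor of $Y$ in a way that is consistent for every fiber $Y_i$ at once.
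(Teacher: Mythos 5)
Your proposal takes essentially the same route as the paper: observe via the open mapping principle for the projections $W_j$, $j\neq i$, that all discs contributing to $s_{I,i}$ lie in a single fiber $Y_i$ of $W_{\overline{\imath}}$, invoke Proposition \ref{prop:ci_fiber} to identify $(Y_i, W_i|_{Y_i})$ with the mirror of the single hypersurface $H_i$, and then import the calculation of Section \ref{s:calculation} (Lemma \ref{l:s0zeta}, Proposition \ref{prop:CFL0_product}, and the torus disc counts of Propositions \ref{prop:torusdiscs} and \ref{prop:torusdiscweights}). The paper compresses all of this into the discussion immediately preceding the statement and the remark that ``the core calculation of Section \ref{s:calculation} applies to this setting.'' Your write-up supplies two points the paper leaves implicit: (i) the explicit reason the Lagrangian data restricted to $Y_i$ are of the admissible fibered type treated in Section \ref{s:calculation}, and (ii) a Leibniz-rule transport between $s_{I\sqcup\{j\},i}$ and $s_{I,i}$ (for $j\in\overline{I}$, by multiplying by a reference generator $\zeta_{\{j\},0}$ and projecting off-diagonal components) to confirm that the Laurent polynomial content is independent of $I'$ and can legitimately be denoted $g_i$, with the $I$-dependence absorbed into $C_{I,i}(t',t)$. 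This second point is worth making: the paper acknowledges that for $I'\neq\emptyset$ the fiberwise sections are twisted by monodromy of $W_{i'}$, $i'\in I'$, but waves this away as ``a minor difference,'' whereas your argument makes the independence precise in the same spirit as Lemma \ref{l:s0zeta}. Your anticipation that the analogue of Proposition \ref{prop:matchbulk} must be arranged consistently across all $i$ simultaneously is also correct and aligns with what the paper does immediately afterward.
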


\noindent Moreover, equipping $Y$ with a bulk deformation class
$\mathfrak{b}=\sum_i\sum_{\alpha\in P_{i,\Z}} \mathfrak{b}_{i,\alpha}
\delta_{Z_{i,\alpha}}$, where the $\delta_{Z_{i,\alpha}}$ are Poincar\'e dual to the irreducible toric
divisors of $Y$ and the coefficients $\mathfrak{b}_{i,\alpha}\in \Lambda_{\geq 0}$ are
determined as in Proposition \ref{prop:matchbulk}, ensures that $g_i=f_i$
for all $i$. 

Thus, denoting by $\K[(x_i^{\pm 1})]_P$ the subspace of $\K[(x_i^{\pm 1})]$
consisting of Laurent polynomials
whose Newton polytope is contained in $\frac{1}{2\pi} P$, we have:

\begin{proposition}
For a suitable choice of bulk deformation class $\mathfrak{b}\in
H^2(Y,\Lambda_{\geq 0})$, for $t'-t>t_1$ the Floer cohomology group
$HF^*(L_0(t'),L_0(t))$ is concentrated in degree zero and isomorphic (as a
vector space), via $\vartheta_{p}^{t'\to t}\mapsto x^{\bar{p}}$, to the quotient 
\begin{equation}\label{eq:ci_truncatedlaurent}
\K[(x_i^{\pm 1})]_{P_0(t'-t)}\,\big/\,
\bigl(f_1 \K[(x_i^{\pm 1})]_{P_{\{1\}}(t'-t)}+\dots+f_k \K[(x_i^{\pm
1})]_{P_{\{k\}}(t'-t)}\bigr).
\end{equation}
\end{proposition}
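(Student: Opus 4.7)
The plan is to identify the vertical Floer complex $CF^*_{vert}(L_0(t'),L_0(t))$ with a polytope-truncated Koszul complex on the sequence $(f_1,\dots,f_k)$ in $R=\K[x_1^{\pm 1},\dots,x_n^{\pm 1}]$, and to extract its cohomology via the classical theory of regular sequences, presented as an iterated mapping cone.

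First I would absorb the constants $C_{I,i}(t',t)\in\K^*$ by a consistent rescaling of the generators $\zeta_{I,p}^{t'\to t}$, using the fact that $\partial^2=0$ on the Floer side, together with the multiplicative form of the $s_{I,i}$, forces the rescaled differentials to satisfy the Koszul sign relations. Under the identification $\zeta_{I,p}^{t'\to t}\leftrightarrow x^{\bar p}\cdot e_{i_1}\wedge\cdots\wedge e_{i_{|I|}}$ (where $I=\{i_1<\cdots<i_{|I|}\}$ and $p=2\pi\bar p$), the vertical Floer complex becomes the truncated Koszul complex
$$K^{\bullet}_{t',t} \;=\; \bigoplus_{I\subset\{1,\dots,k\}} R_{P_I(t'-t)}\cdot e_I,
\qquad d(e_I)=\sum_{i\in I}\pm\,f_i\,e_{I\setminus\{i\}},$$
where $R_Q$ denotes Laurent polynomials whose Newton polytope is contained in $\tfrac{1}{2\pi}Q$. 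The inclusion $f_i\cdot R_{P_I(t'-t)}\subset R_{P_{I\setminus\{i\}}(t'-t)}$ follows from $v^{i,0}=\max\{\langle\alpha,\vec v\rangle\mid\alpha\in P_{i,\Z}\}$ together with the formula \eqref{eq:ci_slopes} for the slopes $\sigma_I(t'-t)$.

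Next I would verify that $(f_1,\dots,f_k)$ is a regular sequence in $R$. The tropical smoothness assumption combined with the generic-position hypothesis on the tropicalizations $\varphi_i$ ensures that each partial intersection $f_1^{-1}(0)\cap\cdots\cap f_j^{-1}(0)$ is smooth of codimension $j$ in $(\K^*)^n$; hence $(f_1,\dots,f_k)$ cuts a codimension-$k$ Cohen--Macaulay subscheme of the Cohen--Macaulay ring $R$, which is equivalent to regularity. The cohomology of $K^{\bullet}_{t',t}$ is then computed by induction on $k$ via the mapping-cone decomposition
$$K^\bullet_{t',t}\;\simeq\;\mathrm{Cone}\Bigl(\,K_1^\bullet[-1]\xrightarrow{\,f_k\,}K_0^\bullet\,\Bigr),$$
where $K_0^\bullet$ (resp.\ $K_1^\bullet$) is the truncated Koszul complex for $(f_1,\dots,f_{k-1})$ with $I$-th term $R_{P_I(t'-t)}$ (resp.\ $R_{P_{I\cup\{k\}}(t'-t)}$) for $I\subset\{1,\dots,k-1\}$. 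By the inductive hypothesis both pieces have cohomology concentrated in degree zero, namely $R_{P_\emptyset(t'-t)}/\sum_{i<k} f_i R_{P_{\{i\}}(t'-t)}$ and $R_{P_{\{k\}}(t'-t)}/\sum_{i<k} f_i R_{P_{\{i,k\}}(t'-t)}$. The long exact sequence of the cone then collapses, and the cokernel of the induced map $\bar f_k$ between these two quotients is manifestly the desired quotient \eqref{eq:ci_truncatedlaurent}.

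The main obstacle is the polytope-level injectivity of $\bar f_k$ in the inductive step: given $p\in R_{P_{\{k\}}(t'-t)}$ with $f_k p=\sum_{i<k}f_i r_i$ for some $r_i\in R_{P_{\{i\}}(t'-t)}$, one must produce $q_i\in R_{P_{\{i,k\}}(t'-t)}$ with $p=\sum_{i<k}f_i q_i$. Regularity of $(f_1,\dots,f_k)$ in $R$ immediately yields such $q_i\in R$ with no a priori control on Newton polytopes; upgrading this to the required polytope-level statement is a graded Koszul assertion in the multigrading by support functions in the facet directions of the $P_i$. The hypothesis $t'-t>t_1$ ensures that the slopes $\sigma_I(t'-t)$ are all slopes of convex Hamiltonians, so the polytopes $P_I(t'-t)$ differ from $P_\emptyset(t'-t)$ by successive Minkowski differences with the $P_i$; combined with Cohen--Macaulay control of the Koszul syzygies of the regular sequence $(f_1,\dots,f_k)$, this should allow the $q_i$ to be chosen inside the prescribed truncations. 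Carrying out this polytope bookkeeping, essentially a graded lift of the standard inductive proof of Koszul acyclicity, is the principal technical content of the proof.
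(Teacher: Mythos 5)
Your identification of the vertical Floer complex with a polytope-truncated Koszul complex on $(f_1,\dots,f_k)$, and the iterated mapping-cone strategy to compute its cohomology from regularity of the sequence, is the right framework, and indeed the paper gives no written proof of this proposition (it is stated following ``Thus\dots{} we have'' after the identification of the differentials $s_{I,i}$ with multiplication by $f_i$), so the burden of justification falls on exactly the step you flag. However, the gap you acknowledge in the final paragraph is a genuine one that your sketch does not fill, and the fix is less straightforward than the hand-wave about ``Cohen--Macaulay control of the Koszul syzygies'' and ``multigrading by support functions'' suggests.

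Here is the issue made concrete. For the injectivity of $\bar f_k$ in the inductive step, regularity produces $q_i\in R$ with $p=\sum_{i<k}f_iq_i$, but these $q_i$ are only determined modulo the submodule of Koszul syzygies $\{(f_j\lambda,\dots,-f_i\lambda,\dots)\}$; the Newton polytopes of different lifts can be arbitrarily large. For $k=2$ the witness $q$ is unique and one recovers its Newton polytope directly from $r_1=f_2 q$ by the Minkowski-sum identity $\mathrm{NP}(r_1)=\mathrm{NP}(f_2)+\mathrm{NP}(q)$, combined with the computation $h_{\frac{1}{2\pi}P_{\{1\}}(t'-t)}(\vec v)-h_{P_2}(\vec v)=\frac{1}{2\pi}\sigma_{\{1,2\},\v}$ coming from \eqref{eq:ci_slopes} and $v^{i,0}=\max\{\alpha\cdot\vec v\}$; so the claim is clean in that case. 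For $k\geq 3$, the intermediate Koszul cohomology involves non-unique witnesses and the Minkowski-sum argument applies only to the extreme terms (the ``top'' syzygies, where a unique $q$ exists), not to the cocycles in degrees $-1,\dots,-(k-2)$. You would need a genuine argument that a representative with controlled Newton polytopes can always be selected, e.g.\ via an interpretation of the truncated Koszul complex as global sections of a Koszul complex of nef line bundles on the smooth toric variety with moment polytope $P_\emptyset(t'-t)$ and invoking vanishing of higher cohomology, or via a multigraded Gr\"obner-theoretic reduction. As written, the proposal lays out the correct skeleton but does not establish the key vanishing that makes the proposition (as opposed to just the $t'-t\to\infty$ colimit, which is the exact full Koszul complex and can be handled by commuting cohomology with filtered colimits) actually true.
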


The Floer product 
\begin{equation}\label{eq:ci_floerproduct}
CF^*_{vert}(L_0(t''),L_0(t'))\otimes CF^*_{vert}(L_0(t'),L_0(t))\to
CF^*_{vert}(L_0(t''),L_0(t))\end{equation} 
can be determined as in
Section \ref{s:calculation}, by observing that any contributing $J$-holomorphic disc
projects under
$W_i:Y\to\C$ to either a single point or
a triangular region of the complex plane delimited by $\gamma_{t''}$,
$\gamma_{t'}$ and $\gamma_t$ (not enclosing the origin), and reducing to a calculation within the fiber
of $\mathbf{W}$. This yields an analogue of Proposition
\ref{prop:CFL0_product}:

\begin{proposition}
For $t''-t'>t_1$ and $t'-t>t_1$, the product
\eqref{eq:ci_floerproduct} is given by
\begin{equation}\zeta_{I,p}^{t'\to t}\cdot \zeta_{J,p'}^{t''\to
t'}=\begin{cases}
C_{I,J,t'',t',t}\,\zeta_{I\,\sqcup\, J,\,p+p'}^{t''\to t}\quad & \text{if }I\cap
J=\emptyset,\\
0 & \text{if }I\cap J\neq \emptyset,
\end{cases}
\end{equation}
for all $I,J\subset \{1,\dots,k\}$, $p\in P_I(t'-t)\cap (2\pi \Z)^n$, $p'\in
P_J(t''-t')\cap 2\pi\Z)^n$, where $C_{I,J,t'',t',t}$ is a non-zero constant.
In particular, for $I=J=\emptyset$ we have
\begin{equation}\label{eq:ci_product}
\vartheta_p^{t'\to t}\cdot \vartheta_{p'}^{t''\to t'}=\vartheta_{p+p'}^{t''\to t}.
\end{equation}
\end{proposition}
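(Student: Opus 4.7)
The plan is to adapt the proof of Proposition \ref{prop:CFL0_product} by using the $k$ projections $W_1,\ldots,W_k:Y\to\C$ simultaneously rather than a single one, and to handle the combinatorics of the subsets $I,J\subseteq\{1,\ldots,k\}$ via a degree count. First I would apply the open mapping principle to each $W_i$ separately (legitimate because $J$ agrees with the standard complex structure of $Y$ outside a small neighborhood of $\bigcup_i W_i^{-1}(0)$). For each $i$ this forces the projection $W_i\circ u$ of any contributing $J$-holomorphic triangle $u$ to be either constant or to map onto one of the two triangular regions of $\C$ delimited by $\gamma_t,\gamma_{t'},\gamma_{t''}$, with the boundary marked points sent to the vertices in a manner compatible with the $\mu^2$ corner structure. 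Coordinate-wise: for each $i\notin I\cup J$ both inputs project to $-1$ and the output is forced to $-1$ as well; for each $i$ lying in exactly one of $I$ or $J$ the $i$-th projection lies in $\mathcal{T}_{t''\to t',t}$ or $\mathcal{T}_{t'',t'\to t}$ respectively and sends the output to $c_{t'',t}$.

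The case $i\in I\cap J$ is the delicate one, since the two input corners are pinned to $c_{t',t}$ and $c_{t'',t'}$; rather than directly analyze the putative triangular region(s) in the base (the only candidate encircles the critical value $0$ of $W_i$ and has the wrong orientation), I would deduce the vanishing for $I\cap J\ne\emptyset$ from a clean degree count: the total degree of the inputs is $-(|I|+|J|)$, while every generator $\zeta_{K,q}^{t''\to t}$ lies in degree $-|K|$ with $K\subseteq I\cup J$, so $-|K|\ge -|I\cup J|>-(|I|+|J|)$ leaves no possible output of the correct degree. For disjoint $I$ and $J$ the output therefore lies in the summand spanned by the $\zeta_{I\sqcup J,q}^{t''\to t}$.

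To pin down the exponent and the multiplicative constant, I would run the deformation argument of Proposition \ref{prop:CFL0_product} simultaneously in each of the $k$ base factors: shrink each triangular region $\mathcal{T}_{*}$ (for $i\in I\cup J$) to a point by a compactly-supported base isotopy of $\gamma_t,\gamma_{t'},\gamma_{t''}$ that does not cross the origin, preserving fiberwise monomial admissibility of all boundary Lagrangians. After the deformation the count of $J$-holomorphic triangles in $Y$ reduces to a count of Floer triangles inside a single fiber of $\mathbf{W}$, for which Proposition \ref{prop:mon_adm_hf_product} yields $\zeta_{I\sqcup J,\,p+p'}^{t''\to t}$ with coefficient $+1$; the symplectic area swept during the base deformation contributes the overall nonzero scalar $C_{I,J,t'',t',t}\in\K^*$, depending only on $t,t',t''$ and on $I,J$ because the action-rescaling built into the $\zeta$'s absorbs the purely fiberwise contribution. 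The special case $I=J=\emptyset$ requires no deformation: the product occurs entirely in $\mathbf{W}^{-1}(-1,\ldots,-1)\simeq(\C^*)^n$, and Proposition \ref{prop:mon_adm_hf_product} directly gives \eqref{eq:ci_product}. The main obstacle I anticipate is organizational rather than conceptual: one must verify that the $k$ simultaneous base deformations can be performed while preserving fiberwise monomial admissibility everywhere and that the maximum principles of Section \ref{sec:maximum-principle} continue to constrain the deformed moduli spaces; the delicate geometric question of which triangular configurations in the base can contribute is handled cleanly by the degree count rather than by a direct case analysis.
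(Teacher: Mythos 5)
Your proof is correct and follows essentially the same approach as the paper. You apply the open mapping principle coordinate-wise to each $W_i$, use a degree count to rule out $I\cap J\ne\emptyset$ (the paper's version of this, in the $k=1$ case, is the one-line remark that ``the product of two degree $-1$ generators vanishes for degree reasons''), and reduce the nonvanishing products to the single-fiber computation of Proposition~\ref{prop:mon_adm_hf_product} by shrinking the triangular regions via base deformations that do not cross the origin. This matches the paper's argument that ``any contributing $J$-holomorphic disc projects under $W_i$ to either a single point or a triangular region \dots not enclosing the origin, and reducing to a calculation within the fiber of $\mathbf{W}$.''

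One place where you could tighten the write-up: the degree count only yields the vanishing once you know $K\subseteq I\cup J$, and you assert this (``for each $i\notin I\cup J$ \dots the output is forced to $-1$'') without proof. The justification is the filtration/intersection-number argument used earlier in the section for the differential: for $i\notin I\cup J$ the projection $W_i\circ u$ has both input corners at $-1$, so either it is constant, or (by the open mapping principle) its image is a region bounded by arcs of $\gamma_t,\gamma_{t'},\gamma_{t''}$ running between two copies of $-1$, which necessarily encloses the origin; in the latter case $u$ would have a positive intersection number with $W_i^{-1}(0)$, which strictly decreases the $i$-th filtration index of the output below that of the inputs, namely below $0$, a contradiction. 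Once this is spelled out, your argument is watertight.
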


It follows from \eqref{eq:ci_product} that the cohomology-level product
structure corresponds to multiplication of Laurent polynomials on the
quotient spaces \eqref{eq:ci_truncatedlaurent}.

Finally, the quasi-unit $e^{t'\to t}\in HF^0(L_0(t'),L_0(t))$ is again given by
$e^{t'\to t}=\vartheta_0^{t'\to t}$, by the same argument as in Proposition
\ref{prop:CFL0_quasiunit}. Thus, computing $H\Wrap(L_0,L_0)$ as a colimit of the Floer
cohomology groups $HF^*(L_0(t'),L_0(t))$ as $t'-t\to \infty$ amounts to taking the colimit of
\eqref{eq:ci_truncatedlaurent} under the naive inclusion maps, and we arrive
at $$H\Wrap(L_0,L_0)\simeq \K[x_1^{\pm 1},\dots,x_n^{\pm
1}]/(f_1,\dots,f_k),$$ which completes the proof of Theorem
\ref{thm:ci_main}.

\begin{remark}
It is not a coincidence that the structure of the vertical Floer complex 
\eqref{eq:ci_CFL0vert} matches that of the Koszul complex which resolves
$i_*\O_{\mathbf{H}}$. This can be understood using the same perspective 
as in Section \ref{ss:functorial}, given the interpretation of the
Landau-Ginzburg models $(Y_I,W_I)$, $I\subset \{1,\dots,k\}$ provided by 
Proposition \ref{prop:ci_fiber}  and observing that for
$I=I'\sqcup\{i\}$ the categories $\Wrap(Y_I,W_I)$ and
$\Wrap(Y_{I'},W_{I'})$ are related to each other by cap and cup functors which
correspond under mirror symmetry  to the inclusion and restriction functors
between the derived categories of $H_I=\bigcap_{i\in I} f_i^{-1}(0)$ and
$H_{I'}$.
\end{remark}

\begin{remark}
The object $L_0$ is expected to generate $\Wrap(Y,\mathbf{W})$, which would
imply that the embedding of Theorem \ref{thm:ci_main} is an equivalence. The argument should
proceed by induction on $k$, using stop removal. Namely, for
$I=I'\sqcup\{i\}$ it should follow
from a suitable stop removal result (for the stop at $W_i\to -\infty$)
that $\Wrap(Y_I,W_{I'})$ is the quotient of
$\Wrap(Y_I,W_I)$ by the image of the cup functor from
$\Wrap(Y_{I'},W_{I'})$. On the other hand, the
category $\Wrap(Y_I,W_{I'})$ is expected to be trivial for $I'$ a strict
subset of $I$; at least, SYZ mirror symmetry suggests that $(Y_I,W_{I'})$
admits a B-side Landau-Ginzburg mirror whose superpotential has no critical 
points \cite{AAK}, which implies the triviality of its derived category of 
singularities.
Thus, one expects that $\Wrap(Y_I,W_I)$ is generated by the image 
under the cup functor of a generator of $\Wrap(Y_{I'},W_{I'})$;
the result then follows by induction on $k$.
\end{remark}

\end{document}